\newtheorem{thm}{Theorem}[section]
\newtheorem{lem}[thm]{Lemma}
\newtheorem{prop}[thm]{Proposition}
\newtheorem{cor}[thm]{Corollary}
\newtheorem*{thmA1}{Theorem A.1}
\newtheorem*{thmA3}{Theorem A.3}
\newtheorem*{thmA5}{Theorem A.5}
\theoremstyle{definition}     
\newtheorem{definition}[thm]{Definition}
\theoremstyle{remark}
\newtheorem{rem}[thm]{Remark}
\newtheorem*{remA2}{Remark {\rm{A.2}}}
\newtheorem*{remA4}{Remark {\rm{A.4}}}
\newtheorem*{remA6}{Remark {\rm{A.6}}}
\numberwithin{equation}{section}
\newcommand{\bF}{\mathbb F}
\newcommand{\bP}{\mathbb P}
\newcommand{\bZ}{\mathbb Z}
\newcommand{\ctext}[1]{\textcircled{\scriptsize{#1}}}
\newcommand{\mapright}[1]{\smash{\mathop{\longrightarrow}\limits^{#1}}}
\newcommand{\Xt}{\tilde{X}}
\newcommand{\sh}[1]{\mathcal{O}_{#1}}
\newcommand{\Kx}{K_X}
\newcommand{\Kxt}{K_{\tilde{X}}}
\newcommand{\Xtp}{\tilde{X}^{\prime}}
\newcommand{\Kxtp}{K_{\tilde{X}^{\prime}}}
\newcommand{\ch}{{\rm{char}}}
\newcommand{\Spec}{{\rm{Spec}}}
\newcommand{\etap}{\eta ^{\prime}}
\newcommand{\kb}{\bar{k}}
\newcommand{\mA}{{\mathcal{A}}}
\newcommand{\mB}{{\mathcal{B}}}
\newcommand{\mC}{{\mathcal{C}}}
\newcommand{\mD}{{\mathcal{D}}}
\newcommand{\mAp}{{\mathcal{A}^{\prime}}}
\newcommand{\mBp}{{\mathcal{B}^{\prime}}}
\newcommand{\mCp}{{\mathcal{C}^{\prime}}}
\newcommand{\Cp}{C^{\prime}}
\newcommand{\Dp}{D^{\prime}}
\newcommand{\Ep}{E^{\prime}}
\newcommand{\Fp}{F^{\prime}}
\newcommand{\Gp}{G^{\prime}}
\newcommand{\mF}{{\mathcal{F}}}
\newcommand{\Xkb}{X\times_k \kb}
\newcommand{\Vt}{\tilde{V}}
\newcommand{\Vtp}{\tilde{V}^{\prime}}
\newcommand{\Vp}{V^{\prime}}
\newcommand{\Kvp}{K_{V^{\prime}}}
\newcommand{\Proj}{{\rm{Proj}}}
\newcommand{\Sp}{S^{\prime}}
\newcommand{\Tr}{{\rm{Tr}}}
\newcommand{\Gal}{{\rm{Gal}}}
\newcommand{\Pic}{{\rm{Pic}}}
\newcommand{\Imr}{{\rm{Im}}}
\newcommand{\mL}{{\mathcal{L}}}
\begin{document}

\title{Unirationality of RDP Del Pezzo surfaces of degree 2}

\author{Ryota Tamanoi}
\email{ryotatamanoi@gmail.com}

\address{Graduate School of Mathematical Sciences, The University of Tokyo, 3-8-1 Komaba, Meguro-ku, Tokyo 153-8914, Japan}

\begin{abstract}
We study unirationality of a Del Pezzo surface of degree two over a given (non algebraically closed) field, under the assumption that it admits at least one rational double point over an algebraic closure of the base field. As corollaries of our main results, we find that over a finite field, it is unirational if the cardinality of the field is greater than or equal to nine and we also find that over an infinite field, which is not necessarily perfect, it is unirational if and only if the rational points are Zariski dense over the field. 
\end{abstract}
\maketitle

\section{Introduction}
	Let $X$ be a projective, normal and geometrically integral surface over a field $k$ and $\kb$ be an algebraic closure of $k$. $X$ is a smooth Del Pezzo surface if $X$ is smooth and the anti-canonical divisor $-\Kx$ is ample. As a generalization of this, if $X$ is singular but has only rational double points and $-\Kx$ is ample, $X$ is called a RDP Del Pezzo surface. It is well known that a smooth Del Pezzo surface over an algebraically closed field $\kb$ is $\kb$-rational, that is, birational to $\bP_{\kb}^2$. A RDP Del Pezzo surface over $\kb$ is also $\kb$-rational. However, $X$ over a field $k$ is not always $k$-rational (even if $X$ has a smooth $k$-point), so we consider the weaker condition unirationality. A projective variety $V$ over $k$ is $k$-unirational if there exists a dominant map $\bP_k^n \dashrightarrow V$. In this article, we study about the unirationality of RDP Del Pezzo surfaces.

	The self intersection number of $-\Kx$ is called the degree of a smooth or RDP Del Pezzo surface $X$, denoted by $d$. Let $X_1$ be a smooth Del Pezzo surface of degree $d$. We also suppose that $X_1$ has a rational point since varieties with no rational point are clearly not unirational. For $d \geq 5$, Manin proved that $X_1$ is $k$-rational over arbitrary fields {\cite[Theorem 29.4]{Manin}}. Manin also proved $X_1$ is $k$-unirational if $d = 3,4$ and $X_1$ has a rational point which does not lie on any exceptional curves. The unirationality of $X_1$ for $d = 3$ and $4$ was finally proved in {\cite[Theorem 1.1]{Kollar}} and {\cite[Proposition 5.19]{Pieropan}}, respectively. Let $X_2$ be a RDP Del Pezzo surface of degree $d$ with a rational point over a perfect field $k$. {\cite[Section 5]{Segre2}}, {\cite[Proposition 1]{Coray}} and {\cite[Theorem A]{CT}} proved that $X_2$ is $k$-unirational if $d = 3$. {\cite{CT}} also proved the $k$-unirationality of $X_2$ for $d \geq 4$. These propositions for RDP Del Pezzo surfaces are detailed in Section 2.2. 
\begin{thm}{\label{MainThm1}}
	Let $X$ be a RDP Del Pezzo surface of degree $2$ over a perfect field $k$ and $\Xt$ be a minimal resolution of $X$. Suppose that the singularities of $X$ are neither
\begin{itemize}
	\item[$\ctext{1}$] $A_1$ type (that is, the singularity of $X$ over $\kb$ is a unique $A_1$ type singularity),
	\item[$\ctext{2}$] $A_2$ type and the two $(-2)$curves on $\Xt$ are conjugate under $\Gal(\kb / k)$,
	\item[$\ctext{3}$] $4A_1$ type and the four singularities are conjugate under $\Gal(\kb / k)$,
\end{itemize}
where $\Gal ( \kb / k)$ is the absolute Galois group of $k$. Then, $\Xt$ is not minimal over $k$. In particular, $X$ is $k$-unirational if $\Xt$ has a $k$-point.
\end{thm}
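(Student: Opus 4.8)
The plan is to split the statement into its two assertions---that $\Xt$ is not $k$-minimal, and that non-minimality together with a $k$-point gives $k$-unirationality---and to dispose of the second, which is essentially formal, first. Since the minimal resolution $\Xt \to X$ is birational and $k$-unirationality is a birational invariant of the target, it suffices to prove that $\Xt$ is $k$-unirational. Granting that $\Xt$ is non-minimal over $k$, there is a $\Gal(\kb/k)$-invariant set of pairwise disjoint $(-1)$-curves on $\Xt\times_k\kb$, and by Galois descent the associated contraction descends to a birational $k$-morphism $\Xt \to Y$ onto a weak Del Pezzo surface $Y$ whose degree $d'$ satisfies $d' \geq 3$. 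Its anticanonical model is a RDP Del Pezzo surface $X'$ of degree $d' \geq 3$, and the image of the given $k$-point is a $k$-point of $X'$. By the results recalled in Section 2.2 (unirationality of RDP Del Pezzo surfaces of degree $\geq 3$ with a rational point over a perfect field), $X'$ is $k$-unirational, hence so are $\Xt$ and $X$.

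It remains to prove non-minimality, which is the geometric heart of the statement. I would work in $\Pic(\Xt\times_k\kb) \cong \bZ^{1,7}$, whose primitive sublattice orthogonal to $\Kxt$ is the root lattice $E_7$. The $56$ exceptional $(-1)$-curves, the $(-2)$-curves (which generate the root subsystem $R$ determined by the singularity type of $X$), the intersection form, and $\Kxt$ are all preserved by $G := \Gal(\kb/k)$. Since $\Xt$ is a geometrically rational surface, the equivariant Castelnuovo criterion applies: $\Xt$ fails to be $k$-minimal precisely when some $G$-orbit of $(-1)$-curves consists of pairwise disjoint curves. The goal is therefore to exhibit such an orbit in every case other than \ctext{1}, \ctext{2}, and \ctext{3}. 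The crucial structural constraint is that $G$ must stabilize the configuration of $(-2)$-curves, so its image in the automorphism group of $\Pic(\Xt\times_k\kb)$ lies in the stabilizer of $R$; the more singularities there are, or the less symmetrically $G$ permutes them, the smaller this image is and the easier it is to produce a disjoint invariant orbit.

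Concretely, I would run through the classification of root subsystems $R \subseteq E_7$ realized by RDP Del Pezzo surfaces of degree $2$, and for each admissible $G$-action on the $(-2)$-curves and the $(-1)$-curves exhibit a pairwise-disjoint Galois orbit of $(-1)$-curves. A convenient source of disjoint curves is the interaction between $(-2)$- and $(-1)$-curves: if $E$ is a $(-1)$-curve and $D$ a $(-2)$-curve with $E\cdot D = 1$, then $(E+D)^2 = -1$ and $(E+D)\cdot \Kxt = -1$, so $E+D$ is again a $(-1)$-class, and these recombinations (governed by the Weyl group of $R$) pin down small Galois-stable families of mutually disjoint exceptional curves once a singular point is present. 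I expect the three excluded configurations to be exactly those in which $G$ acts on the $(-2)$-curves as transitively as the small size of $R$ permits---a single $A_1$, a conjugate $A_2$, a totally conjugate $4A_1$---forcing every $(-1)$-orbit to contain an intersecting pair.

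The main obstacle will be the borderline cases lying one step away from the exclusions: $2A_1$ as opposed to a unique $A_1$, an $A_2$ whose two $(-2)$-curves are not conjugate, and a $4A_1$ that is not totally conjugate. For these I must verify that a disjoint invariant orbit genuinely appears, so that the three exceptions in the statement are sharp. Carrying out these verifications, and more generally organizing the full list of singularity types together with their compatible Galois actions without omission, is where the real work lies; the remainder of the argument is formal.
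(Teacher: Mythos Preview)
Your reduction of the ``in particular'' clause is correct and matches the paper: contract a Galois-stable disjoint set of $(-1)$-curves, land on a weak Del Pezzo of degree $\geq 3$, and invoke the results of Section~2.2.

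For the non-minimality assertion, your plan is workable but takes a different route from the paper. You propose a lattice-theoretic case analysis: list the realizable root subsystems $R\subset E_7$, enumerate the compatible Galois actions on the $(-1)$- and $(-2)$-curves, and in each case locate a pairwise-disjoint $G$-orbit. The paper instead builds a small amount of theory---the notion of a \emph{pre$(-1)$curve} (an effective class $D$ with $D^2=D\cdot K_{\Xt}=-1$), of which your observation about $E+D$ is the germ---and uses it to give a \emph{canonical} construction: to any pair of non-intersecting $(-2)$-curves $F_1,F_2$ it attaches the two pre$(-1)$curves meeting both, extracts the unique $(-1)$-curve component of each, and proves these two $(-1)$-curves are disjoint (or equal). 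Because the construction is intrinsic, the resulting set of $(-1)$-curves is automatically Galois-stable, and one never has to enumerate the possible $G$-actions; one only splits on the number $\delta$ of singular points. The same mechanism, iterated, handles $\delta=2,3$, a variant (``$(-1)$-curve of Lemma~3.15'', meeting three $A_1$-curves) handles $\delta\geq 4$, and a separate argument treats $\delta=1$.

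What the paper's approach buys is precisely the elimination of the step you flag as the main obstacle: you never have to list Galois actions case by case, so the ``borderline'' verifications (e.g.\ $4A_1$ not totally conjugate) become short structural arguments rather than enumerations. Conversely, your approach is more elementary in that it stays inside $\Pic$ and uses no new definitions---but to carry it out you would in effect be redoing, one subsystem at a time, the uniform facts the paper proves once (Propositions~3.7--3.10). Either route leads to the theorem; the paper's is shorter and yields, as a byproduct, the explicit identification of the contracted surface in each case (Appendix~A).
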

	The type of singularities of $X$ is one of the $40$ types in Proposition {\ref{Prop:singularity type}}. Theorem {\ref{MainThm1}} states all but $3$ cases of these are non-minimal.
  
	For a smooth Del Pezzo surface $X$ of degree $2$ over a field $k$, {\cite{STVA}} gave a necessary and sufficient condition of unirationality: $X$ is $k$-unirational if there exists a non-constant morphism $\bP_k^1 \rightarrow X$ ({\cite[Theorem 3.2]{STVA}}). As the corollary of this, {\cite[Corollary 3.3]{STVA}} states that $X$ is $k$-unirational if $X$ has a rational point which is not a generalized Eckardt point (cf. Definition {\ref{Def:geneEckpt}}) or on the ramification divisor of anti-canonical morphism of $X$. The following second main theorem is the RDP Del Pezzo surface version of this corollary.
\begin{thm}{\label{MainThm2}}
	Let $X$ be a RDP Del Pezzo surface of degree $2$ over a field $k$ and $\kappa : X \rightarrow \bP_k^2$ be the anti-canonical morphism. Suppose either
\begin{itemize}
	\item[(1)] $\kappa$ is purely inseparable and $k$ is perfect or
	\item[(2)] $\kappa$ is separable and $X$ has a $k$-point which is neither a generalized Eckardt point nor on the ramification divisor of $\kappa$.
\end{itemize}
Then, $X$ is $k$-unirational.
\end{thm}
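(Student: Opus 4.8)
The plan is to treat the two cases by entirely different mechanisms.

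For case (1) the argument is essentially formal and uses only the perfectness of $k$. Since $\kappa$ has degree $(-\Kx)^2 = 2$ and is purely inseparable, we are in characteristic $2$ and the extension $k(X)/k(\bP_k^2)$ is purely inseparable of degree $2$; thus $\alpha^2 \in k(\bP_k^2)$ for every $\alpha \in k(X)$, i.e. $k(X) \subseteq k(\bP_k^2)^{1/2}$. Writing $k(\bP_k^2) = k(s,t)$ and using $k^{1/2} = k$, one gets $k(\bP_k^2)^{1/2} = k(s^{1/2}, t^{1/2}) = k(u,v)$, a purely transcendental extension of $k$ of transcendence degree $2$ (squaring is the Frobenius, so $k(s^{1/2},t^{1/2})^2 = k(s,t)$, giving the reverse inclusion). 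Hence $k(X)$ embeds into the rational function field $k(u,v) = k(\bP_k^2)$, and this inclusion is precisely a dominant rational map $\bP_k^2 \dashrightarrow X$. Therefore $X$ is $k$-unirational, with no real obstruction here.

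For case (2) I would reduce unirationality to the production of a single non-constant rational curve on $X$, in the spirit of \cite{STVA}. First I would note that the hypothesis that $P$ does not lie on the ramification divisor of $\kappa$ forces $P$ into the smooth locus: the singular points of an RDP Del Pezzo of degree $2$ map to singular points of the branch quartic and in particular lie on the ramification divisor, so the good point $P$ is a smooth point of $X$. Let $Q = \kappa(P) \in \bP_k^2(k)$ and let $\sigma$ be the Geiser involution, i.e. the deck transformation of $\kappa$, which is defined over $k$; since $Q$ lies off the branch locus, the two preimages $P$ and $\sigma(P)$ are distinct $k$-points.

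Next I would take the pencil of lines through $Q$ and pull it back by $\kappa$ to a pencil $\{D_t\}_{t \in \bP^1}$ of curves of arithmetic genus $1$ on $X$, each passing through the base points $P$ and $\sigma(P)$. After resolving the base locus (and the RDPs) this becomes an elliptic fibration over $\bP_k^1$ carrying two $k$-rational sections: the zero section $O$ over $P$ and a section $S$ over $\sigma(P)$. Forming the fibrewise double $[2]S$ in the Mordell--Weil group with origin $O$ yields a further $k$-rational section, whose image in $X$ is a $k$-rational curve. I expect this curve to be non-constant exactly when $P$ is not a generalized Eckardt point: the Eckardt condition should be precisely what makes $\sigma(P)$ two-torsion on every fibre and thereby collapses $[2]S$ to the (contracted) zero section, while $[2]S \ne O$ guarantees a genuine non-constant morphism $\bP_k^1 \to X$. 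Feeding this into the unirationality criterion---the RDP analogue of \cite[Theorem 3.2]{STVA}---then gives $k$-unirationality.

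The hard part will be twofold. The more serious obstacle is the criterion itself in the singular setting: the passage from one rational curve to a dominant family $\bP_k^2 \dashrightarrow X$ must be carried out when the anticanonical double cover has a singular branch quartic, so that the elliptic fibration acquires extra degenerate fibres over the images of the RDPs, and one must verify that these do not obstruct the spreading-out argument. The secondary, more geometric point is to pin down the notion of generalized Eckardt point (Definition \ref{Def:geneEckpt}) sharply enough to prove that $[2]S$ is non-constant under the stated hypothesis; this is where the separability of $\kappa$ is genuinely used.
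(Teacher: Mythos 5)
Your case (1) is correct and is in substance the same as the paper's: Proposition {\ref{Prop:Kawakami}} is just the geometric form of your function-field computation, and both arguments hinge on perfectness of $k$ making $k(s^{1/2},t^{1/2})$ purely transcendental over $k$. The problems are all in case (2), where your outline has two genuine gaps rather than being a different but complete proof. First, the step you defer as ``the more serious obstacle'' is in fact the heart of the matter: producing one non-constant rational curve through a good point does not by itself give a dominant map from $\bP_k^2$. The paper's Theorem {\ref{Thm4.11}} supplies this implication by base-changing to the generic point $\eta$ of that curve, checking that the tautological $k(\eta)$-point is again a usable point (not on $f^*R$, not a generalized Eckardt point), and rerunning the curve construction over $k(\eta)$ to obtain a strictly larger dominant image; nothing in your sketch replaces this, and it is not automatic in the RDP setting. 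You would also need to check that the curve you produce is not contained in $f^*R$, which is a hypothesis of that criterion.

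Second, the mechanism you propose for producing the curve is not secure. The fibrewise doubling $[2]S$ presupposes that the pencil of pullbacks of lines through $Q$ has smooth generic fibre, i.e.\ that the associated genus-one fibration is elliptic rather than quasi-elliptic; in characteristic $2$ (exactly where the separable case is delicate, since the branch locus is a double conic) this requires proof, and in the quasi-elliptic case the group-law construction of $[2]S$ is unavailable. Moreover the equivalence ``$[2]S$ is non-constant if and only if $P$ is not a generalized Eckardt point'' is only asserted (``I expect\ldots''). Making it precise via the height pairing, one has $\langle S,S\rangle = 2 - \sum_v \mathrm{contr}_v(S)$, and the correction terms come not only from the reducible fibres created by $(-1)$curves through $P$ and $\sigma(P)$ but also from the fibres containing the exceptional $(-2)$curves over the rational double points; you must rule out that these extra contributions force $S$ to be torsion even when $P$ is not a generalized Eckardt point. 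The paper avoids all of this: it blows up once at $p$, uses Corollary {\ref{Cor3.5}} to find the unique pre$(-1)$curve in $|-2\Kxtp - E|$, and uses the spine analysis (Propositions {\ref{Prop4.3}} and {\ref{Prop4.7}}) to show that its moving part is a $(-1)$curve defined over $k$ meeting $E$ with multiplicity $3-n$, which is non-constant precisely because $n\leq 3$ by Lemma {\ref{Lem4.8}}.
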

In particular, for an infinite field $k$, $X$ is $k$-unirational if and only if the set of rational points $X(k)$ is Zariski dense in $X$.

	Moreover, {\cite{STVA}} and {\cite{FL}} showed that if $k$ is a finite field, then smooth Del Pezzo surfaces of degree $2$ are $k$-unirational({\cite[Theorem 1.1]{FL}}). The following third main theorem is the RDP Del Pezzo surface version of this theorem.
\begin{thm}{\label{MainThm3}}
	Let $X$ be a RDP Del Pezzo surface of degree $2$ over a finite field $k$ with $q$ elements. Suppose either:
\begin{itemize}
	\item[(1)] $X$ is not $\ctext{1}$, $\ctext{2}$ or $\ctext{3}$ of Theorem {\ref{MainThm1}}; or
	\item[(2)] $X$ is either $\ctext{1}$, $\ctext{2}$ or $\ctext{3}$ of Theorem {\ref{MainThm1}} and $q$ is at least $n$, where
$$
n = 
\begin{cases}
	9 & $if $\ctext{1}, \\
	8 & $if $\ctext{2}, \\
	4 & $if $\ctext{3}.
\end{cases}
$$
\end{itemize}
Then, $X$ is $k$-unirational.
\end{thm}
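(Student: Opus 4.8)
The plan is to deduce the statement from the two preceding main theorems, splitting along the dichotomy in the hypothesis and using that a finite field is perfect. Consider first Case (1), where $X$ is none of $\ctext{1},\ctext{2},\ctext{3}$. Since $k$ is finite it is perfect, so Theorem \ref{MainThm1} applies and shows that $\Xt$ is not minimal over $k$. To invoke the final clause of that theorem I must produce a $k$-point of $\Xt$. But $\Xt$ is a smooth projective surface that is rational over $\kb$, hence geometrically rationally connected, and every smooth projective rationally connected variety over a finite field carries a $k$-rational point by Esnault's theorem; concretely one may also argue via the Lefschetz trace formula $|\Xt(k)| = q^2 + q\,\mathrm{Tr}(\mathrm{Frob}\mid \Pic(\Xt_{\kb})) + 1$, using that the Frobenius eigenvalues on $\Pic(\Xt_{\kb})$ are $q$ times roots of unity with the class of $\Kxt$ fixed. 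In any case $\Xt(k)\neq\emptyset$, and Theorem \ref{MainThm1} then yields the $k$-unirationality of $X$ with no condition on $q$, matching the absence of a numerical hypothesis in Case (1).

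For Case (2), where $X$ is one of $\ctext{1},\ctext{2},\ctext{3}$ and $q\ge n$, Theorem \ref{MainThm1} is unavailable (these are exactly its exceptional configurations), so I would instead reduce to Theorem \ref{MainThm2}. Let $\kappa\colon X\to\bP_k^2$ be the anti-canonical morphism, a double cover. If $\kappa$ is purely inseparable (which can occur only in characteristic $2$), then since $k$ is perfect Theorem \ref{MainThm2}(1) already gives $k$-unirationality, independently of $q$. It therefore remains to treat the separable case, where the goal is to exhibit a $k$-point of $X$ that is neither a generalized Eckardt point nor on the ramification divisor $R$ of $\kappa$; Theorem \ref{MainThm2}(2) then finishes the proof.

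That remaining case is a counting problem: for $q\ge n$ I would show that the number of $k$-points on $X$ strictly exceeds the number of $k$-points lying on the forbidden locus, namely $R$ together with the finitely many generalized Eckardt points. For the lower bound on $|X(k)|$ I would pass to the smooth model $\Xt$, use the Lefschetz formula $|\Xt(k)|=q^2+qt+1$ with $t=\mathrm{Tr}(\mathrm{Frob}\mid\Pic(\Xt_{\kb}))$, and then subtract the contribution of the exceptional $(-2)$-curves lying over the singular points (an $O(q)$ correction). The key refinement is that in each of the three configurations the singularity type pins down the root sublattice of $\Pic(\Xt_{\kb})$ and the Galois action is constrained by the stated conjugacy hypotheses (a single $A_1$ in $\ctext{1}$, the swap of the two $(-2)$-curves in $\ctext{2}$, the permutation of the four nodes in $\ctext{3}$), so $t$ can be bounded below far better than the crude estimate $t\ge -8$. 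For the upper bound I would count $k$-points on the branch quartic $B=\kappa(R)\subset\bP_k^2$ via the Weil bound on its normalization, whose geometric genus drops according to the singularities forced by the configuration, and add the bounded number of generalized Eckardt points. Comparing the two estimates produces the explicit thresholds $n=9,8,4$.

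The main obstacle is precisely this point count for small $q$: the naive surface bound $q^2-8q+1$ is negative for $q\le 7$, so a proof cannot rely on it and must instead compute (or sharply bound) the Frobenius trace $t$ and the genus of $B$ in each configuration separately. I expect $\ctext{1}$ (with $n=9$) to be the most delicate, since the unique $A_1$ point forces the least degeneration of the branch quartic and hence the largest forbidden locus, whereas $\ctext{3}$ (with $n=4$) should be the easiest: four nodes force the quartic to become reducible (arithmetic genus $3$ cannot absorb four nodes otherwise), so $B$ has very few $k$-points, while the conjugacy of the four nodes makes the Galois action, and thus $t$, highly symmetric.
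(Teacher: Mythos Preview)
Your proposal is correct and follows essentially the same route as the paper: Case~(1) is handled by Theorem~\ref{MainThm1} together with the existence of a $k$-point on $\Xt$ via the Weil/Lefschetz count (Proposition~\ref{Prop:WeylConj}), and Case~(2) is reduced to Theorem~\ref{MainThm2} by a point count comparing a lower bound for $\#X(k)$ (from the trace formula) against an upper bound for $\#R(k)$ (from the Hasse--Weil bound on the branch quartic, whose singularity type is dictated by $\ctext{1},\ctext{2},\ctext{3}$) plus a bound on generalized Eckardt points. The two places where the paper supplies the precise numerics you leave open are: the possible values of $\Tr F^*$ are pinned down by a Weyl-group computation in $W(E_7)$ (Lemma~\ref{Lem5.6}, done in SageMath), and the generalized Eckardt points are controlled by first counting the $(-1)$-curves disjoint from all $(-2)$-curves ($32$, $20$, $8$ in the three cases, Lemma~\ref{Lem5.2}), which caps the Eckardt points at $8$, $5$, $2$ respectively (Lemma~\ref{Lem5.3}).
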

	I do not know whether $X$ is $k$-unirational when $X$ is of type $\ctext{1}$, $\ctext{2}$, $\ctext{3}$ and $q < 9, 8, 4$, respectively.

	This article is structured as following: Section 2 describes the properties of RDP Del Pezzo surfaces of any degree. In Section 3, we show Theorem {\ref{MainThm1}}. In Section 4, we show Theorem {\ref{MainThm2}}. This section is independent from Section 3. In Section 5, we show Theorem {\ref{MainThm3}}. Many of propositions and proofs of Section 4 and 5 are similar to {\cite{STVA}}.
\section{Geometry of RDP Del Pezzo surfaces}
	Let $X$ be a RDP Del Pezzo surface. We call the self-intersection number $d = \Kx \cdot \Kx$ the degree of $X$. Such surfaces over an algebraically closed field $\kb$ are studied by Demazure in \cite{Demazure} and Hidaka and Watanabe in \cite{HW}. Some important results of \cite{Demazure} and \cite{HW} are cited in Section 2.1. In Section 2.2, results of {\cite{CT}} about  the unirationality of RDP Del Pezzo surfaces of degree at least $3$ are cited.
\subsection{RDP Del Pezzo surfaces and weak Del Pezzo surfaces}
	Let $\Sigma = \{ x_1,\cdots ,x_r\} $ be a finite set of points on $\bP _{\kb} ^2$ ($1\leq r \leq 8$), infinitely near points allowed. Denote by $\Sigma _j$ the subset $\{ x_1,\cdots ,x_j\}$ ($1\leq j\leq r$) and let $V(\Sigma _j)\rightarrow \bP_{\kb} ^2$ be the blowing up of $\bP_{\kb} ^2$ with center $\Sigma _j$. Then there exists a sequence of blowing ups
	$$V(\Sigma ) = V(\Sigma _r)\longrightarrow V(\Sigma _{r-1})\longrightarrow \cdots \longrightarrow V(\Sigma _1) \longrightarrow \bP_{\kb} ^2$$
Let $E_j$ be the exceptional divisor which is inverse image of $x_j$ by the blowing up $V(\Sigma _j)\rightarrow \bP_{\kb} ^2$. 
\begin{definition}[\cite{Demazure}]
The points of $\Sigma \subset \bP _{\kb}^2$ are in {\em{general position}} (resp. {\em{almost general position}}) if
\begin{itemize}
	\item[(i)] no three (resp. four) of them are on a line.
	\item[(ii)] no six (resp. seven) of them are on a conic.
	\item[(iii)] all the points are distinct (resp. for all $j$ ($1\leq j \leq r-1$), the point $P_{j+1}\in V(\Sigma _j)$ does not lie on any proper transform $\hat{E_j}$ of $E_j$ such that $\hat{E_j}^2 = -2$).
	\item[(iv)] when $r = 8$, there exists no singular cubic which passes through all the points of $\Sigma$ and has one of them as the singular point (no corresponding condition for almost general position).
\end{itemize}
\end{definition}
	It is well known that a projective surface $X$ is a smooth Del Pezzo surface over $\kb$ if and only if $X$ is isomorphic to either $\bP_{\kb} ^2$, $\bP_{\kb} ^1 \times \bP_{\kb} ^1$ or $V(\Sigma)$ where the points of $\Sigma$ are in general position. A similar statement for almost general position holds:
\begin{prop}[{\cite[III Th\'eor\`eme 1]{Demazure}}]\label{Prop:Demazure}
Let $V = V(\Sigma )$ as above. Then the following conditions are equivalent:
\begin{itemize}
	\item[(a)] The points of $\Sigma$ are in almost general position.
	\item[(b)] The anti-canonical system $|{-K_{V}}|$ of $V$ has no fixed components.
	\item[(c)] $H^1(V,\sh{V}(nK_{V}))=0$ for all $n \in \bZ$.
	\item[(c$^{\prime}$)] There exists a series of integers $\{ n_i\}$, tending to $-\infty$ such that $H^1 (V,\sh{V}(n_i K_{V}))=0$
	\item[(d)] $D \cdot K_{V} \leq 0$ for every effective divisor $D$ on $V$.
	\item[(d$^{\prime}$)] For an irreducible curve $D$ on $V$, either $D \cdot K_{V} < 0$ or $D\cdot K_{V}=0$ and $D^2=-2$.
\end{itemize}
	Moreover, if $\ch(\kb) = 0$, the following condition is also equivalent:
\begin{itemize}
	\item[(b$^{\prime}$)] The anti-canonical system $|{-K_{V}}|$ of $V$ contains a non-singular elliptic curve.
\end{itemize}  
\end{prop}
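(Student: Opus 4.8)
The plan is to prove the equivalences by a cycle of implications, isolating the elementary steps, the single cohomological step, and the combinatorial step, and then to attach (b) and (b$'$) at the end. Throughout I would use that $V=V(\Sigma)$ is a smooth rational surface, so $\chi(\sh{V})=1$ and $H^1(V,\sh{V})=H^2(V,\sh{V})=0$, together with Riemann--Roch $\chi(\sh{V}(D))=1+\tfrac12 D\cdot(D-K_V)$, Serre duality $H^i(V,\sh{V}(D))\cong H^{2-i}(V,\sh{V}(K_V-D))^{\vee}$, and the numerics $K_V=-3H+\sum E_i$, $(-K_V)^2=9-r=d\geq 1$. The easy links come first. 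The implication (c)$\Rightarrow$(c$'$) is trivial. For (d$'$)$\Leftrightarrow$(d): an effective divisor is a nonnegative sum of irreducible curves, each with nonpositive intersection with $K_V$ by (d$'$), giving (d); conversely, if $-K_V$ is nef then $(-K_V)^2=d>0$ makes it nef and big, so for irreducible $D$ with $K_V\cdot D=0$ the Hodge index theorem forces $D^2<0$ while adjunction gives $D^2\geq -2$, whence $D^2=-2$, which is (d$'$). Finally (b)$\Rightarrow$(d) is short: an irreducible $D$ with $(-K_V)\cdot D<0$ has $\deg((-K_V)|_D)<0$, so every section of $-K_V$ vanishes on $D$ and $D$ is a fixed component, contradicting (b).

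The cohomological core is (d)$\Rightarrow$(c). Assuming $-K_V$ nef, I would first observe $h^0(\sh{V}(mK_V))=0$ for $m\geq 1$ (no effective pluricanonical divisor on a rational surface), so by Serre duality $H^2(V,\sh{V}(m(-K_V)))=0$ for $m\geq 0$; hence $h^0(-K_V)\geq\chi(-K_V)=1+d>0$ and $|-K_V|$ is nonempty. Fixing $C\in|-K_V|$, adjunction makes $C$ a Gorenstein curve of arithmetic genus $1$ with $\omega_C\cong\sh{C}$. Restricting
$$0 \to \sh{V}((m-1)(-K_V)) \to \sh{V}(m(-K_V)) \to \sh{C}(m(-K_V)|_C) \to 0,$$
the bundle $\sh{C}(m(-K_V)|_C)$ has total degree $md>0$ and, by nefness, nonnegative degree on each component of $C$, so $H^1(C,\sh{C}(m(-K_V)|_C))=0$ for $m\geq 1$. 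An induction on $m$ starting from $H^1(V,\sh{V})=0$ then gives $H^1(V,\sh{V}(m(-K_V)))=0$ for all $m\geq 0$, and Serre duality upgrades this to vanishing for every $n\in\bZ$, which is (c). The delicate point is the absence of Kodaira-type vanishing when $\ch(\kb)>0$, which is exactly why the argument is routed through the genus-$1$ curve $C$ rather than through a vanishing theorem.

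To close the loop I would prove (c$'$)$\Rightarrow$(d). Along $m_i=-n_i\to+\infty$, condition (c$'$) and the vanishing of $H^2$ give $h^0(\sh{V}(m_i(-K_V)))=\chi=1+\tfrac12 m_i(m_i+1)d$, so the sections grow with leading term $\tfrac12(-K_V)^2\,m_i^2$. Were $-K_V$ not nef, an irreducible $D$ with $(-K_V)\cdot D<0$ would be a forced component of every member of $|m(-K_V)|$; peeling off such negative components (equivalently, passing to the positive part of the Zariski decomposition, which exists in every characteristic) would bound the growth of $h^0$ by $\tfrac12\,\mathrm{vol}(-K_V)\,m^2$ with $\mathrm{vol}(-K_V)<(-K_V)^2$, contradicting the computed growth; hence (d). Separately, for (a)$\Leftrightarrow$(d$'$) I would translate the position conditions into intersection numbers: the proper transforms of a line through four of the points, of a conic through seven of them, and of the curves created by violating the infinitely-near condition (iii) are precisely the irreducible $D$ with $(-K_V)\cdot D<0$, so almost general position holds exactly when no such curve exists.

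It remains to tie in (b) and, in characteristic $0$, (b$'$). For (d)$\Rightarrow$(b) I would run the same anticanonical-curve induction to get $H^1(V,\sh{V}(M))=0$ for the nef moving part $M$ of a putative decomposition $-K_V=M+F$ with $F>0$ fixed, and then compare $\chi(M)$ with $\chi(-K_V)$: this forces $F\cdot(-K_V)\leq 0$, so nefness gives $F\cdot(-K_V)=0$ and, by Hodge index, $F\equiv 0$, contradicting $F>0$. Finally, when $\ch(\kb)=0$, Bertini makes the general member of the fixed-component-free system $|-K_V|$ smooth, and being of arithmetic genus $1$ it is a smooth elliptic curve, giving (b)$\Rightarrow$(b$'$); the converse is immediate, since a smooth irreducible anticanonical member cannot contain a fixed curve. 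I expect the two main obstacles to be the characteristic-free vanishing in (d)$\Rightarrow$(c) and the growth estimate in (c$'$)$\Rightarrow$(d), both of which must circumvent the vanishing theorems that are unavailable in positive characteristic.
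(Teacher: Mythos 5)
The paper contains no proof of this proposition: it is quoted directly from Demazure [III Th\'eor\`eme 1] and used as a black box, so there is no in-text argument to compare yours against. Measured against the standard proof in the cited source, your overall architecture is the classical one; in particular, routing (d)$\Rightarrow$(c) through restriction to a member $C\in|-K_V|$ of arithmetic genus $1$ is exactly how one circumvents the failure of Kodaira-type vanishing in positive characteristic, and your cycle of implications is a legitimate reorganization of Demazure's.

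There is, however, a concrete error in your step (c$'$)$\Rightarrow$(d). You assert that peeling off the negative components bounds $h^0(m(-K_V))$ by $\tfrac12\mathrm{vol}(-K_V)\,m^2$ with $\mathrm{vol}(-K_V)<(-K_V)^2$. The inequality goes the other way: if $-K_V=P+N$ is the Zariski decomposition with $N\neq 0$, then $P\cdot N=0$ and $N^2<0$ give $\mathrm{vol}(-K_V)=P^2=(-K_V)^2-N^2>(-K_V)^2$. So the contradiction cannot be that $h^0$ grows too slowly; it is that it grows too fast relative to $\chi$: along $m_i=-n_i\to\infty$, condition (c$'$) together with $h^2=0$ forces $h^0(m_i(-K_V))=\chi(m_i(-K_V))\sim\tfrac{d}{2}m_i^2$, while $h^0(m(-K_V))=\tfrac12P^2m^2+O(m)$ with $P^2>d$ whenever $-K_V$ fails to be nef; equivalently $h^1=h^0-\chi$ grows quadratically, contradicting (c$'$). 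The conclusion survives, but the step as written is false. Two further places are under-argued. In (d)$\Rightarrow$(b), the claim that ``the same induction'' yields $H^1(V,\sh{V}(M))=0$ for the moving part $M=-K_V-F$ is not justified: the restriction sequence you used for the multiples of $-K_V$ does not apply verbatim to $M$, and this is where the work lies. And in (a)$\Leftrightarrow$(d$'$) you verify only the easy direction, that each forbidden configuration produces an irreducible curve with $K_V\cdot D>0$; the converse requires enumerating all irreducible classes with $K_V\cdot D>0$ on a blow-up of at most eight points (via the genus formula, Cauchy--Schwarz, and irreducibility constraints), which is the combinatorial heart of Demazure's theorem and cannot be dispatched with ``precisely''.
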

	Therefore, $V$ is a weak Del Pezzo surface if $V$ satisfies one of (thus all) the above statements, where a surface $V$ is called a weak Del Pezzo surface if $-K_{V}$ is nef. Demazure also proved the following:
\begin{prop}[{\cite{Demazure}}, {\cite[Proposition 0.6]{CT}}]{\label{collapsing weakDelPezzo}}
	Let $\Vp$ be a weak Del Pezzo surface of degree $d$ over an algebraically closed field $\kb$. Let $f_0 : \Vp \rightarrow V_0$ be the rational map defined by the complete linear system $|-i\Kvp|$, where
$$
i = 
\begin{cases}
	1 & $if $d \geq 3, \\
	2 & $if $d = 2, \\
	3 & $if $d = 1.
\end{cases}
$$
Then $f_0$ is a morphism which contracts all the $(-2)$curves of $\Vp$ and is an isomorphism everywhere else. Its image $V_0$ is a RDP Del Pezzo surface. For $d \geq 3$, $V_0$ is anticanonically embedded as a surface of degree $d$ in $\bP^d$.
\end{prop}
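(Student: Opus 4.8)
The plan is to realize $f_0$ as the contraction of the anticanonical model of $\Vp$ and to verify every assertion through intersection theory on $\Vp$. First I would record that $-\Kvp$ is nef by the definition of a weak Del Pezzo surface and big because $(-\Kvp)^2 = d \geq 1 > 0$ (a nef divisor with positive self-intersection on a surface is big). Combined with the numerical characterization (d$'$) of Proposition \ref{Prop:Demazure}, this shows that the irreducible curves $C$ with $(-\Kvp)\cdot C = 0$ are exactly the $(-2)$curves, while every other irreducible curve $D$ satisfies $(-\Kvp)\cdot D > 0$. Since $-i\Kvp$ is numerically trivial on each $(-2)$curve and strictly positive on all other curves, once $f_0$ is known to be a morphism it automatically contracts precisely the $(-2)$curves and is finite onto its image elsewhere.

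Next I would establish that $|{-i\Kvp}|$ is base-point free for the stated $i$, so that $f_0$ is genuinely a morphism. The inputs are Riemann--Roch on the surface together with the vanishing $H^1(\Vp, \sh{\Vp}(n\Kvp)) = 0$ for $n < 0$ furnished by Proposition \ref{Prop:Demazure}(c); from these one computes $h^0(-i\Kvp)$ and, writing $\Vp$ as a blow-up of $\bP_{\kb}^2$ (or $\bP_{\kb}^1\times\bP_{\kb}^1$) at points in almost general position, checks freeness according to the value of $d$. The integer $i$ is forced by the requirement that $f_0$ be birational onto its image: for $d \geq 3$ the system $|{-\Kvp}|$ already embeds the complement of the $(-2)$curves, for $d = 2$ the map defined by $|{-\Kvp}|$ is a double cover of $\bP_{\kb}^2$ and one must pass to $|{-2\Kvp}|$ to obtain a birational morphism, and for $d = 1$ the anticanonical system has a base point while even $|{-2\Kvp}|$ is only a double cover of a quadric cone, so $|{-3\Kvp}|$ is needed.

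I would then identify the image. The contracted curves form an ADE configuration of $(-2)$curves whose intersection matrix is negative definite, so Artin's contraction theorem produces a normal surface $V_0$ with only rational double points, and $f_0$ (being a birational morphism of $\Vp$ onto a normal surface contracting exactly these curves) coincides with this contraction and is an isomorphism away from the exceptional locus. Because $-\Kvp$ is numerically trivial on every contracted curve, it descends to a divisor class on $V_0$; rational double points are canonical and Gorenstein, so the contraction is crepant, $-K_{V_0}$ is the descended class, and it is ample since $-\Kvp$ is nef and big and positive on every non-contracted curve. Hence $V_0$ is a RDP Del Pezzo surface, and for $d \geq 3$ one has $h^0(-\Kvp) = d+1$ with image of degree $(-\Kvp)^2 = d$, giving the anticanonical embedding in $\bP^d$.

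The main obstacle is the base-point-freeness of $|{-i\Kvp}|$ and, more delicately, the birationality of $f_0$ onto its image for the chosen $i$, i.e. showing that $|{-i\Kvp}|$ separates points and tangent directions on the complement of the $(-2)$curves. This is precisely where the jump from $i=1$ to $i=2,3$ in low degree enters, since there the smaller systems define finite covers rather than birational morphisms; carrying this out uniformly over all characteristics, without recourse to Kodaira-type vanishing, is the technically demanding point.
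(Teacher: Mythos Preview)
The paper does not give its own proof of this proposition; it is stated as a cited result from \cite{Demazure} and \cite[Proposition 0.6]{CT}. Your outline is essentially the standard argument carried out in those references: show $-\Kvp$ is nef and big, use the numerical criterion (d$'$) of Proposition~\ref{Prop:Demazure} to see that the curves orthogonal to $-\Kvp$ are exactly the $(-2)$curves, establish base-point-freeness of $|{-i\Kvp}|$ for the indicated $i$, and then identify the resulting contraction with Artin's contraction of the negative-definite configuration of $(-2)$curves, so that $V_0$ has only rational double points and inherits an ample anticanonical class. Your diagnosis of the hard step is also correct: the substantive work is proving that $|{-i\Kvp}|$ is free and that the induced morphism is birational onto its image (i.e.\ separates points and tangents off the $(-2)$locus), and this is precisely where one is forced to increase $i$ for $d=2$ and $d=1$. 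Since the paper offers no alternative argument to compare against, there is nothing further to contrast; your sketch matches the approach of the cited sources.
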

{\cite[Theorem 4.4]{HW}} states that for $d = 2$ (resp.$1$), $V_0$ is also embedded as a surface of degree $4$ (resp.$6$) in the weighted projective space $\bP(1,1,1,2)$ (resp.$\bP(1,1,2,3)$).

	Hidaka and Watanabe proved the converse of Proposition {\ref{Prop:Demazure}}:
\begin{prop}[{\cite[Theorem 3.4]{HW}}]{\label{Prop:HW}}
Let $X$ be a smooth or RDP Del Pezzo surface over an algebraically closed field $\kb$ and $f : \Xt \rightarrow X$ be a minimal resolution of $X$. Then
\begin{itemize}
	\item[(i)] $1 \leq d = \Kx \cdot \Kx \leq 9$.
	\item[(ii)] If $d = 9$, then $X \cong \bP_{\kb} ^2$.
	\item[(iii)] If $d = 8$, then either $(a) X \cong \bP_{\kb} ^1 \times \bP_{\kb} ^1$ or $(b) X \cong \bF _1$ or $(c) X$ is the cone over a quadric in $\bP_{\kb} ^2$. In this case, $\Xt \cong \bF _2$ and the resolution $f$ is given by contracting the minimal section of $\Xt$, where $\bF_n$ is a Hirzebruch surface.

	\item[(iv)] If $1 \leq d \leq 7$, then there exists a set of $r$ points on $\bP_{\kb} ^2$ such that the points of $\Sigma$ are in almost general position, $r = 9-d$ and $\Xt \cong V(\Sigma )$. In this case,	the resolution $f$ is the contraction of all curves on $\Xt$ with self-intersection number $-2$.
\end{itemize}
\end{prop}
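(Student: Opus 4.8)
The plan is to transfer the problem to the minimal resolution $\Xt$, which will be a weak Del Pezzo surface, and then classify such surfaces by descending along $(-1)$curves to a relatively minimal model. Because the singularities of $X$ are rational double points, hence canonical, the minimal resolution $f\colon \Xt\to X$ is crepant, i.e. $\Kxt = f^{*}\Kx$. Pulling back the ample class $-\Kx$ shows that $-\Kxt$ is nef, and the projection formula gives $\Kxt^{2}=\Kx^{2}=d$; thus $\Xt$ is a weak Del Pezzo surface of degree $d$ in the sense of Proposition~\ref{Prop:Demazure}, with $d\ge 1$ because $-\Kx$ is ample. Crepancy also shows that an irreducible curve $C\subset\Xt$ satisfies $C\cdot\Kxt = 0$ exactly when $f(C)$ is a point, and that these are precisely the $(-2)$curves; this identifies the exceptional locus of $f$ with the union of $(-2)$curves and so establishes the assertions about $f$ in (iii) and (iv).

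Next I would prove that $\Xt$ is rational. Since $-\Kxt$ is nef with $\Kxt^{2}=d>0$ it is nef and big, so $(-\Kxt)\cdot m\Kxt = -md<0$ forces $h^{0}(\Xt,\sh{\Xt}(m\Kxt))=0$ for all $m>0$, whence $\Xt$ has Kodaira dimension $-\infty$; together with $h^{1}(\Xt,\sh{\Xt})=0$, Castelnuovo's criterion shows $\Xt$ is rational. Then Noether's formula $12\chi(\sh{\Xt})=\Kxt^{2}+c_{2}(\Xt)$, with $\chi(\sh{\Xt})=1$ and $c_{2}(\Xt)=2+\rho(\Xt)$, yields $d = 10-\rho(\Xt)$, so $1\le d\le 9$, which is (i). If $\rho(\Xt)=1$ then $d=9$, there are no $(-2)$curves, $f$ is an isomorphism, and a minimal rational surface of Picard number one is $\bP_{\kb}^{2}$; this is (ii).

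For $d=8$ we have $\rho(\Xt)=2$. If $\Xt$ has no $(-1)$curve it is relatively minimal, hence some $\bF_{n}$ with $n\neq 1$, and on the minimal section one computes $-\Kxt\cdot C_{0}=2-n$ (for $n\ge 1$), so nefness forces $n\le 2$, leaving $\bF_{0}=\bP_{\kb}^{1}\times\bP_{\kb}^{1}$ and $\bF_{2}$; if $\Xt$ does carry a $(-1)$curve, contracting it produces a minimal rational surface of degree $9$, namely $\bP_{\kb}^{2}$, so $\Xt\cong\bF_{1}$. Contracting the $(-2)$section of $\bF_{2}$ gives the quadric cone, which is (iii). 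For $d\le 7$ we have $\rho(\Xt)\ge 3$, so $\Xt$ is not relatively minimal and carries a $(-1)$curve $E$; its contraction $g\colon\Xt\to Y$ keeps $-K_{Y}$ nef, since for any irreducible $C\subset Y$ with strict transform $\widetilde C$ one has $-K_{Y}\cdot C=(-\Kxt+E)\cdot\widetilde C=-\Kxt\cdot\widetilde C+E\cdot\widetilde C\ge 0$. Thus $Y$ is again a weak Del Pezzo surface, of degree $d+1$, and iterating reaches a relatively minimal surface $\bP_{\kb}^{2}$, $\bF_{0}$ or $\bF_{2}$; as $d\le 7$ at least one blow-up occurred, and a blow-up of $\bF_{0}$ or $\bF_{2}$ is rewritten as a blow-up of $\bP_{\kb}^{2}$ (infinitely near points allowed), exhibiting $\Xt$ as $V(\Sigma)$ with $r=9-d$ points. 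That these points are in almost general position is then exactly the equivalence (a)$\Leftrightarrow$(d) of Proposition~\ref{Prop:Demazure} applied to the nef divisor $-\Kxt$, giving (iv).

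I expect the most delicate step to be rationality of $\Xt$ in positive characteristic. The vanishing $h^{1}(\Xt,\sh{\Xt})=0$ is immediate from Kawamata--Viehweg vanishing when $\ch(\kb)=0$, but that theorem can fail in characteristic $p$, so $q=0$ must instead be extracted from the nef-and-big anticanonical class by a characteristic-free argument, for instance by excluding an irrational ruled structure directly, using that a fibre $F$ of any ruling satisfies $-\Kxt\cdot F=2$. Once rationality is secured, everything else is standard intersection-theoretic bookkeeping together with the already-available criterion of Proposition~\ref{Prop:Demazure}.
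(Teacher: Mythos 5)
The paper offers no proof of this proposition: it is quoted verbatim from Hidaka--Watanabe \cite{HW}, Theorem 3.4, so there is nothing internal to compare against. Your argument is the standard one and is essentially correct: crepancy of the minimal resolution of Du Val singularities gives $\Kxt=f^{*}\Kx$, hence $-\Kxt$ nef with $\Kxt^{2}=d$ and the identification of the $f$-exceptional locus with the $(-2)$curves; Noether's formula then gives $d=10-\rho(\Xt)$ and hence (i), and the case analysis on $\rho$ via contraction of $(-1)$curves (checking that nefness of the anticanonical class is preserved under blow-down, and re-factoring a blown-up $\bF_0$ or $\bF_2$ through $\bP^2_{\kb}$) correctly yields (ii)--(iv), with almost general position recovered from Proposition~\ref{Prop:Demazure}(a)$\Leftrightarrow$(d).

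The one step you flag yourself, rationality of $\Xt$ in positive characteristic, is indeed where your sketch is thinnest, and the lever you propose ($-\Kxt\cdot F=2$ for a fibre of a ruling) is not the one that closes it. The clean route is: $-\Kxt$ nef and big forces $\kappa(\Xt)=-\infty$, so by the classification of surfaces (valid in all characteristics for this case) $\Xt$ is birationally ruled over a smooth curve $C$ with $q=g(C)$; a relatively minimal model over $C$ has $K^{2}=8(1-g(C))$, and blow-downs only increase $K^{2}$, so $1\leq d=\Kxt^{2}\leq 8(1-g(C))$ forces $g(C)=0$. This gives $q=0$ and $P_{2}=0$ simultaneously, and Castelnuovo--Zariski then yields rationality. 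With that substitution your proof is complete; it is, as far as one can tell, the same strategy as the original source, so the net difference from the paper is simply that you prove what the paper cites.
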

	From the results above, the following corollary holds.
\begin{cor}{\label{Cor:RDPDP<->weakDP}}
Let $X$ and $\Xt$ be a RDP Del Pezzo surface and a weak Del Pezzo surface over a field $k$, respectively.  
\begin{itemize}
	\item[(1)] The minimal resolution of $X$ is a weak Del Pezzo surface. In particular, the singularities of $X$ come from $(-2)$curves of the weak Del Pezzo surface.

	\item[(2)] By collapsing $(-2)$curves of $\Xt$, $\Xt$ becomes a RDP Del Pezzo surface. In particular, $(-2)$curves of $\Xt$ come from singularities of the RDP Del Pezzo surfaces.
\end{itemize}
\end{cor}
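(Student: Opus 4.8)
The plan is to reduce both assertions to their counterparts over $\kb$, which are furnished by Propositions \ref{Prop:HW} and \ref{collapsing weakDelPezzo}, and then to descend along the faithfully flat extension $\kb / k$. The guiding principle is that every property in sight --- nefness and ampleness of a divisor, base-point-freeness of a linear system, normality and geometric integrality of a surface, the rational double point condition, and the formation of the scheme-theoretic image of a morphism --- is insensitive to $\kb / k$: each can be tested, or is inherited, after base change to $\kb$. Since the definition of a RDP Del Pezzo surface over $k$ is itself geometric (the singularities are required to be rational double points over $\kb$), $X \times_k \kb$ is a RDP Del Pezzo surface over $\kb$ and the two statements over $k$ become consequences of the cited results over $\kb$ once the relevant morphisms are seen to commute with base change.

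For (1), the cleanest route avoids base-changing the resolution altogether. Rational double points are canonical (Du Val) singularities, so the minimal resolution $f : \Xt \rightarrow X$ is crepant, i.e. $K_{\Xt} = f^{*}\Kx$; I would record this by checking it after base change to $\kb$, where it is part of Proposition \ref{Prop:HW}(iv) and the classical theory of Du Val singularities. Consequently $-K_{\Xt} = f^{*}(-\Kx)$, and since $-\Kx$ is ample and $f$ is proper and birational, the projection formula gives $-K_{\Xt} \cdot C = -\Kx \cdot f_{*}C \geq 0$ for every curve $C \subset \Xt$ (with $f_{*}C = 0$ when $C$ is $f$-exceptional). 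Hence $-K_{\Xt}$ is nef and $\Xt$ is a weak Del Pezzo surface. The assertion that the singularities of $X$ correspond to the $(-2)$curves contracted by $f$ is then the description of the exceptional locus of a Du Val resolution, which over $\kb$ is exactly Proposition \ref{Prop:HW}(iv).

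For (2), I would define the contraction over $k$ directly by the complete linear system $|{-i K_{\Xt}}|$ with $i$ as in Proposition \ref{collapsing weakDelPezzo}. Because $\kb$ is flat over $k$, formation of $H^{0}(\Xt, -i K_{\Xt})$ commutes with the base change $\kb / k$, so the $k$-morphism $f_0 : \Xt \rightarrow V_0$ it defines satisfies $f_0 \times_k \kb = (f_0)_{\kb}$; in particular base-point-freeness, which may be checked over $\kb$, holds over $k$, and $f_0$ is a genuine morphism. By Proposition \ref{collapsing weakDelPezzo}, $(f_0)_{\kb}$ contracts precisely the $(-2)$curves of $\Xt \times_k \kb$, is an isomorphism elsewhere, and has image a RDP Del Pezzo surface over $\kb$. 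Since the scheme-theoretic image commutes with flat base change, $V_0 \times_k \kb$ is this RDP Del Pezzo surface; as normality, geometric integrality, the rational double point condition, and ampleness of $-K_{V_0}$ all descend from $\kb$ to $k$, the image $V_0$ is a RDP Del Pezzo surface over $k$, and the $(-2)$curves of $\Xt$ are exactly the contracted curves, hence correspond to the singularities of $V_0$.

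The main point to be careful about is the behaviour of the geometry over an imperfect $k$, where $\kb / k$ is inseparable: a priori the minimal resolution $\Xt$ is only regular, and I must ensure that $\Xt \times_k \kb$ is still smooth (equivalently that $\Xt$ is geometrically regular) so that Proposition \ref{Prop:HW} genuinely applies to $X \times_k \kb$ with minimal resolution $\Xt \times_k \kb$. The crepancy argument in (1) is arranged to lean on this as little as possible, since crepancy of a Du Val resolution is local and holds in every characteristic; but the identifications ``$(-2)$curve $\leftrightarrow$ singularity'' still pass through $\kb$. I therefore expect the one nonroutine step to be confirming that the minimal resolution of a geometrically RDP surface commutes with field extension and yields a geometrically smooth surface, rather than any of the descents of nefness, ampleness, or image formation, which are standard.
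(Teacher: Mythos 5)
Your argument is correct and follows the same route the paper intends: the paper offers no proof beyond the phrase ``from the results above,'' i.e.\ it treats the corollary as immediate from Propositions \ref{Prop:Demazure}, \ref{collapsing weakDelPezzo} and \ref{Prop:HW} applied over $\kb$, which is exactly what you do, supplemented by the (standard but worth recording) descent of nefness, ampleness, base-point-freeness and image formation along $\kb/k$. Your crepancy argument for (1) and your flag about geometric regularity of the minimal resolution over an imperfect base are the right points to be careful about and are consistent with how the paper uses these facts later.
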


	The following proposition is used in Section $4$.
\begin{prop}[{\cite[Proposition 4.2 (iii)]{HW}}]{\label{dim of anticano}}
	Let $X$ be a smooth or RDP Del Pezzo surface of degree $d$. Then,
$$
\dim H^0 (X, \sh{X}(-m\Kx)) = 
\begin{cases}
	d \cdot m(m + 1)/2 + 1 & (m \geq 0) \\
	0 & (m < 0)
\end{cases}
$$
and $\dim H^1 (X, \sh{X}(-m\Kx)) = 0$ for all $m \in \bZ$.
\end{prop}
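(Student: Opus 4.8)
The plan is to transfer the entire computation to the minimal resolution $f : \Xt \to X$, which is a weak Del Pezzo surface by Corollary {\ref{Cor:RDPDP<->weakDP}}(1). Since rational double points are rational singularities, one has $f_* \sh{\Xt} = \sh{X}$ and $R^i f_* \sh{\Xt} = 0$ for $i > 0$; since they are crepant (the exceptional locus consists of $(-2)$curves) one has $f^* \Kx = \Kxt$, and hence $f^* \sh{X}(-m\Kx) = \sh{\Xt}(-m\Kxt)$. The projection formula then gives $R^i f_* \sh{\Xt}(-m\Kxt) = \sh{X}(-m\Kx) \otimes R^i f_* \sh{\Xt}$, which equals $\sh{X}(-m\Kx)$ for $i = 0$ and vanishes for $i > 0$. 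The Leray spectral sequence degenerates and yields
$$H^i(X, \sh{X}(-m\Kx)) \cong H^i(\Xt, \sh{\Xt}(-m\Kxt)) \quad \text{for all } i \geq 0,$$
so it suffices to prove the statement on the weak Del Pezzo surface $\Xt$.

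Next I would compute the Euler characteristic by Riemann--Roch on the smooth surface $\Xt$. Since $\Xt$ is rational we have $\chi(\sh{\Xt}) = 1$, and $\Kxt^2 = \Kx^2 = d$, so
$$\chi(\Xt, \sh{\Xt}(-m\Kxt)) = \chi(\sh{\Xt}) + \tfrac{1}{2}(-m\Kxt)\cdot(-m\Kxt - \Kxt) = 1 + \frac{d\, m(m+1)}{2}.$$

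Then comes the vanishing. The equality $H^1(\Xt, \sh{\Xt}(-m\Kxt)) = 0$ for every $m$ is precisely condition (c) of Proposition {\ref{Prop:Demazure}} with $n = -m$, as $\Xt$ is a weak Del Pezzo surface; this already gives $H^1(X, \sh{X}(-m\Kx)) = 0$ for all $m \in \bZ$. For $m \geq 0$, Serre duality gives $H^2(\Xt, \sh{\Xt}(-m\Kxt)) \cong H^0(\Xt, \sh{\Xt}((m+1)\Kxt))^{\vee}$; were this nonzero, $(m+1)\Kxt$ would be linearly equivalent to an effective divisor, but intersecting with the nef divisor $-\Kxt$ yields $-\Kxt \cdot (m+1)\Kxt = -(m+1)d < 0$, a contradiction, so $H^2 = 0$ and hence $\dim H^0 = \chi = d\,m(m+1)/2 + 1$. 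For $m < 0$ the line bundle $-m\Kx$ is a negative multiple of the ample divisor $-\Kx$, whence $H^0 = 0$.

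The step I expect to require the most care is the reduction to $\Xt$: the crepancy $f^*\Kx = \Kxt$ together with the rationality of the singularities is exactly what forces the cohomology on $X$ to agree with that on $\Xt$. Once that is in place the argument is essentially Riemann--Roch bookkeeping, with the crucial $H^1$-vanishing handed to us by the weak Del Pezzo characterization of Proposition {\ref{Prop:Demazure}}, and the remaining $H^0$ and $H^2$ vanishings following from the ampleness of $-\Kx$ and the nef-ness of $-\Kxt$, respectively.
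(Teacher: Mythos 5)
The paper does not prove this proposition --- it is imported directly from \cite[Proposition 4.2 (iii)]{HW} with no argument given --- and your proposal is a correct, self-contained proof along exactly the standard lines underlying that source: pass to the minimal resolution using that rational double points are rational and crepant (so $f^*\Kx=\Kxt$ and the Leray spectral sequence identifies the cohomologies), compute $\chi$ by Riemann--Roch, obtain $H^1=0$ from condition (c) of Proposition~\ref{Prop:Demazure}, and kill $H^2$ for $m\geq 0$ (resp.\ $H^0$ for $m<0$) by Serre duality and nefness of $-\Kxt$ (resp.\ ampleness of $-\Kx$). The only completeness remark worth recording is that Proposition~\ref{Prop:Demazure} is stated for surfaces of the form $V(\Sigma)$ obtained by blowing up $\bP_{\kb}^2$ at $r\geq 1$ points, so in degrees $8$ and $9$, where by Proposition~\ref{Prop:HW} the resolution $\Xt$ may be $\bP_{\kb}^2$, $\bP_{\kb}^1\times\bP_{\kb}^1$ or $\bF_2$ rather than such a blow-up, the vanishing $H^1(\Xt,\sh{\Xt}(n\Kxt))=0$ needs a separate (elementary) verification; in the degrees actually used in this paper ($d=1,2$) your appeal to Proposition~\ref{Prop:Demazure} applies as written.
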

\subsection{Unirationality of RDP Del Pezzo surfaces of degree at least 3}
	In this subsection, $k$ is a perfect field. The unirationality of RDP Del Pezzo surfaces of degree $d \geq 3$ over a perfect field is studied in {\cite{CT}} in detail.

\begin{prop}[{\cite[Theorem A]{CT}}]{\label{Prop:bir degree3}}
	Let $V$ be a RDP Del Pezzo surface of degree $3$ over a perfect field $k$ and $\delta$ be the number of singularities of $V \times_k \kb$. Then $\delta \leq 4$ and $V$ is birationally equivalent (over $k$) to:
\begin{itemize}
	\item[] $\bP^2_k$ if $\delta = 1$ or $4$;
	\item[] a smooth Del Pezzo surface of degree $4$ with a $k$-point if $\delta = 2$;
	\item[] a smooth Del Pezzo surface of degree $6$ if $\delta = 3$.
\end{itemize}
\end{prop}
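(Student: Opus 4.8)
The plan is to transfer everything to the smooth surface, the minimal resolution $\Vt \to V$: since this is a birational $k$-morphism, $V$ and $\Vt$ have the same birational type over $k$, and $\Vt$ is a smooth, geometrically rational surface. By Corollary~\ref{Cor:RDPDP<->weakDP} and Proposition~\ref{Prop:HW}, $\Vt$ is a weak Del Pezzo surface of degree $3$, so over $\kb$ it is the blow-up of $\bP^2_{\kb}$ at six points in almost general position, with $\Pic(\Vt\times_k\kb)=\bZ\ell\oplus\bigoplus_{i=1}^{6}\bZ e_i$, anticanonical class $-K_{\Vt}=3\ell-\sum e_i$, and a $\Gal(\kb/k)$-action preserving $-K_{\Vt}$ and the intersection form. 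The $(-2)$-curves of $\Vt$ lie in $K_{\Vt}^{\perp}$, which carries the root system $E_6$; their classes span a sub-root-system $R\subseteq E_6$ whose irreducible components correspond bijectively to the singular points of $V\times_k\kb$ (Corollary~\ref{Cor:RDPDP<->weakDP}), so $\delta$ is the number of components of $R$. Running through the sub-root-systems of $E_6$ (Borel--de Siebenthal) shows that $R$ has at most four components and that four components occur only for $R=4A_1$; hence $\delta\le 4$, the case $\delta=4$ being exactly the four-nodal (Cayley) cubic.

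I would first dispose of $\delta=1$. Here the unique singular point of $V\times_k\kb$ is fixed by $\Gal(\kb/k)$, hence is a $k$-rational double point $p$ of the cubic $V\subset\bP^3_k$. Linear projection $\pi_p:V\dashrightarrow\bP^2_k$ from $p$ is defined over $k$, and since a general line through a double point of a cubic surface meets $V$ at $p$ with multiplicity two and in one residual point, $\pi_p$ has degree one; thus $V$ is $k$-birational to $\bP^2_k$.

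For the remaining cases the singular points need not be individually $k$-rational, so projection from a single node is unavailable over $k$, and the strategy is to replace it by a $\Gal(\kb/k)$-equivariant birational modification built from curves defined over $k$. The key observation is that the line joining two singular points of a cubic surface lies on the surface (it meets the cubic with multiplicity at least four), so every Galois-stable subset of the singular points yields a Galois-stable configuration of lines on $V$ defined over $k$; in particular, projection off the line through two conjugate nodes endows $V$ with a $k$-rational conic-bundle structure over $\bP^1_k$. Combining this with the induced $\Gal(\kb/k)$-action on the finite set of $(-1)$-curves of $\Vt$, I would carry out Galois-equivariant birational links (blow-downs of $(-1)$-curves, possibly preceded by blow-ups, arranged so as to absorb the $(-2)$-curves) to descend to a smooth surface $W$ over $k$ with $V\sim W$. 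Tracking $K_W^2$ and the Picard rank through these links, and verifying the Del Pezzo property over $k$, should identify $W$ as $\bP^2_k$ when $\delta=4$, as a smooth Del Pezzo surface of degree $6$ when $\delta=3$, and as a smooth Del Pezzo surface of degree $4$ when $\delta=2$; in the last case one must additionally produce a $k$-point on $W$, which is supplied by the Galois-stable line through the two nodes (a copy of $\bP^1_k$) or by a section of the associated conic bundle.

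The main obstacle is precisely this descent in the cases $\delta=2,3,4$. One must check, for every admissible $R$ with the prescribed number of components, that the chosen family of curves to be modified is genuinely $\Gal(\kb/k)$-stable and disjoint, that the links remove all $(-2)$-curves while keeping the surface smooth, and that the descended $W$ is of exactly the claimed degree with a $k$-point where asserted. This is a finite but delicate analysis over the list of root subsystems of $E_6$ with two, three and four components, carried out orbit-by-orbit for the Galois action. The Cayley case $\delta=4$ is the most subtle: the four nodes may form a single Galois orbit, so the birational map to $\bP^2_k$ cannot use any rational node and must instead be manufactured purely from the Galois-stable configuration of the six lines joining the nodes (equivalently, from the induced action on the three pairs of opposite edges), which is where the heart of the argument lies.
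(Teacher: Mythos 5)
This statement is quoted in the paper from {\cite[Theorem A]{CT}} and is not proved there, so there is no in-paper argument to compare against; I am judging your proposal on its own terms. The parts you actually carry out are sound: passing to the minimal resolution $\Vt$ (a weak Del Pezzo surface of degree $3$ by Corollary {\ref{Cor:RDPDP<->weakDP}}), bounding $\delta$ by the maximal number of mutually orthogonal roots in $E_6$ (so $\delta\le 4$, with $4A_1$ the only four-component type, consistent with Du Val's classification), the projection from the unique, hence $k$-rational, double point when $\delta=1$, and the observation that the line through two singular points lies on $V$ and that any $\Gal(\kb/k)$-stable set of singular points yields a $k$-rational configuration of lines. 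These are correct and are indeed the right ingredients.

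The gap is that for $\delta=2,3,4$ --- which is the actual content of the theorem --- you state a program rather than a proof, and you say so yourself. Concretely, what is missing is the analogue of what this paper does in degree $2$: exhibiting, for each pair of singular points, the specific pre$(-1)$-curves/$(-1)$-curves meeting the corresponding $(-2)$-configurations (Propositions {\ref{Prop3.8}}--{\ref{Prop3.10}}), proving they are pairwise disjoint and Galois-stable as a set, and verifying that after contracting them every $(-2)$-curve has acquired self-intersection $\ge -1$ so that the resulting surface is a \emph{smooth} Del Pezzo of exactly degree $4$ (resp.\ $6$). Your conic-bundle remark for $\delta=2$ does not substitute for this: a $k$-rational conic bundle structure on $V$ does not by itself yield a smooth degree-$4$ Del Pezzo birational to $V$, and the stated $k$-point must then be transported through the (as yet unconstructed) birational map. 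Most seriously, for $\delta=4$ with the four nodes in a single Galois orbit no pair of opposite edges need be Galois-stable, the three diagonal lines pairwise intersect and so cannot be simultaneously contracted, and you offer no construction at all of the map to $\bP^2_k$; this is precisely the case you correctly identify as ``where the heart of the argument lies,'' but identifying the heart is not supplying it. As written, the proposal establishes $\delta\le 4$ and the case $\delta=1$, and for the remaining cases one must still import the arguments of {\cite{CT}}.
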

It is also known that $V$ is $k$-rational if $V$ has a singular $k$-point ({\cite[Section 4]{Segre2}}). Thus it follows Proposition {\ref{Prop:bir degree3}} that RDP Del Pezzo surfaces of degree $3$ with a $k$-point are $k$-unirational.
\par
For $d = 4$, RDP Del Pezzo surfaces are classified as below:
\begin{prop}[{\cite{DuVal}}, {\cite[Proposition 5.6]{CT}}]{\label{Prop:classify degree4}}
	Let $V$ be a RDP Del Pezzo surface of degree $4$ over an algebraically closed field $\kb$ and $\Vt$ be a minimal resolution of $V$. Then $V$ satisfies one of the following:
\begin{itemize}
	\item[1.] $V$ has $A_1$ singularity;
	\item[2.] $V$ has $2A_1$ singularities and $\Vt$ has one $(-1)$curve which intersects both of two $(-2)$curves;
	\item[3.] $V$ has $2A_1$ singularities and $\Vt$ has no $(-1)$curve which intersects both of two $(-2)$curves;
	\item[4.] $V$ has $A_2$ singularity;
	\item[5.] $V$ has $3A_1$ singularities;
	\item[6.] $V$ has $A_1 + A_2$ singularities;
	\item[7.] $V$ has $A_3$ singularity and $\Vt$ has five $(-1)$curves; 
	\item[8.] $V$ has $A_3$ singularity and $\Vt$ has four $(-1)$curves; 
	\item[9.] $V$ has $4A_1$ singularities;
	\item[10.] $V$ has $2A_1 + A_2$ singularities;
	\item[11.] $V$ has $A_1 + A_3$ singularities;
	\item[12.] $V$ has $A_4$ singularity;
	\item[13.] $V$ has $D_4$ singularity;
	\item[14.] $V$ has $2A_1 + A_3$ singularities or;
	\item[15.] $V$ has $D_5$ singularity;
\end{itemize}
\end{prop}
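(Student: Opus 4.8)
The plan is to reduce the classification to a combinatorial study of the $(-2)$-curve configuration on the minimal resolution, governed by the root lattice $D_5$. By Corollary \ref{Cor:RDPDP<->weakDP} and Proposition \ref{Prop:HW}(iv), $\Vt$ is a weak Del Pezzo surface of degree $4$, hence isomorphic to the blow-up $V(\Sigma)$ of $\bP^2_{\kb}$ at $r = 9 - d = 5$ points in almost general position. I would fix the standard geometric basis $\ell, e_1, \dots, e_5$ of $\Pic(\Vt)$, with $\ell^2 = 1$, $e_i^2 = -1$ and all other products zero, so that $K_{\Vt} = -3\ell + \sum_i e_i$ and $K_{\Vt}^2 = 4$. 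The orthogonal complement $R = K_{\Vt}^{\perp} \subset \Pic(\Vt)$ is then a negative definite rank-$5$ lattice isomorphic to the root lattice of type $D_5$, whose roots are exactly the classes $\alpha$ with $\alpha^2 = -2$ and $\alpha \cdot K_{\Vt} = 0$.

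Next I would identify the $(-2)$-curves with the effective positive roots. By Proposition \ref{Prop:Demazure}(d$^{\prime}$) every irreducible curve $D$ on $\Vt$ with $D \cdot K_{\Vt} = 0$ is a smooth rational curve with $D^2 = -2$, and by Corollary \ref{Cor:RDPDP<->weakDP} these are precisely the curves contracted by the anticanonical map to the rational double points of $V$. Their classes form a set of simple roots of a sub-root-system $\Phi' \subseteq D_5$; since the contracted locus is a negative definite configuration of smooth rational curves, its dual graph is a disjoint union of ADE Dynkin diagrams, and the singularity type of $V$ is read off as the corresponding sum of $A$- and $D$-types. Conversely, each such configuration is contracted to the stated RDP by Proposition \ref{collapsing weakDelPezzo}.

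The heart of the argument is then a finite enumeration. I would list the $W(D_5)$-conjugacy classes of ADE sub-root-systems of $D_5$ and, for each, exhibit a configuration of five points (with prescribed collinearities and infinitely near relations) realizing it in almost general position; the available generators are the line classes $\ell - e_i - e_j - e_k$ (three collinear points) and the classes $e_i - e_j$ (a point infinitely near another). Rank and length considerations already eliminate many candidates: for instance $A_5 \not\subseteq D_5$ and $A_4 + A_1 \not\subseteq D_5$, since no root is orthogonal to a copy of $A_4$, and the maximal orthogonal set of roots has size four, ruling out $5A_1$. What survives is exactly the list of thirteen types $A_1, 2A_1, A_2, 3A_1, A_1 + A_2, A_3, 4A_1, 2A_1 + A_2, A_1 + A_3, A_4, D_4, 2A_1 + A_3, D_5$.

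Finally, two of these types split into the geometric sub-cases recorded in the statement, and this refinement is where the lattice data must be supplemented by the $(-1)$-curves, that is, the classes $C$ with $C^2 = -1$, $C \cdot K_{\Vt} = -1$ represented by irreducible curves (sixteen of them in the smooth case). The type $2A_1$ corresponds to two non-conjugate embeddings of $A_1 + A_1$ in $D_5$, distinguished by whether some $(-1)$-curve meets both $(-2)$-curves (cases 2 and 3); likewise the two non-conjugate $A_3$-embeddings are separated by the count of surviving irreducible $(-1)$-curves, five versus four (cases 7 and 8). I expect the main obstacle to lie precisely here and in the realizability step: one must verify that the enumeration is complete and non-redundant, that every listed configuration genuinely arises from five points in almost general position, and that the incidence numbers of $(-1)$- and $(-2)$-curves are computed correctly so as to certify the two splittings. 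This is a finite but delicate case analysis in which the geometry, and not the root lattice alone, is decisive.
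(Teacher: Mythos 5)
The paper does not actually prove this proposition: it is imported wholesale from Du Val and from \cite[Proposition 5.6]{CT}, so there is no internal argument to compare yours against. Judged on its own terms, your sketch follows exactly the route that the cited sources (and, e.g., Dolgachev) take: realize $\Vt$ as a blow-up of $\bP^2_{\kb}$ at five points in almost general position, identify $K_{\Vt}^{\perp}$ with the root lattice $D_5$, identify the irreducible $(-2)$-curves with a simple system of an effective sub-root-system via Proposition \ref{Prop:Demazure}(d$'$), and enumerate the $W(D_5)$-orbits. Your stated lattice eliminations are all correct: $A_5\not\subseteq D_5$ (discriminants $6$ versus $4$), no root is orthogonal to an $A_4$ (its orthogonal complement is spanned by $(1,\dots,1)$), and four is the maximal number of mutually orthogonal roots, so $5A_1$ is excluded; likewise $2A_2$ and $A_1+D_4$ drop out for the same orthogonal-complement reason. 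The splitting of $2A_1$ and of $A_3$ into two $W(D_5)$-orbits each (distinguished by the orthogonal complements $D_3$ versus $2A_1$, and $\emptyset$ versus $2A_1$, respectively) is also genuine and is exactly what produces cases 2/3 and 7/8.

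What keeps this from being a complete proof is precisely what you flag yourself, and it is worth being explicit that these are not cosmetic: (i) the full orbit enumeration must be written out and shown to yield exactly fifteen classes, including the verification that $3A_1$, $4A_1$, $A_1+A_3$ and $2A_1+A_3$ each form a single orbit; (ii) each class must be realized by an explicit five-point configuration in almost general position (for degree $4$ all fifteen do occur, but this requires exhibiting the configurations, and one should note that the relevant object is the sub-root-system of \emph{effective} roots, not an abstract subsystem); and (iii) the incidence data separating cases 2/3 and 7/8 --- the existence of a $(-1)$-class meeting both $(-2)$-curves, and the count of five versus four irreducible $(-1)$-classes among the sixteen --- must be computed in the geometric basis $\ell, e_1,\dots,e_5$ and checked to be constant on each orbit. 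None of these steps would fail, but until they are carried out the argument is an outline of the classification rather than a proof of it.
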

	{\cite[Proposition 6.1]{CT}} has the configuration of $(-1)$curves and $(-2)$curves of $\Vt$ by the type of singularities of $V$.

	There are two different equivalent classes if $2A_1$ type or $A_3$ type. One case of type $2A_1$ is called an Iskovskih surface.
\begin{definition}{\label{Def:Iskovski surface}}
	Let $V$ be a RDP Del Pezzo surface of degree $4$ over $k$ which is in Case $3$ of Proposition {\ref{Prop:classify degree4}}. If the two singularities over $\kb$ are conjugate, then $V$ is called an {\em{Iskovskih surface}}.
\end{definition}

	If $V$ is not an Iskovskih surface, it follows from the following proposition that $V$ with a smooth $k$-point is $k$-rational.

\begin{prop}[{\cite[Lemma 7.4]{CT}}]{\label{Prop:bir degree4}}
	Let $V$ be a RDP Del Pezzo surface of degree $4$ over a perfect field $k$ and let $\Vt$ be a minimal resolution of $V$. If $V$ is not an Iskovskih surface, $\Vt$ is not minimal over $k$. In fact, there exists a birational morphism $\Vt \rightarrow \Vtp$ such that:
\begin{itemize}
	\item[] in Cases $2, 6, 7, 11, 12$ and $15$, of Proposition {\ref{Prop:classify degree4}}, $\Vtp$ is isomorphic to $\bP^2_k$;
	\item[] in Cases $1, 4, 5, 9, 10$ and $14$, $\Vtp$ is a form of $\bP^1_{\kb} \times \bP^1_{\kb}$;
	\item[] in Cases $3, 8$ and $13$, $\Vtp$ is a weak Del Pezzo surface of degree $8$ with a $(-2)$curve, which is birational to a form of $\bP^1_{\kb} \times \bP^1_{\kb}$.
\end{itemize}
In particular, $V$ is $k$-rational if $\Vt$ has a $k$-point.
\end{prop}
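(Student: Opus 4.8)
The plan is to deduce the non-minimality of $\Vt$ from a Galois-descended contraction and then to pin down $\Vtp$ by a degree count. Since $k$ is perfect, $\Gal(\kb/k)$ acts on $\Pic(\Vt \times_k \kb)$ preserving the intersection form and the canonical class, hence permuting the finite sets of $(-1)$curves and $(-2)$curves. Whenever $Z \subset \Vt \times_k \kb$ is a nonempty $\Gal(\kb/k)$-stable union of pairwise disjoint $(-1)$curves, $Z$ descends to a closed subscheme of $\Vt$ and its simultaneous contraction is Galois-equivariant, so it descends to a birational morphism over $k$; this alone shows that $\Vt$ is non-minimal, and iterating produces the morphism $\Vt \rightarrow \Vtp$. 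Each contraction raises the degree by one, so the three conclusions correspond to Galois-stable towers of length five (reaching degree $9$, a form of $\bP^2$) and of length four (reaching degree $8$, a form of $\bP^1_{\kb}\times\bP^1_{\kb}$, possibly still carrying a $(-2)$curve).

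The concrete input is the table of $(-1)$curve and $(-2)$curve incidences recorded in {\cite[Proposition 6.1]{CT}}, together with the fact that the $\Gal(\kb/k)$-permutation of the $(-2)$curves is governed by the arrangement of the singularities over $k$. For each of the fifteen types of Proposition~\ref{Prop:classify degree4} I would read off a suitable Galois-stable disjoint family from this table. In Cases $2,6,7,11,12,15$ I expect the table to provide a $\Gal(\kb/k)$-stable tower of five contractions exhibiting $\Vt$ as an iterated blow-up of a smooth surface of Picard number one that is geometrically $\bP^2$, i.e. a Severi--Brauer surface; here I would additionally produce a $k$-rational point on the image, typically as the image of a Galois-fixed contracted $(-1)$curve, so that this surface is split and equals $\bP^2_k$. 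In the remaining non-Iskovskih cases I would instead find a Galois-stable tower of four contractions landing on a surface of degree $8$, and identify it by whether a $(-2)$curve survives: a smooth form of $\bP^1_{\kb}\times\bP^1_{\kb}$ in Cases $1,4,5,9,10,14$, and a weak Del Pezzo surface of degree $8$ carrying a $(-2)$curve — whose RDP model is the quadric cone of Proposition~\ref{Prop:HW}(iii)(c) — in Cases $3,8,13$.

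The delicate part, and the place where the Iskovskih hypothesis is used decisively, is the $2A_1$ and $A_3$ types, which is exactly where Proposition~\ref{Prop:classify degree4} already bifurcates. In Case $2$ there is a unique $(-1)$curve meeting both $(-2)$curves; even when $\Gal(\kb/k)$ swaps the two $A_1$ points it must fix this connecting curve, since it only exchanges the curve's two intersection points, so the contraction is available over $k$. In Case $3$ no such connecting $(-1)$curve exists, and a Galois-stable disjoint family can be assembled precisely when the two $A_1$ points are individually defined over $k$ — that is, precisely outside the Iskovskih surface of Definition~\ref{Def:Iskovski surface}; when the two points are conjugate the Galois action forces every candidate orbit of $(-1)$curves to contain an intersecting pair, and $\Vt$ is then genuinely minimal. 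The dichotomy for $A_3$ (Cases $7$ and $8$) is already visible over $\kb$ in the number of $(-1)$curves and produces no minimal exception. Carrying out this bookkeeping — verifying for each type that the chosen orbit really consists of mutually non-intersecting curves and that no Galois element breaks disjointness — is the main obstacle and the technical heart of the argument.

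Finally, for the closing assertion I would transport a $k$-point of $\Vt$ along the morphism to $\Vtp$ (a $k$-point maps to a $k$-point since the contraction is a morphism) and invoke rationality of each target once it carries a $k$-point: a Severi--Brauer surface with a $k$-point is $\bP^2_k$, a smooth form of $\bP^1_{\kb}\times\bP^1_{\kb}$ with a $k$-point is $k$-rational by projection from that point, and a degree-$8$ weak Del Pezzo surface with a $(-2)$curve is birational over $k$ to such a quadric and hence also $k$-rational. Since $V$ and $\Vt$ are birational over $k$, the $k$-rationality of $\Vtp$ yields the $k$-rationality of $V$.
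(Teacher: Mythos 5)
First, a point of comparison: the paper does not prove this proposition at all --- it is imported as a black box from \cite[Lemma 7.4]{CT}, so there is no in-house argument to measure your proposal against. That said, the strategy you describe (use the incidence tables of \cite[Proposition 6.1]{CT}, find a $\Gal(\kb/k)$-stable set of pairwise disjoint $(-1)$curves, descend and contract it over $k$, and identify the target by a degree count) is indeed the method of Coray--Tsfasman, and it is also exactly the method this paper deploys in Section 3 for the degree-$2$ analogue (Theorems \ref{Thm3.11}, \ref{Thm3.14}, \ref{Thm:FourSingularities}). So the approach is the right one, and the closing reduction of the $k$-rationality claim to Proposition \ref{Prop:bir degree>5} and to quadrics with a $k$-point is fine.

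As written, however, the proposal is a plan rather than a proof: everything that actually separates the fifteen cases is deferred (``I would read off'', ``I expect the table to provide''), and you yourself label that bookkeeping the technical heart of the argument. Concretely, what is missing is (a) the exhibition, type by type, of a Galois-stable disjoint family of the right cardinality, including at the intermediate stages of the tower --- e.g.\ in Case 2, after contracting the unique connecting $(-1)$curve the two $(-2)$curves become $(-1)$curves meeting at a point, and possibly conjugate to one another, so reaching degree $9$ requires a further argument, not just ``five more steps''; and (b) the verification that in Cases $2,6,7,11,12,15$ some contracted $(-1)$curve is geometrically irreducible and Galois-fixed, since the statement asserts $\Vtp \cong \bP^2_k$ unconditionally, without assuming $\Vt(k) \neq \emptyset$, and a nontrivial Severi--Brauer form must be excluded in each case. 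One side claim is also wrong: an Iskovskih surface need not be minimal --- the paper itself quotes \cite[Proposition 7.7]{CT} to the effect that even \emph{non-minimal} Iskovskih surfaces fail to be $k$-rational --- so ``the Galois action forces every candidate orbit to contain an intersecting pair, and $\Vt$ is then genuinely minimal'' is false in general; this is harmless for the proposition (the Iskovskih case is excluded) but it signals that the Galois-orbit analysis in the $2A_1$ cases is more delicate than the sketch suggests.
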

Even if $V$ is an Iskovskih surface, $V$ is $k$-unirational when $V$ has a $k$-point since the surface given by blowing up at the $k$-point is a RDP Del Pezzo surface of degree $3$. Coray and Tsfasman also proved that non-minimal Iskovskih surface cannot be $k$-rational ({\cite[Proposition 7.7]{CT}}).

	For $d \geq 5$, the unirationality was proved as below.
\begin{prop}[{\cite[Corollary 9.4]{CT}}]{\label{Prop:bir degree>5}}
	Let $\Vt$ be a weak Del Pezzo surface of degree $d \geq 5$ over a perfect field $k$.
\begin{itemize}
	\item[(a)] If $d = 5$ or $7$, then $\Vt$ is $k$-rational.
	\item[(b)] If $\Vt$ has a $k$-point, then $\Vt$ is $k$-rational. 
\end{itemize}
\end{prop}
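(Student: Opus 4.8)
The plan is to establish $k$-rationality by an induction that \emph{raises} the degree, the single engine being the following descent principle: if $S$ is a finite $\Gal(\kb/k)$-stable set of pairwise disjoint $(-1)$-curves on $\Vt_{\kb}$, then its contraction descends to a birational morphism $\Vt \to \Vtp$ of $k$-surfaces onto a weak Del Pezzo surface of degree $d + |S|$, exactly as in the proof of Proposition \ref{Prop:bir degree4}; since such a morphism is a blow-down at Galois-stable closed points, $\Vt$ is $k$-rational as soon as $\Vtp$ is. Because the Galois action preserves the intersection pairing and the class $K_{\Vt}$, it factors through the finite Weyl group attached to $K_{\Vt}^{\perp}$, so for each fixed $d$ the whole question reduces to a combinatorial analysis of the orbits of $\Gal(\kb/k)$ on the finitely many $(-1)$- and $(-2)$-curves of $\Vt_{\kb}$; the presence of $(-2)$-curves (i.e.\ of singularities on the associated RDP Del Pezzo surface) only constrains this action further, so the genuinely hard instances are the \emph{smooth} ones.

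First I would dispose of the base cases $d \geq 8$, which also settle part (b) in this range. For $d = 9$, $\Vt_{\kb} \cong \bP_{\kb}^2$ by Proposition \ref{Prop:HW}, so $\Vt$ is a Severi--Brauer surface, and a $k$-point splits it: $\Vt \cong \bP_k^2$. For $d = 8$, Proposition \ref{Prop:HW} gives $\Vt$ as a form of $\bP_{\kb}^1 \times \bP_{\kb}^1$, of $\bF_1$, or as $\bF_2$; in the first case a $k$-point yields rationality by projection from it, and in the remaining two the unique $(-1)$-curve (resp.\ the $(-2)$-section) is Galois-stable, and contracting it produces a form of $\bP^2$ carrying a $k$-point, hence $\bP_k^2$.

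The core of part (a) is to show that for $d = 7$ and $d = 5$ a Galois-stable contraction exists \emph{and} manufactures a $k$-point, so that no rational-point hypothesis is needed. For $d = 7$ the three $(-1)$-curves form a chain $E_1$--$L$--$E_2$ with $E_1, E_2$ disjoint (the infinitely-near case, producing a $(-2)$-curve, is only easier). Galois either fixes all three---whence the disjoint pair $E_1, E_2$ is Galois-stable and contracts to a form of $\bP^2$ with two $k$-points, giving $\bP_k^2$---or swaps $E_1 \leftrightarrow E_2$, in which case $L$ is Galois-fixed and contracting it yields a form of $\bP_{\kb}^1 \times \bP_{\kb}^1$ on which the image of $L$ is a $k$-point, so we reduce to the degree-$8$ case just treated. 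The main obstacle is the degree-$5$ surface, where the ten $(-1)$-curves form the Petersen graph and $\Gal(\kb/k)$ acts through a subgroup $G$ of $W(A_4) \cong S_5$. Here I would run through the conjugacy classes of subgroups of $S_5$: whenever $G$ stabilizes a line, contracting it gives a degree-$6$ surface with a $k$-point, reducing to the already-settled degree-$\geq 6$ cases. The delicate point is the transitive case, where there is no stable line; there I would produce a $k$-point from a Galois-stable conic-bundle structure and conclude rationality from it, this being the classical fact that quintic Del Pezzo surfaces are unconditionally $k$-rational.

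Finally, part (b) reduces to the outstanding degree $d = 6$ with a $k$-point. The six $(-1)$-curves form a hexagon with symmetry group $W(A_1 \times A_2) \cong S_2 \times S_3$; its two inscribed triangles of pairwise disjoint curves are either individually Galois-stable---so one contracts a triangle to a form of $\bP^2$, rational once the $k$-point is used---or interchanged by Galois, in which case the three antipodal pairs furnish a Galois-stable conic-bundle structure and, together with the $k$-point, yield rationality by the standard degree-$6$ argument. Throughout, the one recurring technical obligation is to check that the contraction of a Galois-stable configuration genuinely descends to a $k$-morphism and lands again in the class of weak Del Pezzo surfaces, which is exactly the descent used for Proposition \ref{Prop:bir degree4}; beyond that, all the real difficulty is concentrated in the degree-$5$ transitive case.
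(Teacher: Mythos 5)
The paper offers no proof of this proposition at all: it is quoted directly from Coray--Tsfasman \cite{CT}, Corollary 9.4. Your strategy --- Galois descent of contractions of stable sets of pairwise disjoint $(-1)$-curves, reducing each degree to a finite orbit analysis in the Weyl group --- is indeed the method of \cite{CT} and of Manin's Theorem 29.4, and your handling of $d\geq 8$, of $d=7$, and of the ``non-transitive'' subcases is essentially sound (modulo one slip: contracting the $(-2)$-section of a form of $\bF_2$ yields the quadric cone, not a form of $\bP^2$; that case must instead be finished via the ruling over a conic, which acquires a $k$-point from the given one, together with the $(-2)$-section as a section of the ruling).

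The real problems sit exactly at the two spots you flag as delicate. For $d=5$ with no Galois-stable line, the proposed rescue --- ``a Galois-stable conic-bundle structure'' --- does not exist: the five conic classes $l_0-l_i$ $(1\leq i\leq 4)$ and $2l_0-l_1-\cdots-l_4$ are permuted by $W(A_4)\cong S_5$ as the standard five-element set, and transitivity of $\Gal(\kb/k)$ on the ten lines forces transitivity on these five classes, so none is individually stable. Falling back on ``the classical fact that quintic Del Pezzo surfaces are unconditionally $k$-rational'' is circular, since that fact \emph{is} part (a) for $d=5$; its known proofs (Enriques, Swinnerton-Dyer, Shepherd-Barron, Koll\'ar) produce a $k$-point by an entirely different mechanism. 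The $d=6$ case has the same defect: the three conic classes $l_0-l_i$ are the sums of \emph{adjacent} pairs of lines in the hexagon (an antipodal pair sums to a class of square $-2$, not $0$), and they are permuted by the $S_3$-factor of $W(A_1\times A_2)$ while the $S_2$-factor fixes them; so when Galois swaps the two triangles \emph{and} permutes $\{1,2,3\}$ transitively (e.g.\ a $6$-cycle rotating the hexagon), you have neither a contractible triangle nor a stable conic bundle, and one must argue differently (torus-torsor structure of the hexagon complement, or blowing up the $k$-point). Finally, the assertion that $(-2)$-curves ``only constrain the action further'' is a heuristic, not an argument: one still has to exhibit a Galois-stable contractible configuration on each singular degree-$5$ and degree-$6$ type, which is precisely where \cite{CT}, Sections 8--9, does its detailed case work.
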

\section{Minimality of RDP Del Pezzo surfaces of degree 2}
	In this section, we show Theorem {\ref{MainThm1}}. The singularity of a RDP Del Pezzo surface of degree $2$ is classified by the following proposition:
\begin{prop}[{\cite{DuVal}}, {\cite[Section 8.7]{Dolgachev}}]{\label{Prop:singularity type}}
	Let $X$ be a RDP Del Pezzo surface of degree $2$ over an algebraically closed field and let $\delta$ be the number of singularities of $X$. Then $\delta \leq 7$ and the type is either
\begin{itemize}
	\item[(1)] $A_1, A_2, A_3, A_4, A_5, A_6, A_7, D_4, D_5, D_6, E_6$ or $E_7$ if $\delta = 1$;
	\item[(2)] $2A_1, A_1 + A_2, A_1 + A_3, A_1 + A_4, A_1 + A_5, A_1 + D_4, A_1 + D_5, A_1 + D_6,  2A_2, A_2 + A_3, A_2 + A_4, A_2 + A_5$ or $2A_3$ if $\delta = 2$;
	\item[(3)] $3A_1, 2A_1 + A_2, 2A_1 + A_3, 2A_1 + D_4, A_1 + 2A_2, A_1 + A_2 + A_3, A_1 + 2A_3$ or $3A_2$ if $\delta = 3$;
	\item[(4)] $4A_1, 3A_1 + A_2, 3A_1 + A_3$ or $3A_1 + D_4$ if $\delta = 4$;
	\item[(5)] $5A_1$ if $\delta = 5$;
	\item[(6)] $6A_1$ if $\delta = 6$;
	\item[(7)] $7A_1$ if $\delta = 7$ (this case occurs only if characteristic is $2$).	
\end{itemize}
\end{prop}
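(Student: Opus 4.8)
The plan is to reduce the statement to a finite combinatorial problem about root subsystems of $E_7$ and then to address geometric realizability, with the characteristic entering only at the last step. By Proposition \ref{Prop:HW} (iv) and Corollary \ref{Cor:RDPDP<->weakDP}, the minimal resolution $\Xt$ is a weak Del Pezzo surface of degree $2$, hence the blow-up $V(\Sigma)$ of $\bP_{\kb}^2$ at $7$ points in almost general position, and the singularities of $X$ correspond bijectively to the connected components of the configuration of $(-2)$curves contracted by $f : \Xt \to X$; each component is a connected ADE Dynkin diagram giving the type of the associated rational double point. Thus it suffices to determine which unions of ADE diagrams are realized by the $(-2)$curves on such a surface.

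First I would record the lattice. Writing $\Pic(\Xt) = \bZ\ell \oplus \bigoplus_{i=1}^{7}\bZ E_i$ with $\ell^2 = 1$, $E_i^2 = -1$ and $\ell \cdot E_i = 0$, one has $\Kxt = -3\ell + \sum_i E_i$ and $\Kxt^2 = 2$, and the orthogonal complement $\Kxt^{\perp}$ is a negative-definite lattice of rank $7$ isomorphic to the root lattice of type $E_7$. Every $(-2)$curve $C$ satisfies $C \cdot \Kxt = 0$ (by adjunction) and $C^2 = -2$, so its class is a root of $E_7$; since two distinct irreducible $(-2)$curves meet transversally in at most one point, the classes of the irreducible $(-2)$curves form a system of simple roots spanning a sub-root-system $R \subseteq E_7$ whose Dynkin diagram is the union recorded above. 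As $R$ lies in a definite lattice of rank $7$ its rank is at most $7$, and since each singularity contributes at least one node we already obtain $\delta \le 7$.

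Next I would enumerate, up to the Weyl group, the ADE root subsystems of $E_7$. The systematic tool is the Borel--de Siebenthal procedure: deleting a node of prime mark from the affine diagram $\tilde{E}_7$ yields the maximal-rank subsystems $A_1 + D_6$, $A_7$ and $A_2 + A_5$, and combining this with the connected sub-diagrams of the finite $E_7$ diagram (which produce $E_6$, $D_6$, the $A_k$, and so on) and iterating generates every subsystem up to conjugacy. Matching component types and ranks against the bound $\le 7$ reproduces exactly the entries of cases (1)--(6), together with $7A_1$, the unique way to fit seven pairwise orthogonal roots into $E_7$; the bookkeeping is routine but must be done with care to confirm that no further type occurs and that candidates of rank $8$ such as $A_1 + A_7$, or types such as $D_7$ that do not embed, are correctly excluded.

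The step I expect to be the real obstacle is realizability: an abstract root subsystem of $E_7$ need not be cut out by honest $(-2)$curves on a weak Del Pezzo surface, and this is where the characteristic matters. For each type in (1)--(6) I would exhibit an explicit configuration $\Sigma$ of seven points in almost general position, infinitely near points allowed, whose blow-up produces precisely that set of $(-2)$curves and no more; verifying the almost-general-position conditions for these configurations is the laborious part. The genuinely characteristic-dependent phenomenon is $7A_1$: realizing seven disjoint $(-2)$curves forces a special configuration that exists only when $\ch(\kb) = 2$ (reflecting the exceptional behavior of the anti-canonical double cover $X \to \bP_{\kb}^2$ in that characteristic), so this type is absent otherwise. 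Carrying out both the realizations for (1)--(6) and the non-realizability of $7A_1$ outside characteristic $2$ is the crux; alternatively, one may invoke the detailed analyses of \cite{DuVal} and \cite[Section 8.7]{Dolgachev}.
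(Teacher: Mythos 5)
The paper does not prove this proposition: it is imported verbatim from \cite{DuVal} and \cite[Section 8.7]{Dolgachev} and used as a black box, so there is no in-text argument to measure your attempt against. Your reduction is the standard one and the part you actually carry out is sound: $\Kxt^{\perp}\cong E_7$, each irreducible $(-2)$curve is a root, distinct irreducible $(-2)$curves satisfy $F\cdot F'\in\{0,1\}$ (nonnegativity for distinct irreducible curves, and $F\cdot F'=2$ would violate negative definiteness of the contracted configuration), so the $(-2)$curves form a simply-laced Dynkin diagram of rank at most $7$, whence $\delta\le 7$ and the correspondence between singularities and connected components (Corollary~\ref{Cor:RDPDP<->weakDP}).

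The issue is that everything after that is a program rather than a proof, and the content of the proposition lives precisely there. Two verifications are named but not performed. First, the Borel--de Siebenthal enumeration must actually be run to completion and checked against the $40$ listed types; this includes ruling out candidates such as $A_6+A_1$, $A_4+2A_1$, $2A_2+2A_1$ or $4A_1+A_3$, which is not automatic (most are excluded by discriminant or orthogonal-complement computations in $E_7$, but each needs a line of argument). Second, and more seriously, realizability: an abstract root subsystem need not be spanned by classes of actual irreducible $(-2)$curves on some $V(\Sigma)$ with $\Sigma$ in almost general position. You correctly flag $7A_1$ as the characteristic-dependent case (it \emph{is} a root subsystem of $E_7$, sitting inside $3A_1+D_4$), but you give no argument for its non-realizability in characteristic $\ne 2$ --- the natural one being that the branch quartic of the separable anticanonical double cover would need seven singular points while a reduced plane quartic with only rational double point singularities has at most six --- nor do you exhibit the configurations realizing the other $39$ types. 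Since your closing sentence falls back on the same citations the paper uses, the proposal as written does not constitute an independent proof; it is a correct and well-organized reduction of the statement to the two computations that the references actually perform.
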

Theorem {\ref{MainThm1}} states that all but type $A_1, A_2$ and $4A_1$ are non-minimal.

	In Section 3.1, we prepare lemmas about $(-1)$curves on $\Xt$. The lemmas are also used in Section 4, so we do not assume $k$ is perfect in this subsection. In Section 3.2, 3.3, 3.4, 3.5 and 3.6, we show the case of $\delta = 2, 3, 4, \geq 5$ and $1$, respectively.
\subsection{Pre($-$1)curve}
	From now on, a {\em{($-$1)curve}} means a ``$(-1)$curve over $\kb$'', that is, a divisor $E$ on $\Xt \times_k \kb$ such that $E^2 = -1$ and $E \cong \bP_{\kb}^1$. If there exists a divisor $\Ep$ on $\Xt$ such that $\Ep \times_k \kb = E$, then $E$ is called a {\em{($-$1)curve defined over $k$}} and $E$ is identified with $\Ep$. Similarly, a {\em{($-$2)curve}} means a ``$(-2)$curve over $\kb$''.

	Let $X$ be a RDP Del Pezzo surface of degree $d$ over a field $k$ and let $\Xt$ be a minimal resolution of $X$. By Corollary {\ref{Cor:RDPDP<->weakDP}}, $\Xt$ is a weak Del Pezzo surface. Since results in this subsection are used in Section 4, we do not assume that $k$ is perfect or $d$ is $2$.
\begin{lem}{\label{Lem:exccurves}}
	For an irreducible divisor $C$ on $\Xt \times_k \kb$, the followings hold:
\begin{itemize}
	\item[(1)] If $C^2 = -1$, then $p_a(C)=0$. This means $C$ is a $(-1)$curve.
	\item[(2)] If $C^2 = -2$, then $p_a(C)=0$. This means $C$ is a $(-2)$curve.
	\item[(3)] $C\cdot (-\Kxt ) = 1$ if and only if either $C$ is a $(-1)$curve or $d=1$ and $C \in |{-\Kxt}|$.
	\item[(4)] $C\cdot (-\Kxt ) = 0$ if and only if $C$ is a $(-2)$curve.
\end{itemize}
\end{lem}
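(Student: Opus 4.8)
The plan is to prove Lemma \ref{Lem:exccurves} using the adjunction formula together with the fact, established in Corollary \ref{Cor:RDPDP<->weakDP}, that $\Xt$ is a weak Del Pezzo surface, so that $-\Kxt$ is nef. Throughout I work on $\Xt \times_k \kb$, which is again a weak Del Pezzo surface over $\kb$, and I write $K$ for its canonical divisor; being nef, $-K$ satisfies $C \cdot (-K) \geq 0$ for every irreducible curve $C$. The central computational tool is the adjunction formula $2p_a(C) - 2 = C^2 + C \cdot K$, equivalently $C^2 = 2p_a(C) - 2 + C \cdot (-K)$, combined with the nonnegativity $p_a(C) \geq 0$ for an irreducible curve.

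First I would dispatch parts (1) and (2). If $C^2 = -1$, then adjunction gives $2p_a(C) - 2 = -1 + C\cdot K$, i.e. $C \cdot(-K) = 2p_a(C) - 1$; since $-K$ is nef the left side is $\geq 0$, forcing $p_a(C) \geq 1$ to be impossible only if I also bound it above, so instead I argue directly that $2p_a(C) = 1 + C\cdot(-K) \geq 1$ combined with $2p_a(C)$ even forces $C\cdot(-K) \geq 1$ and odd, giving $p_a(C) = 0$ exactly when $C\cdot(-K)=1$; more cleanly, I use that on a weak Del Pezzo surface $-K$ is nef and big, so by the discussion following Proposition \ref{Prop:Demazure} (condition (d$'$)) every irreducible curve has either $C\cdot(-K) > 0$ or $C \cdot (-K) = 0$ with $C^2 = -2$. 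With $C^2 = -1$ we are in the first case, and then $2p_a(C) - 2 = -1 - C\cdot(-K) < 0$ forces $p_a(C) = 0$ and hence $C \cdot (-K) = 1$; since $p_a(C) = 0$ means $C \cong \bP^1_{\kb}$, this is a $(-1)$curve. The case $C^2 = -2$ is identical: adjunction gives $2p_a(C) - 2 = -2 + C\cdot K$, so $C \cdot (-K) = 2p_a(C) \geq 0$, and nefness together with $p_a(C) \geq 0$ gives $p_a(C) = 0$ and $C\cdot(-K) = 0$, so $C$ is a $(-2)$curve.

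For parts (3) and (4) I would run adjunction in the reverse direction, starting from the hypothesis on $C \cdot (-K)$. If $C \cdot(-K) = 0$, condition (d$'$) of Proposition \ref{Prop:Demazure} immediately forces $C^2 = -2$, which by part (2) means $C$ is a $(-2)$curve; conversely a $(-2)$curve has $C\cdot(-K)=0$ by part (2), giving (4). For part (3), suppose $C\cdot(-K) = 1$. Adjunction yields $2p_a(C) - 2 = C^2 - 1$, i.e. $C^2 = 2p_a(C) - 1$, which is odd; since $C^2 \geq -1$ would hold for a $(-1)$curve while more negative self-intersections are excluded on a weak Del Pezzo surface, the relevant possibilities are $p_a(C) = 0$ giving $C^2 = -1$ (a $(-1)$curve), or $p_a(C) = 1$ giving $C^2 = 1$ with $C\cdot K = -1$, and I must show the latter forces $d = 1$ and $C \in |{-K}|$. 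Here I would use the degree constraint: since $-K$ is nef and big with $(-K)^2 = d$, and $C \cdot (-K) = 1$ with $C^2 = 1 = (-K)^2$ when $d = 1$, the Hodge index theorem shows $C$ is numerically proportional to $-K$, and then $C \equiv -K$, so $C \in |{-K}|$; for $d \geq 2$ the same Hodge index argument rules out $p_a(C) = 1$ because the intersection matrix of $C$ and $-K$ would have positive determinant violating the signature. The converse directions are routine: a $(-1)$curve has $C\cdot(-K)=1$ by part (1), and an anticanonical curve in degree $1$ has $C \cdot (-K) = (-K)^2 = 1$.

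I expect the main obstacle to be the careful bookkeeping in part (3) separating the genuine $(-1)$curve case from the anticanonical curve case that appears only in degree $1$. The clean way to organize this is via the Hodge index theorem applied to the rank-two sublattice spanned by $C$ and $-K$: the intersection form on $\mathrm{Pic}(\Xt\times_k\kb)$ has signature $(1, \rho - 1)$, so the Gram matrix $\begin{pmatrix} C^2 & C\cdot(-K) \\ C\cdot(-K) & (-K)^2 \end{pmatrix} = \begin{pmatrix} C^2 & 1 \\ 1 & d \end{pmatrix}$ must have nonpositive determinant $C^2 d - 1 \leq 0$ whenever $C$ and $-K$ are not proportional, which pins down $C^2$ and forces proportionality precisely in the boundary case $d = 1$, $C^2 = 1$. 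Care is needed only to confirm that $-K$ is big (not merely nef) so that Hodge index applies with $-K$ in the positive cone; this follows since $(-K)^2 = d \geq 1 > 0$ and $-K$ is nef on the weak Del Pezzo surface.
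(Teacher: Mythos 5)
Your treatment of (1), (2) and (4) is essentially the paper's: adjunction $C\cdot \Kxt + C^2 = 2p_a(C)-2$ plus nefness of $-\Kxt$ gives (1) and (2), and (4) follows from (2) together with condition (d$'$) of Proposition \ref{Prop:Demazure}. One caution on your write-up of (1): your first two formulations contain a sign error (from $2p_a(C)-2 = -1 + C\cdot\Kxt$ one gets $C\cdot(-\Kxt) = 1 - 2p_a(C)$, not $2p_a(C)-1$), and the intermediate reasoning built on that identity does not close; only your third, ``more cleanly'' version is the correct argument, and in fact plain nefness $C\cdot\Kxt\leq 0$ already yields $2p_a(C)-2\leq -1$, hence $p_a(C)=0$, without invoking (d$'$).

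Where you genuinely diverge is part (3). The paper simply cites \cite[III Lemme 9]{Demazure}, whereas you supply a self-contained proof: adjunction gives $C^2 = 2p_a(C)-1$, and the Hodge index theorem applied to the sublattice spanned by $C$ and $-\Kxt$ (which is legitimate since $(-\Kxt)^2 = d > 0$) forces $dC^2 - 1 \leq 0$, leaving only $C^2=-1$ (a $(-1)$curve) or $d=1$, $C^2=1$ with equality in Hodge index, whence $C$ is numerically equivalent to $-\Kxt$. This is correct and arguably preferable to an external citation; the one step you should make explicit is that numerical equivalence implies linear equivalence here, which holds because $\Xt\times_k\kb$ is a rational surface, so $\Pic$ is free and coincides with the numerical class group, giving $C\in|-\Kxt|$. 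The trade-off is clear: the paper's route is shorter but opaque, while yours makes the degree-$1$ exception transparent as the boundary case of the Hodge index inequality.
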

\begin{proof}
Since $C\cdot \Kxt + C^2 =2p_a(C) -2$ and $C \cdot \Kxt \leq 0$, (1) and (2) hold. (3) is {\cite[III Lemme 9]{Demazure}}. (4) follows from (2) and Proposition {\ref{Prop:Demazure}}.
\end{proof}

\begin{definition}
	Let $D$ be an effective divisor on $\Xt \times_k \kb$. $D$ is called a {\em{pre$(-1)$curve}} if $D^2 = D \cdot \Kxt = -1$.
\end{definition}
	By Lemma {\ref{Lem:exccurves}}, $(-1)$curves are pre$(-1)$curves. If $\Xt$ is a smooth Del Pezzo surface, every pre$(-1)$curve is a $(-1)$curve. We show some important properties of pre$(-1)$curves.

\begin{lem}\label{Lem3.2}
	Let $D$ be a pre$(-1)$curve on $\Xt$. $D$ contains a unique prime divisor $E$ such that $E \cdot (-\Kxt) = 1$. In particular, if $d \neq 1$ then $E$ is a $(-1)$curve   
\end{lem}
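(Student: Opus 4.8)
The plan is to exploit the nefness of $-\Kxt$ together with the numerical characterizations collected in Lemma~\ref{Lem:exccurves}. Since $\Xt$ is a weak Del Pezzo surface (Corollary~\ref{Cor:RDPDP<->weakDP}), $-\Kxt$ is nef, so every irreducible curve $C$ on $\Xt \times_k \kb$ satisfies $C \cdot (-\Kxt) \geq 0$, and this intersection number is a nonnegative integer. First I would write $D = \sum_i a_i C_i$, where the $C_i$ are the distinct prime components of $D$ and the $a_i$ are positive integers, which is possible because $\Xt$ is smooth.

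The key step is a counting argument. By definition of a pre$(-1)$curve we have $D \cdot (-\Kxt) = -\,D \cdot \Kxt = 1$, hence
$$
\sum_i a_i \bigl( C_i \cdot (-\Kxt) \bigr) = 1.
$$
Each summand is a nonnegative integer and each $a_i$ is positive, so at most one index can contribute; there is exactly one index $i_0$ with $C_{i_0} \cdot (-\Kxt) \neq 0$, and for that index the term must equal $1$, forcing $a_{i_0} = 1$ and $C_{i_0} \cdot (-\Kxt) = 1$. Every other component then satisfies $C_i \cdot (-\Kxt) = 0$ and is a $(-2)$curve by Lemma~\ref{Lem:exccurves}(4). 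Setting $E = C_{i_0}$ yields a prime component of $D$ with $E \cdot (-\Kxt) = 1$; since any prime divisor contained in $D$ is one of the $C_i$ and only $C_{i_0}$ has intersection $1$ with $-\Kxt$, such an $E$ is unique.

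For the final assertion I would apply Lemma~\ref{Lem:exccurves}(3) to $E$: from $E \cdot (-\Kxt) = 1$ it follows that either $E$ is a $(-1)$curve, or $d = 1$ and $E \in |{-\Kxt}|$. When $d \neq 1$ the second possibility is excluded, so $E$ is a $(-1)$curve. I do not expect a genuine obstacle here: the whole argument is a positivity-and-counting argument driven by nefness, and the only point needing slight care is observing that the distinguished term equals exactly $1$ (not merely that it is positive), which is immediate because all the quantities involved are nonnegative integers whose total is $1$.
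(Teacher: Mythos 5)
Your proof is correct and is essentially the paper's argument: the paper simply says the first statement follows from nefness of $-\Kxt$ and the second from Lemma \ref{Lem:exccurves}, and your write-up just makes the underlying counting explicit (decomposing $D$ into prime components, noting each contributes a nonnegative integer to $D\cdot(-\Kxt)=1$, so exactly one component meets $-\Kxt$, with multiplicity and intersection number both equal to $1$). No gaps.
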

\begin{proof}
	The first statement holds since $-\Kxt$ is nef. The second half follows from Lemma {\ref{Lem:exccurves}}.
\end{proof}

\begin{lem}\label{Lem3.3}
	Let $D_1$ and $D_2$ be pre$(-1)$curves on $\Xt$. If $D_1$ and $D_2$ are not linearly equivalent, then $D_1 \cdot D_2 \geq 0$.
\end{lem}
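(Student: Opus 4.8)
The plan is to prove the contrapositive: assuming $D_1 \cdot D_2 \leq -1$, I would show that $D_1$ and $D_2$ are linearly equivalent. All of the intersection theory and linear equivalence take place on the surface $\Xt \times_k \kb$, which by Proposition \ref{Prop:HW}(iv) is a blow-up of $\bP^2_{\kb}$ (or one of the exceptional cases), hence rational; in particular $\chi(\sh{\Xt \times_k \kb}) = 1$ and $-\Kxt$ is nef.

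First I would form the difference $F := D_1 - D_2$ and extract the two numerical facts coming from the definition of a pre$(-1)$curve. Since $D_1 \cdot \Kxt = D_2 \cdot \Kxt = -1$, we get $F \cdot \Kxt = 0$; and since $D_1^2 = D_2^2 = -1$, we get $F^2 = -2 - 2\,(D_1 \cdot D_2)$, which is $\geq 0$ precisely because of the standing assumption $D_1 \cdot D_2 \leq -1$. These are the only inputs from the hypothesis.

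Next I would feed this into Riemann–Roch on the surface: $\chi(F) = 1 + \tfrac12\, F \cdot (F - \Kxt) = 1 + \tfrac12\, F^2 \geq 1$. To turn this Euler characteristic into genuine effectivity of $F$, I would kill the top cohomology by Serre duality, $H^2(F) \cong H^0(\Kxt - F)^{\vee}$, and note that $\Kxt - F$ cannot be linearly equivalent to an effective divisor: indeed $(\Kxt - F) \cdot (-\Kxt) = -\Kxt^2 - 0 = -d < 0$ since $d \geq 1$, which is impossible against a nef divisor. Hence $h^0(F) \geq \chi(F) \geq 1$, so $F = D_1 - D_2$ is linearly equivalent to an effective divisor. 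Because the whole setup is symmetric in $D_1$ and $D_2$ (the identities $F \cdot \Kxt = 0$ and $F^2 \geq 0$ are insensitive to the sign of $F$), the identical argument shows $-F = D_2 - D_1$ is effective as well. Finally I would invoke the elementary fact that a divisor with both $F$ and $-F$ effective must satisfy $F \sim 0$: picking effective representatives $G_1 \in |F|$ and $G_2 \in |-F|$, the sum $G_1 + G_2$ is an effective divisor linearly equivalent to $0$ on a projective surface, hence is $0$, forcing $G_1 = G_2 = 0$ and $D_1 \sim D_2$ — contradicting $D_1 \not\sim D_2$.

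The genuine content, and the only place needing care, is recognizing that strict negativity of $D_1 \cdot D_2$ is exactly what pushes $F^2$ up to $\geq 0$, after which the argument is forced; the vanishing of $H^2(F)$ is then routine, resting only on $d \geq 1$ (Proposition \ref{Prop:HW}(i)) and the nefness of $-\Kxt$ for the weak Del Pezzo surface $\Xt \times_k \kb$. The symmetric double application of Riemann–Roch — effectivity of both $F$ and $-F$ — is the mechanism that upgrades ``effective'' to ``trivial'', and I expect that to be the step worth stating carefully rather than any delicate estimate.
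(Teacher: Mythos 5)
Your proof is correct, and it reaches the conclusion by a genuinely different mechanism in the crucial case. The paper's proof splits on whether $\dim H^0(\Xt,\sh{\Xt}(D_1-D_2))$ vanishes. When it vanishes, the paper runs exactly your Riemann--Roch computation (with $h^2=0$ forced by $(\Kxt-D_1+D_2)\cdot(-\Kxt)=-d<0$ and nefness of $-\Kxt$) to get $(D_1-D_2)^2\le -2$ directly. When it does not vanish, the paper instead uses Lemma~\ref{Lem:exccurves}(4) to write an effective representative of $D_1-D_2$ as a sum of $(-2)$curves and appeals to the fact that such a nonzero sum has self-intersection at most $-2$. You replace that second mechanism entirely: arguing by contrapositive, $D_1\cdot D_2\le -1$ forces $\chi(\sh{\Xt}(D_1-D_2))\ge 1$, and the same Riemann--Roch/Serre-duality step applied symmetrically to $D_1-D_2$ and to $D_2-D_1$ makes both classes effective, whence the difference is linearly trivial and $D_1\sim D_2$, contradicting the hypothesis. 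Your route needs no case split and no input from the structure of divisors orthogonal to $\Kxt$ --- only Riemann--Roch, Serre duality, nefness of $-\Kxt$, and $d\ge 1$ --- which is a mild gain in economy and generality; the paper's route has the minor advantage of exhibiting $D_1-D_2$ concretely as a sum of $(-2)$curves in the effective case, the same device that drives Proposition~\ref{Prop3.4}. Both arguments are complete.
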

\begin{proof}
	If $\dim H^0(\Xt ,\sh{\Xt}(D_1 - D_2)) > 0$, then there exists an effective divisor $\mF$ which linearly equivalent to $(D_1 - D_2)$. Since $\mF \cdot \Kxt = 0$ by Lemma {\ref{Lem:exccurves}} (4), $\mF$ is the sum of $(-2)$curves. Therefore $(\mF)^2 \leq -2$. This means $(D_1 - D_2)^2 \leq -2$ and thus $D_1 \cdot D_2 \geq 0$.

	Suppose that $\dim H^0(\Xt ,\sh{\Xt}(D_1 - D_2)) = 0$. Since $(\Kxt - D_1 + D_2) \cdot (-\Kxt) = -d < 0$, we have $\dim H^0(\Xt, \sh{\Xt}(\Kxt - D_1 + D_2)) = 0$. Thus, by Riemann-Roch theorem, we have $(D_1 - D_2) \cdot (D_1 - D_2 - \Kxt)/2 + 1 \leq 0$. This means $D_1 \cdot D_2 \geq 0$.
\end{proof}

\begin{prop}\label{Prop3.4}
	Let $D$ be a pre$(-1)$curve on $\Xt$. $D$ is not a $(-1)$curve if and only if there exists a $(-2)$curve $F$ such that $D \cdot F = -1$.
\end{prop}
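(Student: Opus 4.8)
The plan is to prove both implications separately, with the reverse direction being nearly immediate and the forward direction requiring the structure theory of pre$(-1)$curves together with the Hodge index theorem.

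For the direction ($\Leftarrow$), suppose a $(-2)$curve $F$ satisfies $D \cdot F = -1$, and assume for contradiction that $D$ is a $(-1)$curve. Then $D$ is irreducible with $D^2 = -1$, while $F^2 = -2$, so $D \neq F$; but two distinct irreducible curves on a smooth surface meet non-negatively, contradicting $D \cdot F = -1$. Hence $D$ is not a $(-1)$curve.

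For the direction ($\Rightarrow$), I would first decompose $D$ using Lemma \ref{Lem3.2}: there is a unique prime component $E$ of $D$ with $E \cdot (-\Kxt) = 1$, and (for $d \neq 1$) it is a $(-1)$curve appearing with multiplicity one. Every other prime component $C$ must then have $C\cdot(-\Kxt) = 0$, hence is a $(-2)$curve by Lemma \ref{Lem:exccurves}(4), so I may write $D = E + G$ with $G = \sum_i a_i F_i$ a non-zero effective sum of distinct $(-2)$curves $F_i$ (non-zero precisely because $D$ is not a $(-1)$curve). Since the intersection form on a configuration of $(-2)$curves is negative definite, these curves arising from the resolution of the rational double points of $X$ by Corollary \ref{Cor:RDPDP<->weakDP}, we have $G^2 < 0$. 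Expanding $D^2 = -1$ with $E^2 = -1$ gives $2E\cdot G + G^2 = 0$, so $E \cdot G = -G^2/2 > 0$, and therefore $D \cdot G = E\cdot G + G^2 = G^2/2 < 0$. As $D\cdot G = \sum_i a_i (D\cdot F_i)$ with all $a_i > 0$, at least one $(-2)$curve $F := F_i$ in the support of $G$ satisfies $D \cdot F < 0$.

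It remains to upgrade $D \cdot F < 0$ to the sharp value $D \cdot F = -1$, which I expect to be the crux. Because $F$ is a component of $D$, the divisor $D - F$ is effective and satisfies $(D-F)\cdot(-\Kxt) = 1$ and $(D-F)^2 = -3 - 2(D\cdot F)$. Applying the Hodge index theorem to the nef and big class $-\Kxt$ (with $(-\Kxt)^2 = d$) yields $d\,(D-F)^2 \leq \big((D-F)\cdot(-\Kxt)\big)^2 = 1$, so $(D-F)^2 \leq 0$ once $d \geq 2$; this forces $-3 - 2(D\cdot F) \leq 0$, i.e. $D\cdot F \geq -1$, and combined with $D \cdot F < 0$ we conclude $D \cdot F = -1$. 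The main obstacle is exactly this last bound: ruling out $D\cdot F \leq -2$ is precisely what the Hodge index estimate achieves, and it is also the only place where the degree enters — for $d = 1$ the inequality only gives $(D-F)^2 \leq 1$, so the case $E \in |{-\Kxt}|$ would require separate treatment, but for the degree-two surfaces of this paper $d \geq 2$ suffices.
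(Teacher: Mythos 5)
Your proof is correct for $d \geq 2$ and follows the paper's skeleton for most of its length: the reverse implication, the decomposition $D = E + G$ via Lemma \ref{Lem3.2} and Lemma \ref{Lem:exccurves}(4), and the extraction of a component $F$ with $D \cdot F < 0$ are essentially the computations in the paper's proof (the paper derives $D\cdot\Dp = -1 - D\cdot E \leq -1$ from $(\Dp)^2 \leq -2$; you derive $D\cdot G = G^2/2 < 0$ from negative definiteness of $\Kxt^{\perp}$ --- the same estimate in different clothing). Where you genuinely diverge is the final bound $D \cdot F \geq -1$: the paper isolates this as a separate unnumbered lemma immediately after Proposition \ref{Prop3.4} and proves it by showing $h^0(-D+F) = h^0(\Kxt + D - F) = 0$ and applying Riemann--Roch, whereas you apply the Hodge index inequality $d\,(D-F)^2 \leq \bigl((D-F)\cdot(-\Kxt)\bigr)^2 = 1$. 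Your route is shorter and purely lattice-theoretic (effectivity of $D - F$ is not even needed for it), which is a genuine simplification when $d \geq 2$.

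The cost is the degree-one case, and your caveat there is not as harmless as you suggest. Section 3.1 is explicitly stated without assuming $d = 2$, and Proposition \ref{Prop3.4} feeds into Corollary \ref{Cor3.5}, which the paper later invokes on the degree-$1$ weak Del Pezzo surface $\Xtp$ obtained by blowing up $\Xt$ at a rational point (proof of Theorem \ref{Thm4.10}). So the proposition really is applied with $d = 1$, and your argument, which there only yields $(D-F)^2 \leq 1$ and hence $D\cdot F \geq -2$, does not cover it; moreover for $d = 1$ the distinguished component $E$ of Lemma \ref{Lem3.2} may lie in $|-\Kxt|$, so $E^2 = 1$ and your identity $2E\cdot G + G^2 = 0$ also changes. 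To prove the statement in the generality in which it is used you would need the paper's Riemann--Roch lemma (or a substitute) for the bound $D \cdot F \geq -1$. (In fairness, the paper's own proof also silently assumes $E^2 = -1$ when writing $(D-E)^2 = -2 - 2D\cdot E$, so the $d = 1$ case is delicate on both sides; but a complete argument must address it rather than set it aside.)
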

\begin{proof}
	If there exists a $(-2)$curve $F$ such that $D \cdot F = -1$, $F$ is contained in $D$. Thus $D$ is not a $(-1)$curve. Conversely, suppose that $D$ is not a $(-1)$curve. By Lemma {\ref{Lem3.2}}, there exists a prime divisor $E$ contained in $D$ such that $E \cdot (-\Kxt ) = 1$. Then $\Dp := D - E$ satisfies $\Dp \cdot \Kxt = 0$. By Lemma {\ref{Lem:exccurves}} (4), $\Dp$ is the sum of $(-2)$curves and thus $(\Dp)^2 = (D - E)^2 = -2 - 2D \cdot E \leq -2$. This means $D \cdot E \geq 0$. Since $D \cdot \Dp = D \cdot (D - E) = -1 - D \cdot E \leq -1$, there is a $(-2)$curve $F$ contained in $\Dp$ such that $D \cdot F \leq -1$. On the other hand, intersection number between a pre$(-1)$curve and a $(-2)$curve is at least $-1$ by the following lemma. Thus $D \cdot F = -1$. This $F$ is the desired $(-2)$curve.
\end{proof}

\begin{lem}
	Let $D$ be a pre$(-1)$curve on $\Xt$ and $F$ be a $(-2)$curve on $\Xt$. Then $D \cdot F \geq -1$.
\end{lem}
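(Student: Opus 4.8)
The plan is to argue by contradiction: assuming $D \cdot F \le -2$, I would extract from $D$ an effective divisor of $(-\Kxt)$-degree one but strictly positive self-intersection, a combination that the Hodge index theorem forbids.

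First I would observe that $F$ must be a component of $D$. Indeed $F$ is an irreducible curve and $D$ is effective; if $F$ were not contained in the support of $D$, the two effective divisors would meet properly and we would get $D \cdot F \ge 0$, against the assumption. Hence $G := D - F$ is again effective. Next I would record the numerical invariants of $G$. Since $F$ is a $(-2)$curve, Lemma~\ref{Lem:exccurves}(4) gives $F \cdot \Kxt = 0$, so
\[
G \cdot (-\Kxt) = D \cdot (-\Kxt) - F \cdot (-\Kxt) = 1 - 0 = 1,
\]
using that $D$ is a pre$(-1)$curve, while
\[
G^2 = (D-F)^2 = D^2 - 2\,(D\cdot F) + F^2 = -3 - 2\,(D\cdot F) \ge 1,
\]
the last step because $D\cdot F \le -2$.

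Finally I would invoke the Hodge index theorem on the smooth projective surface $\Xt\times_k\kb$, applied to $-\Kxt$, which satisfies $(-\Kxt)^2 = d > 0$. For the effective divisor $G$ this yields
\[
1 = \bigl(G\cdot(-\Kxt)\bigr)^2 \ge (-\Kxt)^2\cdot G^2 = d\cdot G^2 \ge d,
\]
since $G^2\ge 1$. For $d\ge 2$ this is absurd, which forces $D\cdot F \ge -1$; this is precisely the range ($d=2$) in which the lemma is applied here and in Section~4.

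I expect the only genuine obstacle to be the boundary case $d=1$. There the two displayed inequalities can become equalities simultaneously (forcing $G\equiv -\Kxt$ and $D\cdot F = -2$, realized e.g. by $D = G_0 + F$ with $G_0\in|{-\Kxt}|$), so the Hodge index estimate is no longer strict and the argument must be supplemented or explicitly restricted to $d\ge 2$. Everything else is routine bookkeeping of intersection numbers, so the tightness of the estimate for small degree is where I would concentrate the care.
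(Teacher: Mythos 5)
Your argument is sound for $d \ge 2$, and it takes a genuinely different route from the paper's. The paper obtains the bound from Riemann--Roch applied to the class $-D+F$: it first shows $h^0(\Xt,\sh{\Xt}(-D+F))=0$ (because $(-D+F)\cdot(-\Kxt)=-1$ while $-\Kxt$ is nef) and, outside the case $d=1$, $h^0(\Xt,\sh{\Xt}(\Kxt+D-F))=0$, whence $(-D+F)\cdot(-D+F-\Kxt)/2+1\le 0$, which unwinds to exactly $D\cdot F\ge -1$. You instead observe that $F$ must be a component of $D$, pass to the effective divisor $G=D-F$, and invoke the Hodge index inequality $(G\cdot(-\Kxt))^2\ge(-\Kxt)^2\,G^2$. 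Both are correct where they apply; yours is more geometric and avoids cohomology entirely, while the Riemann--Roch computation has the advantage that its main inequality does not degenerate as $d$ drops to $1$, only the auxiliary vanishing statement does.

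Two remarks on your closing paragraph. First, the degree-one configuration you exhibit, $D=G_0+F$ with $G_0\in|{-\Kxt}|$, is not merely a point where your estimate fails to be strict: it is an honest effective divisor with $D^2=D\cdot\Kxt=-1$ and $D\cdot F=-2$, hence a counterexample to the statement on any weak Del Pezzo surface of degree $1$ carrying a $(-2)$curve, so no supplementary argument can close the gap --- the lemma is simply false for $d=1$. (The paper's own $d=1$ branch tacitly assumes the effective member of $|\Kxt+D-F|$ is a \emph{nonempty} sum of $(-2)$curves so that its square is negative; for your $D$ that member is $0$, which is exactly the loophole.) Second, your assertion that the lemma is only ever applied in degree $2$ is not accurate: in the proof of Theorem \ref{Thm4.10} the paper applies Corollary \ref{Cor3.5}, whose uniqueness assertion rests on Proposition \ref{Prop3.4} and hence on this lemma, to the degree-$1$ weak Del Pezzo surface $\Xtp$ obtained by blowing up $\Xt$ at a point. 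So restricting the statement to $d\ge 2$ is not cost-free; one must check separately that the degree-$1$ instances used downstream avoid the configuration you identified.
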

\begin{proof}
$\dim H^0 (\Xt, \sh{\Xt}(-D + F)) = 0$ since $(-D + F) \cdot (-\Kxt) = -1$ and $-\Kxt$ is nef. On the other hand, since $(\Kxt + D - F) \cdot (-\Kxt) = -d + 1$, we have either $\dim H^0 (\Xt, \sh{\Xt}(\Kxt + D - F)) = 0$ or $d = 1$ and $(\Kxt + D - F)$ is linearly equivalent to a sum of $(-2)$curves. If the later holds, we have $(\Kxt + D - F)^2 < 0$ and thus $D \cdot F > 0$. 

	We assume that $\dim H^0 (\Xt, \sh{\Xt}(\Kxt + D - F)) = 0$. Then, by Riemann-Roch theorem, we have $(-D + F) \cdot (-D + F - \Kxt)/2 + 1 \leq 0$. This means $D \cdot F \geq -1$.
\end{proof}
As discussed in Lemma {\ref{Lem:intersec_No}}, if $d = 2$ then $D \cdot F = -1, 0$ or $1$. When $D \cdot F = -1$ or $1$, the following lemma holds.

\begin{lem}{\label{Lem:minusF}}
	Let $D$ be a pre$(-1)$curve on $\Xt$ and $F$ be a $(-2)$curve on $\Xt$. If $D\cdot F = -1$, then $(D - F)$ is also a pre$(-1)$curve. If $D \cdot F = 1$, then $(D + F)$ is also a pre$(-1)$curve.
\end{lem}
\begin{proof}
	If $D \cdot F = -1$, then $(D - F)^2 = -1 +2 -2 = -1$ and $(D - F) \cdot \Kxt = -1$. Since $(D - F)$ is an effective divisor, $(D - F)$ is a pre$(-1)$curve. If $D \cdot F = 1$, then $(D + F)^2 = -1 +2 -2 = -1$ and $(D + F) \cdot \Kxt = -1$. Since $(D + F)$ is an effective divisor, $(D + F)$ is a pre$(-1)$curve.
\end{proof}

	By using Proposition {\ref{Prop3.4}} and Lemma {\ref{Lem:minusF}} repeatedly, we obtain the following corollary.
\begin{cor}{\label{Cor3.5}}
	If a divisor $D$ on $\Xt$ satisfies $D^2 = D \cdot \Kxt = -1$, then the complete linear system $|D|$ consists of a single pre$(-1)$curve.
\end{cor}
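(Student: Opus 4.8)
The goal is Corollary~\ref{Cor3.5}: if $D$ satisfies $D^2 = D \cdot \Kxt = -1$, then $|D|$ consists of a single pre$(-1)$curve. The plan is to first establish that $|D|$ is nonempty, and then that it has no nonzero moving part, so that its unique member is the given effective divisor $D$, which is then a pre$(-1)$curve by definition.

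\emph{Nonemptiness.} First I would check $\dim H^0(\Xt, \sh{\Xt}(D)) > 0$ using the same Riemann--Roch argument that appears repeatedly above. By Serre duality (or the vanishing in Proposition~\ref{dim of anticano} / Proposition~\ref{Prop:Demazure}(c) for the weak Del Pezzo $\Xt$), one has $H^2(\Xt, \sh{\Xt}(D)) \cong H^0(\Xt, \sh{\Xt}(\Kxt - D))^\vee$, and since $(\Kxt - D)\cdot(-\Kxt) = -d - (-1) = 1 - d \le 0$ with equality only when $d=1$, the nefness of $-\Kxt$ kills this $H^0$ except possibly in the boundary case, which one treats exactly as in the preceding lemma. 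Then Riemann--Roch gives $\dim H^0(\Xt, \sh{\Xt}(D)) \ge \tfrac{1}{2}D\cdot(D - \Kxt) + 1 = \tfrac{1}{2}(-1 + 1) + 1 = 1$, so $|D|$ is nonempty. (In fact the hypothesis already presents $D$ as a divisor with these numerical values, but I want the full linear system, so producing at least the class is the point.)

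\emph{Reduction to a single member.} The heart is to show $|D|$ has no moving part. I would argue by the structure already built up: any effective divisor $D'$ linearly equivalent to $D$ is again a divisor with $(D')^2 = D'\cdot\Kxt = -1$, i.e.\ a pre$(-1)$curve. By Lemma~\ref{Lem3.2}, $D'$ contains a unique prime divisor $E'$ with $E'\cdot(-\Kxt) = 1$, and $D' - E'$ is an effective sum of $(-2)$curves. Using Proposition~\ref{Prop3.4} and Lemma~\ref{Lem:minusF} \emph{repeatedly}, as the corollary instructs: whenever $D'$ is not itself a $(-1)$curve, there is a $(-2)$curve $F$ with $D'\cdot F = -1$, and then $D' - F$ is again a pre$(-1)$curve with strictly smaller self-intersection of its $(-2)$-part; this descent terminates, expressing $D'$ canonically as $E' + \sum F_i$ with the $F_i$ forced by the intersection conditions. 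The upshot is that the decomposition of any member of $|D|$ into its $(-1)$curve core and its $(-2)$curve tail is uniquely determined by the linear equivalence class, so $|D|$ cannot move.

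\emph{Main obstacle.} The delicate point is promoting ``the numerical/combinatorial decomposition is unique'' to ``the complete linear system is a single point.'' Concretely I would show $\dim H^0(\Xt, \sh{\Xt}(D)) = 1$ directly: from the Riemann--Roch computation above the Euler characteristic is $1$, and the vanishing of $H^1$ must be argued. The cleanest route is to use that each $(-2)$curve $F$ appearing satisfies $H^0(\Xt, \sh{\Xt}(F)) $ is one-dimensional (an isolated rigid curve, since $F^2 = -2$) and that adding the rigid $(-1)$curve core does not create extra sections, via the exact sequences $0 \to \sh{\Xt}(D - F) \to \sh{\Xt}(D) \to \sh{F}(D|_F) \to 0$ and induction on the descent length, reducing to the base case $D = E'$ a genuine $(-1)$curve, where $|E'|$ is a single point because $E' \cong \bP^1_{\kb}$ with $(E')^2 = -1$. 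I expect the bookkeeping of the induction — ensuring each step strictly decreases a well-chosen nonnegative quantity and that the $H^1$ contributions vanish at every stage — to be the main thing requiring care, whereas the individual numerical identities are the routine calculations invoked from Lemma~\ref{Lem:minusF} and Proposition~\ref{Prop3.4}.
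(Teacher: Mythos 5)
Your proposal is correct and follows essentially the same route as the paper: Riemann--Roch together with the vanishing of $H^0(\Xt,\sh{\Xt}(\Kxt - D))$ for existence, and an induction that strips off $(-2)$curves $F$ with $D'\cdot F = -1$ (a condition depending only on the linear equivalence class, so the same $F$ is removed from any two linearly equivalent members) down to the rigid $(-1)$curve core for uniqueness. Two minor remarks: the correct computation is $(\Kxt - D)\cdot(-\Kxt) = -d-1 < 0$ for every $d$ (you wrote $1-d$), so the boundary case you worry about never arises; and the cohomological $H^1$-vanishing detour in your final paragraph is unnecessary, since the divisor-level induction already shows that any two effective divisors in $|D|$ coincide, which is exactly the statement that $|D|$ is a single point.
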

\begin{proof}
	Since $(\Kxt - D) \cdot (-\Kxt) = - d - 1 < 0$, we have $\dim H^0(\Xt , \sh{\Xt}(\Kxt - D)) = 0$. Thus $\dim H^0(\Xt , \sh{\Xt}(D)) \geq (D \cdot (D - \Kxt))/2 + 1 = 1$. This means there exists a pre$(-1)$curve which is linearly equivalent to $D$.

	Suppose that $D_1$ and $D_2$ are pre$(-1)$curves which are linearly equivalent to $D$. Let $E_1$ be the prime divisor contained in $D_1$ such that $E_1 \cdot (-\Kxt) = 1$ and let $\Dp_1 := D_1 - E_1$. We show $D_1 = D_2$ by induction on $n$, where $n$ is the number of components of $\Dp_1$

	If $n = 0$, $D_1$ is a $(-1)$curve. By Proposition {\ref{Prop3.4}}, we have $D_1 \cdot F = D_2 \cdot F \geq 0$ for all $(-2)$curve $F$. This means $D_2$ is also a $(-1)$curve. Thus $D_1 = D_2$. If $n > 0$, there is a $(-2)$curve $F$ such that $D_1 \cdot F = -1$ by Proposition {\ref{Prop3.4}}. Since $D_1$ is linearly equivalent to $D_2$, we have $D_2 \cdot F = -1$. By Lemma {\ref{Lem:minusF}}, $(D_1 - F)$ and $(D_2 - F)$ are pre$(-1)$curves. By the induction hypothesis, we have $D_1 - F = D_2 - F$. This means $D_1 = D_2$.
\end{proof}
\subsection{The ($-$1)curve passing through two singular points}{\label{subsection:two pts}}
	In the rest of Section 3, let $X$ be a RDP Del Pezzo surface of degree $2$ over a perfect field $k$ and $\Xt$ be the weak Del Pezzo surface defined as the minimal resolution of $X$. In this subsection, we show that if $X \times_k \kb$ has two singular points, $\Xt$ has one or two $(-1)$curves which passing through the two singularities on $X$. By collapsing these $(-1)$curves, $\Xt$ is non-minimal if $X$ has just two singular points over $\kb$ (Theorem {\ref{Thm3.11}}).

	For Proposition {\ref{Prop3.7}} and {\ref{Prop3.8}}, we need some notations. By Proposition {\ref{Prop:HW}}, there exists a blowing up $\pi :\Xt \times_k \kb \rightarrow \bP^2_{\kb}$ with center $\Sigma = \{ x_1,\cdots ,x_7\}$ over $\kb$. Then the Picard group of $\Xt \times_k \kb$ is generated by the class of inverse image of lines, denote this class by $l_0$, and the classes of inverse image of $x_i$, denoted by $l_i$ $(1 \leq i \leq 7)$. The intersection numbers of these are $l_0 \cdot l_0 = 1$, $l_i \cdot l_i = -1$ $(1\leq i \leq 7)$ and $l_i \cdot l_j = 0$ $(0\leq i,j \leq 7)$.

	We list all pre$(-1)$curves. Let $\mL := a_0 l_0 + a_1 l_1 + \cdots + a_7 l_7$, where $a_i \in \bZ$ and $a_0 \geq 0$ since there is no effective divisor corresponding to $\mL$ if $a_0 < 0$. Then, $(\mL)^2 = \mL \cdot \Kxt = -1$ if and only if $a_0^2 - a_1^2 - a_2^2 - \cdots - a_7^2 = -1$ and $-3a_0 - a_1 -a_2 - \cdots -a_7 = -1$. On the other hand, we have $(a_1 + a_2 + \cdots + a_7)^2 \leq 7(a_1^2 + a_2^2 + \cdots + a_7^2)$ by Cauchy-Schwartz inequality. Therefore, $(-3a_0 + 1)^2 \leq 7(a_0^2 + 1)$ and thus $0 \leq a_0 \leq 3$. From the above, if $(\mL)^2 = \mL \cdot \Kxt = -1$ then $\mL$ is one of the following 56 classes.
\begin{itemize}
	\item[(a)] $\mA_i := l_i$,
	\item[(b)] $\mB_{ij} := l_0 - l_i - l_j$,
	\item[(c)] $\mC_{ij} := 2l_0 - l_1 - \cdots - l_7 + l_i + l_j$ or
	\item[(d)] $\mD_i := 3l_0 - l_1 - \cdots - l_7 -l_i$.
\end{itemize}
By Corollary {\ref{Cor3.5}}, these each 56 elements corresponds to a unique pre$(-1)$curve.

\begin{cor}{\label{Cor:56}}{\label{Cor:56pre-1curves}}
	A weak Del Pezzo surface of degree $2$ over $k$ has just $56$ pre$(-1)$curves.
\end{cor}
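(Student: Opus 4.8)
The plan is to set up a bijection between the pre$(-1)$curves on $\Xt \times_k \kb$ and the $56$ divisor classes $\mA_i$, $\mB_{ij}$, $\mC_{ij}$, $\mD_i$ just listed. The discussion preceding the statement already provides the numerical half: a class $\mL = a_0 l_0 + a_1 l_1 + \cdots + a_7 l_7$ in $\Pic(\Xt \times_k \kb)$ satisfies $\mL^2 = \mL \cdot \Kxt = -1$ if and only if the two equations $a_0^2 - \sum_{i=1}^7 a_i^2 = -1$ and $3a_0 + \sum_{i=1}^7 a_i = 1$ hold, and the Cauchy--Schwarz bound $0 \le a_0 \le 3$ then forces $\mL$ to be one of exactly these $56$ classes. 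These classes are pairwise distinct, as the four families are separated by the value $a_0 \in \{0, 1, 2, 3\}$ of the coefficient of $l_0$ and the subscripts distinguish the members within each family; the count is $7 + \binom{7}{2} + \binom{7}{2} + 7 = 56$.

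Next I would promote this classification of classes to a classification of curves by appealing to Corollary {\ref{Cor3.5}}. Each of the $56$ classes $\mL$ has $\mL^2 = \mL \cdot \Kxt = -1$, so by that corollary the complete linear system $|\mL|$ consists of a single pre$(-1)$curve; this yields both existence (every one of the $56$ classes is realised by a pre$(-1)$curve) and uniqueness (no class contains two distinct pre$(-1)$curves). Conversely, any pre$(-1)$curve is an effective divisor $D$ with $D^2 = D \cdot \Kxt = -1$, so its class $[D]$ lies among the $56$. Therefore $D \mapsto [D]$ is a well-defined injection from pre$(-1)$curves onto the full list of $56$ classes, and both sides consequently have cardinality $56$.

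The argument is essentially bookkeeping, since its two substantive ingredients are already in hand: the bound $0 \le a_0 \le 3$ that reduces the infinite lattice of classes to a finite list, and the single-element linear systems of Corollary {\ref{Cor3.5}} that forbid a class from carrying two pre$(-1)$curves. The one step deserving explicit care is the injectivity of $D \mapsto [D]$, which is precisely the uniqueness clause of Corollary {\ref{Cor3.5}} rephrased; with that in place the bijection, and hence the number $56$, follows at once.
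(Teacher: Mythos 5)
Your proposal is correct and follows essentially the same route as the paper: the paper also derives the list of $56$ classes from the numerical conditions and the Cauchy--Schwarz bound $0 \le a_0 \le 3$, and then invokes Corollary \ref{Cor3.5} to conclude that each class carries exactly one pre$(-1)$curve. Your explicit remark that injectivity of $D \mapsto [D]$ is the uniqueness clause of Corollary \ref{Cor3.5} is a slightly more careful phrasing of what the paper leaves implicit, but the substance is identical.
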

This corollary is used in Section 5.

	Similarly, if $(\mL)^2 = -2$ and $\mL \cdot \Kxt = 0$, then $(0 \leq) a_0 \leq 2$. Thus $\mL$ is one of the followings:
\begin{itemize}
	\item[(a$^{\prime}$)] $\mAp_{i,j} := l_i - l_j$,
	\item[(b$^{\prime}$)] $\mBp_{ijk} := l_0 - l_i - l_j - l_k$ or
	\item[(c$^{\prime}$)] $\mCp_{i} := 2l_0 - l_1 - \cdots - l_7 + l_i$.
\end{itemize}
However, unlike Corollary {\ref{Cor:56}}, these classes may not correspond to a $(-2)$curve. For example, if $\mA_{i,j}$ has a $(-2)$curve, $\mA_{j,i}$ does not have a $(-2)$curve.

\begin{table}[p]
	\caption{The intersection number of two pre$(-1)$curves}
	\begin{tabular}{|c|cl|cl|cl|cl|}\hline
		 & \multicolumn{2}{|c|}{$\mA_k$}& \multicolumn{2}{|c|}{$\mB_{kl}$}& \multicolumn{2}{|c|}{$\mC_{kl}$} & \multicolumn{2}{|c|}{$\mD_{k}$}\\ \hline

		              &$ -1 $&$ (i=k) $&$ 1 $&$ (i\in \{ k,l\} ) $&$ 0 $&$ (i\in \{ k,l\} ) $&$ 2 $&$ (i=k) $\\ 
		$\mA_{i}$ &$ 0 $&$ (i\neq k) $&$ 0 $&$ (i\notin \{ k,l\} ) $&$ 1 $&$ (i\notin \{ k,l\} ) $&$ 1 $&$ (i\neq k) $\\ \hline

		               &  &  & $-1$ & $(|\{ i,j\}\cap\{ k,l\}| =2)$ & $2$ & $(|\{ i,j\}\cap\{ k,l\}| =2)$ & $0$ & $(k \in \{ i,j\})$\\ 
		$\mB_{ij}$ &  &  & $0$ &  $(|\{ i,j\}\cap\{ k,l\}| =1)$ & $1$ &  $(|\{ i,j\}\cap\{ k,l\}| =1)$ & $1$ & $(k \notin \{ i,j\})$\\ 
		               &  &  & $1$ & $(\{ i,j\}\cap\{ k,l\} =\phi )$ & $0$ & $(\{ i,j\}\cap\{ k,l\} =\phi )$ &  & \\ \hline

		               &  &  &  &  & $-1$ & $(|\{ i,j\}\cap\{ k,l\}| =2)$ & $1$ & $(k \in \{ i,j\})$\\ 
		$\mC_{ij}$ &  &  &  &  & $0$ & $(|\{ i,j\}\cap\{ k,l\}| =1)$ & $0$ & $(k \notin \{ i,j\})$\\ 
		               &  &  &  &  & $1$ & $(\{ i,j\}\cap\{ k,l\} =\phi )$ &  & \\ \hline

		              &  &  &  &  &  &  & $-1$ & $(i=k)$ \\ 
		 $\mD_{i}$&  &  &  &  &  &  & $0$ & $(i\neq k)$\\ \hline
 	\end{tabular}
	\label{Table1}

	\caption{The intersection number of two $(-2)$classes}
	\begin{tabular}{|c|cl|cl|cl|}\hline
		 & \multicolumn{2}{|c|}{$\mAp_{l,m}$}& \multicolumn{2}{|c|}{$\mBp_{lmn}$}& \multicolumn{2}{|c|}{$\mCp_{l}$}\\ \hline

	 & $-2$ & $i = l, j = m$ & $0$ & $\{ i,j\} \cap \{ l,m,n\} = \{ i,j\}$ & $-1$ & $i = l$ \\
	 & $2$ & $i = m, j = l$ & $1$ & $\{ i,j\} \cap \{ l,m,n\} = \{ i\}$ & $1$ & $j = l$ \\
	 & $-1$ & $i = l, j \neq m$ & $-1$ & $\{ i,j\} \cap \{ l,m,n\} = \{ j\}$ & $0$ & $l \notin \{ i,j\}$ \\
	$\mAp_{i,j}$ & $1$ & $i = m, j \neq l$ & $0$ & $\{ i,j\} \cap \{ l,m,n\} = \phi$ &  &  \\
	 & $-1$ & $i \neq l,j = m$ &  &  & &  \\
	 & $0$ & $\{ i,j\} \cap \{ l,m\} = \phi$ &  &  &  & \\ \hline

	 &  &  & $-2$ & $|\{ i,j,k\} \cap \{ l,m,n\}| = 3$ & $0$ & $l \in \{ i,j,k\}$ \\
	 &  &  & $-1$ & $|\{ i,j,k\} \cap \{ l,m,n\}| = 2$ & $-1$ & $l \notin \{ i,j,k\}$ \\
	$\mBp_{ijk}$ &  &  & $0$ & $|\{ i,j,k\} \cap \{ l,m,n\}| = 1$ &  &  \\
	 &  &  & $1$ & $\{ i,j,k\} \cap \{ l,m,n\} = \phi$ &  &  \\ \hline

	 &  &  &  &  & $-2$ & $i = l$ \\
	$\mCp_{i}$ &  &  &  &  & $-1$ & $i \neq l$ \\ \hline
 	\end{tabular}
	\label{Table2}

	\caption{The intersection number of pre$(-1)$curve and $(-2)$class}
	\begin{tabular}{|c|cl|cl|cl|}\hline
		 & \multicolumn{2}{|c|}{$\mAp_{k,l}$}& \multicolumn{2}{|c|}{$\mBp_{klm}$}& \multicolumn{2}{|c|}{$\mCp_{k}$}\\ \hline

	 & $-1$ & $i = k$ & $1$ & $i \in \{ k,l,m\}$ & $0$ & $i = k$\\
	$\mA_{i}$ & $1$ & $i = l$ & $0$ & $i \notin \{ k,l,m\}$ & $1$ & $i \neq k$\\
	 & $0$ & $i \notin \{ k,l\}$ &  &  &  & \\ \hline

	 & $0$ & $\{ i,j\} \cap \{ k,l\} = \{ k,l\}$ & $-1$ & $|\{ i,j\} \cap \{ k,l,m\}| = 2$ & $1$ & $k \in \{ i,j\}$ \\
	$\mB_{ij}$ & $1$ & $\{ i,j\} \cap \{ k,l\} = \{ k\}$ & $0$ & $|\{ i,j\} \cap \{ k,l,m\}| = 1$ & $0$ &  $k \notin \{ i,j\}$\\
	 & $-1$ & $\{ i,j\} \cap \{ k,l\} = \{ l\}$ & $1$ & $\{ i,j\} \cap \{ k,l,m\} = \phi$ &  & \\
	 & $0$ & $\{ i,j\} \cap \{ k,l\} = \phi$ &  &  &  & \\ \hline

	 & $0$ & $\{ i,j\} \cap \{ k,l\} = \{ k,l\}$ & $1$ & $|\{ i,j\} \cap \{ k,l,m\}| = 2$ & $-1$ & $k \in \{ i,j\}$ \\
	$\mC_{ij}$ & $-1$ & $\{ i,j\} \cap \{ k,l\} = \{ k\}$ & $0$ & $|\{ i,j\} \cap \{ k,l,m\}| = 1$ & $0$ &  $k \notin \{ i,j\}$\\
	 & $1$ & $\{ i,j\} \cap \{ k,l\} = \{ l\}$ & $-1$ & $\{ i,j\} \cap \{ k,l,m\} = \phi$ &  & \\
	 & $0$ & $\{ i,j\} \cap \{ k,l\} = \phi$ &  &  &  & \\ \hline

	 & $1$ & $i = k$ & $-1$ & $i \in \{ k,l,m\}$ & $0$ & $i = k$\\
	$\mD_{i}$ & $-1$ & $i = l$ & $0$ & $i \notin \{ k,l,m\}$ & $-1$ & $i \neq k$\\
	 & $0$ & $i \notin \{ k,l\}$ &  &  &  & \\ \hline
 	\end{tabular}
	\label{Table3}
\end{table}

	The intersection number of them is given by Table {\ref{Table1}},{\ref{Table2}} and {\ref{Table3}}. By these tables, the following lemma holds.
\begin{lem}{\label{Lem:intersec_No}}
	Let $D, D_1, D_2$ be pre$(-1)$curves on $\Xt$ and let $F, F_1, F_2$ be $(-2)$curves o $\Xt$. Then, the intersection numbers of these are
\begin{itemize}
	\item[(1)] $D_1 \cdot D_2 = -1, 0, 1$ or $2$
	\item[(2)] $D \cdot F = -1, 0$ or $1$
	\item[(3)] $F_1 \cdot F_2 = -2, 0$ or $1$
\end{itemize}
\end{lem}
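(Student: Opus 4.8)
The plan is to read off all three bounds from the explicit intersection tables, which are themselves produced by a direct computation in the basis $l_0,\dots,l_7$ with $l_0^2=1$, $l_i^2=-1$ $(1\le i\le 7)$ and $l_i\cdot l_j=0$ $(i\ne j)$. Since every pre$(-1)$curve is one of the $56$ classes $\mA_i,\mB_{ij},\mC_{ij},\mD_i$ and every $(-2)$curve lies in one of the classes $\mAp_{i,j},\mBp_{ijk},\mCp_i$, each intersection number $D_1\cdot D_2$, $D\cdot F$ and $F_1\cdot F_2$ is obtained by expanding the relevant pair of classes and pairing them term by term. Carrying this out for all combinations fills Tables \ref{Table1}, \ref{Table2} and \ref{Table3}, and the three assertions then follow by inspecting which values actually occur.

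For (1), by Corollary \ref{Cor:56pre-1curves} all $56$ pre$(-1)$curves are genuinely present on $\Xt$, and each class is realized by a single curve (Corollary \ref{Cor3.5}). Reading Table \ref{Table1}, every entry lies in $\{-1,0,1,2\}$, and the value $-1$ occurs precisely on the diagonal, i.e. when $D_1=D_2$ (where $D_1\cdot D_2=D_1^2=-1$); for $D_1\ne D_2$ the table gives $0,1$ or $2$, in agreement with the non-negativity forced by Lemma \ref{Lem3.3}. For (2), I would simply inspect Table \ref{Table3}: every entry of the pairing between a pre$(-1)$ class and a $(-2)$ class lies in $\{-1,0,1\}$, so $D\cdot F\in\{-1,0,1\}$ for the actual curves as well.

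The only case requiring care is (3), because, unlike the $56$ pre$(-1)$curves, not every $(-2)$class need be represented by an irreducible $(-2)$curve on a given $\Xt$. Table \ref{Table2} shows that between two distinct $(-2)$classes the possible values are $-2,-1,0,1$ and $2$, so I cannot conclude (3) by reading the table alone. First, if $F_1=F_2$ then $F_1\cdot F_2=F_1^2=-2$. If $F_1\ne F_2$ are distinct irreducible curves, then $F_1\cdot F_2\ge 0$, which immediately rules out the value $-1$. It remains to exclude the value $2$: Table \ref{Table2} produces $2$ only for a pair $\mAp_{i,j},\mAp_{j,i}$, but $\mAp_{j,i}=l_j-l_i=-\mAp_{i,j}$, so if one of them is represented by an effective $(-2)$curve the other cannot be effective, and hence the two cannot both be $(-2)$curves. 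Therefore distinct $(-2)$curves meet in $0$ or $1$, giving $F_1\cdot F_2\in\{-2,0,1\}$.

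The main obstacle is organizational rather than conceptual: assembling the three tables requires checking a large number of case distinctions (the $\binom{56}{2}$ pairs for Table \ref{Table1} collapse by symmetry into the handful of cases recorded there, and similarly for the others), and in (3) one must remember to pass from $(-2)$classes to honest $(-2)$curves, using both the non-negativity of the intersection of distinct prime divisors and the observation that $\mAp_{i,j}$ and $\mAp_{j,i}$ are negatives of one another.
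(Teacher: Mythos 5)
Your proposal is correct and follows essentially the same route as the paper: parts (1) and (2) are read directly off Tables \ref{Table1} and \ref{Table3}, and for part (3) the paper likewise observes that the entries $-1$ and $2$ in Table \ref{Table2} cannot be realized by pairs of actual $(-2)$curves. Your two explicit justifications for that last point (non-negativity of the intersection of distinct prime divisors, and the fact that $\mAp_{i,j}$ and $\mAp_{j,i}$ are negatives of one another) simply spell out the paper's one-line remark, so nothing further is needed.
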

\begin{proof}
	(1) and (2) are given by Table {\ref{Table1}} and {\ref{Table3}}, respectively. In Table {\ref{Table2}}, there are $(-2)$classes $\mL_1$ and $\mL_2$ such that $\mL_1 \cdot \mL_2 = -1$ or $2$. However, it is impossible both of such $\mL_1$ and $\mL_2$ have $(-2)$curves.
\end{proof}

When $D_1 \cdot D_2 = 2$, the following lemma holds.
\begin{lem}\label{Lem3.6}
	Let $D_1$ and $D_2$ be pre$(-1)$curves on $\Xt$. Then
	$$D_1 \cdot D_2 = 2 \Leftrightarrow (D_1 + D_2) \in |-\Kxt|.$$
\end{lem}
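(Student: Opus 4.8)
The plan is to prove the two implications separately, the nontrivial content being the forward direction.

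For the implication $(D_1 + D_2) \in |-\Kxt| \Rightarrow D_1 \cdot D_2 = 2$, I would just compare self-intersections. If $D_1 + D_2 \sim -\Kxt$, then $(D_1 + D_2)^2 = \Kxt^2 = 2$; on the other hand each $D_i$ is a pre$(-1)$curve, so $D_i^2 = -1$ and $(D_1 + D_2)^2 = D_1^2 + D_2^2 + 2 D_1 \cdot D_2 = -2 + 2 D_1 \cdot D_2$. Comparing the two expressions forces $D_1 \cdot D_2 = 2$.

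For the converse, assume $D_1 \cdot D_2 = 2$ and set $M := -\Kxt - D_1 - D_2$; the goal is to show $M \sim 0$. First I would record the numerical data: using $\Kxt^2 = 2$, $D_i^2 = D_i \cdot \Kxt = -1$ and $D_1 \cdot D_2 = 2$, a direct expansion gives $M \cdot \Kxt = 0$ and $M^2 = 0$. Next I would show that $M$ is linearly equivalent to an effective divisor. By Serre duality $\dim H^2(\Xt, \sh{\Xt}(M)) = \dim H^0(\Xt, \sh{\Xt}(\Kxt - M))$, and since $\Kxt - M = 2\Kxt + D_1 + D_2$ satisfies $(\Kxt - M)\cdot(-\Kxt) = -2\Kxt^2 + D_1\cdot(-\Kxt) + D_2\cdot(-\Kxt) = -2 < 0$ while $-\Kxt$ is nef, the divisor $\Kxt - M$ cannot be (equivalent to) an effective divisor; hence this $H^2$ vanishes. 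Riemann-Roch then reads $\dim H^0(\Xt, \sh{\Xt}(M)) - \dim H^1(\Xt, \sh{\Xt}(M)) + \dim H^2(\Xt, \sh{\Xt}(M)) = M\cdot(M - \Kxt)/2 + 1 = 1$, so together with the vanishing we obtain $\dim H^0(\Xt, \sh{\Xt}(M)) \geq 1$, i.e. $M$ is effective.

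Finally, let $N \geq 0$ be an effective divisor with $N \sim M$. Since $-\Kxt$ is nef and $N\cdot(-\Kxt) = M\cdot(-\Kxt) = 0$, every irreducible component $C$ of $N$ satisfies $C\cdot(-\Kxt) = 0$, so by Lemma \ref{Lem:exccurves} (4) each such $C$ is a $(-2)$curve and $N$ is a sum of $(-2)$curves. The $(-2)$curves on $\Xt$ span a negative definite sublattice (they are exactly the curves contracted to the rational double points of $X$), so a nonzero effective sum of them has strictly negative self-intersection; but $N^2 = M^2 = 0$, which forces $N = 0$. Hence $M \sim 0$, that is $D_1 + D_2 \sim -\Kxt$, as claimed. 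The crux is this forward direction, and within it the main obstacle is controlling $\dim H^0(\Xt, \sh{\Xt}(M))$: one must simultaneously produce an effective representative (via the Serre-duality and Riemann-Roch estimate) and rule out a nonzero one (via negative definiteness of the $(-2)$curve lattice), the two halves pinching $M$ down to $0$.
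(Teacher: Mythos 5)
Your proof is correct, and the easy direction ($(D_1+D_2)\in|-\Kxt|\Rightarrow D_1\cdot D_2=2$) is exactly the paper's computation. For the converse you take a genuinely different route. The paper applies its already-built machinery: since $(-\Kxt-D_1)^2=(-\Kxt-D_1)\cdot\Kxt=-1$, Corollary \ref{Cor3.5} gives a pre$(-1)$curve in $|-\Kxt-D_1|$, and then $(-\Kxt-D_1)\cdot D_2=-1<0$ forces $D_2\in|-\Kxt-D_1|$ by Lemma \ref{Lem3.3} (non-linearly-equivalent pre$(-1)$curves meet non-negatively). You instead work directly with $M=-\Kxt-D_1-D_2$, which is numerically trivial against $\Kxt$ and has $M^2=0$; the Serre duality plus Riemann--Roch step correctly produces an effective representative $N$, and the negative definiteness of the lattice spanned by the $(-2)$curves (which holds because they are exactly the curves contracted by the minimal resolution $f:\Xt\to X$, though the paper never states this explicitly) correctly pinches $N$ to zero. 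Your argument is more self-contained -- it bypasses Corollary \ref{Cor3.5} and Lemma \ref{Lem3.3} entirely -- at the cost of invoking negative definiteness of the $(-2)$-curve lattice, an input the paper's proof avoids; the paper's version is shorter precisely because Lemma \ref{Lem3.3} and Corollary \ref{Cor3.5} have already packaged the same Riemann--Roch and effectivity arguments once and for all.
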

\begin{proof}
	If $(D_1 + D_2) \in |-\Kxt|$, then $(D_1 + D_2)^2 = -2 + 2 D_1\cdot D_2 = 2$. Thus $D_1 \cdot D_2 = 2$. Conversely, suppose that $D_1 \cdot D_2 = 2$. Since $(-\Kxt - D_1)^2 = (-\Kxt - D_1) \cdot \Kxt = -1$, there exists a pre$(-1)$curve which is linearly equivalent to $(-\Kxt - D_1)$ by Corollary {\ref{Cor3.5}}. Since $(-\Kxt - D_1) \cdot D_2 = -1$, we have $D_2 \in |-\Kxt - D_1|$ by Lemma {\ref{Lem3.3}}.
\end{proof}

\begin{prop}\label{Prop3.7}
	Let $F$ be a $(-2)$curve on $\Xt$. Then pre$(-1)$curves on $\Xt$ satisfy the following properties:
\begin{itemize}
	\item[(1)] The cardinality of $\{ D : pre(-1)curve\ | D \cdot F = 1 \}$ is $12$.
\end{itemize}
Let $D_1$ and $D_2$ be two pre$(-1)$curves such that $D_1 \cdot F = D_2 \cdot F = 1$.
\begin{itemize}
	\item[(2)] $D_1 \cdot D_2 \leq 1$.
	\item[(3)] $D_1 \cdot D_2 = 1$ if and only if $(D_1 + D_2) \in |-\Kxt - F|$. Moreover, for every $D_1 \in \{ D : pre(-1)curve\ | D \cdot F = 1 \}$, there exists a unique $(-2)$curve $D_2$ such that $D_2 \cdot F = 1$ and $D_1 \cdot D_2 = 1$. 
\end{itemize}
\end{prop}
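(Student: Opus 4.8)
The plan is to read part (1) off the intersection tables and to derive parts (2)--(3) from the structural lemmas of Section 3.1, modelling the key direction of (3) on the proof of Lemma \ref{Lem3.6}.

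For (1), recall that the class of $F$ is one of the $(-2)$classes $\mAp_{k,l}$, $\mBp_{klm}$, $\mCp_k$, while every pre$(-1)$curve is one of the $56$ classes $\mA_i,\mB_{ij},\mC_{ij},\mD_i$ (Corollary \ref{Cor:56}), all of which are realized. So the set in question consists of pre$(-1)$curves meeting $F$ in $1$, and its size is a purely combinatorial count. First I would fix one representative of each type, say $F=l_1-l_2$, $F=l_0-l_1-l_2-l_3$, and $F=\mCp_1$, and tally the entries equal to $1$ in the matching column of Table \ref{Table3}. In each case the total splits by type of pre$(-1)$curve and comes out to $12$ (one finds $1+5+5+1$ for $\mAp$, $3+6+3+0$ for $\mBp$, and $6+6+0+0$ for $\mCp$). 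Since relabelling the indices $1,\dots,7$ permutes both the pre$(-1)$curves and the $(-2)$classes while preserving intersection numbers, the count is independent of the chosen representative within each type, so it is $12$ for every $F$.

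For (2), note $D_1\cdot D_2\in\{-1,0,1,2\}$ by Lemma \ref{Lem:intersec_No}(1), so only the value $2$ must be excluded. If $D_1\cdot D_2=2$, then $D_1+D_2\in|-\Kxt|$ by Lemma \ref{Lem3.6}, whence $(D_1+D_2)\cdot F=-\Kxt\cdot F=0$ since $\Kxt\cdot F=0$ for a $(-2)$curve; this contradicts $(D_1+D_2)\cdot F=2$. Hence $D_1\cdot D_2\le 1$. For (3), the right object to introduce is $A:=-\Kxt-F-D_1$. A direct computation gives $A^2=A\cdot\Kxt=-1$, so by Corollary \ref{Cor3.5} the class $A$ is effective and $|A|$ is a single pre$(-1)$curve; one also checks $A\cdot F=1$. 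If $D_1\cdot D_2=1$, then $A\cdot D_2=-\Kxt\cdot D_2-F\cdot D_2-D_1\cdot D_2=1-1-1=-1<0$, so $A$ and $D_2$ cannot be inequivalent pre$(-1)$curves by Lemma \ref{Lem3.3}; thus $D_2\sim A$ and $D_1+D_2\in|-\Kxt-F|$. Conversely, if $D_1+D_2\in|-\Kxt-F|$, intersecting with $D_1$ gives $-1+D_1\cdot D_2=(-\Kxt-F)\cdot D_1=1-1=0$, so $D_1\cdot D_2=1$. For the final clause, given $D_1$ I take $D_2$ to be the unique pre$(-1)$curve in $|A|$: the computations above yield $D_2\cdot F=1$ and $D_1\cdot D_2=1$, and any competitor $D_2'$ with $D_2'\cdot F=1$, $D_1\cdot D_2'=1$ satisfies $D_1+D_2'\in|-\Kxt-F|$ by the forward implication, forcing $D_2'\sim A\sim D_2$ and hence $D_2'=D_2$ by Corollary \ref{Cor3.5}.

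The genuinely delicate point is the forward direction of (3): the three identities $A^2=A\cdot\Kxt=-1$ and $A\cdot D_2=-1$ do not by themselves equate divisor \emph{classes}, and the argument really turns on Lemma \ref{Lem3.3}, whose hypothesis (that two pre$(-1)$curves are not linearly equivalent) is exactly what converts the negative intersection number into the desired linear equivalence. Everything else reduces to table lookups and the self-intersection computations, which are routine.
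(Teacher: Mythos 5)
Your proof is correct and follows essentially the same route as the paper: part (1) by the same table count (the three tallies match the paper's explicit lists), part (2) by excluding $D_1\cdot D_2=2$ via Lemma \ref{Lem3.6} and $(-\Kxt)\cdot F=0$, and part (3) by applying Corollary \ref{Cor3.5} to the class $-\Kxt-F-D_1$ and Lemma \ref{Lem3.3} to force the linear equivalence. You merely spell out a few steps the paper leaves implicit (the restriction to $\{-1,0,1,2\}$ in (2) and the uniqueness argument at the end), which is fine.
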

\begin{proof}
	We can assume that $F$ is either $\mAp_{1,2}$ , $\mBp_{123}$ or $\mCp_1$. By Table {\ref{Table3}}, we have 
\begin{itemize}
	\item[]$\{ D\ | D \cdot \mAp_{1,2} = 1 \} = \{ \mA_2 , \mB_{1i} (i = 3,\cdots,7) , \mC_{2i} (i = 3,\cdots,7), \mD_1\} $
	\item[]$\{ D\ | D \cdot \mBp_{123} = 1 \} = \{ \mA_l (l = 1,2,3) ,\mB_{ij} (i,j = 4,\cdots ,7), \mC_{lm} ( l,m = 1,2,3) \} $
	\item[]$\{ D\ | D \cdot \mCp_{1} = 1 \} = \{ \mA_{i} (i = 2,\cdots,7), \mB_{1i} (i = 2,\cdots,7)\} $.
\end{itemize}
Thus (1) holds. Since $F \cdot \Kxt = 0$, (2) follows from Lemma {\ref{Lem3.6}}.
\par
	We show $(3)$. If $(F + D_1 + D_2) \in |-\Kxt|$, we have $(F+ D_1 + D_2)^2 = 2 D_1 \cdot D_2 = 2$. Thus $D_1 \cdot D_2 = 1$. Conversely, suppose that $D_1 \cdot D_2 = 1$. Since $(-\Kxt - D_1 - F)^2 = (-\Kxt - D_1 - F) \cdot \Kxt = -1$, there exists a pre$(-1)$curve which is linearly equivalent to $(-\Kxt - D_1 - F)$ by Corollary {\ref{Cor3.5}}. Since $(-\Kxt - D_1 - F) \cdot D_2 = -1$, we have $D_2 \in |-\Kxt - D_1 - F|$ by Lemma {\ref{Lem3.3}}. The last statement follows from Corollary {\ref{Cor3.5}}.
\end{proof}

\begin{prop}{\label{Prop3.8}}
	Let $F_1$ and $F_2$ be two $(-2)$curves on $\Xt$ such that $F_1 \cdot F_2 = 0$. Then pre$(-1)$curves on $\Xt$ satisfy the following properties:
\begin{itemize}
	\item[(1)] The cardinality of $\{ D : pre(-1)curve\ | D \cdot F_1 = D \cdot F_2 = 1 \}$ is $2$.
\end{itemize}
Let $D_1$ and $D_2$ be the two pre$(-1)$curves such that $D_i \cdot F_1 = D_i \cdot F_2 = 1$ for $i = 1,2$.
\begin{itemize}
	\item[(2)] $D_1 \cdot D_2 = 0$.
	\item[(3)] $(D_1 + D_2) \in |-\Kxt - F_1 - F_2|$.
\end{itemize}
\end{prop}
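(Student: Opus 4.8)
The plan is to follow the same strategy as the proof of Proposition \ref{Prop3.7}: establish (1) by an explicit count, then deduce (2) and (3) by a uniform linear-equivalence argument. For (1), I would first use the action of the Weyl group on $\Pic(\Xt \times_k \kb)$ (index permutations together with the standard Cremona transformation, exactly as in Proposition \ref{Prop3.7}) to reduce $(F_1,F_2)$ to a short list of standard pairs of orthogonal $(-2)$classes. By Table \ref{Table2}, two distinct $(-2)$classes with intersection $0$ occur, up to this action, as pairs such as $(\mAp_{1,2},\mAp_{3,4})$, $(\mAp_{1,2},\mBp_{123})$, $(\mBp_{123},\mBp_{145})$, and so on. For each representative I would take the explicit $12$-element list $\{D \mid D\cdot F_1 = 1\}$ supplied by Proposition \ref{Prop3.7} and keep those members that additionally satisfy $D\cdot F_2 = 1$, reading the second intersection number off Table \ref{Table3}. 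For instance, with $F_1 = \mBp_{123}$ and $F_2 = \mBp_{145}$ the list is $\{\mA_l\ (l=1,2,3),\ \mB_{ij}\ (i,j\in\{4,\dots,7\}),\ \mC_{lm}\ (l,m\in\{1,2,3\})\}$, and the only members meeting $\mBp_{145}$ in $1$ are $\mA_1$ and $\mB_{67}$; every representative yields exactly two, giving (1).

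A cleaner way to see that the two counts balance, which I would record as a sanity check, is to note that by Proposition \ref{Prop3.7}(3) the set $\{D \mid D\cdot F_1 = 1\}$ splits into six pairs each summing to $-\Kxt - F_1$, so $\sum_{D\cdot F_1 = 1} D = 6(-\Kxt - F_1)$; dotting with $F_2$ and using $\Kxt\cdot F_2 = F_1\cdot F_2 = 0$ gives $\sum_{D\cdot F_1 = 1} D\cdot F_2 = 0$. Since each such $D\cdot F_2 \in \{-1,0,1\}$ by Lemma \ref{Lem:intersec_No}, the number with $D\cdot F_2 = 1$ equals the number with $D\cdot F_2 = -1$; indeed $D \mapsto D - F_2$ is a bijection between the two sets by Lemma \ref{Lem:minusF}. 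The exact value $2$ is then pinned down by the representative computation.

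For (2) and (3) I would avoid the case analysis entirely. Given any pre$(-1)$curve $D$ with $D\cdot F_1 = D\cdot F_2 = 1$, set $M := -\Kxt - F_1 - F_2$ and consider $M - D$. A direct computation using $F_i^2 = -2$, $F_i\cdot \Kxt = 0$, $F_1\cdot F_2 = 0$ and $M\cdot D = 1 - 1 - 1 = -1$ gives $(M-D)^2 = -1$ and $(M-D)\cdot \Kxt = -1$, so by Corollary \ref{Cor3.5} the class $M - D$ contains a unique pre$(-1)$curve, which I call $\iota(D)$. Since $M\cdot F_i = 2$, one checks $\iota(D)\cdot F_1 = \iota(D)\cdot F_2 = 1$, so $\iota$ is an involution of the set in (1), and it is fixed-point-free because $2D = M$ is impossible (it would force $4D^2 = M^2$, i.e. $-4 = -2$). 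Hence the two curves $D_1, D_2$ of (1) satisfy $D_2 = \iota(D_1)$, which gives $D_1 + D_2 \in |-\Kxt - F_1 - F_2|$, namely (3), and then $D_1\cdot D_2 = D_1\cdot(M - D_1) = M\cdot D_1 - D_1^2 = -1 - (-1) = 0$, namely (2).

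The main obstacle is the bookkeeping in (1): one must be sure that every Weyl orbit of disjoint pairs of $(-2)$classes is represented and that each really can occur as a pair of $(-2)$curves. I would organize this by first normalizing $F_1$ into one of the three standard forms $\mAp_{1,2},\mBp_{123},\mCp_1$ from Proposition \ref{Prop3.7}, and then using the stabilizer of $F_1$ to normalize $F_2$ among the classes orthogonal to it in Table \ref{Table2}; the resulting list of pairs is short, and Table \ref{Table3} disposes of each. By contrast, parts (2) and (3) are obstacle-free once the involution $D \mapsto -\Kxt - F_1 - F_2 - D$ is in hand.
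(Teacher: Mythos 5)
Your proposal is correct. For part (1) you follow essentially the same route as the paper: normalize $(F_1,F_2)$ to a short list of orthogonal pairs of $(-2)$classes via Table \ref{Table2} and read off from Table \ref{Table3} which of the twelve classes meeting $F_1$ also meet $F_2$; the paper lists exactly the six representative pairs you describe and finds two pre$(-1)$curves in each case (e.g.\ $\mA_1$ and $\mB_{67}$ for $(\mBp_{123},\mBp_{145})$, as you computed). Your counting identity $\sum_{D\cdot F_1=1}D\cdot F_2=0$ is a nice consistency check the paper does not record. Where you genuinely diverge is in (2) and (3): the paper first proves (2) by noting $(D_1+D_2+F_1)\cdot F_2=2$, so $(D_1+D_2+F_1)\notin|-\Kxt|$ and hence $D_1\cdot D_2\neq 1$ by Proposition \ref{Prop3.7}(3) (with $D_1\cdot D_2\geq 0$ coming implicitly from Lemma \ref{Lem3.3}), and only then derives (3) by showing $(-\Kxt-F_1-F_2-D_1)$ is a pre$(-1)$class meeting $D_2$ in $-1$. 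You instead prove (3) first, via the fixed-point-free involution $D\mapsto -\Kxt-F_1-F_2-D$ on the two-element set from (1), and obtain (2) by pure intersection arithmetic. Your computations check out ($(M-D)^2=(M-D)\cdot\Kxt=-1$, $\iota(D)\cdot F_i=1$, and $2D\sim M$ forcing $-4=-2$), and Corollary \ref{Cor3.5} supplies both existence and the involution property $\iota^2=\mathrm{id}$. The one logical dependency worth flagging is that your argument for (2)--(3) genuinely needs the cardinality statement (1) (an involution on a larger set need not swap two given elements), whereas the paper's argument for (2) does not; since you prove (1) first, this is harmless. Your version buys a cleaner, table-free derivation of (2) and (3) at no real cost.
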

\begin{proof}
	By Table {\ref{Table2}}, we can assume that $(F_1, F_2)$ is either $(\mAp_{1,2} , \mAp_{3,4})$, $(\mAp_{1,2} , \mBp_{123})$, $(\mAp_{1,2} , \mBp_{345})$, $(\mAp_{1,2} , \mCp_{3})$, $(\mBp_{123} , \mBp_{145})$ or $(\mBp_{123} , \mCp_{1})$.

	If $(F_1,F_2)$ is $(\mAp_{1,2} , \mAp_{3,4})$, the pre$(-1)$curves are $\mB_{13}$ and $\mC_{24}$. 
\par
	If $(F_1,F_2)$ is $(\mAp_{1,2} , \mBp_{123})$, the pre$(-1)$curves are $\mA_{2}$ and $\mC_{23}$. 
\par
	If $(F_1,F_2)$ is $(\mAp_{1,2} , \mBp_{345})$, the pre$(-1)$curves are $\mB_{16}$ and $\mB_{17}$. 
\par
	If $(F_1,F_2)$ is $(\mAp_{1,2} , \mCp_{3})$, the pre$(-1)$curves are $\mA_{2}$ and $\mB_{13}$. 
\par
	If $(F_1,F_2)$ is $(\mBp_{123} , \mBp_{145})$, the pre$(-1)$curves are $\mA_{1}$ and $\mB_{67}$. 
\par
	If $(F_1,F_2)$ is $(\mBp_{123} , \mCp_{1})$, the pre$(-1)$curves are $\mA_{2}$ and $\mA_{3}$. 
\par
	Thus $(1)$ holds. 

	Let $D_1,D_2$ be the two pre$(-1)$curves which intersect $F_1$ and $F_2$. Since $(D_1 + D_2 + F_1) \cdot F_2 = 2$, we have $(D_1 + D_2 + F_1) \notin |-\Kxt|$. By Proposition {\ref{Prop3.7}}, we have $D_1 \cdot D_2 = 0$. $(2)$ holds.

	Since $D_1 \cdot D_2 = 0$, we have $(-\Kxt - D_1 -F_1 - F_2) \cdot D_2 = -1$. On the other hand, $(-\Kxt - D_1 -F_1 - F_2)$ is linearly equivalent to a pre$(-1)$curve since $(-\Kxt - D_1 -F_1 - F_2)^2 = (-\Kxt - D_1 -F_1 - F_2) \cdot \Kxt = -1$. By Lemma {\ref{Lem3.3}}, $D_2 \in |-\Kxt - F_1 - F_2 - D_1|$.
\end{proof}

	Let $F_1 , F_2 ,D_1$ and $D_2$ be as Proposition {\ref{Prop3.8}}. By Lemma {\ref{Lem3.2}}, each $D_i$ contains a $(-1)$curve $E_i$. We consider the case where $f(F_1)$ and $f(F_2)$ are two distinct singular points on $X$, where $f : \Xt \rightarrow X$ is a minimal resolution of $X$. 

\begin{prop}{\label{Prop3.9}}
	Let $E_1,E_2$ be as above. If $f(F_1)$ and $f(F_2)$ are distinct two singular points on $X$, we have $E_1 \cdot E_2 = 0$ or $-1$.
\end{prop}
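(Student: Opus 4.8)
The plan is to prove $E_1\cdot E_2\le 0$; the reverse bound $E_1\cdot E_2\ge -1$ is automatic, since $E_1,E_2$ are prime divisors, so $E_1\cdot E_2\ge 0$ when $E_1\neq E_2$ and $E_1\cdot E_2=E_1^2=-1$ when $E_1=E_2$. Together these give $E_1\cdot E_2\in\{-1,0\}$, which is the assertion. I would first record the decomposition $D_i=E_i+G_i$, where $G_i:=D_i-E_i$ is effective with $G_i\cdot\Kxt=0$; by Lemma \ref{Lem:exccurves}(4) every component of $G_i$ is a $(-2)$curve, so each $G_i$ is a nonnegative sum of $(-2)$curves. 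Note also that by Proposition \ref{Prop3.4} the $(-1)$curve $E_1$ satisfies $E_1\cdot F\ge 0$ for \emph{every} $(-2)$curve $F$, and hence $E_1\cdot G_2\ge 0$.

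The computation I would then carry out is the chain
\[
E_1\cdot E_2 = E_1\cdot D_2 - E_1\cdot G_2 \;\le\; E_1\cdot D_2 = (D_1-G_1)\cdot D_2 = -\,G_1\cdot D_2,
\]
where the inequality is $E_1\cdot G_2\ge 0$ and the last equality uses $D_1\cdot D_2=0$ from Proposition \ref{Prop3.8}(2). Thus it suffices to prove $G_1\cdot D_2\ge 0$. Writing $G_1=\sum_F m_F F$ with $m_F\ge 0$, we have $G_1\cdot D_2=\sum_F m_F(F\cdot D_2)$, and by Lemma \ref{Lem:intersec_No}(2) each $F\cdot D_2\in\{-1,0,1\}$. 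By Proposition \ref{Prop3.4}, $F\cdot D_2=-1$ forces $F$ to be a component of $D_2$, hence a component of $G_2$ (as $F$ is a $(-2)$curve and $E_2$ is a $(-1)$curve). Consequently, if $G_1$ and $G_2$ share no common $(-2)$curve, then every term $F\cdot D_2$ is nonnegative, so $G_1\cdot D_2\ge 0$ and $E_1\cdot E_2\le 0$, finishing the proof.

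Everything therefore reduces to the claim that either $E_1=E_2$ (in which case $E_1\cdot E_2=-1$ directly, with no disjointness needed), or else $G_1$ and $G_2$ have no common $(-2)$curve; equivalently, a $(-2)$curve lying in both $G_1$ and $G_2$ can occur only when the two pre$(-1)$curves reduce to the same $(-1)$curve. I expect this to be the main obstacle, and it is exactly where the hypothesis that $f(F_1)$ and $f(F_2)$ are \emph{distinct} singular points is used: $F_1$ and $F_2$ then lie in two disjoint connected components $C_1,C_2$ of the $(-2)$curve configuration. To establish the claim I would exploit that the $(-2)$curves absorbed in passing from $E_i$ to $D_i$ arise through the reduction of Proposition \ref{Prop3.4} and Lemma \ref{Lem:minusF}, and are pinned down componentwise by the data $D_i\cdot F_1=D_i\cdot F_2=1$. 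Concretely I would run the six cases for $(F_1,F_2)$ produced in the proof of Proposition \ref{Prop3.8}, and in each case use Tables \ref{Table1}--\ref{Table3}, together with the compatibility constraint on which $(-2)$classes can simultaneously carry $(-2)$curves (cf. Lemma \ref{Lem:intersec_No}(3)), to identify $E_1,E_2$ and verify the disjointness; the delicate step is checking that a shared $(-2)$curve between $G_1$ and $G_2$ genuinely forces $E_1=E_2$ rather than two distinct $(-1)$curves meeting positively.
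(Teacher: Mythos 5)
Your reduction is sound as far as it goes: the chain $E_1\cdot E_2\le E_1\cdot D_2=(D_1-G_1)\cdot D_2=-G_1\cdot D_2$ is correct, as is the observation that $G_1\cdot D_2\ge 0$ whenever $G_1$ and $G_2$ share no common $(-2)$curve. But the proof is not complete, and the missing piece is precisely the one you flag: the claim that, under the hypothesis $f(F_1)\neq f(F_2)$, a common $(-2)$curve of $G_1$ and $G_2$ forces $E_1=E_2$ is never established. Your proposed route to it --- running the six Weyl-orbit representatives of $(F_1,F_2)$ from the proof of Proposition \ref{Prop3.8} together with the tables --- does not suffice as stated: those six cases pin down only the pair of $(-2)$\emph{classes} $(F_1,F_2)$ up to symmetry, whereas the components of $G_1$ and $G_2$ are produced by repeatedly applying Proposition \ref{Prop3.4} and Lemma \ref{Lem:minusF} and depend on which of the \emph{other} $(-2)$classes actually carry $(-2)$curves, i.e.\ on the full singularity configuration of $X$ (one of the forty types of Proposition \ref{Prop:singularity type}). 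A genuine case analysis would have to range over all configurations with at least two singular points. Note also that your claim is false once the hypothesis is dropped: in the $D_4$ case treated in the proof of Theorem \ref{MainThm3.5}, $D_{12}$ and $\Dp_{12}$ both contain $F_3$ and $F_4$ as components, yet $E_{12}\neq\Ep_{12}$; so any proof of the claim must isolate exactly where $f(F_1)\neq f(F_2)$ enters, which your sketch does not do.

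For comparison, the paper's proof bypasses the disjointness question entirely with a self-intersection computation. From Proposition \ref{Prop3.8} (3), $(D_1+D_2+F_1+F_2)\in|-\Kxt|$, hence $(E_1+E_2+\mF)\in|-\Kxt|$ where $\mF=G_1+G_2+F_1+F_2$ is an effective sum of $(-2)$curves containing $F_1$ and $F_2$. Then
$$-2+2E_1\cdot E_2=(E_1+E_2)^2=(-\Kxt-\mF)^2=2+(\mF)^2,$$
and since $f(F_1)\neq f(F_2)$ the divisor $\mF$ meets the fibres over at least two distinct singular points, so it has at least two connected components, each of self-intersection at most $-2$; hence $(\mF)^2\le -4$ and $E_1\cdot E_2\le 0$. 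If you want to salvage your approach, you would need to supply a configuration-independent argument for the disjointness claim; the hypothesis must enter in essentially the same way it enters here, through the disconnectedness of $f^{-1}(f(F_1))\cup f^{-1}(f(F_2))$.
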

\begin{proof}
	Since $(D_1 + D_2 + F_1 + F_2) \in |-\Kxt|$, we have $(E_1 + E_2 + \mF) \in |-\Kxt|$ where $\mF$ is the sum of some $(-2)$curves containing $F_1$ and $F_2$. Then
	$$-2 + 2 E_1 \cdot E_2 = (E_1 + E_2)^2 = (-\Kxt - \mF)^2 = 2 + (\mF)^2$$
Since $F_1, F_2 \in \mF$ and $f(F_1) \neq f(F_2)$, $\mF$ has at least two connected components. This induces $(\mF)^2 \leq -4$. Thus $E_1 \cdot E_2 \leq 0$.
\end{proof}

	In practice, the assumption $f(F_1) \neq f(F_2)$ is not necessary (cf. Proposition {\ref{Prop:MainProp3.5}}). This is proved in Section 3.5. 

	Let $p_1$ and $p_2$ be two distinct singular points on $X$ over $\kb$. By Corollary {\ref{Cor:RDPDP<->weakDP}} (2), $f^{-1}(p_1)$ and $f^{-1}(p_2)$ are the sum of some $(-2)$curves. If $(-2)$curves $F$ and $\Fp$ satisfy $f(F) = f(\Fp) = p_1$, then there exist $(-2)$curves $F_1, F_2, \cdots, F_n$ such that $F_1 = F, F_n = \Fp$ and $F_i \cdot F_{i + 1} = 1$.
$$
\xygraph{
	\circ ([]!{+(0,+.2)} {F = F_1}) - [r]
	\circ ([]!{+(0,+.2)} {F_2}) - [r]
	\cdots - [r]
	\circ ([]!{+(0,+.2)} {F_{n - 1}}) - [r]
	\circ ([]!{+(0,+.2)} {\Fp = F_n})
}
$$
Let $F, \Fp, G$ and $\Gp$ be $(-2)$curves on $\Xt$ such that $f(F) = f(\Fp) = p_1$ and $f(G) = f(\Gp) = p_2$. Then the following holds:

\begin{prop}{\label{Prop3.10}}
	 Let $D_1, D_2$ be the two pre$(-1)$curves which intersect $F$ and $G$ and let $E_1$ and $E_2$ be the $(-1)$curves contained in $D_1$ and $D_2$, respectively (such pre$(-1)$curves and $(-1)$curves exist by Proposition {\ref{Prop3.8}} and Lemma {\ref{Lem3.2}}). Similarly, $\Dp_1, \Dp_2, \Ep_1$ and $\Ep_2$ are defined by $\Fp$ and $\Gp$. Then, $\{ E_1,E_2 \} = \{ \Ep_1, \Ep_2 \}$ holds.
\end{prop}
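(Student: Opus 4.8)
The plan is to identify the unordered pair $\{E_1,E_2\}$ with an intrinsic set that makes no reference to the chosen curves $F,G,\Fp,\Gp$, namely the set $S$ of all $(-1)$curves $E$ on $\Xt$ such that $f(E)$ passes through both $p_1$ and $p_2$; once $\{E_1,E_2\}=S=\{\Ep_1,\Ep_2\}$ is shown, the proposition is immediate. First I would establish the inclusion $\{E_1,E_2\}\subseteq S$, for which it suffices to know that a pre$(-1)$curve is connected. This follows from $p_a(D)=0$ together with $\dim H^0(\sh{\Xt}(-D))=\dim H^1(\sh{\Xt}(-D))=0$ (the first because $(-D)\cdot(-\Kxt)=-1<0$, the second by Riemann--Roch once one checks $H^2(\sh{\Xt}(-D))=0$ via Serre duality), which forces $\dim H^0(\sh{D})=1$. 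Since each $D_i$ is connected, contains $E_i$ by Lemma \ref{Lem3.2}, meets $F\subseteq f^{-1}(p_1)$ and meets $G\subseteq f^{-1}(p_2)$, and since the two exceptional fibres $f^{-1}(p_1)$ and $f^{-1}(p_2)$ are disjoint, the dual graph of $D_i$ forces $E_i$ itself to meet each fibre; hence $f(E_i)$ passes through $p_1$ and $p_2$, i.e. $E_i\in S$.

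Next I would pin down $S$ geometrically through the anticanonical double cover $\kappa:X\rightarrow\bP^2_k$, using $\sh{X}(-\Kx)=\kappa^*\sh{\bP^2}(1)$ and the crepant identity $\Kxt=f^*\Kx$. By Proposition \ref{Prop3.8}(3) (exactly as in the proof of Proposition \ref{Prop3.9}) we have $(E_1+E_2+\mF)\in|{-\Kxt}|$ with $\mF$ an effective sum of $(-2)$curves; pushing forward by $f$, which contracts $\mF$ and satisfies $f_*f^*\Kx=\Kx$, yields $f(E_1)+f(E_2)\in|{-\Kx}|$. Thus $f(E_1)+f(E_2)=\kappa^{-1}(\ell)$ for a unique line $\ell\subseteq\bP^2_k$. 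Because $f(E_i)\cdot(-\Kx)=1$, the restriction $\kappa|_{f(E_i)}$ is birational onto $\ell$, so $\ell=\kappa(f(E_1))=\kappa(f(E_2))$. As each $f(E_i)$ passes through both $p_1$ and $p_2$, the two distinct components $f(E_1),f(E_2)$ meet at $p_1$ and at $p_2$; a point lying on both sheets of $\kappa^{-1}(\ell)$ must lie over the branch locus, so $\kappa(p_1)$ and $\kappa(p_2)$ are the two distinct contact points of the bitangent $\ell$. In particular $\kappa(p_1)\neq\kappa(p_2)$ and $\ell$ is the line through them, which depends only on $p_1$ and $p_2$. Therefore $\kappa^{-1}(\ell)$, hence the unordered pair of its components $\{f(E_1),f(E_2)\}$ and their strict transforms $\{E_1,E_2\}$, is independent of the choice of $F$ and $G$; running the identical argument for $\Fp,\Gp$ gives $\{E_1,E_2\}=S=\{\Ep_1,\Ep_2\}$.

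The hard part will be the degenerate situation in which this clean bitangent picture breaks down: when the branch quartic is reducible or non-reduced, as occurs for the deeper singularity types, or when $E_1=E_2$, the divisor $\kappa^{-1}(\ell)$ may fail to be reduced and the contact points may collide, so the step ``$\kappa(p_1)\neq\kappa(p_2)$ and $\ell$ is their join'' needs a separate justification. To cover all cases uniformly I would fall back on a purely combinatorial argument. Using connectedness of each fibre $f^{-1}(p_i)$, reduce to the case where $\Fp$ is adjacent to $F$ (that is, $F\cdot\Fp=1$) with $G=\Gp$; then, with Proposition \ref{Prop3.4}, Lemma \ref{Lem:minusF}, Lemma \ref{Lem3.6}, Proposition \ref{Prop3.7} and the intersection Tables \ref{Table1}--\ref{Table3}, one checks that passing from $F$ to the neighbouring $\Fp$ alters the pre$(-1)$curves $D_1,D_2$ only by $(-2)$curves supported on $f^{-1}(p_1)$, and hence leaves the contained $(-1)$curves $E_1,E_2$ (the unique prime components of anticanonical degree $1$, by Corollary \ref{Cor3.5} and Lemma \ref{Lem3.2}) unchanged. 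Iterating along the chains of $(-2)$curves in $f^{-1}(p_1)$ and then in $f^{-1}(p_2)$ would then give the full statement.
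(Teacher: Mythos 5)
Your fallback combinatorial argument is, in outline, exactly the paper's proof: the paper reduces along chains of $(-2)$curves in each fibre to the case $F=\Fp$, $G\cdot\Gp=1$, computes $(D_1+D_2)\cdot\Gp=(-\Kxt-F-G)\cdot\Gp=-1$ to conclude that $\Gp$ is a component of (say) $D_1$, and then uses Lemma \ref{Lem:minusF} to see that $D_1-\Gp$ is a pre$(-1)$curve meeting $F$ and $\Gp$ once and $G$ not at all, hence is one of $\Dp_1,\Dp_2$ by Proposition \ref{Prop3.8}(1); that two-line computation is the entire content of the step you leave as ``one checks'', so you should actually perform it. Your primary geometric route is genuinely different from the paper: identifying $\{E_1,E_2\}$ with the strict transforms of the components of $\kappa^{-1}(\ell)$, where $\ell$ is the line through $\kappa(p_1)$ and $\kappa(p_2)$, is a conceptual characterization the paper never makes explicit, and it buys an intrinsic description of the pair that is manifestly independent of the chosen $(-2)$curves. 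It can be completed: the missing ingredient for $\kappa(p_1)\neq\kappa(p_2)$ is that the singular points of $X$ lie on the ramification locus of $\kappa$ (visible from the equation $w^2+wq_2+q_4=0$ in any characteristic), over which $\kappa$ is set-theoretically injective; and the case $E_1=E_2$ causes no real trouble, since $\kappa^*\ell=2f(E_1)$ still determines $E_1$. As written, though, both of your routes stop just short of the decisive verification, and the paper's argument has the advantage of being purely lattice-theoretic, with no dependence on the characteristic, the separability of $\kappa$, or the shape of the branch curve.
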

\begin{proof}
	Since $f(F) = f(\Fp)$ and $f(G) = f(\Gp)$, there exist sequences of $(-2)$curves $\{ F_1, F_2, \cdots, F_n\}$ such that $F_1 = F$, $F_n = \Fp$ and $F_i \cdot F_{i+1} = 1$ and $\{ G_1, G_2, \cdots, G_m \}$ such that $G_1 = G$, $G_n = \Gp$ and $G_i \cdot G_{i+1} = 1$. Then, to show that $(F, G)$ and $(\Fp, \Gp)$ define the same $(-1)$curves, it suffices to show the case where $F = \Fp$ and $G \cdot \Gp = 1$. Indeed, If this case is proved, we can show that $(F_1, G_1)$ and $(F_1, G_2)$ define the same two $(-1)$curves. Further, $(F_1, G_2)$ and $(F_1, G_3)$ define the same $(-1)$curves. By repeating this, $(F_1,G_1)$ and $(F_1, G_m)$ define the same $(-1)$curves.
$$
\xygraph{!~:{@{--}}
	\circ ([]!{+(0,+.2)} {F = F_1}) (
	- [r] \circ ([]!{+(0,+.2)} {F_2})
	- [r] \circ ([]!{+(0,+.2)} {F_3})
	- [r] \cdots
	- [r] \circ ([]!{+(0,+.2)} {\Fp = F_n}),
	: [d] \circ ([]!{+(0,-.2)} {G = G_1})
	- [r] \circ ([]!{+(0,-.2)} {G_2})
	- [r] \circ ([]!{+(0,-.2)} {G_3})
	- [r] \cdots
	- [r] \circ ([]!{+(0,-.2)} {\Gp = G_m}),
	: [rd] \ ,
	: [rrd] \ ,
	: [rrrrd] \ 
)
}
$$
Similarly, $(F_1, G_m), (F_2, G_m), (F_3, G_m), \cdots, (F_n, G_m)$ define the same two $(-1)$curves. Thus, $(F_1, G_1)$ and $(F_n, G_m)$ define the same two $(-1)$curves.

	Suppose that $F = \Fp$ and $G \cdot \Gp = 1$. Then $(D_1 + D_2) \cdot \Gp = -1$ since $(D_1 + D_2) \in |-\Kxt - F - G|$ and $G \cdot \Gp = 1$. This means $D_1$ or $D_2$ contains $\Gp$ (assume $\Gp \subset D_1$). Similarly, $(\Dp_1 + \Dp_2) \cdot G = -1$ and we can assume that $G \subset \Dp_1$. By Lemma {\ref{Lem:minusF}}, $(D_1 - \Gp)$ is a pre$(-1)$curve. Then, $(D_1 - \Gp)$ satisfies $(D_1 - \Gp) \cdot F = (D_1 - \Gp) \cdot \Gp = 1$ and $(D_1 - \Gp) \cdot G = 0$. This means $D_1 - \Gp = \Dp_2$ and $E_1 = \Ep_2$. Similarly, $\Dp_1 - G = D_2$ and thus $\Ep_1 = E_2$. Thus $\{ E_1,E_2 \} = \{ \Ep_1, \Ep_2 \}$.
\end{proof}
	Proposition {\ref{Prop3.9}} and {\ref{Prop3.10}} mean that two singular points define two $(-1)$curves on $\Xt$ which intersect or a single $(-1)$curve on $\Xt$. The following lemma states that such two $(-1)$curves pass through the two singularities.
\begin{lem}{\label{Lem3.12}}
	Let $p$ be a singular point of $\Xkb$ and let $\mF$ be the sum of $(-2)$curves collapsed by $f$ to $p$ (that is, $\mF = f^{-1}(p)$). For a $(-2)$curve $F$ contained in $\mF$ and a pre$(-1)$curve $D$, if $D \cdot F = 1$ then the $(-1)$curve $E \subset D$ intersects one of $(-2)$curves in $\mF$.
\end{lem}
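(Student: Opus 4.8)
The plan is to induct on the number of $(-2)$curves contained in $D$, stripping them off one at a time by means of Proposition~\ref{Prop3.4} and Lemma~\ref{Lem:minusF}. By Lemma~\ref{Lem3.2} I write $D = E + \Delta$, where $E$ is the unique $(-1)$curve in $D$ and $\Delta := D - E$ satisfies $\Delta \cdot \Kxt = 0$; by Lemma~\ref{Lem:exccurves}(4), $\Delta$ is an effective sum of $(-2)$curves. The induction is on the total coefficient $\deg\Delta$ of the $(-2)$curves in $\Delta$, a nonnegative integer. If $\deg\Delta = 0$, then $D = E$ is a $(-1)$curve and the hypothesis reads $E \cdot F = D \cdot F = 1$, so $E$ already meets the $(-2)$curve $F \subset \mathcal F$ and we are done.

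Suppose $\Delta \neq 0$, so that $D$ is not a $(-1)$curve. By Proposition~\ref{Prop3.4} there is a $(-2)$curve $F_0$ with $D \cdot F_0 = -1$; being an irreducible curve meeting the effective divisor $D$ negatively, $F_0$ is a component of $D$, hence of $\Delta$. Lemma~\ref{Lem:minusF} then shows that $D' := D - F_0$ is again a pre$(-1)$curve, whose unique $(-1)$curve is still $E$ and for which $\deg(D' - E) = \deg\Delta - 1$. Since $D \cdot F = 1 \neq -1 = D \cdot F_0$ we have $F_0 \neq F$, so $F_0$ and $F$ are distinct irreducible curves and $0 \le F_0 \cdot F \le 1$ by Lemma~\ref{Lem:intersec_No}(3). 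In either case I will produce a $(-2)$curve in $\mathcal F$ that $D'$ meets with multiplicity one, and then conclude by the inductive hypothesis applied to $D'$ (recall its $(-1)$curve is the same $E$).

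If $F_0 \cdot F = 0$, then $D' \cdot F = D \cdot F - F_0 \cdot F = 1$ with $F \subset \mathcal F$, and the inductive hypothesis applied to the pair $(D', F)$ shows that $E$ meets $\mathcal F$. If $F_0 \cdot F = 1$, then $F_0$ meets $F$, so the two lie in the same connected component of the $(-2)$configuration; by Corollary~\ref{Cor:RDPDP<->weakDP} this forces $f(F_0) = f(F) = p$, i.e. $F_0 \subset \mathcal F$. Moreover $D' \cdot F_0 = D \cdot F_0 - F_0^2 = -1 + 2 = 1$, so the inductive hypothesis applied to $(D', F_0)$ again gives that $E$ meets $\mathcal F$. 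This completes the induction.

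The delicate step is the case $F_0 \cdot F = 1$, where the $(-2)$curve supplied by Proposition~\ref{Prop3.4} is adjacent to $F$ rather than disjoint from it. The observation that rescues the argument is that such an $F_0$ automatically lies in the same fiber $\mathcal F$ (because it meets $F$) and satisfies $D' \cdot F_0 = 1$, so one may transfer the role of the reference $(-2)$curve from $F$ to $F_0$ and keep descending. As an alternative to the whole induction, one could instead prove directly that $D$ is connected --- using $\chi(\sh{\Xt}(-D)) = 0$ together with the vanishing $H^1(\Xt, \sh{\Xt}(-D)) = 0$ (which follows from Riemann--Roch and the nefness of $-\Kxt$) to deduce $h^0(\sh{D}) = 1$ --- and then note that, were $E$ disjoint from $\mathcal F$, the part of $\Delta$ supported on $\mathcal F$, which is nonzero since $D \cdot F = 1$, would form a separate connected component of $D$, contradicting connectedness.
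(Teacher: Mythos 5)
Your proof is correct and follows essentially the same route as the paper's: induct on the size of the $(-2)$-part of $D$, use Proposition~\ref{Prop3.4} and Lemma~\ref{Lem:minusF} to strip off a $(-2)$curve $F_0$ with $D\cdot F_0=-1$, and split into the cases $F_0\cdot F=0$ (keep $F$ as the reference curve) and $F_0\cdot F=1$ (note $F_0\subset\mF$ and switch the reference curve to $F_0$). Your induction variable (total coefficient of the $(-2)$-part) is a slightly cleaner measure than the paper's ``number of components,'' and the observation that the $(-1)$curve of $D-F_0$ is still $E$ is exactly what the paper uses implicitly.
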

\begin{proof}
	Let $n$ be the number of components of $D$. We show this lemma by induction on $n$.

	If $n = 1$, then $D = E$ and $E$ intersects $F \subset \mF$. Suppose that $n > 1$. By Proposition {\ref{Prop3.4}}, there exists a $(-2)$curve $\Fp$ such that $\Fp \cdot D = -1$. By Lemma {\ref{Lem:minusF}}, $(D - \Fp)$ is a pre$(-1)$curve. We show that $(D - \Fp)$ intersects $\mF$. If $F \cdot \Fp = 0$, we have $(D - \Fp) \cdot F = 1$. If $F \cdot \Fp = 1$, we have $(D - \Fp) \cdot \Fp = 1$ and $\Fp \subset \mF$. Thus $(D - \Fp)$ intersects $\mF$ in any cases. By induction hypothesis, the $(-1)$curve $\subset D - \Fp$, that is $E$, intersects $\mF$.
\end{proof}
	By collapsing the $(-1)$curve(s) defined by two singularities, we have the following theorem:
\begin{thm}{\label{Thm3.11}}
	Let $X$ be a RDP Del Pezzo surface of degree $2$ over a perfect field $k$ and $\Xt$ be a minimal resolution of $X$. If $X \times_k \kb$ has just two singular points $p_1, p_2$, then there exists a birational morphism $\Xt \rightarrow \Vt$, where $\Vt$ is a weak Del Pezzo surface of degree $3$ or $4$. In particular, $X$ is $k$-unirational if $\Xt$ has a $k$-point.
\end{thm}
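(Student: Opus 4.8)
The plan is to read off from the two singular points a canonical, Galois-stable configuration of $(-1)$curves on $\Xt \times_k \kb$, to contract it over $k$, and then to invoke the degree $3$ and $4$ unirationality results recalled in Section 2.2.

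First I would work over $\kb$. By Corollary \ref{Cor:RDPDP<->weakDP}, the fibers $f^{-1}(p_1)$ and $f^{-1}(p_2)$ of the minimal resolution $f : \Xt \to X$ are sums of $(-2)$curves; choosing $(-2)$curves $F \subset f^{-1}(p_1)$ and $G \subset f^{-1}(p_2)$, these lie in distinct fibers and hence satisfy $F \cdot G = 0$. Proposition \ref{Prop3.8} then produces exactly two pre$(-1)$curves meeting both $F$ and $G$, and by Lemma \ref{Lem3.2} each contains a unique $(-1)$curve; call them $E_1, E_2$. Proposition \ref{Prop3.9} gives $E_1 \cdot E_2 \in \{0,-1\}$, while Proposition \ref{Prop3.10} shows that the unordered pair $\{E_1, E_2\}$ is independent of the choice of $F$ and $G$ inside the two fibers, so it depends only on the set $\{p_1, p_2\}$. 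Since two distinct irreducible curves meet non-negatively, $E_1 \cdot E_2 = -1$ forces $E_1 = E_2$; thus the construction attaches to $\{p_1,p_2\}$ a canonical set $S$ which is either a single $(-1)$curve or a pair of disjoint $(-1)$curves.

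Next I would descend and contract. Because $X$ is defined over $k$, its singular locus $\{p_1, p_2\}$ is stable under $\Gal(\kb/k)$, and the canonicity just established yields $\sigma(S) = S$ for every $\sigma \in \Gal(\kb/k)$. Hence $S$ is a Galois-stable family of pairwise disjoint $(-1)$curves, and by Galois descent its Castelnuovo contraction descends to a birational morphism $g : \Xt \to \Vt$ defined over $k$. Since $g^{*}K_{\Vt} = \Kxt - \sum_{E \in S} E$, for any irreducible curve $C'$ on $\Vt$ with strict transform $C$ one has $-K_{\Vt} \cdot C' = \bigl(-\Kxt + \sum_{E \in S} E\bigr)\cdot C \geq 0$ (using that $-\Kxt$ is nef and that $C$ meets each contracted curve non-negatively), so $-K_{\Vt}$ is nef; and $K_{\Vt}\cdot K_{\Vt} = \Kxt\cdot\Kxt + \#S \in \{3,4\}$. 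Therefore $\Vt$ is a weak Del Pezzo surface over $k$ of degree $3$ (if $S$ is a single curve) or $4$ (if $S$ is a disjoint pair), which is the assertion of the theorem.

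For the final clause, suppose $\Xt$ has a $k$-point $P$; then $g(P) \in \Vt(k)$. Collapsing the $(-2)$curves of $\Vt$ by the canonical morphism of Proposition \ref{collapsing weakDelPezzo}, which is defined over $k$, produces a RDP Del Pezzo surface $V_0$ of degree $3$ or $4$ over $k$ that carries a $k$-point and is $k$-birational to $X$. The facts recalled in Section 2.2 — that a RDP Del Pezzo surface of degree $3$ with a $k$-point is $k$-unirational (the discussion following Proposition \ref{Prop:bir degree3}), and likewise in degree $4$ (Proposition \ref{Prop:bir degree4} together with the ensuing discussion of Iskovskih surfaces) — show $V_0$ is $k$-unirational, and since unirationality is a $k$-birational invariant, so is $X$. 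The step I expect to be the main obstacle is the descent: one must justify that a possibly $\Gal(\kb/k)$-conjugate pair of disjoint $(-1)$curves can genuinely be contracted over $k$, so that $\Vt$ is a $k$-surface and $g$ a $k$-morphism, and confirm that nefness of $-K_{\Vt}$ is preserved so that the image is indeed a weak Del Pezzo surface.
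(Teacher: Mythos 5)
Your proposal is correct and follows essentially the same route as the paper: extract the canonical pair of $(-1)$curves from the two fibers via Propositions \ref{Prop3.8}, \ref{Prop3.9} and \ref{Prop3.10}, observe that Galois stability of the singular locus makes the (one- or two-element) configuration defined over $k$, contract it to a weak Del Pezzo surface of degree $3$ or $4$, and conclude via the degree $3$ and $4$ results of Section 2.2. The only difference is that you spell out the descent and the nefness of $-K_{\Vt}$ explicitly, which the paper leaves implicit.
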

\begin{proof}
	Let $f : \Xt \rightarrow X$ be the minimal resolution of $X$ and let $\mF_i$ be $f^{-1}(p_i)$ $( i = 1,2)$. Then two $(-1)$curves $E ,\Ep$ on $\Xt$ are defined by $\mF_1, \mF_2$ and  Proposition {\ref{Prop3.10}}. Since $X$ has only two singular points, $(E + \Ep)$ is defined over $k$. By Proposition {\ref{Prop3.9}}, $E \cdot \Ep = 0$ or $E = \Ep$. If $E \cdot \Ep = 0$, $(E + \Ep)$ can be collapsed. Thus we obtain a birational morphism to a weak Del Pezzo surface of degree $4$. If $E = \Ep$, $E$ is defined over $k$. Thus $X$ is birationally equivalent to a weak Del Pezzo surface of degree $3$. 

	The last statement follows from Proposition {\ref{Prop:bir degree3}} and {\ref{Prop:bir degree4}}.
\end{proof}

	In practice, $\Vt$ can be classified according to whether the type of singularities on $X$ is $2A_1, A_1 + A_2, A_1 + A_3, A_1 + A_4, A_1 + A_5, A_1 + D_4, A_1 + D_5, A_1 + D_6,  2A_2, A_2 + A_3, A_2 + A_4, A_2 + A_5$ or $2A_3$. This allows us to classify the unirationality of $X$ in more detail. See Appendix A.
	
\subsection{The case with three singular points}
	Let $X$ and $\Xt$ be as above and suppose that $X$ has three singular points. By Proposition {\ref{Prop3.10}}, these three singular points define 1 to 6 $(-1)$curves. We show these $(-1)$curves do not intersect.
\begin{lem}{\label{Lem3.13}}
	Let $\mF, \mF_1, \mF_2$ be inverse images of distinct three singular points on $X$ by $f$. Let $E_i$ be one of the $(-1)$curve intersecting $\mF$ and $\mF_i$ $( i = 1,2)$. Then $E_1 \cdot E_2 = 0$ or $-1$.
\end{lem}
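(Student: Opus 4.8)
The statement concerns three distinct singular points on $X$ with inverse-image $(-2)$-configurations $\mF, \mF_1, \mF_2$. The pair $(\mF, \mF_1)$ defines a pre$(-1)$curve and hence a $(-1)$curve $E_1$ (via Proposition~\ref{Prop3.8} and Lemma~\ref{Lem3.2}), and similarly $(\mF, \mF_2)$ defines $E_2$. The goal is $E_1 \cdot E_2 \in \{0,-1\}$, i.e. to rule out $E_1 \cdot E_2 \geq 1$ (the values $E_i \cdot E_j = -1, 0, 1, 2$ are the only possibilities by Lemma~\ref{Lem:intersec_No}(1) applied to the containing pre$(-1)$curves, but we work directly with the $(-1)$curves). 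The plan is to exploit the anti-canonical relations already established: by Proposition~\ref{Prop3.8}(3) the pre$(-1)$curve $D_1 \supset E_1$ sits in a pencil with a partner, and $(D_1 + \text{partner}) \in |{-\Kxt} - \mF - \mF_1|$ up to the summands of $(-2)$curves collapsing to the respective points.

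**Main computation.** The key idea is to write each $E_i$ inside an anti-canonical relation and compare. Concretely, for the pair $(\mF, \mF_1)$ Proposition~\ref{Prop3.8}(3) together with Lemma~\ref{Lem3.2} gives an effective divisor of the form $E_1 + E_1' + \mathcal{G}_1 \in |{-\Kxt}|$, where $\mathcal{G}_1$ is a sum of $(-2)$curves supported on $\mF \cup \mF_1$ (coming from completing the pre$(-1)$curves to $(-1)$curves and from the two collapsed configurations). The same construction for $(\mF, \mF_2)$ gives $E_2 + E_2' + \mathcal{G}_2 \in |{-\Kxt}|$ with $\mathcal{G}_2$ supported on $\mF \cup \mF_2$. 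Since the three singular points are distinct, $\mF, \mF_1, \mF_2$ are pairwise disjoint connected clusters of $(-2)$curves. I would then intersect $E_2$ with the first relation: from $(E_1 + E_1' + \mathcal{G}_1) \cdot E_2 = (-\Kxt)\cdot E_2 = 1$ (using $E_2$ a $(-1)$curve and Lemma~\ref{Lem:exccurves}(3)), and the fact that $E_2 \cdot \mathcal{G}_2$ is controlled because $E_2$ meets only $(-2)$curves in $\mF \cup \mF_2$, one extracts a numerical bound forcing $E_1 \cdot E_2 \leq 0$. The cleanest route mirrors the proof of Proposition~\ref{Prop3.9}: form $(E_1 + E_2 + \mathcal{F}) \in |{-\Kxt}|$ for a suitable sum $\mathcal{F}$ of $(-2)$curves containing representatives from $\mF$, $\mF_1$ and $\mF_2$, and compute
$$
-2 + 2\,E_1 \cdot E_2 = (E_1 + E_2)^2 = (-\Kxt - \mathcal{F})^2 = 2 + (\mathcal{F})^2.
$$
Because $\mathcal{F}$ draws from the three mutually disjoint clusters $\mF, \mF_1, \mF_2$, it has at least two connected components, so $(\mathcal{F})^2 \leq -4$, which yields $E_1 \cdot E_2 \leq 0$ exactly as in Proposition~\ref{Prop3.9}.

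**Main obstacle.** The delicate point is producing the anti-canonical relation $(E_1 + E_2 + \mathcal{F}) \in |{-\Kxt}|$ in the first place: unlike Proposition~\ref{Prop3.9}, where two clusters give the relation immediately from Proposition~\ref{Prop3.8}(3), here $E_1$ and $E_2$ come from two \emph{different} pairings that share the common cluster $\mF$. I expect the hard part to be verifying that the $(-1)$curves $E_1$ and $E_2$ can be completed by a \emph{single} coherent sum $\mathcal{F}$ of $(-2)$curves lying in $\mF \cup \mF_1 \cup \mF_2$ so that $E_1 + E_2 + \mathcal{F}$ is anti-canonical. The plan is to invoke the uniqueness in Proposition~\ref{Prop3.10}, which guarantees that the $(-1)$curves attached to a pair of clusters are independent of which individual $(-2)$curves $F \subset \mF$, $F_1 \subset \mF_1$ we select; this lets me choose representatives $F, F_1, F_2$ adjacent to $E_1$ and $E_2$ and assemble $\mathcal{F}$ from them. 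Once the relation is in hand, the connectedness argument and the inequality $(\mathcal{F})^2 \leq -4$ finish the proof just as in Proposition~\ref{Prop3.9}.
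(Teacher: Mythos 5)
There is a genuine gap, and it sits exactly where you flagged your ``main obstacle'': the anti-canonical relation $(E_1+E_2+\mF')\in|-\Kxt|$ that your whole computation rests on does not exist in general, and Proposition \ref{Prop3.10} cannot supply it. Proposition \ref{Prop3.10} only says that the pair of $(-1)$curves attached to \emph{one} pair of clusters is independent of the chosen components; it says nothing about fusing $(-1)$curves coming from two \emph{different} pairings into a single anticanonical divisor. In fact one can see directly that such a divisor is usually unavailable: since $(-\Kxt-E_1)^2=(-\Kxt-E_1)\cdot\Kxt=-1$, Corollary \ref{Cor3.5} says $|-\Kxt-E_1|$ consists of a single pre$(-1)$curve, so there is exactly one member of $|-\Kxt|$ containing $E_1$, and by Lemma \ref{Lem3.2} its only component of anticanonical degree $1$ other than $E_1$ is the partner $(-1)$curve $\Ep_1$ of $E_1$ for the pair $(\mF,\mF_1)$. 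Hence an anticanonical divisor containing both $E_1$ and $E_2$ exists only in the degenerate situation $E_2=\Ep_1$, which is not the generic case (the partner meets $\mF_1$, not $\mF_2$). Riemann--Roch confirms this: the Euler characteristic of $\sh{\Xt}(-\Kxt-E_1-E_2)$ is $E_1\cdot E_2-1$, so effectivity is only forced when $E_1\cdot E_2\geq 2$ --- i.e.\ precisely when the conclusion you are trying to prove fails. The computation $-2+2E_1\cdot E_2=2+(\mF')^2$ is therefore built on a divisor that is not there, and the argument is circular.

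The paper's proof avoids this by never combining $E_1$ and $E_2$ into an anticanonical divisor. It first rules out $E_1\cdot E_2=2$ cheaply: by Lemma \ref{Lem3.6} that would force $(E_1+E_2)\in|-\Kxt|$, impossible since $E_1$ meets a $(-2)$curve positively while anticanonical divisors meet every $(-2)$curve with multiplicity $0$. To rule out $E_1\cdot E_2=1$ it works in the pencil attached to a \emph{single} $(-2)$curve $F$, namely the component of the shared cluster $\mF$ meeting $E_1$: by Proposition \ref{Prop3.10} one may choose the pre$(-1)$curve $D_2\supset E_2$ for the pair $(\mF,\mF_2)$ so that $D_2\cdot F=1$ and $D_2\cdot F_2=1$ for some component $F_2$ of $\mF_2$. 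If $E_1\cdot E_2=1$ then $E_1\cdot D_2\geq 1$, and Proposition \ref{Prop3.7}(3) (the statement about two pre$(-1)$curves both meeting $F$ once) gives $(E_1+D_2)\in|-\Kxt-F|$; intersecting with $F_2$ yields $(E_1+D_2)\cdot F_2=-F\cdot F_2=0$, contradicting $D_2\cdot F_2=1$ and $E_1\cdot F_2\geq 0$. So the correct tool here is Proposition \ref{Prop3.7}, not the two-cluster relation of Proposition \ref{Prop3.8} that underlies Proposition \ref{Prop3.9}; if you want to repair your write-up, replace the search for a global anticanonical relation by this pencil argument.
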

\begin{proof}
	Since $(E_1 + E_2)$ intersects some $(-2)$curves, we have $(E_1 + E_2) \notin |-\Kxt|$. This means $E_1 \cdot E_2 \neq 2$ by Lemma {\ref{Lem3.6}}. Thus, it suffices to show that $E_1 \cdot E_2 \neq 1$. Let $F$ be the component of $\mF$ intersecting $E_1$, let $F_1$ be the component of $\mF_1$ intersecting $E_1$ and let $F_2$ be a component of $\mF_2$. By Proposition {\ref{Prop3.8}} and {\ref{Prop3.10}}, there exists a pre$(-1)$curve $D_2$ such that $F \cdot D_2 = F_2 \cdot D_2 = 1$ and $E_2 \subset D_2$. 

	Suppose that $E_1 \cdot E_2 = 1$. Then $E_1 \cdot D_2 \geq 1$. By Proposition {\ref{Prop3.7}}, $(E_1 + D_2) \in |-\Kxt - F|$ since $E_1 \cdot F = D_2 \cdot F = 1$. Thus $(E_1 + D_2) \cdot F_2 = 0$. This contradicts $D_2 \cdot F_2 = 1$. Thus $E_1 \cdot E_2 \leq 0$.
\end{proof}

\begin{thm}{\label{Thm3.14}}
	Let $X$ be a RDP Del Pezzo surface of degree $2$ over a perfect field $k$ and $\Xt$ be a minimal resolution of $X$. If $X \times_k \kb$ has just three singular points $p_1, p_2, p_3$, then there exists a birational morphism $\Xt \rightarrow \Vt$, where $\Vt$ is a weak Del Pezzo surface of degree at least  $3$. In particular, $X$ is $k$-unirational if $\Xt$ has a $k$-point.
\end{thm}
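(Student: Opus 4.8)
The plan is to imitate the proof of Theorem~{\ref{Thm3.11}}, producing enough $(-1)$curves (defined over $k$ after taking Galois orbits) to contract and land on a weak Del Pezzo surface of degree~$\geq 3$. With three singular points $p_1,p_2,p_3$, Proposition~{\ref{Prop3.10}} attaches to each unordered pair $\{p_a,p_b\}$ either a single $(-1)$curve or a pair of disjoint $(-1)$curves; thus across the three pairs we obtain between $1$ and $6$ $(-1)$curves in total. Lemma~{\ref{Lem3.13}} tells us that any two of these $(-1)$curves coming from two different pairs of singularities satisfy $E_1\cdot E_2 = 0$ or $-1$, so after removing coincidences the configuration is a disjoint union of $(-1)$curves.

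First I would collect all these $(-1)$curves into a single effective divisor $\mathcal{E}$. The crucial point is that this collection is \emph{canonical}: it is determined purely by the set of three singular points together with the adjacency data of the $(-2)$curves over $\kb$. Since $\Gal(\kb/k)$ permutes the three singularities and respects intersection numbers, it permutes the associated $(-1)$curves, so $\mathcal{E}$ is a Galois-stable divisor and hence defined over $k$. By Lemma~{\ref{Lem3.13}} together with Proposition~{\ref{Prop3.9}} the curves in $\mathcal{E}$ are pairwise disjoint (identifying any that coincide), so $\mathcal{E}$ can be contracted by a birational morphism $\Xt \to \Vt$ defined over $k$. Each contraction of a $(-1)$curve raises the degree by one, and since the number $c$ of distinct such curves satisfies $1 \leq c \leq 6$, the image $\Vt$ is a weak Del Pezzo surface of degree $2 + c$, which is at least~$3$.

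To conclude $k$-unirationality I would invoke the results of Section~2.2: once $\Vt$ has degree $\geq 3$ and carries a $k$-point (the image of the given $k$-point of $\Xt$), Propositions~{\ref{Prop:bir degree3}}, {\ref{Prop:bir degree4}} and~{\ref{Prop:bir degree>5}} yield $k$-unirationality, and unirationality is a birational invariant. One must check that the $k$-point of $\Xt$ does not get collapsed in a way that loses rationality, but since a $k$-point maps to a $k$-point under the $k$-morphism $\Xt\to\Vt$, the image surface has a $k$-point and the cited propositions apply.

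The main obstacle I anticipate is the disjointness bookkeeping when several pairs of singularities share a common $(-1)$curve: the counting of distinct curves and the verification that the resulting divisor is genuinely contractible (i.e. that no two surviving curves meet) must be handled carefully, using Lemma~{\ref{Lem3.13}} to rule out $E_1\cdot E_2 = 1$ and Proposition~{\ref{Prop3.10}} to identify overlaps correctly. A secondary subtlety is confirming that $\Vt$ is again a weak Del Pezzo surface rather than merely a smooth projective surface; this follows because contracting $(-1)$curves on a weak Del Pezzo surface preserves nefness of the anticanonical class, but I would state this explicitly rather than leave it implicit.
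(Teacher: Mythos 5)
Your proposal is correct and follows essentially the same route as the paper: it uses Proposition \ref{Prop3.10} to attach one or two $(-1)$curves to each pair of singularities, Proposition \ref{Prop3.9} and Lemma \ref{Lem3.13} to see that the resulting (at most six) curves are pairwise disjoint or coincident, and Galois-stability of the whole collection to contract it over $k$. The paper's proof is exactly this, with the degree bound and the appeal to Section 2.2 for unirationality handled the same way.
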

\begin{proof}
	Let $f : \Xt \rightarrow X$ be the minimal resolution of $X$ and let $\mF_i := f^{-1}(p_i)$ $( i = 1,2,3)$. Let $E_{ij} ,\Ep_{ij}$ be the $(-1)$curves defined by $\mF_i, \mF_j$ and Proposition {\ref{Prop3.10}}. Since $X$ has only three singularities, $(E_{12} + \Ep_{12} + E_{23} + \Ep_{23} + E_{13} + \Ep_{13})$ is defined over $k$. By Proposition {\ref{Prop3.9}}, we have $E_{ij} \cdot \Ep_{ij} = 0$ or $E_{ij} = \Ep_{ij}$ $(i,j = 1,2,3)$. By Lemma {\ref{Lem3.13}}, we have $E_{ij} \cdot E_{ik} = 0$ or $E_{ij} = E_{ik}$ $(i,j,k = 1,2,3)$. Thus the reduced components of $(E_{12} + \Ep_{12} + E_{23} + \Ep_{23} + E_{13} + \Ep_{13})$ can be collapsed and we obtain the desired birational morphism.
\end{proof}
As in Section 3.2, $\Vt$ can be classified according to whether the type of singularities on $X$ is  $3A_1, 2A_1 + A_2, 2A_1 + A_3, 2A_1 + D_4, A_1 + 2A_2, A_1 + A_2 + A_3, A_1 + 2A_3$ or $3A_2$. See Appendix A.
\subsection{The case with four singularities}
	Suppose that $\Xkb$ has just four singularities. Then the type of singularities is either $4A_1, 3A_1 + A_2, 3A_1 + A_3$ or $3A_1 + D_4$ by Proposition {\ref{Prop:singularity type}}. In particular, $\Xt$ has at least three $(-2)$curves which intersect no $(-2)$curve. From now on, a $(-2)$curve $F$ is a {\em{A$_1$($-$2)curve}} if $F \cdot G \neq 1$ for all $(-2)$curves $G$.

\begin{lem}{\label{Lem3.15}}
	Let $F_1, F_2$ be $A_1$ $(-2)$curves on $\Xt$. Let $D, \Dp$ be the two pre$(-1)$curves defined by $F_1$, $F_2$ and Proposition {\ref{Prop3.8}}. Then,
\begin{itemize}
	\item[(a)] at least one of $\{ D, \Dp\}$ is a $(-1)$curve.
	\item[(b)] the followings are equivalent:
		\begin{itemize}
			\item[(i)] $\Dp$ is not a $(-1)$curve.
			\item[(ii)] There exists a $(-2)$curve $F$ such that $F \neq F_1, F_2$ and $F \cdot D = 1$.
		\end{itemize}
\end{itemize}
\end{lem}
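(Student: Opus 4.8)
The plan is to reduce both parts to three ingredients: the linear equivalence $(D + \Dp) \in |-\Kxt - F_1 - F_2|$ from Proposition {\ref{Prop3.8}}(3); the criterion of Proposition {\ref{Prop3.4}} that a pre$(-1)$curve $C$ fails to be a $(-1)$curve exactly when some $(-2)$curve $F$ satisfies $C \cdot F = -1$; and the isolatedness of an $A_1$ $(-2)$curve, namely that if $F_i$ is an $A_1$ $(-2)$curve and $G$ is a $(-2)$curve with $G \neq F_i$ then $F_i \cdot G = 0$ (distinct prime curves meet non-negatively, and the value $1$ is excluded by the definition of an $A_1$ $(-2)$curve). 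Throughout I will also use $(-\Kxt) \cdot G = 0$ for every $(-2)$curve $G$ (Lemma {\ref{Lem:exccurves}}(4)).

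For part (a), I would argue that if $D$ is not a $(-1)$curve then $\Dp$ must be one. Let $E \subset D$ be the unique $(-1)$curve contained in $D$ (Lemma {\ref{Lem3.2}}, applicable since $d = 2 \neq 1$); as $D$ is not a $(-1)$curve, $E \neq D$. Because $F_1$ is an $A_1$ $(-2)$curve, its fibre $\mF := f^{-1}(f(F_1))$ under the minimal resolution $f$ equals $F_1$; since $D \cdot F_1 = 1$, Lemma {\ref{Lem3.12}} gives $E \cdot F_1 \geq 1$, and combined with $E \cdot F_1 \leq 1$ from Lemma {\ref{Lem:intersec_No}}(2) this forces $E \cdot F_1 = 1$, and likewise $E \cdot F_2 = 1$. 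Thus $E$ is a pre$(-1)$curve meeting both $F_1$ and $F_2$ in degree $1$, so by Proposition {\ref{Prop3.8}}(1) it coincides, as an effective divisor, with one of the two such pre$(-1)$curves $\{ D, \Dp\}$ (these being unique in their classes by Corollary {\ref{Cor3.5}}). Since $E \neq D$, we conclude $E = \Dp$, so $\Dp$ is a $(-1)$curve; hence at least one of $D, \Dp$ is a $(-1)$curve.

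For part (b), I would read off the intersections of the relevant $(-2)$curve with $D$ and $\Dp$ from the equivalences $D \sim -\Kxt - F_1 - F_2 - \Dp$ and $\Dp \sim -\Kxt - F_1 - F_2 - D$. For (ii)$\Rightarrow$(i): given a $(-2)$curve $F \neq F_1, F_2$ with $D \cdot F = 1$, the vanishing $(-\Kxt)\cdot F = F_1 \cdot F = F_2 \cdot F = 0$ yields $\Dp \cdot F = -1$, so $\Dp$ is not a $(-1)$curve by Proposition {\ref{Prop3.4}}. For (i)$\Rightarrow$(ii): if $\Dp$ is not a $(-1)$curve, Proposition {\ref{Prop3.4}} produces a $(-2)$curve $F$ with $\Dp \cdot F = -1$; since $\Dp \cdot F_1 = \Dp \cdot F_2 = 1$ we have $F \neq F_1, F_2$, and the same computation gives $D \cdot F = 1$, which is exactly (ii). I note that this equivalence holds without knowing which of $D, \Dp$ is the $(-1)$curve, which matches the asymmetric phrasing of the statement.

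The step demanding the most care is part (a): one must check that $E$ meets each $F_i$ with intersection exactly $1$ rather than merely positively — this uses both that $F_i$ is isolated (so its fibre is $F_i$ itself, which is what lets Lemma {\ref{Lem3.12}} apply) and the upper bound $E \cdot F_i \leq 1$ — and then one must upgrade the class-level conclusion of Proposition {\ref{Prop3.8}}(1) to the divisor identity $E = \Dp$ via the uniqueness in Corollary {\ref{Cor3.5}}. Once that is in place, part (b) is a short linear-equivalence computation in which the isolatedness of $F_1$ and $F_2$ removes all the cross terms.
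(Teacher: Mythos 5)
Your proof is correct and follows essentially the same route as the paper's: for (a) you extract the $(-1)$curve $E \subset D$ via Lemma \ref{Lem3.2}, use Lemma \ref{Lem3.12} (with the fibre over $f(F_i)$ reduced to $F_i$ by the $A_1$ hypothesis) to get $E \cdot F_1 = E \cdot F_2 = 1$, and conclude $E \in \{D, \Dp\}$ from Proposition \ref{Prop3.8}(1); for (b) you combine Proposition \ref{Prop3.4} with the linear equivalence $(D + \Dp) \in |-\Kxt - F_1 - F_2|$. You are somewhat more explicit than the paper about the bound $E \cdot F_i \leq 1$ and about why $F \neq F_1, F_2$ in the (i)$\Rightarrow$(ii) direction, but these are refinements of the same argument, not a different one.
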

\begin{proof}
	By Lemma {\ref{Lem3.2}}, $D$ contains a $(-1)$curve $E$ as a component. By Lemma {\ref{Lem3.12}}, we have $E \cdot F_1 = E \cdot F_2 = 1$. However, the only pre$(-1)$curves intersecting $F_1$ and $F_2$ are $D$ and $\Dp$ by Proposition {\ref{Prop3.8}} (1). Thus $E$ is $D$ or $\Dp$. Therefore, (a) holds.

	We show (b). Suppose that $\Dp$ is not a $(-1)$curve. By Proposition {\ref{Prop3.4}}, there exists a $(-2)$curve $F$ such that $F \cdot \Dp = -1$. On the other hand, $(D + \Dp) \in |-\Kxt - F_1 - F_2|$ by Proposition {\ref{Prop3.8}}. Thus $D \cdot F = 1$. Conversely, suppose that there exists a $(-2)$curve $F \neq F_1, F_2$ such that $F \cdot D = 1$. Then $\Dp \cdot F = -1$ since $(D + \Dp) \in | -\Kxt - F_1 - F_2|$. Thus $\Dp$ is not a $(-1)$curve.
\end{proof}

\begin{definition}
	Let $E$ be a $(-1)$curve on $\Xt$. $E$ is a {\em{($-$1)curve of Lemma {\ref{Lem3.15}}}} if $E$ intersects three $(-2)$curves.
\end{definition}

\begin{lem}{\label{Lem:(-1)curveofLem}}
	Let $E$ be a $(-1)$curve of Lemma {\ref{Lem3.15}} and $F_1, F_2, F_3$ be the three $(-2)$curves on $\Xt$ which intersect $E$. Then, the following statements hold:
\begin{itemize}
	\item[(1)] $2E \in |-\Kxt - F_1 - F_2 - F_3|$,
	\item[(2)] $F_i$ are $A_1$ $(-2)$curves,
	\item[(3)] $E$ intersects no $(-2)$curve except $F_1, F_2$ and $F_3$.
\end{itemize}
\end{lem}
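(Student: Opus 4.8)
The plan is to produce the single linear equivalence $-\Kxt \sim 2E + F_1 + F_2 + F_3$ first; statements (2) and (3) will then fall out by intersecting this relation with an arbitrary $(-2)$curve.

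First I would record that $E \cdot F_i = 1$ for each $i$: since $E$ is a $(-1)$curve meeting the distinct prime divisor $F_i$, its intersection with $F_i$ is nonnegative, is at most $1$ by Lemma \ref{Lem:intersec_No}(2), and is positive because they meet. The key construction is the auxiliary divisor $-\Kxt - E$. A direct computation gives $(-\Kxt - E)^2 = (-\Kxt - E)\cdot \Kxt = -1$, so by Corollary \ref{Cor3.5} the class $-\Kxt - E$ is represented by a unique pre$(-1)$curve $D$; thus $E + D \in |{-\Kxt}|$ and $E \cdot D = 2$ by Lemma \ref{Lem3.6}. Intersecting with $F_i$ gives $D \cdot F_i = -\Kxt\cdot F_i - E\cdot F_i = -1$, so each $F_i$ is a component of $D$ by Proposition \ref{Prop3.4}. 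Writing $D = E' + (\text{sum of }(-2)\text{curves})$ with $E'$ the $(-1)$curve component furnished by Lemma \ref{Lem3.2}, the $(-2)$-part contains $F_1 + F_2 + F_3$, so $E \cdot E' = E\cdot D - E\cdot(D - E') \le 2 - 3 = -1$; as $E$ and $E'$ are prime $(-1)$curves this forces $E' = E$. Hence $-\Kxt \sim 2E + G$, where $G := D - E$ is an effective sum of $(-2)$curves with $G \ge F_1 + F_2 + F_3$.

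Next I would exploit this relation numerically. Intersecting $-\Kxt \sim 2E + G$ with $E$, with $F_i$, and with $G$ gives $E\cdot G = 3$, $G\cdot F_i = -2$, and $G^2 = -6$. Since $E\cdot G = 3$ while each of $F_1,F_2,F_3$ already contributes $E\cdot F_i = 1$, every $F_i$ occurs in $G$ with multiplicity exactly $1$ and no other component of $G$ meets $E$; feeding this back into $G\cdot F_i = -2$ and using that all remaining intersection terms are nonnegative yields $F_i\cdot F_j = 0$ for $i\ne j$ and $F_i\cdot G' = 0$, where $G' := G - F_1 - F_2 - F_3 \ge 0$. Then $G^2 = (F_1+F_2+F_3)^2 + (G')^2 = -6 + (G')^2$, so $(G')^2 = 0$; since $G'$ lies in the negative-definite lattice $\Kxt^{\perp}$ spanned by the $(-2)$curves, $G' = 0$. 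This proves (1): $2E \in |{-\Kxt - F_1 - F_2 - F_3}|$.

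Finally, (2) and (3) follow from (1). For any $(-2)$curve $F$, intersecting $-\Kxt \sim 2E + F_1 + F_2 + F_3$ with $F$ gives $2\,E\cdot F + \sum_i F_i\cdot F = 0$. If $F\notin\{F_1,F_2,F_3\}$ all four terms are nonnegative, hence all vanish, giving $E\cdot F = 0$ and $F_i\cdot F = 0$; together with $F_i\cdot F_j = 0$ this shows each $F_i$ is an $A_1$ $(-2)$curve, which is (2), and that $E$ meets no $(-2)$curve other than $F_1,F_2,F_3$, which is (3). I expect the main obstacle to be the construction and analysis of the auxiliary pre$(-1)$curve $D = -\Kxt - E$: the clean relation $-\Kxt \sim 2E + G$ hinges on identifying the $(-1)$curve component of $D$ with $E$ itself, after which the negative-definiteness of $\Kxt^{\perp}$ does the rest.
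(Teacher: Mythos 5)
Your proof is correct, but it follows a genuinely different route from the paper's. The paper argues ``two $(-2)$curves at a time'': it applies Proposition \ref{Prop3.8} to the pair $(F_1,F_2)$ to get the companion pre$(-1)$curve $\Dp$ with $E+\Dp\in|-\Kxt-F_1-F_2|$, intersects with $F_3$ to force $\Dp\cdot F_3=-1$ and $F_1\cdot F_3=F_2\cdot F_3=0$, and then identifies $\Dp-F_3=E$ via Lemma \ref{Lem:minusF} and Lemma \ref{Lem3.3}, giving (1) directly; (2) and (3) then follow by intersecting with an arbitrary $(-2)$curve exactly as you do. You instead work with the full anticanonical class from the start: you take the unique pre$(-1)$curve $D\in|-\Kxt-E|$ (Corollary \ref{Cor3.5}), show its $(-1)$curve component is $E$ itself, and then pin down the residual sum of $(-2)$curves $G$ by the numerical identities $E\cdot G=3$, $G\cdot F_i=-2$, $G^2=-6$ together with negative definiteness of $\Kxt^{\perp}$. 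Your version is slightly longer but has a real advantage: the paper's invocation of Proposition \ref{Prop3.8} formally presupposes $F_1\cdot F_2=0$, which is not part of the hypotheses of the lemma (it only holds a priori in the contexts where the lemma is applied), whereas your argument derives $F_i\cdot F_j=0$ as output rather than consuming it as input. The paper's version buys brevity and stays entirely inside the already-established pre$(-1)$curve machinery, avoiding the appeal to definiteness of the orthogonal lattice (which the paper only makes explicit later, in Section 5).
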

\begin{proof}
	By Proposition {\ref{Prop3.8}}, there exists a pre$(-1)$curve $\Dp$ such that $\Dp \cdot F_1 = \Dp \cdot F_2 = 1$ and $(\Dp + E) \in |-\Kxt - F_1 - F_2|$. Then, $(\Dp + E) \cdot F_3 = -(F_1 + F_2) \cdot F_3 \leq 0$ and thus $\Dp \cdot F_3 \leq -1$. Therefore $\Dp \cdot F_3 = -1$ by Lemma {\ref{Lem:intersec_No}} and $F_1 \cdot F_3 = F_2 \cdot F_3 = 0$ (This is a part of (2)). By Lemma {\ref{Lem:minusF}}, $(\Dp - F_3)$ is a pre$(-1)$curve. Further $\Dp - F_3 = E$ since $(\Dp - F_3) \cdot E = -1$. Thus $2E \in |-\Kxt - F_1 - F_2 - F_3|$.

	We show $G \cdot F_i \leq 0$ for all $(-2)$curves $G$. If $G = F_i$, then it is already proved that $G \cdot F_j = 0$ $(j \neq i)$. Let $G$ be a $(-2)$curve $\neq F_1, F_2, F_3$. Since $2E \cdot G \geq 0$, we have $(F_1 + F_2 + F_3) \cdot G \leq 0$. Thus $F_1 \cdot G = F_2 \cdot G = F_3 \cdot G = 0$. Therefore (2) holds. (3) follows from (1) and (2).
\end{proof}

	The following lemma is also used in Section 3.5.
\begin{lem}{\label{Lem3.16}}
	Suppose that $\Xkb$ has at least four singularities and the type of singularities of $\Xkb$ is not $4A_1$. Let $F_1,F_2,F_3$ be $A_1$ $(-2)$curves on $\Xt$ and let $D_{ij}, \Dp_{ij}$ be the two pre$(-1)$curves defined by $F_i$, $F_j$ and Proposition {\ref{Prop3.8}} $(i,j = 1,2,3)$. Then at least one of $\{ D_{12}, \Dp_{12}, D_{13}, \Dp_{13}, D_{23}, \Dp_{23} \}$ is not a $(-1)$curve.
\end{lem}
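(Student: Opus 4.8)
The plan is to argue by contradiction: suppose all six divisors $D_{12},\Dp_{12},D_{13},\Dp_{13},D_{23},\Dp_{23}$ are $(-1)$curves, and deduce a contradiction with the hypothesis that the type is not $4A_1$. The first thing I would pin down is the intersection behaviour of these curves with the $(-2)$curves. Fix a pair $\{i,j\}$. Since both $D_{ij}$ and $\Dp_{ij}$ are assumed to be $(-1)$curves, applying Lemma \ref{Lem3.15} (b) to the pair $(D_{ij},\Dp_{ij})$ in both orderings shows that no $(-2)$curve $F\notin\{F_i,F_j\}$ satisfies $F\cdot D_{ij}=1$ or $F\cdot\Dp_{ij}=1$. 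As these are $(-1)$curves they meet every $(-2)$curve non-negatively, and by Lemma \ref{Lem:intersec_No} the only remaining possibility is $0$; hence each of $D_{ij},\Dp_{ij}$ meets exactly the two curves $F_i,F_j$ among all $(-2)$curves, with intersection number $1$ each, and meets no other $(-2)$curve.

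Next I would show the six $(-1)$curves are pairwise disjoint. Within a pair, $D_{ij}\cdot\Dp_{ij}=0$ by Proposition \ref{Prop3.8} (2), and $D_{ij}+\Dp_{ij}\in|{-\Kxt}-F_i-F_j|$ by Proposition \ref{Prop3.8} (3). For two distinct pairs, say $\{1,2\}$ and $\{1,3\}$, the previous step gives
\[
(D_{12}+\Dp_{12})\cdot D_{13}=({-\Kxt}-F_1-F_2)\cdot D_{13}=1-1-0=0 .
\]
The curves $D_{12},\Dp_{12},D_{13}$ are pairwise distinct $(-1)$curves (they meet different pairs of $F$'s), so $D_{12}\cdot D_{13}$ and $\Dp_{12}\cdot D_{13}$ are each non-negative and sum to $0$, hence both vanish. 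The identical computation over all three pairs shows that the six curves are mutually disjoint.

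I would then translate this into a rank count. Consider the seven classes $\Kxt,F_1,F_2,F_3,D_{12},D_{13},D_{23}$ in $\Pic(\Xt\times_k\kb)$. Their intersection numbers are now completely determined: $\Kxt^2=2$, $F_m^2=-2$, $D_{ij}^2=-1$, $\Kxt\cdot F_m=0$, $\Kxt\cdot D_{ij}=-1$, $F_m\cdot F_n=0$ for $m\neq n$, $D_{ij}\cdot F_m=1$ exactly when $m\in\{i,j\}$, and $D_{ij}\cdot D_{kl}=0$ for distinct pairs. A direct evaluation of the resulting $7\times 7$ Gram matrix gives determinant $2\neq 0$, so these seven classes are linearly independent and span a rank-$7$ sublattice $L$. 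Since $\Pic(\Xt\times_k\kb)$ has rank $8$, the orthogonal complement of $L$ is at most one-dimensional over $\mathbb{Q}$.

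Finally I would invoke the hypothesis. By Proposition \ref{Prop:singularity type}, a type with at least four singularities that is not $4A_1$ is one of $3A_1+A_2$, $3A_1+A_3$, $3A_1+D_4$, $5A_1$, $6A_1$ or $7A_1$; in every case, deleting the three $A_1$ curves $F_1,F_2,F_3$ leaves at least two further $(-2)$curves (the curves of the non-$A_1$ component, or the extra $A_1$ curves). These survivors form a disjoint union of $A$--$D$--$E$ configurations, so their intersection matrix is negative definite and they are linearly independent. Each such curve $G$ is orthogonal to $\Kxt$, to $F_1,F_2,F_3$ (it lies in a connected component of the $(-2)$configuration distinct from each $A_1$ curve), and to every $D_{ij}$ by the first step; thus the orthogonal complement of $L$ would contain two linearly independent classes, contradicting the rank count. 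The conceptual crux is the disjointness step, where assuming that \emph{all} six divisors are $(-1)$curves forces them to be mutually disjoint and so makes $L$ too large; the two routine-but-essential verifications are the $7\times 7$ determinant and the check, case by case from Proposition \ref{Prop:singularity type}, that at least two independent $(-2)$curves survive the deletion of $F_1,F_2,F_3$.
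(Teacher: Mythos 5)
Your proof is correct. Its first two steps coincide in content with the paper's: assuming all six divisors are $(-1)$curves, Lemma \ref{Lem3.15}(b) applied in both orderings forces each of $D_{ij},\Dp_{ij}$ to meet no $(-2)$curve other than $F_i$ and $F_j$, and the six curves are pairwise disjoint (the paper obtains disjointness from Proposition \ref{Prop3.8}(2) together with Lemma \ref{Lem3.13}, whereas your computation $(D_{12}+\Dp_{12})\cdot D_{13}=(-\Kxt-F_1-F_2)\cdot D_{13}=0$ combined with the non-negativity of intersections of distinct irreducible curves is a clean self-contained substitute). Where you genuinely diverge is in extracting the contradiction. The paper contracts the six disjoint $(-1)$curves to obtain a weak Del Pezzo surface of degree $8$ over $\kb$, notes that at least two $(-2)$curves survive the contraction because the type is not $4A_1$, and invokes the classification of Proposition \ref{Prop:HW} (a weak Del Pezzo surface of degree $8$ is $\bF_2$ or smooth, hence carries at most one $(-2)$curve). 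You instead stay inside $\Pic(\Xt\times_k\kb)$: the Gram determinant of $\Kxt,F_1,F_2,F_3,D_{12},D_{13},D_{23}$ is indeed $2$, so these classes span a rank-$7$ sublattice of the rank-$8$ Picard lattice, while the at least two surviving $(-2)$curves are orthogonal to all seven classes and linearly independent, which is impossible. The two routes are of comparable length; yours avoids the classification of degree-$8$ weak Del Pezzo surfaces and needs only three of the six curves, at the cost of a $7\times 7$ determinant evaluation, while the paper's contraction argument is more geometric and reuses machinery already present in Section 2.
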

\begin{proof}
	Suppose that all of the six pre$(-1)$curves are $(-1)$curves. By Lemma {\ref{Lem3.15}}, $D_{ij}, \Dp_{ij}$ do not intersect with any $(-2)$curves except $F_i$ and $F_j$. Thus $D_{12}$, $\Dp_{12}$, $D_{13}$, $\Dp_{13}$, $D_{23}$ and $\Dp_{23}$ are distinct six $(-1)$curves. Further, these six $(-1)$curves do not intersect with each other by Lemma {\ref{Lem3.13}}. Thus $(D_{12} + \Dp_{12} + D_{13} + \Dp_{13} + D_{23} + \Dp_{23})$ defines a birational morphism $\Xt \times_k \kb \rightarrow \Vt$ over $\kb$. Then $\Vt$ is a weak Del Pezzo surface of degree $8$ over $\kb$. Since $\Xkb$ is not type $4A_1$, $\Xt$ has at least five $(-2)$curves and thus $\Vt$ has at least two $(-2)$curves. However, a weak Del Pezzo surface of degree $8$ has at most one $(-2)$curve. Indeed, a weak Del Pezzo surface is a Hirzebruch surface $\bF_2$ or a smooth Del Pezzo surface by Proposition {\ref{Prop:HW}} and $\bF_2$  has only one $(-2)$curve. Therefore, at least one of $\{ D_{12}, \Dp_{12}, D_{13}, \Dp_{13}, D_{23}, \Dp_{23} \}$ is not a $(-1)$curve.
\end{proof}

	By Lemma {\ref{Lem3.15}} and {\ref{Lem3.16}}, $\Xt$ has a $(-1)$curve of Lemma {\ref{Lem3.15}} except $4A_1$ type. By using this $(-1)$curve, we can show the following proposition.

\begin{prop}{\label{Prop3.17}}
	If singularities of $\Xkb$ is either type $3A_1 + A_2$, $3A_1 + A_3$ or $3A_1 + D_4$, then there exists a birational morphism from $\Xt$ to a weak Del Pezzo surface of degree $3$. In particular, $X$ is $k$-rational.
\end{prop}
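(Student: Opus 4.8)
The plan is to exploit the special $(-1)$curve of Lemma~\ref{Lem3.15} that exists in each of these three cases. By Lemma~\ref{Lem3.15} and Lemma~\ref{Lem3.16}, since the singularity type of $\Xkb$ is not $4A_1$ but has (at least) three $A_1$ $(-2)$curves, there is a $(-1)$curve $E$ on $\Xt$ meeting three $(-2)$curves $F_1,F_2,F_3$, and by Lemma~\ref{Lem:(-1)curveofLem} we have $2E \in |-\Kxt - F_1 - F_2 - F_3|$, each $F_i$ is an $A_1$ $(-2)$curve, and $E$ meets no other $(-2)$curve. First I would pin down how many such $(-1)$curves of Lemma~\ref{Lem3.15} there are in each type and their Galois action. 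Each type $3A_1+A_2$, $3A_1+A_3$, $3A_1+D_4$ has exactly three $A_1$ singularities ($f(F_1),f(F_2),f(F_3)$) whose union is defined over $k$ because the fourth singularity ($A_2$, $A_3$, or $D_4$) is of a different type and so cannot be conjugate to any $A_1$ point; hence the set $\{F_1,F_2,F_3\}$ is Galois-stable.

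Next I would argue that $E$ is defined over $k$ (or, more precisely, that the relevant collection of such $(-1)$curves is Galois-stable and contractible). The key point is that once $\{F_1,F_2,F_3\}$ is fixed as a set by the Galois action, the linear equivalence class of $E$ is determined by the relation $2E \sim -\Kxt - F_1 - F_2 - F_3$, and by Corollary~\ref{Cor3.5} each pre$(-1)$curve is the unique effective divisor in its class; so $E$ is the unique $(-1)$curve of Lemma~\ref{Lem3.15} attached to $\{F_1,F_2,F_3\}$ and is therefore fixed by $\Gal(\kb/k)$, i.e. defined over $k$. Since $E$ is a single $(-1)$curve defined over $k$, it can be contracted over $k$ to give a birational morphism $\Xt \to \Vt$. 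Counting degrees, contracting one $(-1)$curve raises the degree from $2$ to $3$, and the resulting $\Vt$ is again a weak Del Pezzo surface (its anticanonical class stays nef: contracting $E$ only affects curves meeting $E$, namely the $F_i$, which become curves of self-intersection $-2+1=-1$ and no longer obstruct nefness). So $\Vt$ is a weak Del Pezzo surface of degree $3$.

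For the final rationality claim, I would invoke Proposition~\ref{Prop:bir degree3}: contracting $E$ merges $E$ with each $F_i$, turning the chain $F_i$--$E$--$F_j$ into configurations that reduce the number of singularities on the image RDP surface. In the $3A_1+A_2$ case the three $A_1$ points are replaced, lowering the singularity count to $\delta \le 4$ on the degree-$3$ image, and Proposition~\ref{Prop:bir degree3} then yields $k$-birationality to $\bP^2_k$ (when $\delta=1$ or $4$) or to a surface already known to be $k$-rational; I would verify case by case that the image RDP Del Pezzo surface of degree $3$ falls into a $k$-rational case of that proposition.

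The main obstacle I expect is the Galois-descent step, that is, showing $E$ itself (not merely the set of candidate $(-1)$curves) is individually defined over $k$: one must be careful that the three $A_1$ points are not permuted with the remaining singularity and that the uniqueness from Corollary~\ref{Cor3.5} really forces $E$ to be Galois-invariant rather than merely permuted within a larger orbit. The cleanest route is the uniqueness argument above — since $\{F_1,F_2,F_3\}$ is Galois-stable and $E$ is the unique effective divisor with $2E \sim -\Kxt - \sum F_i$, Galois must fix $E$ — but I would double-check in the $3A_1+A_3$ and $3A_1+D_4$ cases that the $A_1$ $(-2)$curves supporting $E$ are exactly the three coming from the $A_1$ singularities and are not confused with $(-2)$curves inside the $A_3$ or $D_4$ configuration.
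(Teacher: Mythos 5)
Your proposal is correct and follows essentially the same route as the paper's proof: existence of a $(-1)$curve of Lemma \ref{Lem3.15} via Lemmas \ref{Lem3.15} and \ref{Lem3.16}, uniqueness of that curve relative to the Galois-stable set of the three $A_1$ $(-2)$curves, hence Galois-invariance and contraction over $k$ to a weak Del Pezzo surface of degree $3$ whose associated RDP surface has $\delta = 1$, so that Proposition \ref{Prop:bir degree3} gives $k$-rationality. The only cosmetic difference is the uniqueness step: the paper computes $2E \cdot 2\Ep = (-\Kxt - F_1 - F_2 - F_3)^2 = -4$ to force $E = \Ep$ directly, whereas you pass from $2E \sim 2\Ep$ to $E \sim \Ep$ (which needs the torsion-freeness of $\Pic(\Xt \times_k \kb)$, worth stating explicitly) and then apply Corollary \ref{Cor3.5}.
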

\begin{proof}
	Let $F_1, F_2, F_3$ be the three $A_1$ $(-2)$curves. By Lemma {\ref{Lem3.15}} and {\ref{Lem3.16}}, there exists a $(-1)$curve of Lemma {\ref{Lem3.15}} which intersects $F_1, F_2$ and $F_3$, denoted by $E$. Then $E$ is a unique $(-1)$curve of Lemma {\ref{Lem3.15}}. Indeed, if $\Ep$ is a $(-1)$curve of Lemma {\ref{Lem3.15}}, then $2\Ep$ is also contained in $|-\Kxt - F_1 - F_2 - F_3|$ by Lemma {\ref{Lem:(-1)curveofLem}}. Thus $2E \cdot 2\Ep = (-\Kxt - F_1 - F_2 - F_3)^2 = -4$ and $E = \Ep$. Since $E$ is unique, $E$ is defined over $k$ and defines a birational morphism $\Xt \rightarrow \Vt$, where $\Vt$ is a weak Del Pezzo surface of degree $3$ over $k$. Further the surface obtained by collapsing all $(-2)$curves on $\Vt$ is a RDP Del Pezzo surface of degree $3$ with only one singularity.  It is $k$-rational by Proposition {\ref{Prop:bir degree3}}.
\end{proof}

	Even if singularities of $\Xkb$ is type $4A_1$, $\Xt$ may be non-minimal.
\begin{prop}{\label{Prop3.18}}
	If the type of singularities of $\Xkb$ is type $4A_1$ but not $\ctext{3}$ of Theorem {\ref{MainThm1}}, then $\Xt$ is not minimal.
\end{prop}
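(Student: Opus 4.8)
The plan is to read the configuration of pre$(-1)$curves against the four $A_1$ $(-2)$curves $F_1,F_2,F_3,F_4$ off Tables \ref{Table1}--\ref{Table3}, and to split according to the intrinsic invariant whether the class $w:=\tfrac12(F_1+F_2+F_3+F_4)$ lies in $\Pic(\Xt\times_k\kb)$. For a $(-1)$curve $E$ one has $E\cdot F_i\in\{0,1\}$, so $E\cdot(F_1+F_2+F_3+F_4)$ counts the $F_i$ met by $E$; hence if $w$ is integral every $(-1)$curve meets an \emph{even} number of the $F_i$, while if $w\notin\Pic$ a single $F_i$ is singled out. Since a Galois-stable set of pairwise disjoint $(-1)$curves can be contracted over $k$ (as in the proofs of Theorem \ref{Thm3.11} and Proposition \ref{Prop3.17}), in each case I aim to exhibit such a set.

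Case $w\in\Pic$. Then no $(-1)$curve meets exactly one (or exactly three) of the $F_i$. The crucial point is that any two $(-1)$curves $E,E'$ with $E\cdot F_i=E'\cdot F_i=1$ are disjoint: $E\cdot E'=2$ is impossible by Lemma \ref{Lem3.6} (it would give $(E+E')\cdot F_i=0$), and $E\cdot E'=1$ would force $E+E'\in|-\Kxt-F_i|$ by Proposition \ref{Prop3.7}(3), hence $E\cdot F_j=0$ for all $j\neq i$, i.e. $E$ meets only $F_i$, contradicting the parity. Thus the $(-1)$curves through a fixed $F_i$ (there is at least one, by Lemma \ref{Lem3.15}(a)) are pairwise disjoint. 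As $X$ is not $\ctext{3}$ of Theorem \ref{MainThm1}, $\Gal(\kb/k)$ acts non-transitively on $\{F_1,\dots,F_4\}$: either some $F_i$ is Galois-fixed, and the disjoint $(-1)$curves through it form a Galois-stable set to contract, or the orbits have shape $2+2$, and the two disjoint $(-1)$curves attached to a Galois-stable pair $\{F_a,F_b\}$ (Propositions \ref{Prop3.8}--\ref{Prop3.10}) do the job.

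Case $w\notin\Pic$. A short computation with Tables \ref{Table2}--\ref{Table3} shows that exactly one index, say $i=1$, satisfies $-\Kxt-F_2-F_3-F_4\in 2\Pic$; this characterizes $F_1$ intrinsically, so $F_1$ is Galois-fixed and in particular the four points are never all conjugate in this case. Writing $2E=-\Kxt-F_2-F_3-F_4$, Corollary \ref{Cor3.5} produces a $(-1)$curve of Lemma \ref{Lem:(-1)curveofLem} meeting $F_2,F_3,F_4$, which is unique by the uniqueness argument of Proposition \ref{Prop3.17} (two such $E$ would satisfy $2E\equiv 2E'$ in $|-\Kxt-F_2-F_3-F_4|$). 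Being the only $(-1)$curve meeting three $(-2)$curves, $E$ is Galois-fixed, hence defined over $k$, and contracting it shows $\Xt$ is non-minimal.

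The hard part is the orbit type $3+1$ in the case $w\in\Pic$: a $3$-cycle fixing one point leaves no Galois-stable pair of singular points, so the two-point reduction of Theorem \ref{Thm3.11} is unavailable. The decisive new input is the parity argument above, which lets one contract \emph{all} the $(-1)$curves through the single fixed $(-2)$curve simultaneously; one must also note that this disjointness genuinely fails when $w\notin\Pic$ (two $(-1)$curves through the distinguished $F_1$ can meet, e.g. $\mC_{24}\cdot\mC_{26}=1$ in the notation of Table \ref{Table1}), which is exactly why the two $4A_1$ configurations must be separated.
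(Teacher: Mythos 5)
Your proof is correct, and it reaches the same endpoint as the paper (in each case one contracts a Galois-stable set of pairwise disjoint $(-1)$curves attached to the four $(-2)$curves), but the route is organized differently. The paper splits into three cases according to whether a $(-1)$curve of Lemma~\ref{Lem3.15} exists and, if not, whether the Galois orbits on $\{F_1,\dots,F_4\}$ have a fixed point or are of shape $2+2$; disjointness of the contracted curves is then read off from Proposition~\ref{Prop3.9}, Lemma~\ref{Lem3.13} and the explicit configurations. You instead split on the lattice invariant $w=\tfrac12\sum F_i\in\Pic(\Xt\times_k\kb)$, which is exactly the dichotomy between the two $W(E_7)$-orbits of $4A_1$ root subsystems and is equivalent to the paper's ``does a $(-1)$curve of Lemma~\ref{Lem3.15} exist'' (your case $w\notin\Pic$ is the paper's case (1), and $w\in\Pic$ covers its cases (2) and (3)). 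What your version buys is the parity argument: once $w$ is integral, every $(-1)$curve meets an even number of the $F_i$, and this single observation kills both $E\cdot E'=2$ (via Lemma~\ref{Lem3.6}) and $E\cdot E'=1$ (via Proposition~\ref{Prop3.7}(3)) for two $(-1)$curves through a common $F_i$, giving the disjointness uniformly and handling the $3+1$ orbit type without further case analysis; the paper obtains the same counts less conceptually, essentially re-deriving them in Lemma~\ref{Lem5.2}, equations (\ref{equation5.2})--(\ref{equation5.5}). The one step you defer to ``a short computation with Tables~\ref{Table2}--\ref{Table3}'' is the claim that $w\notin\Pic$ forces some index $i$ with $-\Kxt-\sum_{j\neq i}F_j\in 2\Pic$; this is true (uniqueness follows since $F_a-F_b\in 2\Pic$ is impossible for distinct roots, and existence amounts to the fact that the $4A_1$ configurations fall into exactly two $W(E_7)$-orbits, e.g.\ $\{\mBp_{123},\mBp_{145},\mBp_{167},\mCp_1\}$ versus $\{\mAp_{1,2},\mAp_{3,4},\mBp_{125},\mBp_{345}\}$), but it is the only place where your argument rests on an enumeration you have not actually displayed, so it should be written out at the same level of detail as the paper's other table checks.
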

\begin{proof}
	Under the assumption in the statement, we have the following three cases.
\begin{itemize}
	\item[(1)] $\Xt$ has a $(-1)$curve of Lemma {\ref{Lem3.15}};
	\item[(2)] $\Xt$ has no $(-1)$curve of Lemma {\ref{Lem3.15}} and $\Xt$ has a singular point defined over $k$;
	\item[(3)] $\Xt$ has no $(-1)$curve of Lemma {\ref{Lem3.15}} and the four singularities of $\Xkb$ are conjugate by two.  
\end{itemize}

	Suppose (1). We show that the $(-1)$curve of Lemma {\ref{Lem3.15}} is unique. Let $E_1, E_2$ be $(-1)$curves of Lemma {\ref{Lem3.15}}. Since $\Xt$ has just four $(-2)$curves, there exist at least two $(-2)$curves which intersect with both $E_1$ and $E_2$. Thus $E_1 = E_2$ by Lemma {\ref{Lem3.15}}. Thus the unique $(-1)$curve of Lemma {\ref{Lem3.15}} defines a birational morphism $\Xt \rightarrow \Vt$ where $\Vt$ is a weak Del Pezzo surface of degree $3$.

	Suppose (2). Let $F$ be the $(-2)$curve defined over $k$ and $F_1, F_2, F_3$ be the other $(-2)$curves. Let $D_i, \Dp_i$ be the pre$(-1)$curves which intersects $F$ and $F_i$ $( i = 1,2,3)$. Then $D_i, \Dp_i$ are $(-1)$curves by Lemma {\ref{Lem3.15}}. Since $(D_1 + \Dp_1 + D_2 + \Dp_2 + D_3 + \Dp_3)$ is defined over $k$, we obtain a birational morphism $\Xt \rightarrow \Vt$ where $\Vt$ is a weak Del Pezzo surface of degree $8$.

	Suppose (3). Let $F_1, F_2, F_3$ and $F_4$ be the $(-2)$curves. We can assume that $(F_1 + F_2)$ and $(F_3 + F_4)$ are defined over $k$. Let $D_{12}$ and $\Dp_{12}$ be the two pre$(-1)$curves which intersect $F_1$ and $F_2$. Then $D_{12}$ and $\Dp_{12}$ are $(-1)$curves by Lemma {\ref{Lem3.15}}. Therefore $(D_{12} + \Dp_{12})$ defines a birational morphism $\Xt \rightarrow \Vt$ where $\Vt$ is a smooth Del Pezzo surface of degree $4$. In particular, this $\Vt$ is a minimal resolution of an Iskovskih surface.

	From the above, $\Xt$ is not minimal in any cases.
\end{proof}
\begin{rem}
	The configurations of $\Xt$ of Proposition {\ref{Prop3.18}} are the following, where vertices $\circ$ and $\bullet$ are $(-2)$curves on $\Xt$ and two $(-1)$curves in proof of Proposition {\ref{Prop3.18}}, respectively. Each edge represents the intersection of two vertices.

$$(1)
\xygraph{
	\bullet (
		- []!{+(-1,-.5)} \circ,
		- []!{+(0,-.5)} \circ,
		- []!{+(1,-.5)} \circ
		 [r] \circ,
		)
}
$$
$$
(2)
\xygraph{
	\circ ([]!{+(0,+.2)} {F}) (
		- []!{+(-2,-.6)} \bullet ([]!{+(-.3,0)} {D_1})
		- []!{+(0,-.6)} \circ ([]!{+(0,-.2)} {F_1}),
		- []!{+(-1.2,-.6)} \bullet ([]!{+(0,-.2)} {\Dp_1})
		- []!{+(-.8,-.6)} \circ,
		- []!{+(-.4,-.6)} \bullet ([]!{+(-.1,-.2)} {D_2})
		- []!{+(.4,-.6)} \circ ([]!{+(0,-.2)} {F_2}),
		- []!{+(.4,-.6)} \bullet ([]!{+(.1,-.2)} {\Dp_2})
		- []!{+(-.4,-.6)} \circ,
		- []!{+(2,-.6)} \bullet ([]!{+(.3,0)} {\Dp_3})
		- []!{+(0,-.6)} \circ ([]!{+(0,-.2)} {F_3}),
		- []!{+(1.2,-.6)} \bullet ([]!{+(0,-.2)} {D_3})
		- []!{+(.8,-.6)} \circ,
		)
}
$$
$$
(3)
\xygraph{
	\circ ([]!{+(0,+.2)} {F_1}) (
		- []!{+(-.4,-.5)} \bullet ([]!{+(-.1,-.2)} {D_{12}})
		- []!{+(.4,-.5)} \circ ([]!{+(0,-.2)} {F_2}),
		- []!{+(.4,-.5)} \bullet ([]!{+(.1,-.2)} {\Dp_{12}})(
			[]!{+(.8,0)} \circ ([]!{+(0,+.2)} {F_3})
			[]!{+(.8,0)} \circ ([]!{+(0,+.2)} {F_4}),
			- []!{+(-.4,-.5)} \circ
			)
		)
}
$$
Thus, $\Vt$ of the proof of Proposition {\ref{Prop3.18}} is:
\begin{itemize}
	\item[] the minimal resolution of a RDP Del Pezzo surface of degree $3$ with $A_1$ singularity if (1);
	\item[] a smooth Del Pezzo surface of degree $8$ if (2);
	\item[] an Iskovskih surface if (3)
\end{itemize}

Moreover, by Proposition {\ref{Prop:bir degree3}} and {\ref{Prop:bir degree4}} $\Xt$ is
\begin{itemize}
	\item[] $k$-rational if (1);
	\item[] $k$-rational if (2) and $\Xt$ has a $k$-point;
	\item[] $k$-unirational if (3) and $\Xt$ has a $k$-point.
\end{itemize}
\end{rem}

\begin{thm}{\label{Thm:FourSingularities}}
	Let $X$ be a RDP Del Pezzo surface of degree $2$ over a perfect field $k$. Suppose that $X \times_k \kb$ has just four singular points. If not $\ctext{3}$ of Theorem {\ref{MainThm1}}, then $\Xt$ is not minimal.
\end{thm}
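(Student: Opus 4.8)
The plan is to observe that this theorem is essentially a bookkeeping statement that assembles the case analysis already carried out in Propositions {\ref{Prop3.17}} and {\ref{Prop3.18}}. The first step is to invoke Proposition {\ref{Prop:singularity type}}: since $\Xkb$ has exactly four singular points ($\delta = 4$), its singularity type is one of $4A_1$, $3A_1 + A_2$, $3A_1 + A_3$ or $3A_1 + D_4$. The hypothesis that $X$ is not of type $\ctext{3}$ of Theorem {\ref{MainThm1}} means precisely that we are \emph{not} in the situation ``$4A_1$ with all four singularities conjugate under $\Gal(\kb/k)$''. So the four types above split into exactly the two regimes covered by the two preceding propositions.

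First I would dispose of the three types $3A_1 + A_2$, $3A_1 + A_3$ and $3A_1 + D_4$. These are the types whose $(-2)$curve configuration contains three $A_1$ $(-2)$curves together with an extra $A_2$, $A_3$ or $D_4$ chain, so the hypotheses of Proposition {\ref{Prop3.17}} are met. That proposition produces a birational morphism from $\Xt$ onto a weak Del Pezzo surface $\Vt$ of degree $3$ (obtained by contracting the unique $(-1)$curve of Lemma {\ref{Lem3.15}}, which is automatically defined over $k$). Since this contraction lowers the degree, $\Xt$ cannot be minimal over $k$; in fact Proposition {\ref{Prop3.17}} already records that $X$ is $k$-rational in these cases.

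Next I would treat the remaining type $4A_1$. Here I would simply apply Proposition {\ref{Prop3.18}} verbatim: under the standing assumption that $X$ is not of type $\ctext{3}$, that proposition shows directly that $\Xt$ is not minimal, realizing $\Xt$ as a blow-up of a weak Del Pezzo surface of degree $3$, a smooth Del Pezzo surface of degree $8$, or an Iskovskih surface, according to the three subcases of its proof. Combining the two regimes yields the theorem.

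I do not expect any serious obstacle, since both halves of the argument are already established; the theorem is a direct corollary of Propositions {\ref{Prop:singularity type}}, {\ref{Prop3.17}} and {\ref{Prop3.18}}. The only point deserving a word is that the displayed birational morphisms $\Xt \to \Vt$ all contract a $\Gal(\kb/k)$-stable (hence $k$-rational) divisor, so each one genuinely witnesses non-minimality of $\Xt$ over $k$ rather than merely over $\kb$; this is built into the constructions of the cited propositions, where the contracted configurations are shown to be defined over $k$.
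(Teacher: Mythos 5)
Your argument is correct and is exactly the paper's proof: the theorem is deduced by combining Proposition \ref{Prop:singularity type} (to enumerate the four possible types when $\delta = 4$) with Proposition \ref{Prop3.17} for the types $3A_1 + A_2$, $3A_1 + A_3$, $3A_1 + D_4$ and Proposition \ref{Prop3.18} for the type $4A_1$ not of kind $\ctext{3}$. Your additional remark that the contracted configurations are Galois-stable, hence the morphisms are defined over $k$, is a useful explicit check that the paper leaves implicit in the cited propositions.
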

\begin{proof}
	This follows from Proposition {\ref{Prop3.17}} and {\ref{Prop3.18}}.
\end{proof}
\subsection{The case with at least five singularities}
	Let $\delta$ be the number of singularities of $\Xkb$. Suppose $\delta \geq 5$. Then $\Xkb$ has $\delta A_1$ singularities by Proposition {\ref{Prop:singularity type}}. Thus the following lemma holds.

\begin{lem}{\label{Lem3.19}}
	 Suppose that $\delta \geq 5$. For three $(-2)$curves $F_1, F_2, F_3$ on $\Xt$, there exists a $(-1)$curve $E$ satisfying the followings:
\begin{itemize}
	\item[(1)] $E$ intersects at least two of $\{ F_1, F_2, F_3 \}$.
	\item[(2)] $E$ is a $(-1)$curve of Lemma {\ref{Lem3.15}}.
\end{itemize} 
\end{lem}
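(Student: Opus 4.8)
The plan is to exploit the fact that when $\delta \geq 5$ the surface $\Xt$ has degree $2$ and contains exactly $\delta \geq 5$ mutually disjoint $(-2)$curves, all of type $A_1$, so the combinatorics of the $56$ pre$(-1)$curves (Corollary \ref{Cor:56pre-1curves}) is highly constrained. First I would fix two of the three given curves, say $F_1$ and $F_2$, which are disjoint $A_1$ $(-2)$curves, and apply Proposition \ref{Prop3.8} to obtain the two pre$(-1)$curves $D, \Dp$ with $D \cdot F_1 = D \cdot F_2 = 1$ and $(D + \Dp) \in |-\Kxt - F_1 - F_2|$. By Lemma \ref{Lem3.15}(a), at least one of these, say $D$, is a genuine $(-1)$curve, which already secures property (1) of the statement for the pair $\{F_1, F_2\}$. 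The remaining work is to upgrade this $(-1)$curve to one that meets a third $(-2)$curve, i.e.\ a $(-1)$curve of Lemma \ref{Lem3.15}.

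The key step is to count intersections with the other $(-2)$curves. Since $\Xkb$ is of type $\delta A_1$ with $\delta \geq 5$, there are at least three further disjoint $(-2)$curves beyond $F_1, F_2$. I would use the relation $(D + \Dp) \in |-\Kxt - F_1 - F_2|$ together with the intersection-number tables (Lemma \ref{Lem:intersec_No}) to show that the combined divisor $D + \Dp$ must meet the remaining $(-2)$curves; concretely, for any $(-2)$curve $G \neq F_1, F_2$ one computes $(D + \Dp) \cdot G = (-\Kxt - F_1 - F_2) \cdot G = -\Kxt \cdot G = 0$ only if $G$ is disjoint from both, but a global count of how many $(-2)$curves the pre$(-1)$curves can avoid forces some $G$ with $D \cdot G = 1$ (equivalently $\Dp \cdot G = -1$). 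By Lemma \ref{Lem3.15}(b), this is exactly the condition that makes one of $\{D,\Dp\}$ a $(-1)$curve meeting a third $(-2)$curve. I would then invoke Lemma \ref{Lem:(-1)curveofLem}(2)(3) to conclude that the three curves it meets are $A_1$ $(-2)$curves and that it is indeed a $(-1)$curve of Lemma \ref{Lem3.15}, giving property (2).

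The main obstacle will be ruling out the degenerate possibility that \emph{both} $D$ and $\Dp$ are $(-1)$curves that are disjoint from every $(-2)$curve except $F_1, F_2$ — precisely the situation forbidden by Lemma \ref{Lem3.16}. The cleanest route is to argue by contradiction mimicking that lemma: if the chosen $(-1)$curve met only $F_1$ and $F_2$ among all $(-2)$curves, then collapsing appropriate configurations of disjoint $(-1)$curves (as in the proof of Lemma \ref{Lem3.16}) would produce a weak Del Pezzo surface of degree $8$ carrying too many $(-2)$curves, which is impossible since such a surface is either a smooth Del Pezzo surface or $\bF_2$ and so has at most one $(-2)$curve (Proposition \ref{Prop:HW}). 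The bookkeeping is delicate because one must ensure the collapsed curves remain pairwise disjoint, for which Lemma \ref{Lem3.13} and Lemma \ref{Lem:(-1)curveofLem}(3) are the essential inputs; the large value $\delta \geq 5$ is what guarantees enough residual $(-2)$curves to force the contradiction.
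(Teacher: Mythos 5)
Your proposal assembles the right ingredients---Proposition \ref{Prop3.8}, Lemma \ref{Lem3.15} and Lemma \ref{Lem3.16}, which are exactly what the paper's one-line proof cites---but the logical structure is off in a way that matters. You fix the single pair $(F_1,F_2)$ and try to force one of its two pre$(-1)$curves $D,\Dp$ to meet a third $(-2)$curve, proposing to rule out the alternative (``both $D$ and $\Dp$ are $(-1)$curves disjoint from every $(-2)$curve except $F_1,F_2$'') as ``precisely the situation forbidden by Lemma \ref{Lem3.16}.'' It is not: Lemma \ref{Lem3.16} only forbids all \emph{six} pre$(-1)$curves attached to the three pairs $(F_i,F_j)$ from being $(-1)$curves simultaneously, and for a single pair the degenerate configuration genuinely occurs. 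Since the $(-2)$curves are pairwise disjoint in type $\delta A_1$, one has $(D+\Dp)\cdot G=(-\Kxt-F_1-F_2)\cdot G=0$ for every other $(-2)$curve $G$, hence $D\cdot G=-\Dp\cdot G$, and nothing at the level of this one pair prevents both from vanishing for every $G$. In fact in the $6A_1$ case the four $(-1)$curves of Lemma \ref{Lem3.15} produced by Proposition \ref{Prop3.21} cover (by Lemma \ref{Lem3.21}) only $4\times 3=12$ of the $\binom{6}{2}=15$ pairs of $(-2)$curves, so three pairs have both of their pre$(-1)$curves equal to plain $(-1)$curves meeting no third $(-2)$curve. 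The ``global count'' you invoke therefore does not exist for a fixed pair, and the contradiction you sketch cannot be reached from the single-pair hypothesis: collapsing just $D+\Dp$ yields a degree-$4$ surface, while the degree-$8$ contradiction in the proof of Lemma \ref{Lem3.16} requires all six pre$(-1)$curves from all three pairs to be disjoint $(-1)$curves.

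The repair is precisely the paper's argument. Since $\delta\geq 5$ forces type $\delta A_1$ by Proposition \ref{Prop:singularity type}, the given $F_1,F_2,F_3$ are $A_1$ $(-2)$curves and the type is not $4A_1$, so Lemma \ref{Lem3.16} applies and says that for \emph{some} pair $(F_i,F_j)$ one of the two pre$(-1)$curves, say $D_{ij}$, fails to be a $(-1)$curve. Then Lemma \ref{Lem3.15}(a) makes the other one, $\Dp_{ij}$, a $(-1)$curve, and Lemma \ref{Lem3.15}(b) provides a $(-2)$curve $F\neq F_i,F_j$ with $F\cdot \Dp_{ij}=1$, so $\Dp_{ij}$ meets three $(-2)$curves and is a $(-1)$curve of Lemma \ref{Lem3.15} meeting two of $\{F_1,F_2,F_3\}$. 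The statement only asks for a curve meeting \emph{some} two of the three given $(-2)$curves, so there is no need---and in general no way---to win for the particular pair $(F_1,F_2)$ you chose.
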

\begin{proof}
	This follows from Lemma {\ref{Lem3.15}} and {\ref{Lem3.16}}.
\end{proof}

\begin{lem}{\label{Lem3.21}}
	Suppose that $\delta \geq 5$. Let $E$ and $\Ep$ be two $(-1)$curves of Lemma {\ref{Lem3.15}} and let $F_i$ and $\Fp_i$ be $(-2)$curves such that $F_i \cdot E = 1$ and $\Fp_i \cdot \Ep = 1$ $(i = 1,2,3)$. Then $\# (\{ F_1, F_2, F_3 \} \cap \{ \Fp_1, \Fp_2, \Fp_3 \} )= 1$ and $E \cdot \Ep = 0$.
\end{lem}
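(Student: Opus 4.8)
The plan is to reduce everything to an intersection-number computation on $\Xt \times_k \kb$, using the linear equivalence recorded in Lemma \ref{Lem:(-1)curveofLem}(1). First I would use the hypothesis $\delta \geq 5$: by Proposition \ref{Prop:singularity type} the singularity type must be $\delta A_1$, so every $(-2)$curve on $\Xt$ is an $A_1$ $(-2)$curve and any two distinct $(-2)$curves are disjoint. In particular the three curves $F_i$ are distinct and pairwise disjoint, and likewise the three $\Fp_i$; a curve occurring in both lists contributes its self-intersection $-2$, while two genuinely different $(-2)$curves meet in $0$. We may also assume $E \neq \Ep$, since otherwise the asserted equality $E \cdot \Ep = 0$ is vacuous.

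Next I would invoke Lemma \ref{Lem:(-1)curveofLem}(1) twice to get the linear equivalences $2E \sim -\Kxt - F_1 - F_2 - F_3$ and $2\Ep \sim -\Kxt - \Fp_1 - \Fp_2 - \Fp_3$. Setting $t := \#(\{F_1,F_2,F_3\} \cap \{\Fp_1,\Fp_2,\Fp_3\})$ and expanding the product, using $\Kxt \cdot F_i = \Kxt \cdot \Fp_j = 0$ (Lemma \ref{Lem:exccurves}(4)) and $\Kxt^2 = 2$, one finds
\begin{equation*}
4\, E \cdot \Ep = (-\Kxt - F_1 - F_2 - F_3) \cdot (-\Kxt - \Fp_1 - \Fp_2 - \Fp_3) = 2 + \sum_{i,j} F_i \cdot \Fp_j = 2 - 2t ,
\end{equation*}
since each of the $t$ common $(-2)$curves contributes exactly one pair $(i,j)$ with $F_i = \Fp_j$, hence $F_i \cdot \Fp_j = -2$, and all other pairs vanish. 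Therefore $E \cdot \Ep = (1 - t)/2$.

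The decisive step is then an integrality/parity argument. Because $E \cdot \Ep$ is an integer, $1 - t$ must be even, so $t$ is odd; as $t \in \{0,1,2,3\}$ this leaves only $t = 1$ or $t = 3$. If $t = 3$ then $\{F_1,F_2,F_3\} = \{\Fp_1,\Fp_2,\Fp_3\}$ and the formula gives $E \cdot \Ep = -1$. But $E$ and $\Ep$ are distinct irreducible curves, so $E \cdot \Ep \geq 0$, a contradiction; equivalently, $2E$ and $2\Ep$ would both lie in $|-\Kxt - F_1 - F_2 - F_3|$, forcing $E = \Ep$ exactly as in the uniqueness argument in the proof of Proposition \ref{Prop3.17}. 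Hence $t = 1$, and substituting back yields $E \cdot \Ep = 0$, which delivers both conclusions simultaneously.

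The only genuine obstacle is ruling out the even values $t = 0$ and $t = 2$; this is precisely what integrality of $E \cdot \Ep$ achieves, and it is the reason the disjointness of the $(-2)$curves coming from the $\delta A_1$ hypothesis (equivalently, the clean value of the self- and cross-intersections) is essential rather than incidental to the computation.
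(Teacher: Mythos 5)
Your proof is correct and follows essentially the same route as the paper: both invoke Lemma \ref{Lem:(-1)curveofLem}(1) to write $2E \in |-\Kxt - F_1 - F_2 - F_3|$ and $2\Ep \in |-\Kxt - \Fp_1 - \Fp_2 - \Fp_3|$, compute $4\,E\cdot\Ep = 2 - 2t$, and then combine non-negativity of $E\cdot\Ep$ with integrality to force $t=1$. The only difference is cosmetic — you apply the parity argument before the non-negativity argument, whereas the paper does the reverse — and your explicit justification that the $F_i$ are pairwise disjoint via the $\delta A_1$ classification makes explicit a point the paper leaves implicit.
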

\begin{proof}
	Let $n := \# (\{ F_1, F_2, F_3 \} \cap \{ \Fp_1, \Fp_2, \Fp_3 \} )$. By Lemma {\ref{Lem:(-1)curveofLem}}, we have $2E \in |-\Kxt - F_1 - F_2 - F_3|$ and $2\Ep \in |-\Kxt - \Fp_1 - \Fp_2 - \Fp_3|$. Therefore $2E \cdot 2\Ep = 2 - 2n \geq 0$ and thus $n \leq 1$. However, $n = 0$ is impossible since $E \cdot \Ep$ is not an integer when $n = 0$. Therefore, $n = 1$ and $E \cdot \Ep = 0$. 
\end{proof}

	In the proofs of Proposition {\ref{Prop3.20}}, {\ref{Prop3.21}} and {\ref{Prop3.22}}, we use only Lemma {\ref{Lem3.19}} and {\ref{Lem3.21}}.
\begin{prop}{\label{Prop3.20}}
	If $\delta = 5$, $X$ is birationally equivalent to a smooth Del Pezzo surface of degree $4$. In particular, $X$ is $k$-unirational if $\Xt$ has a $k$-point.
\end{prop}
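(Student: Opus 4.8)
The plan is to produce a Galois-stable pair of disjoint $(-1)$curves on $\Xt$ whose contraction turns the five $A_1$ singularities into the configuration of a smooth surface of degree $4$. Since $\delta = 5$, the type of $\Xkb$ is $5A_1$, so $\Xt$ carries exactly five pairwise disjoint $(-2)$curves $F_1,\dots,F_5$, each an $A_1$ $(-2)$curve in the sense of Section 3.4.

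First I would pin down the $(-1)$curves of Lemma \ref{Lem3.15}. Applying Lemma \ref{Lem3.19} to $F_1,F_2,F_3$ produces such a curve $E$; after relabelling it meets exactly $F_1,F_2,F_3$, with $2E \in |-\Kxt - F_1 - F_2 - F_3|$ by Lemma \ref{Lem:(-1)curveofLem}. Applying Lemma \ref{Lem3.19} to $F_3,F_4,F_5$ produces a second such curve $E'$, and by Lemma \ref{Lem3.21} its triple of $(-2)$curves meets $\{F_1,F_2,F_3\}$ in exactly one member while $E \cdot E' = 0$. In either admissible case the two triples share a single $(-2)$curve and their union is $\{F_1,\dots,F_5\}$; thus $E$ and $E'$ together meet all five $F_i$, meeting the shared curve once each and each of the other four exactly once.

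The key combinatorial point, which I expect to be the main obstacle, is that $E$ and $E'$ are the \emph{only} $(-1)$curves of Lemma \ref{Lem3.15} on $\Xt$. By Lemma \ref{Lem3.21}, any further such curve would have a triple meeting each of the two triples of $E$ and $E'$ in exactly one element; a direct inspection of the $3$-subsets of the five-element set $\{F_1,\dots,F_5\}$ shows that no such triple exists. Since being a $(-1)$curve of Lemma \ref{Lem3.15} is a condition preserved by $\Gal(\kb/k)$ and there are exactly two of them, the set $\{E,E'\}$ is Galois-stable. Hence the disjoint union $E + E'$ is defined over $k$ and can be contracted over $k$.

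Finally I would contract $E + E'$ to obtain a birational morphism $\Xt \rightarrow \Vt$ over $k$, raising the degree from $2$ to $4$, and verify that $\Vt$ is a smooth Del Pezzo surface. Contracting disjoint $(-1)$curves preserves the nefness of $-\Kxt$, so $\Vt$ is a weak Del Pezzo surface of degree $4$; the four $F_i$ met by exactly one of $E, E'$ have images of self-intersection $-1$, while the shared curve has image of self-intersection $0$ and anticanonical degree $2$. Since every $F_i$ meets $E$ or $E'$ and $\Xt$ has no other $(-2)$curve, the surface $\Vt$ carries no $(-2)$curve and is therefore a genuine smooth Del Pezzo surface of degree $4$. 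As $X$ is birational to $\Xt$, it is birational to $\Vt$. If $\Xt$ has a $k$-point, its image is a $k$-point of $\Vt$, and a smooth Del Pezzo surface of degree $4$ with a $k$-point is $k$-unirational; since unirationality is a birational invariant, $X$ is $k$-unirational.
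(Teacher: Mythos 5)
Your proof is correct and follows essentially the same route as the paper: apply Lemma \ref{Lem3.19} twice to obtain two $(-1)$curves of Lemma \ref{Lem3.15}, use Lemma \ref{Lem3.21} to see that their triples cover all five $(-2)$curves and that no third such curve can exist, conclude that their sum is Galois-stable, and contract. Your verification that the contracted surface carries no $(-2)$curve is a welcome explicit detail that the paper leaves implicit.
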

\begin{proof}
	By using Lemma {\ref{Lem3.19}} twice, we obtain two $(-1)$curves of Lemma {\ref{Lem3.15}}, denoted by $E_1$ and $E_2$. We show there is no other $(-1)$curves of Lemma {\ref{Lem3.15}}. Let $F_1, F_2, F_3$ be $(-2)$curves on $\Xt$. $E_1$ or $E_2$ intersects with at least two of $\{ F_1, F_2, F_3 \}$. Thus it follows from Lemma {\ref{Lem3.21}} that, except $E_1, E_2$, there is no $(-1)$curve of Lemma {\ref{Lem3.15}} which intersects $F_1, F_2, F_3$. Therefore $(E_1 + E_2)$ defines a birational morphism $\Xt \rightarrow \Vt$ where $\Vt$ is a smooth Del Pezzo surface of degree $4$.
\end{proof}

\begin{prop}{\label{Prop3.21}}
	If $\delta = 6$, $X$ is birationally equivalent to a smooth Del Pezzo surface of degree $6$. In particular, $X$ is $k$-rational if $\Xt$ has a $k$-point.
\end{prop}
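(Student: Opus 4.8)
The plan is to mimic the proof of Proposition \ref{Prop3.20}, contracting a Galois-stable collection of pairwise disjoint $(-1)$curves of Lemma \ref{Lem3.15}. By Proposition \ref{Prop:singularity type}, $\delta = 6$ forces the type to be $6A_1$, so $\Xt$ carries exactly six $(-2)$curves $F_1, \dots, F_6$, pairwise disjoint and with no other $(-2)$curves. Since contracting a $(-1)$curve raises the degree by $1$, to reach degree $6 = 2 + 4$ I would look for four disjoint $(-1)$curves to blow down, and these should be the $(-1)$curves of Lemma \ref{Lem3.15}.

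First I would produce these curves and determine their combinatorics. Applying Lemma \ref{Lem3.19} to triples of the $F_i$ yields $(-1)$curves of Lemma \ref{Lem3.15}, each meeting exactly three of the $F_i$ by Lemma \ref{Lem:(-1)curveofLem}, and by Lemma \ref{Lem3.21} any two such curves are disjoint and share exactly one $F_i$. I would then identify the resulting incidence with the vertex-edge incidence of the complete graph $K_4$: the $(-1)$curves play the role of the four vertices and the six $(-2)$curves that of the six edges, so each $(-1)$curve meets three $F_i$ and each $F_i$ is met by exactly two of the $(-1)$curves. Counting incidences, $4 \times 3 = 6 \times 2 = 12$, confirms the bookkeeping, and a fifth curve of Lemma \ref{Lem3.15} would correspond to a set of three edges of $K_4$ meeting each vertex star in exactly one edge, i.e. a perfect matching of size three, which is impossible on four vertices. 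Hence there are exactly four such curves $E_1, \dots, E_4$.

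Next comes the descent and the contraction. The set $\{ E_1, \dots, E_4 \}$ is intrinsically characterized as the $(-1)$curves meeting exactly three $(-2)$curves, hence it is stable under $\Gal(\kb/k)$, so $E_1 + \cdots + E_4$ is defined over $k$. Since the $E_i$ are pairwise disjoint, simultaneous contraction gives a birational morphism $\Xt \rightarrow \Vt$ over $k$, with $\Vt$ a weak Del Pezzo surface of degree $2 + 4 = 6$. Because each $F_i$ meets exactly two of the contracted curves, its image has self-intersection $-2 + 2 = 0$; thus $\Vt$ carries no $(-2)$curve and is therefore a smooth Del Pezzo surface of degree $6$. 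As $X$ is birational to $\Xt$ and hence to $\Vt$, this proves the first assertion. For the last sentence, a $k$-point of $\Xt$ maps to a $k$-point of $\Vt$, and Proposition \ref{Prop:bir degree>5} then gives that $\Vt$, and hence $X$, is $k$-rational.

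The main obstacle I expect is the combinatorial step: rigorously pinning down that the $(-1)$curves of Lemma \ref{Lem3.15} realize exactly the $K_4$ configuration, in particular that there are precisely four of them and that each of the six $(-2)$curves is met by exactly two, since this is what simultaneously guarantees both that the degree of $\Vt$ is exactly $6$ and that $\Vt$ has no remaining $(-2)$curve, so that it is genuinely smooth rather than merely weak. The existence input from Lemma \ref{Lem3.19} and the pairwise-intersection input from Lemma \ref{Lem3.21} are the two ingredients that must be combined carefully to exclude degenerate configurations, such as a pencil of triples all through a single $(-2)$curve.
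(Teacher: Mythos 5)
Your overall strategy is the same as the paper's: contract the four $(-1)$curves of Lemma \ref{Lem3.15}, which form the vertex set of a $K_4$ whose six edges are the six $(-2)$curves, check Galois-stability and disjointness, and observe that the images of the $F_i$ become $0$-curves so that $\Vt$ is genuinely smooth of degree $6$. Your descent and contraction steps, and your smoothness argument via $(F_i')^2=-2+1+1=0$, are fine (and slightly more explicit than the paper's).

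However, the step you yourself flag as ``the main obstacle'' is a genuine gap, and it is precisely where the paper spends its entire proof. You assert the identification with the vertex--edge incidence of $K_4$, but the only things you actually derive from Lemmas \ref{Lem3.19}, \ref{Lem:(-1)curveofLem} and \ref{Lem3.21} are: such curves exist, each meets exactly three $F_i$, and any two share exactly one $F_i$. The consistency check $4\times 3=6\times 2$ is circular (it presupposes both that there are four curves and that each $F_i$ lies on two of them), and your fifth-curve argument only rules out a surplus \emph{after} the $K_4$ structure is in place. What is missing is the lower bound: that every $(-2)$curve $F$ meets at least two (hence exactly two) $(-1)$curves of Lemma \ref{Lem3.15}. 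The paper proves this by fixing $F$, letting $n$ be the number of such curves through $F$, and ruling out $n\geq 3$ (three curves through $F$ would pairwise share only $F$, forcing $1+2n>6$ distinct $(-2)$curves), $n=0$ (the five remaining $(-2)$curves support only two curves of Lemma \ref{Lem3.15}, as in the $\delta=5$ case, and applying Lemma \ref{Lem3.19} to $F$ together with one $(-2)$curve from each produces an impossible third one), and $n=1$ (applying Lemma \ref{Lem3.19} to $F$ and pairs from the three $(-2)$curves missed by the unique curve through $F$ forces, via Lemma \ref{Lem3.21}, three curves through a single $(-2)$curve, reducing to the $n\geq 3$ contradiction). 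These case eliminations require choosing the triples for Lemma \ref{Lem3.19} carefully and are not automatic from the data you list; without them your proof of the proposition is incomplete.
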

\begin{proof}
	Let $F$ be a $(-2)$curve on $\Xt$ and $n$ be the number of $(-1)$curves of Lemma {\ref{Lem3.15}} which intersect $F$. We show that $n = 2$. 

	Suppose that $n \geq 3$. Let $E_i$ be the $n$ $(-1)$curves of Lemma {\ref{Lem3.15}} $(i = 1,\cdots,n)$. By Lemma {\ref{Lem3.21}}, each $E_i$ intersects two $(-2)$curves except $F$ and the $2n$ $(-2)$curves are distinct. This contradicts $\delta = 6$.

	Suppose that $n = 0$. Similarly as Proposition {\ref{Prop3.20}}, the five $(-2)$curves except $F$ define two $(-1)$curves of Lemma {\ref{Lem3.15}}, denoted by $E_1, E_2$. Let $F_1$ be a $(-2)$curve such that $E_1 \cdot F_1 = 1$ and $E_2 \cdot F_1 = 0$. Let $F_2$ be a $(-2)$curve such that $E_1 \cdot F_2 = 0$ and $E_2 \cdot F_2 = 1$. By using Lemma {\ref{Lem3.19}} for $F$, $F_1$ and $F_2$, we obtain a new $(-1)$curve of Lemma {\ref{Lem3.15}}. This is impossible since there exist only two $(-1)$curves of Lemma {\ref{Lem3.15}} in the case of $\delta = 5$.

	Suppose that $n = 1$. Let $E$ be the $(-1)$curve of Lemma {\ref{Lem3.15}} intersecting $F$ and let $F_1, F_2$ be the $(-2)$curves intersecting $E$. Let $F_3, F_4, F_5$ be the other $(-2)$curves. By using Lemma {\ref{Lem3.19}} for $F$, $F_i$ and $F_j$ $(3 \leq i,j \leq 5)$, there exists a $(-1)$curve of Lemma {\ref{Lem3.15}} denoted by $E_{ij}$. Since $E_{ij} \cdot F = 0$, $E_{ij}$ intersects $F_i$, $F_j$ and one of $\{ F_1, F_2 \}$ by Lemma {\ref{Lem3.21}}. Thus one of $\{ F_1, F_2\}$ intersects two of $\{ E_{34}, E_{35}, E_{45}\}$. Then, $F_1$ or $F_2$ intersects three $(-1)$curves of Lemma {\ref{Lem3.15}}. This is impossible just as $n = 3$ is impossible.

	From the above, each $(-2)$curve intersects two $(-1)$curves of Lemma {\ref{Lem3.15}}. Thus $\Xt$ has $2 \times 6 / 3 = 4$ $(-1)$curves of Lemma {\ref{Lem3.15}}. By collapsing this four $(-1)$curves, $\Xt$ is birationally equivalent to a smooth Del Pezzo surface of degree $6$.
\end{proof}

\begin{prop}{\label{Prop3.22}}
	If $\delta = 7$, $X$ is birationally equivalent to a smooth Del Pezzo surface of degree $9$. In particular, $X$ is $k$-rational if $\Xt$ has a $k$-point.
\end{prop}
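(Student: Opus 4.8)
The plan is to exhibit seven pairwise disjoint $(-1)$curves of Lemma \ref{Lem3.15} that are permuted by $\Gal(\kb/k)$ and to contract them over $k$. Since $\delta = 7$ forces $\ch(k) = 2$ and type $7A_1$ (Proposition \ref{Prop:singularity type} (7)), the surface $\Xt \times_k \kb$ carries exactly seven pairwise disjoint $(-2)$curves $F_1,\dots,F_7$, one over each singular point. By the definition following Lemma \ref{Lem3.15} together with Lemma \ref{Lem:(-1)curveofLem} (3), every $(-1)$curve of Lemma \ref{Lem3.15} meets exactly three of the $F_i$; I will call this triple its \emph{support}, and note that by Lemma \ref{Lem:(-1)curveofLem} (1) the relation $2E \in |{-\Kxt} - F_i - F_j - F_l|$ shows a curve is determined by its support. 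First I would record, via Lemma \ref{Lem3.21}, that any two such $(-1)$curves are disjoint and that their supports meet in exactly one $(-2)$curve.

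The heart of the argument is to show that there are exactly seven $(-1)$curves of Lemma \ref{Lem3.15} and that their supports realise the Fano plane on $\{F_1,\dots,F_7\}$. The supports form a family of three-element subsets of a seven-element set which pairwise intersect in exactly one element (Lemma \ref{Lem3.21}), while Lemma \ref{Lem3.19} guarantees that for every triple $\{F_i,F_j,F_l\}$ some support meets it in at least two elements. I would first discard the ``pencil'' configuration in which all supports share a common $F_i$: then the remaining six curves would be partitioned into blocks $\{F_i,F_a,F_b\}$, and a triple picking one non-$F_i$ curve from each block would meet no support in two elements, contradicting Lemma \ref{Lem3.19}. A family of triples pairwise meeting in exactly one point which is not a pencil is contained in the set of lines of the projective plane of order two; and Lemma \ref{Lem3.19} excludes every proper subfamily, since deleting any Fano line $\{F_i,F_j,F_l\}$ leaves that very triple uncovered (in the Fano plane each pair of points lies on a unique line). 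Hence the supports are precisely the seven lines of the Fano plane, so there are exactly seven $(-1)$curves of Lemma \ref{Lem3.15}, each $F_i$ lies in exactly three of them, and by Lemma \ref{Lem3.21} they are pairwise disjoint. It is the \emph{exactly one} in Lemma \ref{Lem3.21}, rather than a mere upper bound, that rules out configurations containing two disjoint supports.

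Finally I would carry out the descent. The set of the seven $(-1)$curves of Lemma \ref{Lem3.15} is intrinsic to $\Xt \times_k \kb$, hence stable under $\Gal(\kb/k)$, so the reduced divisor they form is defined over $k$; since the seven curves are pairwise disjoint, their simultaneous contraction is a birational morphism $\Xt \times_k \kb \to \Vt \times_k \kb$ that is Galois-equivariant and therefore descends to a birational morphism $\Xt \to \Vt$ over $k$. Over $\kb$ the surface $\Vt$ has Picard number $8 - 7 = 1$ and $K_{\Vt} \cdot K_{\Vt} = 2 + 7 = 9$, so $\Vt \times_k \kb \cong \bP^2_{\kb}$ by Proposition \ref{Prop:HW} (ii); thus $\Vt$ is a smooth Del Pezzo surface of degree $9$, that is, a form of $\bP^2_k$. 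Consequently $X$, being birational to $\Xt$, is birationally equivalent over $k$ to $\Vt$. If moreover $\Xt$ has a $k$-point, then so does $\Vt$, and a form of $\bP^2$ with a $k$-point is isomorphic to $\bP^2_k$, whence $X$ is $k$-rational.

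I expect the main obstacle to be the combinatorial step pinning down the Fano configuration: one must feed the \emph{exactly one} of Lemma \ref{Lem3.21} and the covering property of Lemma \ref{Lem3.19} together to force all seven lines and to exclude both the pencil and the proper sub-Fano configurations, rather than simply bounding the number of supports through a fixed $F_i$. A second, more routine, point to verify is that the simultaneous contraction of the seven disjoint $(-1)$curves produces a smooth surface whose anticanonical class is ample of degree $9$, so that Proposition \ref{Prop:HW} (ii) applies.
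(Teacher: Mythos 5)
Your route is sound and reaches the same endpoint as the paper, but the combinatorial core is genuinely different. The paper argues locally: it fixes a single $(-2)$curve $F$, lets $n$ be the number of $(-1)$curves of Lemma \ref{Lem3.15} meeting $F$, and kills $n\geq 4$ (too many $(-2)$curves would be needed, as each such curve contributes two new ones by Lemma \ref{Lem3.21}), $n\leq 1$ (by the same covering argument as in the $\delta=6$ case, Proposition \ref{Prop3.21}) and $n=2$ (a third curve forced by Lemma \ref{Lem3.19} would have to meet four $(-2)$curves), concluding $n=3$ for every $F$ and counting $3\times 7/3=7$ curves. You instead argue globally: the supports form a pairwise-$1$-intersecting family of triples on seven points, you exclude the pencil via Lemma \ref{Lem3.19}, invoke the classification of such non-pencil families as subfamilies of the Fano line set, and then use Lemma \ref{Lem3.19} again to force all seven lines. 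Both proofs consume exactly the same two inputs (Lemmas \ref{Lem3.19} and \ref{Lem3.21}) and end with the identical contraction and descent, so the final step is interchangeable. What your version buys is a cleaner structural picture (the incidence geometry is literally the Fano plane, which also makes the Galois action transparent); what it costs is the one assertion you correctly flag as the main obstacle, namely that a non-pencil family of triples pairwise meeting in exactly one point on seven points embeds into the lines of a projective plane of order two. That statement is true, but it is not in the paper and is not a one-liner; as written it is the single unproven debt in your argument, and the paper's local count of curves through a fixed $(-2)$curve is precisely the elementary case analysis that avoids having to prove it. If you keep your route, either supply that classification (a finite case check starting from three supports not through a common point) or replace it by the paper's count of $n$.
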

\begin{proof}
	Let $F$ be a $(-2)$curve on $\Xt$ and let $n$ be the number of $(-1)$curves of Lemma {\ref{Lem3.15}} intersecting $F$. We show that $n =  3$.

	Suppose that $n \geq 4$. Let $E_i$ be the $n$ $(-1)$curves of Lemma {\ref{Lem3.15}} intersecting $F$ $( i = 1,\cdots,n)$. Each of $E_i$ intersects two $(-2)$curves except $F$. This contradicts $\delta = 7$.

	Similarly as the case of $\delta = 6$, we have $n \geq 2$. Suppose that $n = 2$. Let $E_1$ and $E_2$ be the two $(-1)$curves of Lemma {\ref{Lem3.15}} intersecting $F$. Let $F_1$ and $F_2$ be the remaining two $(-2)$curves intersecting $E_1$ and let $F_3$ and $F_4$ be the remaining two $(-2)$curves intersecting $E_2$. Let $F_5$ and $F_6$ be the other $(-2)$curves. By using Lemma {\ref{Lem3.19}} for $F$, $F_5$ and $F_6$, there exists a $(-1)$curve of Lemma {\ref{Lem3.15}} intersecting $F_5$ and $F_6$, denoted by $E_3$. By Lemma {\ref{Lem3.21}}, $E_3$ intersects one of $\{ F_1, F_2\}$ and one of $\{ F_3, F_4\}$. This is impossible since a $(-1)$curve of Lemma {\ref{Lem3.15}} does not intersect four $(-2)$curves. 

	From the above, each $(-2)$curve intersects three $(-1)$curves of Lemma {\ref{Lem3.15}}. Thus $\Xt$ has $3 \times 7 / 3 = 7$ $(-1)$curves of Lemma {\ref{Lem3.15}} for all $(-2)$curves. By collapsing this seven $(-1)$curves, $\Xt$ is birationally equivalent to a smooth Del Pezzo surface of degree $9$.
\end{proof}
\subsection{The case with one singularity}
	If $X \times \kb$ has only one singularity, the type of the singularity is either $A_1$, $A_2$, $A_3$, $A_4$, $A_5$, $A_6$, $A_7$, $D_4$, $D_5$, $D_6$, $E_6$ or $E_7$ by Proposition {\ref{Prop:singularity type}}. In this subsection, we show the following theorem:
\begin{thm}{\label{MainThm3.5}}
	Let $X$ be a RDP Del Pezzo surface of degree $2$ with only one singularity. If neither
\begin{itemize}
	\item[$\ctext{1}$] the singularity of $X$ is type $A_1$ nor
	\item[$\ctext{2}$] the singularity of $X$ is type $A_2$ and the two $(-2)$curves on $\Xt$ are conjugate
\end{itemize}
then $\Xt$ is not minimal.
\end{thm}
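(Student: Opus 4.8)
The plan is to produce on $\Xt$ a $\Gal(\kb/k)$-stable set of pairwise disjoint $(-1)$curves; contracting it gives a birational morphism $\Xt \to \Vt$ to a weak Del Pezzo surface of degree $\geq 3$, which in particular exhibits $\Xt$ as non-minimal over $k$ (and, via Section 2.2, yields $k$-unirationality once a $k$-point is present). The first observation is that, because $\Xkb$ has a \emph{unique} singular point $p$, that point is automatically fixed by $\Gal(\kb/k)$ and hence $k$-rational; therefore the whole reduced exceptional divisor $\mF = f^{-1}(p)$, that is, the entire configuration of $(-2)$curves, is a Galois-stable set. Consequently $\Gal(\kb/k)$ can permute the $(-2)$curves only through an automorphism of the associated Dynkin diagram, which is of order $\leq 2$ for $A_n$ ($n \geq 2$), $D_n$ ($n \geq 5$) and $E_6$, is $S_3$ for $D_4$, and is trivial for $E_7$. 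This is the key constraint that makes the Galois action on the $(-1)$curves tractable.

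Next I would enumerate the honest $(-1)$curves among the $56$ pre$(-1)$curves of Corollary \ref{Cor:56pre-1curves}. By Proposition \ref{Prop3.4} a pre$(-1)$curve $D$ is a $(-1)$curve precisely when $D \cdot F \geq 0$ for every $(-2)$curve $F$, so from Table \ref{Table3} one reads off, for each of the twelve singularity types, the list of genuine $(-1)$curves together with their intersection numbers with the members of $\mF$ and, via Tables \ref{Table1} and \ref{Table3}, with one another. The $(-1)$curve whose intersection profile with $\mF$ is most constrained tends to be unique: for example in type $E_7$ all seven $(-2)$curves impose a nonnegativity (``dominance'') condition, and there is exactly one $(-1)$curve $E$ with $E \cdot F \geq 0$ for all of them, so $E$ is Galois-invariant, hence defined over $k$, and its contraction is a birational morphism onto a weak Del Pezzo surface of degree $3$; the conclusion then follows from Proposition \ref{Prop:bir degree3}.

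For the remaining types I would sort the $(-1)$curves by their intersection profile with $\mF$ and use the restricted Galois action to isolate a profile whose curves are (i) pairwise disjoint---checked directly on the intersection tables, together with Lemma \ref{Lem3.6} (to exclude intersection number $2$) and arguments in the spirit of Lemma \ref{Lem3.13}---and (ii) closed under $\Gal(\kb/k)$, because such a profile is either a single curve (forced, as in $E_7$, $E_6$, $D_6$, $A_7$, and the larger $A_n$) or a disjoint pair interchanged by the surviving diagram automorphism. Contracting this stable disjoint family lowers the degree to $3$ or $4$, and the statement reduces to Propositions \ref{Prop:bir degree3}, \ref{Prop:bir degree4} and \ref{Prop:bir degree>5}. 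The type $A_2$ with the two $(-2)$curves \emph{not} conjugate is treated separately: there $F_1$ and $F_2$ are each individually $k$-rational, so a $(-1)$curve can be pinned down by its \emph{ordered} pair of intersection numbers $(D\cdot F_1, D\cdot F_2)$, which renders it Galois-invariant and hence contractible.

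I expect the main obstacle to be the third step: verifying uniformly across $A_3,\dots,A_7$, $D_4,D_5,D_6$ and $E_6$ that the Galois-stable profile one selects really consists of pairwise disjoint $(-1)$curves, and, crucially, explaining why this breaks down exactly for $A_1$ and for conjugate $A_2$. In those two excluded cases the only natural Galois-invariant candidates are a pair of $(-1)$curves that \emph{meet} (intersection number $1$ by Proposition \ref{Prop3.7}, or $2$ by Lemma \ref{Lem3.6}), so no Galois-stable disjoint family exists and the surface can genuinely be minimal. Pinning down this dichotomy---disjoint versus meeting---through the intersection tables is the delicate combinatorial heart of the argument, and it is the exact analogue in degree $2$ of the Iskovskih-surface obstruction in degree $4$ (Definition \ref{Def:Iskovski surface} and Proposition \ref{Prop:bir degree4}).
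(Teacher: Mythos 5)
Your overall shape --- produce a $\Gal(\kb/k)$-stable family of pairwise disjoint $(-1)$curves on $\Xt$ and contract it --- is exactly the shape of the paper's argument, and your opening observation that the Galois action on the exceptional configuration factors through the Dynkin diagram automorphisms is correct and is implicitly the source of Galois-stability in the paper as well. But the proposal stops short of the actual content of the proof: the verification that a stable family of \emph{pairwise disjoint} $(-1)$curves exists in each of the types $A_3,\dots,A_7,D_4,D_5,D_6,E_6,E_7$ is precisely the step you defer (``I expect the main obstacle to be the third step\dots''), and it is the step Section 3.6 is devoted to. The paper does not enumerate profiles; it chooses two \emph{terminal} $(-2)$curves $F,\Fp$ (each meeting only one other $(-2)$curve) with $F+\Fp$ defined over $k$, takes the two pre$(-1)$curves $D_1,D_2$ meeting both (Proposition \ref{Prop3.8}), and proves via the key Lemma \ref{Lem:NoIntersection} --- that the whole exceptional divisor $\mF$ sits inside $D_i+F+\Fp$ as a component --- that the $(-1)$curves $E_1\subset D_1$ and $E_2\subset D_2$ satisfy $E_1\cdot E_2\le 0$ (Proposition \ref{Prop:MainProp3.5}). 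Nothing in your proposal substitutes for this computation, and without it the disjointness of your selected family is unproved.

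Two of your auxiliary claims are also inaccurate. First, the dichotomy ``a Galois-stable profile is either a single curve or a disjoint pair interchanged by the surviving diagram automorphism'' fails for $D_4$, whose diagram automorphism group is $S_3$: no single pair of terminal nodes is stable, and the paper must assemble \emph{six} $(-1)$curves from the three pairs of outer nodes and check their pairwise disjointness by a separate explicit calculation at the end of the proof of Theorem \ref{MainThm3.5}. Second, in the non-conjugate $A_2$ case a $(-1)$curve is \emph{not} pinned down by the ordered pair $(D\cdot F_1,D\cdot F_2)$: by Proposition \ref{Prop:A2case} there are six $(-1)$curves with profile $(1,0)$; what is Galois-stable is the set of all six, and one still has to show they are pairwise disjoint before contracting (the paper does, obtaining a degree $8$ weak Del Pezzo surface). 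So you have identified the right target and the right obstruction in the excluded cases, but the combinatorial core --- supplied in the paper by Lemmas \ref{Lem:no(-1)curve}, \ref{Lem:NoIntersection} and Proposition \ref{Prop:MainProp3.5} --- is missing from the proposal.
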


	At first, we show this in case of $A_2$ type.
\begin{prop}{\label{Prop:A2case}}
	If the singularity of $\Xkb$ is type $A_2$ and the two $(-2)$curves on $\Xt$ are not conjugate, then each $(-2)$curve intersects six $(-1)$curves and the six $(-1)$curves do not intersect each other. In particular, $\Xt$ is birationally equivalent to a weak Del Pezzo surface of degree $8$ with a $(-2)$curve.
\end{prop}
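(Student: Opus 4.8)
The plan is to compute directly on $\Xt \times_k \kb$ using the geometric basis $l_0,\dots,l_7$ of Section~3.2 and the intersection Tables~\ref{Table1}--\ref{Table3}, separating the genuine $(-1)$curves from the merely pre$(-1)$ ones. Since $X$ has a single $A_2$ singularity over $\kb$, the surface $\Xt$ carries \emph{exactly two} $(-2)$curves $F_1,F_2$, and they form a chain $F_1\cdot F_2 = 1$. By Proposition~\ref{Prop:HW} I would realize $\Xt\times_k\kb$ as $V(\Sigma)$ and, after a change of geometric marking (an element of the Weyl group), normalize the chain to $F_1 = \mAp_{1,2} = l_1 - l_2$ and $F_2 = \mAp_{2,3} = l_2 - l_3$, so that the tables apply verbatim; one should record that this normal form (together with the handful of Weyl-equivalent forms, if any) exhausts the possibilities for an $A_2$ chain, in the same spirit as the reduction in the proof of Proposition~\ref{Prop3.8}.

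First I would count the genuine $(-1)$curves meeting $F_1$. By Proposition~\ref{Prop3.7}(1) there are exactly $12$ pre$(-1)$curves $D$ with $D\cdot F_1 = 1$, which in the proof of that proposition are listed as $\mA_2$, $\mB_{1i}$ and $\mC_{2i}$ $(i=3,\dots,7)$, and $\mD_1$. The key reduction is that, because $F_1$ and $F_2$ are the only $(-2)$curves and $D\cdot F_1 = 1>0$, Proposition~\ref{Prop3.4} shows that such a $D$ fails to be a $(-1)$curve if and only if $D\cdot F_2 = -1$; by Lemma~\ref{Lem:intersec_No} this single number lies in $\{-1,0,1\}$. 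Reading $D\cdot\mAp_{2,3}$ off Table~\ref{Table3} for each of the twelve classes, I expect $\mA_2$, $\mB_{13}$ and $\mC_{2i}$ $(i=4,\dots,7)$ to have intersection $-1$ (hence not to be $(-1)$curves), while the remaining six classes $\mB_{14},\mB_{15},\mB_{16},\mB_{17},\mC_{23},\mD_1$ meet $F_2$ in $0$ and are therefore genuine $(-1)$curves. This produces the count of six, and the symmetric computation gives six for $F_2$.

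Next I would verify the two geometric assertions from Table~\ref{Table1}. Computing the pairwise intersections of $\{\mB_{14},\mB_{15},\mB_{16},\mB_{17},\mC_{23},\mD_1\}$, every pair has intersection number $0$ (distinct $\mB_{1i}$ share only the index $1$, each $\mB_{1i}$ meets neither $\mC_{23}$ nor $\mD_1$, and $\mC_{23}\cdot\mD_1 = 0$), so the six $(-1)$curves are pairwise disjoint. Moreover each of them satisfies $D\cdot F_2 = 0$, so all six are disjoint from $F_2$. Hence they can be contracted simultaneously; since each meets $F_1$ once while missing $F_2$, after the contraction the image of $F_1$ acquires self-intersection $-2+6 = 4$ whereas the image of $F_2$ remains a $(-2)$curve, and the degree rises from $2$ to $2+6 = 8$, giving a weak Del Pezzo surface $\Vt$ of degree $8$ carrying a $(-2)$curve.

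Finally comes the descent to $k$, which I regard as the main point to get right. Because the two $(-2)$curves are assumed \emph{not} conjugate under $\Gal(\kb/k)$, each of $F_1,F_2$ is individually Galois-stable, hence defined over $k$. The family of six $(-1)$curves meeting $F_1$ is intrinsically attached to $F_1$, so it too is Galois-stable; being a disjoint Galois-stable family, it is contracted by a birational morphism $\Xt\to\Vt$ defined over $k$, and $\Vt$ is the asserted form of a degree-$8$ weak Del Pezzo surface with the $(-2)$curve image of $F_2$. The only real obstacles are the preliminary Weyl-group normalization of the $A_2$ chain and the bookkeeping that isolates the six genuine $(-1)$curves among the twelve pre$(-1)$curves; once the normal form is fixed, every intersection is a direct lookup in Tables~\ref{Table1} and~\ref{Table3}.
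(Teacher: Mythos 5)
Your proposal is correct and follows the same overall strategy as the paper: both arguments start from the twelve pre$(-1)$curves $D$ with $D \cdot F_1 = 1$ supplied by Proposition \ref{Prop3.7}(1), use Proposition \ref{Prop3.4} together with the fact that $F_1, F_2$ are the \emph{only} $(-2)$curves to see that such a $D$ fails to be a $(-1)$curve exactly when $D \cdot F_2 = -1$, and descend to $k$ from the non-conjugacy of $F_1$ and $F_2$. Where you differ is in how the twelve are split into six genuine $(-1)$curves and six non-reduced ones, and in how disjointness is checked. The paper does this intrinsically: Proposition \ref{Prop3.7}(3) pairs the twelve into six pairs $(D_i, \Dp_i)$ with $(D_i + \Dp_i) \in |-\Kxt - F_1|$, the single computation $(-\Kxt - F_1)\cdot F_2 = -1$ together with Lemma \ref{Lem:intersec_No} forces $\{ D_i \cdot F_2, \Dp_i \cdot F_2\} = \{0, -1\}$, and disjointness of the six survivors follows because the unique pre$(-1)$curve meeting $D_i$ in $1$ among those meeting $F_1$ is the discarded $\Dp_i$. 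You instead normalize $(F_1, F_2)$ to $(\mAp_{1,2}, \mAp_{2,3})$ and read everything off Tables \ref{Table1} and \ref{Table3}; your explicit lists and all the intersection numbers you quote are correct. The one ingredient your version needs that the paper's does not is the justification of that normalization: up to permutation of the indices, an ordered pair of $(-2)$-classes with product $1$ has one of four shapes, namely $(\mAp_{1,2},\mAp_{2,3})$, $(\mAp_{1,2},\mBp_{134})$, $(\mAp_{1,2},\mCp_{2})$ and $(\mBp_{123},\mBp_{456})$, so you must either invoke the transitivity of $W(E_7)$ on such pairs --- which is exactly Lemma \ref{Lem5.5}(2), established in Section 5 by computer --- or repeat the table lookup in the remaining three cases (routine, since the whole computation is lattice-theoretic and independent of which marking is geometric). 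The paper's pairing argument buys freedom from this case analysis; your computation has the compensating virtue of exhibiting the six $(-1)$curves explicitly.
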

\begin{proof}
	Let $F_1$ and $F_2$ be the two $(-2)$curves defined over $k$.
$$
\xygraph{
	\circ ([]!{+(0,+.2)} {F_1}) - [r]
	\circ ([]!{+(0,+.2)} {F_2})
}
$$
By Proposition {\ref{Prop3.7}}, there exist six pairs of two pre$(-1)$curves $\{ (D_i, \Dp_i)\}_{i = 1,\cdots, 6}$ such that $D_i \cdot F_1 = \Dp_i \cdot F_1 = 1$ and $D_i \cdot \Dp_i = 1$ for all $i$. Since $(D_i + \Dp_i) \in |-\Kxt - F_1|$, we have $(D_i + \Dp_i) \cdot F_2 = -1$. Thus, we can assume that $D_i \cdot F_2 = 0$ and $\Dp_i \cdot F_2 = -1$ for $i = 1, \cdots, 6$. Then $D_i$ are $(-1)$curves by Proposition {\ref{Prop3.4}}. Moreover, $F_1$ is defined over $k$ and $D_1, D_2, \cdots, D_6$ are the only $(-1)$curves intersecting $F_1$. Therefore, $(D_1 + D_2 + \cdots +D_6)$ is also defined over $k$. These six $(-1)$curves do not intersect with each other. Let $\pi : \Xt \rightarrow \Vt$ be the blowing down defined by $(D_1 + D_2 + \cdots + D_6)$. Then $\Vt$ is a weak Del Pezzo surface of degree $8$ with a $(-2)$curve $\pi(F_2)$.
\end{proof}

	Next, we suppose that $\Xt$ has at least three $(-2)$curves. Then there exist two $(-2)$curves $F, \Fp$ which do not intersect. Thus, we obtain two pre$(-1)$curves which intersect $F$ and $\Fp$ by Proposition {\ref{Prop3.8}}. However, we did not prove that the two $(-1)$curves contained in the pre$(-1)$curves do not intersect (cf. Proposition {\ref{Prop3.9}}). We will show this in Proposition {\ref{Prop:MainProp3.5}}. By collapsing these two $(-1)$curves, almost all part of Theorem {\ref{MainThm3.5}} is proved.

	We show the following two lemmas for Proposition {\ref{Prop:MainProp3.5}}.

\begin{lem}{\label{Lem:no(-1)curve}}
	Suppose that $X \times \kb$ has only one singularity. Let $F, \Fp$ be $(-2)$curves such that $F \cdot \Fp = 0$. Then there exist $(-2)$curve $F_1, F_2, \cdots, F_n$ such that
$$
\xygraph{
	\circ ([]!{+(0,+.2)} {F = F_1}) - [r]
	\circ ([]!{+(0,+.2)} {F_2}) - [r]
	\cdots - [r]
	\circ ([]!{+(0,+.2)} {F_{n - 1}}) - [r]
	\circ ([]!{+(0,+.2)} {\Fp = F_n})
}.
$$
Let $D$ be a pre$(-1)$curve such that $D \cdot F = D \cdot \Fp = 1$. Then,
\begin{itemize}
	\item[(1)] there exists $i$ such that $D \cdot F_i = -1$,
	\item[(2)] the following are equivalent:
	\begin{itemize}
		\item[(i)] there uniquely exists $i$ such that $D \cdot F_i = -1$,
		\item[(ii)] $D \cdot F_j \neq 1$ for all $j = 2, \cdots, n-1$,
	\end{itemize}
	\item[(3)] $D \cdot (F_1 + \cdots + F_n) = 1$,
	\item[(4)] $(D - F_2 - \cdots - F_{n-1})$ is a pre$(-1)$curve. In particular, $F_2, \cdots, F_{n-1}$ are components of $D$.
\end{itemize}
\end{lem}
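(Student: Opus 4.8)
The plan is to reduce all four assertions to the single numerical identity in (3), namely $D\cdot(F_1+\cdots+F_n)=1$, and to prove that identity by induction on the length $n$ of the chain. First I would fix the chain: since $\Xkb$ has a unique singular point $p$, every $(-2)$curve on $\Xt$ lies in $\mF=f^{-1}(p)$, which is connected, so I take $F_1=F,\dots,F_n=\Fp$ to be a shortest chain of $(-2)$curves joining $F$ and $\Fp$. As intersections of distinct $(-2)$curves are $0$ or $1$ by Lemma \ref{Lem:intersec_No}, minimality forces $F_i\cdot F_{i+1}=1$ and $F_i\cdot F_j=0$ for $|i-j|\ge 2$; and since $F\cdot\Fp=0$ the endpoints are non-adjacent, so $n\ge 3$. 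Writing $d_i:=D\cdot F_i$, Lemma \ref{Lem:intersec_No} gives $d_i\in\{-1,0,1\}$, and by hypothesis $d_1=d_n=1$. Granting (3), the remaining items are almost formal: (1) holds because $\sum_{i=2}^{n-1}d_i=-1<0$ forces some interior $d_i=-1$; (2) is the elementary fact that if interior integers in $\{-1,0,1\}$ sum to $-1$, then having exactly one $-1$ is equivalent to having no $+1$; and the class $D-(F_2+\cdots+F_{n-1})$ of (4) has self-intersection and intersection with $\Kxt$ both equal to $-1$ by a direct computation using $D\cdot(F_2+\cdots+F_{n-1})=-1$, so by Corollary \ref{Cor3.5} it is the class of a unique pre$(-1)$curve.

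To prove (3), the key tool is Proposition \ref{Prop3.8} applied to the non-intersecting pair $(F_1,F_n)$: our $D$ is one of the two pre$(-1)$curves meeting both, and the other is the conjugate $\bar{D}:=-\Kxt-F_1-F_n-D$, with $(D+\bar{D})\in|-\Kxt-F_1-F_n|$. A direct computation gives $(D+\bar{D})\cdot(F_1+\cdots+F_n)=2$ and $\bar{D}\cdot F_2=-1-d_2$ (for $n\ge4$), so since $\bar{D}\cdot F_2\ge-1$ we get $d_2\in\{-1,0\}$. For the base case $n=3$ the same computation yields $\bar{D}\cdot F_2=-2-d_2\ge-1$, forcing $d_2=-1$ and hence $\sum_i d_i=1$. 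For $n\ge4$, if $d_2=-1$ then $F_2$ is a component of $D$ and $D-F_2$ is a pre$(-1)$curve by Lemma \ref{Lem:minusF} meeting $F_2$ and $F_n$ each in $1$; applying the inductive hypothesis to the shorter chain $F_2,\dots,F_n$ gives $(D-F_2)\cdot(F_2+\cdots+F_n)=1$, which rearranges to $D\cdot(F_1+\cdots+F_n)=1$. If instead $d_2=0$, then $\bar{D}\cdot F_2=-1$, so the previous case applied to $\bar{D}$ gives $\bar{D}\cdot(F_1+\cdots+F_n)=1$; combined with $(D+\bar{D})\cdot(F_1+\cdots+F_n)=2$ this yields (3) for $D$.

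It then remains to upgrade (4) from a statement about divisor classes to the claim that $F_2,\dots,F_{n-1}$ are genuine components of $D$, equivalently that $D-(F_2+\cdots+F_{n-1})$ is effective. I would prove this by a second induction, on the pair $(n,m)$ ordered lexicographically, where $m$ is the total multiplicity of the $(-2)$curve components of $D$ (the part $D-E$ of Lemma \ref{Lem3.2}). By (1) there is an interior index $j$ with $d_j=-1$, so $F_j$ is a component of $D$ and $D-F_j$ is a pre$(-1)$curve by Lemma \ref{Lem:minusF}. If $3\le j\le n-2$ then $D-F_j$ still meets $F_1$ and $F_n$ in $1$ and lives on the same chain with smaller $m$; if $j\in\{2,n-1\}$ then peeling $F_j$ passes to a strictly shorter chain. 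In either case the inductive hypothesis shows the relevant $(-2)$curves are components of $D-F_j$, hence of $D$, and then $D-(F_2+\cdots+F_{n-1})$ is an effective divisor lying in the pre$(-1)$curve class, so it equals that pre$(-1)$curve by Corollary \ref{Cor3.5}.

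The main obstacle is exactly this effectivity step in (4). Intersection numbers alone cannot detect components: when $d_2=d_{n-1}=0$ a chain curve can occur in $D$ with multiplicity $\ge 2$ (for instance, for $n=5$ the profile $(1,0,-1,0,1)$ is realized by $D=E+F_2+2F_3+F_4$), so one cannot read off $F_i\subset D$ directly from $d_i<0$. This is what forces the two-parameter induction, together with the use of the conjugate curve $\bar{D}$ to handle the case in which $D$ itself has no $-1$ adjacent to an endpoint. Everything else—parts (1), (2), (3), and the class computation underlying (4)—follows cleanly once these reductions are in place.
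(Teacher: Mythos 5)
Your proposal is correct, but it is organized quite differently from the paper's proof. The paper proves the four items in the order stated: (1) by the ``ascending'' trick (if all $D\cdot F_i\ge 0$, then $D+F_1+\cdots+F_j$ are successively pre$(-1)$curves via Lemma \ref{Lem:minusF}, ending in the contradiction $(D+F_1+\cdots+F_{n-1})\cdot F_n=2$); (2) by applying (1) to subchains and by the dual ``descending'' peeling; and then both (3) and the effectivity in (4) by induction on $k=\#\{i\mid D\cdot F_i=1\}$, splitting the chain at the indices where $D$ meets it positively. You instead prove (3) first, by induction on the chain length $n$, and your key tool is the conjugate pre$(-1)$curve $\bar D=-\Kxt-F_1-F_n-D$ furnished by Proposition \ref{Prop3.8}: the bound $\bar D\cdot F_2\ge -1$ forces $D\cdot F_2\in\{-1,0\}$, the case $D\cdot F_2=-1$ reduces to a shorter chain via Lemma \ref{Lem:minusF}, and the case $D\cdot F_2=0$ is transferred to $\bar D$ using $(D+\bar D)\cdot(F_1+\cdots+F_n)=2$. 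With (3) in hand, (1) and (2) collapse to arithmetic about integers in $\{-1,0,1\}$ summing to $-1$, and the numerical part of (4) follows from Corollary \ref{Cor3.5}; the genuine content of (4), effectivity, you handle by a separate lexicographic induction on (chain length, total $(-2)$multiplicity), peeling one $F_j$ with $D\cdot F_j=-1$ at a time. Both arguments are sound; your route buys a cleaner logical structure ((1) and (2) become corollaries of (3)) and correctly isolates the one nontrivial point --- that intersection numbers alone do not detect components, as your profile $(1,0,-1,0,1)$ example shows --- at the cost of invoking Proposition \ref{Prop3.8} and the conjugate curve, which the paper's purely ``peeling''-based induction on $k$ avoids. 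One small point to make explicit if you write this up: when the peeling in (4) lands on $j\in\{2,n-1\}$ with $n=3$ or $4$, the ``shorter chain'' degenerates (its endpoints meet), so the base case there should be stated separately; it is immediate since $D\cdot F_j=-1$ already exhibits $F_j$ as a component.
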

\begin{proof}
	We show (1). Suppose that $D \cdot F_i \geq 0$ for all $i$. Since $D \cdot F_1 = 1$, we have $(D + F_1)$ is a pre$(-1)$curve by Lemma {\ref{Lem:minusF}}. Further, since $1 \geq (D + F_1) \cdot F_2 = D \cdot F_2 + 1 \geq 1$ by Lemma {\ref{Lem:intersec_No}}, we have $(D + F_1) \cdot F_2 = 1$. Thus $(D + F_1 + F_2)$ is also a pre$(-1)$curve. By repeating this, $(D + F_1 + F_2 + \cdots + F_{n-1})$ is a pre$(-1)$curve. However, $(D + F_1 + F_2 + \cdots + F_{n-1}) \cdot F_n = 2$. This is impossible by Lemma {\ref{Lem:intersec_No}}. Thus there exists $i$ such that $D \cdot F_i = -1$.

	Next we show (2). ((i)$\Rightarrow$(ii)) Suppose that there exists $j \in \{ 2, \cdots, n-1\}$ such that $D \cdot F_j = 1$. Then, since $1 \geq (D + F_1) \cdot F_j = 1 + F_1 \cdot F_j \geq 1$ by Lemma {\ref{Lem:intersec_No}}, we have $F_1 \cdot F_j = 0$ and thus $j \neq 2$. Similarly, we have $j \neq n-1$. Thus, there exists $i_1 \in \{2, \cdots, j - 1\}$ and $i_2 \in \{ j + 1, \cdots, n - 1\}$ such that $D \cdot F_{i_1} = D \cdot F_{i_2} = -1$ by (1). Therefore $i$ is not unique. 

	((ii)$\Rightarrow$(i)) By (1), there exists a $(-2)$curve $F_i$ such that $D \cdot F_i = -1$. Suppose that $D \cdot F_j \neq 1$ for all $j = 2, \cdots, n-1$. Since $D \cdot F_i = -1$, we have $(D - F_i)$ is also a pre$(-1)$curve by Lemma {\ref{Lem:minusF}}. Further, since $-1 \leq (D - F_i) \cdot F_{i + 1} = D \cdot F_{i + 1} - 1 \leq -1$ by Lemma {\ref{Lem:intersec_No}}, we have $D \cdot F_{i + 1} = 0$ and $(D - F_i) \cdot F_{i + 1} = -1$. Then $(D - F_i - F_{i + 1})$ is also a pre$(-1)$curve. By repeating this, $D \cdot F_j = 0$ for $j = i + 1, \cdots, n-1$ and $(D - F_i - F_{i + 1} - \cdots -F_{n - 1})$ is a pre$(-1)$curve. Since $(D - F_i - F_{i + 1} - \cdots -F_{n - 1}) \cdot F_{i - 1} = D \cdot F_{i - 1} - 1$, we have $D \cdot F_{i - 1} = 0$ and $(D - F_{i-1} -F_i - \cdots - F_{n-1})$ is also a pre$(-1)$curve. By repeating this, we have $D \cdot F_j = 0$ for $j \neq 1, i, n$ and $(D - F_2 - \cdots - F_{n - 1})$ is a pre$(-1)$curve (this is a part of (4)).

	Let $k := \# \{ i = 1,\cdots, n | D \cdot F_i = 1\}$ $(k \geq 2)$. We show (3) by the induction on $k$. If $k = 2$, (3) is clear by (2). If $k \geq 2$, let $\{ a_1, a_2, \cdots, a_k\}$ be a subsequence of $\{1, \cdots, n\}$ such that $D \cdot F_{a_1} = \cdots = D \cdot F_{a_k} = 1$ $(1 = a_1 < a_2 < \cdots < a_k = n)$. By the induction hypothesis, $D \cdot (F_1 + \cdots + F_{a_{k-1}}) = 1$. Further, $D \cdot (F_{a_{k-1}} + \cdots + F_n) = 1$. Thus $D \cdot (F_1 + \cdots + F_n) = 1$ for any $k$.

	Finally, we show (4). $D$ satisfies $(D - F_2 - \cdots - F_{n-1}) \cdot \Kxt = -1$ and
$$(D - F_2 - \cdots - F_{n-1})^2 = (D)^2 + (F_2 + \cdots + F_{n-1})^2 - 2D \cdot(F_2 + \cdots + F_{n-1}) = -1.$$
Therefore, it suffices to show that $(D - F_2 - \cdots - F_{n-1})$ is effective. We show this by induction on $k$. If $k = 2$, this is already proved in the proof of (2). If $k > 2$, then $(D - F_{a_2 + 1} - \cdots - F_{n-1})$ is a pre$(-1)$curve by the induction hypothesis. Then
\begin{eqnarray}
	& &(D - F_{a_2 + 1} - \cdots - F_{n-1}) \cdot (F_{a_2 + 1} + \cdots + F_{n-1}) \nonumber \\
	& &= D \cdot (F_{a_2 + 1} + \cdots + F_{n-1}) - (F_{a_2 + 1} + \cdots + F_{n-1})^2 = 1 \nonumber
\end{eqnarray}
Thus there exists $l \in \{ a_2 + 1, \cdots, n-1\}$ such that $(D - F_{a_2 + 1} - \cdots - F_{n-1}) \cdot F_l = 1$. Let $m$ be the minimal $l$. Then, $(D - F_{a_2 + 1} - \cdots - F_{n-1}) \cdot F_1 = (D - F_{a_2 + 1} - \cdots - F_{n-1}) \cdot F_m = 1$ and $(D - F_{a_2 + 1} - \cdots - F_{n-1}) \cdot F_j \neq 1$ for all $j = 2, \cdots, m-1$. Therefore, $F_2, \cdots, F_{m-1}$ are contained in $(D - F_{a_2 + 1} - \cdots - F_{n-1})$ by the induction hypothesis. Thus $D$ contains $F_2, \cdots, F_{n-1}$ and $(D - F_2 - \cdots - F_{n-1})$ is a pre$(-1)$curve.
\end{proof}

\begin{lem}{\label{Lem:NoIntersection}}
	Suppose that $\delta = 1$. Let $\mF$ be the sum of all $(-2)$curves on $\Xt$ and $F, \Fp$ be two $(-2)$curves. Let $D$ be a pre$(-1)$curve such that $D \cdot F = D \cdot \Fp = 1$. If each of $F$ and $\Fp$ intersects only one $(-2)$curve (that is, $F$ and $\Fp$ are ``terminal'' on $\mF$), then $(D + F + \Fp - \mF)$ is a pre$(-1)$curve. In particular, $\mF$ is contained in $(D + F + \Fp)$ as a component.
\end{lem}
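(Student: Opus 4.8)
The plan is to reduce the statement to a single intersection identity, $D \cdot \mF = 1$, and to realize $D + F + \Fp - \mF$ concretely by peeling $(-2)$curves off $D$. Since $\delta = 1$, the dual graph of $\mF$ is a connected Dynkin diagram of type $A$, $D$ or $E$ by Proposition \ref{Prop:singularity type}, and the hypothesis that $F$ and $\Fp$ are terminal means they are two of its leaves. Applying Lemma \ref{Lem:no(-1)curve} to the chain $F = F_1, F_2, \ldots, F_n = \Fp$ joining them, I obtain that $D' := D - F_2 - \cdots - F_{n-1}$ is again a pre$(-1)$curve and that $F_2, \ldots, F_{n-1}$ are components of $D$. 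Writing $\mG := \mF - (F_1 + \cdots + F_n)$ for the sum of the remaining (``off-path'') $(-2)$curves, one has the clean rewriting $D + F + \Fp - \mF = D' - \mG$. In the $A$-type case $\mG = 0$ and we are already done; in the $D$- and $E$-type cases $\mF$ has a unique branch vertex $F_b$ lying on the chain, and $\mG$ is the single leg $G_1, G_2, \ldots, G_s$ hanging off it, with $G_1 \cdot F_b = 1$ and $G_i \cdot G_{i+1} = 1$.

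Next I would produce $M := D' - \mG$ as an effective pre$(-1)$curve by peeling the $G_i$ off one at a time. Note first that $M \cdot \Kxt = -1$ automatically. I would show by induction that $N_{i-1} := D' - G_1 - \cdots - G_{i-1}$ is a pre$(-1)$curve with $N_{i-1} \cdot G_i = -1$; then Lemma \ref{Lem:minusF} gives that $N_i := N_{i-1} - G_i$ is again a pre$(-1)$curve, and in particular that $G_i$ is one of its components. The process terminates at $N_s = M$, and along the way it also yields $\mF \subseteq D + F + \Fp$, i.e.\ the ``in particular'' clause. Since $G_j \cdot G_i = 0$ for $|i-j| \geq 2$ and only $G_1$ meets the chain (at $F_b$), in every case $N_{i-1} \cdot G_i = D \cdot G_i - 1$. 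Thus the entire peeling---equivalently, the identity $M^2 = -1$, equivalently $D \cdot \mF = 1$---rests on the single arithmetic input that $D \cdot G = 0$ for every off-path $(-2)$curve $G$.

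The heart of the proof is therefore this vanishing, and its decisive instance is the third leaf $G_s$. By Lemma \ref{Lem:intersec_No} we have $D \cdot G_s \in \{-1, 0, 1\}$, and the case $D \cdot G_s = 1$ can be excluded cleanly: if $D$ met each of the three pairwise disjoint leaves $F, \Fp, G_s$ with multiplicity one, then writing $S = F + \Fp + G_s$ and using $D^2 = D \cdot \Kxt = -1$ one computes that $Y := 2D + \Kxt + S$ satisfies $Y \cdot \Kxt = Y \cdot F = Y \cdot \Fp = Y \cdot G_s = 0$ and $Y^2 = 0$; since $\Kxt^{\perp}$ is negative definite this forces $Y = 0$, i.e.\ $2D = -\Kxt - F - \Fp - G_s$. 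Pairing this with any $(-2)$curve $H$ adjacent to exactly one of the three leaves---one always exists, since the leaves are distinct and $\mF$ is connected---gives $2(D \cdot H) = -1$, a contradiction. The remaining and more laborious part, which I expect to be the main obstacle, is to upgrade $D \cdot G_s \neq 1$ to the full statement $D \cdot G = 0$ for all off-path curves: one must rule out $D \cdot G_s = -1$ and then propagate the vanishing inward along the leg through the branch vertex $F_b$, tracking intersection numbers with Lemma \ref{Lem:minusF}, Lemma \ref{Lem:intersec_No} and the pre$(-1)$curve bound. It is precisely here that both the connectedness of $\mF$ (the hypothesis $\delta = 1$) and the terminality of $F$ and $\Fp$ are essential, and that the bookkeeping must be carried uniformly across the $D_n$, $E_6$ and $E_7$ configurations.
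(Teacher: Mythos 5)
Your reduction coincides with the paper's: peel the interior of the chain off $D$ via Lemma \ref{Lem:no(-1)curve}(4), rewrite $D + F + \Fp - \mF$ as $(D - \mF_t) - \mG$ where $\mG = G_1 + \cdots + G_s$ is the leg hanging off the branch vertex, and peel the leg one curve at a time, the whole computation resting on the vanishing $D \cdot G_l = 0$ for every off-path $(-2)$curve. But, as you yourself flag, you prove only a fragment of that vanishing: you exclude $D \cdot G_s = 1$ for the terminal curve of the leg and leave open both the exclusion of $D \cdot G_s = -1$ and all of the interior curves $G_1, \dots, G_{s-1}$. That vanishing is the entire content of the lemma, so as written the proposal has a genuine gap.

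The gap closes with exactly the tools you already have, provided you interleave the two exclusions with the peeling induction rather than attempting $D \cdot G = 0$ up front; this is how the paper's proof runs. At step $l$ the induction hypothesis says $N_{l-1} = D - \mF_t - G_1 - \cdots - G_{l-1}$ is a pre$(-1)$curve, and since a pre$(-1)$curve meets a $(-2)$curve in at least $-1$, the identity $N_{l-1}\cdot G_l = D \cdot G_l - 1$ gives $D \cdot G_l \geq 0$ for free --- there is never a case $D\cdot G_l = -1$ to rule out. For the upper bound, your parity argument is not special to $G_s$ but works verbatim at every $l$: each $G_l$ is disjoint from the terminal curves $F$ and $\Fp$, so $D \cdot G_l = 1$ would make $D + G_l$ a second pre$(-1)$curve meeting both, forcing $2D + G_l \in |-\Kxt - F - \Fp|$ (the paper deduces this from Proposition \ref{Prop3.8}; your negative-definiteness computation of $Y = 2D + \Kxt + F + \Fp + G_l$ is a valid substitute), and pairing $2D = -\Kxt - F - \Fp - G_l$ against the effective divisor $\mF_t + G_1 + \cdots + G_{l-1}$ already exhibited inside $D$ yields $-3$ on one side and an even number on the other. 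The ``laborious propagation through the $D_n$, $E_6$, $E_7$ configurations'' that you anticipate as the main obstacle is therefore a uniform three-line induction, with no case analysis on the Dynkin type.
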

\begin{proof}
	Let $F_1, F_2, \cdots, F_n$ be $(-2)$curves such that:
$$
\xygraph{
	\circ ([]!{+(0,-.2)} {F = F_1}) - [r]
	\circ ([]!{+(0,-.2)} {F_2}) - [r]
	\cdots - [r]
	\circ ([]!{+(0,-.2)} {F_{n - 1}}) - [r]
	\circ ([]!{+(0,-.2)} {\Fp = F_n})
}
$$
Let $\mF_t := F_2 + F_3 + \cdots + F_{n-1}$ and $\mF_b := \mF - \mF_t - F_1 - F_n$. Let $G_1, G_2, \cdots, G_m$ be the $(-2)$curves in $\mF_b$ such that:
$$
\xygraph{
	\circ ([]!{+(0,-.2)} {F = F_1}) - [r]
	\circ ([]!{+(0,-.2)} {F_2}) - [r]
	\cdots - [r]
	\circ (
		- [r] \cdots
		- [r] \circ ([]!{+(0,-.2)} {\Fp = F_n})
		 [r]
		 [r] []!{+(-.3,0)} {\biggr\} \mF_t + F + \Fp},
		- [u] \circ ([]!{+(0,+.2)} {G_1})
		- [r] \circ ([]!{+(0,+.2)} {G_2})
		- [r] \cdots
		- [r] \circ ([]!{+(0,+.2)} {G_m})
		 [r] []!{+(-.3,0)} {\biggr\} \mF_b}
		)
}
$$

	We show that $D \cdot G_l = 0$ and $(D - \mF_t - G_1 - \cdots - G_l)$ is a pre$(-1)$curve for $l = 1, \cdots, m$ by induction on $l$.

	If $l = 0$, this is Lemma {\ref{Lem:no(-1)curve}} (4). If $l > 0$, $(D - \mF_t - G_1 - \cdots - G_{l - 1})$ is a pre$(-1)$curve by the induction hypothesis. Then $(D - \mF_t - G_1 - \cdots - G_{l - 1}) \cdot G_l = D \cdot G_l - 1$. Thus $D \cdot G_l = 0$ or $1$. We show $D \cdot G_l = 0$. If $D \cdot G_l = 1$, then $(D + G_l)$ is a pre$(-1)$curve by Lemma {\ref{Lem:minusF}}. Further, $(D + G_l) \cdot F = (D + G_l) \cdot \Fp = 1$. Thus we have $(D + (D + G_l)) = (2D + G_l) \in |-\Kxt - F - \Fp|$ by Proposition {\ref{Prop3.8}}. However, this is impossible since
	$$(-\Kxt - F - \Fp - G_l) \cdot (\mF_t + G_1 + G_2 + \cdots + G_{l-1}) = -3.$$
Thus $D \cdot G_l = 0$. Then, $(D - \mF_t - G_1 - \cdots - G_{l - 1}) \cdot G_l = -1$. By Lemma {\ref{Lem:minusF}}, we have $(D - \mF_t - G_1 - \cdots - G_{l - 1} - G_l)$ is a pre$(-1)$curve for all $l$. In particular, $(D - \mF_t - \mF_b)$ is a pre$(-1)$curve.
\end{proof}

\begin{prop}{\label{Prop:MainProp3.5}}
	Suppose that $\delta = 1$ and $\Xt$ has at least three $(-2)$curves. Let $F, \Fp$ be two $(-2)$curves which intersect only one $(-2)$curve. Let $D_1, D_2$ be the two pre$(-1)$curves which intersect $F$ and $\Fp$ and let $E_1$ and $E_2$ be the $(-1)$curves contained in $D_1$ and $D_2$, respectively. Then, $E_1 \cdot E_2 = 0$ or $-1$.
\end{prop}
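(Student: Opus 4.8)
The plan is to run the argument of Proposition \ref{Prop3.9}, replacing its ``two connected components'' input (unavailable when $\delta=1$) by the containment information furnished by Lemma \ref{Lem:NoIntersection}. By Proposition \ref{Prop3.8} we have $D_1\cdot D_2=0$ and $(D_1+D_2)\in|{-\Kxt}-F-\Fp|$. Since $E_i\subset D_i$, the class $\mG:=-\Kxt-E_1-E_2$ is effective and supported on $(-2)$curves, because $E_1+E_2+\mG\in|{-\Kxt}|$. As $\mG\cdot(-\Kxt)=0$, one computes $(E_1+E_2)^2=(-\Kxt-\mG)^2=2+\mG^2$, so the assertion $E_1\cdot E_2\le 0$ is equivalent to $\mG^2\le -4$. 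Since $E_1,E_2$ are $(-1)$curves we also have $E_1\cdot E_2\ge -1$, with equality exactly when $E_1=E_2$; hence it suffices to exclude $E_1\cdot E_2\in\{1,2\}$, i.e. $\mG^2\in\{-2,0\}$.

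Next I would feed in Lemma \ref{Lem:NoIntersection}. Let $\mF$ denote the sum of all $(-2)$curves; as $\delta=1$ its dual graph is a single $ADE$ tree, so $\mF^2=-2$ (a tree on $V$ vertices has $V-1$ edges, whence $\mF^2=-2V+2(V-1)$) and $\mF\cdot F=\mF\cdot\Fp=-1$ because $F,\Fp$ are terminal. By Lemma \ref{Lem:NoIntersection} the divisors $\hat D_i:=D_i-(\mF-F-\Fp)$ are pre$(-1)$curves, and since they differ from $D_i$ only by $(-2)$curves each $\hat D_i$ contains the same $(-1)$curve $E_i$. From $(D_1+D_2)\in|{-\Kxt}-F-\Fp|$ we obtain $(\hat D_1+\hat D_2)\in|{-\Kxt}-(2\mF-F-\Fp)|$, and using $(2\mF-F-\Fp)^2=-4$ together with $\hat D_i^2=-1$ this gives the key identity $\hat D_1\cdot\hat D_2=0$. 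Writing $\hat D_i=E_i+R_i$ with $R_i\ge 0$ a sum of $(-2)$curves, we get $\mG=R_1+R_2+(2\mF-F-\Fp)$; in particular $\mG\ge 2\mF-F-\Fp$, so $\mG$ has multiplicity $\ge 2$ along every $(-2)$curve except $F,\Fp$, where it has multiplicity $\ge 1$.

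The value $E_1\cdot E_2=2$ is excluded at once: by Lemma \ref{Lem3.6} it would force $E_1+E_2\in|{-\Kxt}|$, hence $\mG\sim 0$, contradicting $\mG\ge 2\mF-F-\Fp\ne 0$. For $E_1\cdot E_2=1$, i.e. $\mG^2=-2$, note that the $(-2)$curves span a negative definite even root lattice, so an effective class of square $-2$ is a positive root, whose multiplicity at each simple root is bounded by the corresponding mark of the highest root $\theta$. I would now compare this upper bound with the lower bound above, type by type over the admissible singularities $A_n\,(n\ge3)$, $D_n$, $E_6$, $E_7$ and the choices of the terminal pair $\{F,\Fp\}$. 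There are two coefficient obstructions. First, a vertex off $\{F,\Fp\}$ whose mark in $\theta$ is $1$ (an interior vertex of an $A_n$ chain, or the third terminal vertex of $D_n$ and of $E_6$) already carries multiplicity $\ge 2$ in $\mG$, so $\mG$ is not a root. Second, if the trivalent branch vertex $b$ is not adjacent to $F$ or $\Fp$, then $(\hat D_1+\hat D_2)\cdot b=-(2\mF-F-\Fp)\cdot b=-2$, so (as $\hat D_i\cdot b\ge -1$ by the lemma following Proposition \ref{Prop3.4}) both $\hat D_i\cdot b=-1$; by Proposition \ref{Prop3.4} then $b\subset R_1\cap R_2$, forcing multiplicity $\ge 4$ in $\mG$, which exceeds the mark $3$ of $\theta$ for $E_6$. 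These two obstructions dispose of $A_n$, $D_n$, $E_6$ and most choices for $E_7$.

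The genuinely delicate cases are the few $E_7$ configurations (for instance $\{F,\Fp\}$ equal to the two ends of the long arm) in which the branch mark $4$ of $\theta$ coincides with the forced multiplicity, so that the coefficient bounds leave $\mG=\theta$ as the only candidate root. There I would return to $0=\hat D_1\cdot\hat D_2=E_1\cdot E_2+E_1\cdot R_2+E_2\cdot R_1+R_1\cdot R_2$: the explicit short class $R_1+R_2=\theta-(2\mF-F-\Fp)$, together with $E_i\cdot(\text{any }(-2)\text{curve})\ge 0$ and the precise per-vertex split of $R_1,R_2$ read off from the intersection values $\hat D_i\cdot G$ (compare Tables \ref{Table1}--\ref{Table3} and Lemma \ref{Lem:intersec_No}), is designed to force $E_1\cdot E_2\le 0$, contradicting $E_1\cdot E_2=1$. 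I expect this finite but fiddly verification of the residual $E_7$ cases to be the main obstacle; the rest reduces to the single computation $\hat D_1\cdot\hat D_2=0$ and the coefficient comparison, which are routine.
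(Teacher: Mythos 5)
Your reductions are correct as far as they go: the identity $\hat{D}_1\cdot\hat{D}_2=0$, the lower bounds on the multiplicities of $\mG$ coming from $\mG\ge 2\mF-F-\Fp$, the exclusion of $E_1\cdot E_2=2$ via Lemma \ref{Lem3.6}, and your two coefficient obstructions all check out and do dispose of the $A_n$, $D_n$ and $E_6$ configurations. But the proof is not finished. For the residual $E_7$ configurations (the pair of terminal vertices for which the forced multiplicity at the branch vertex matches, or falls one short of, the mark $4$ of the highest root $\theta$) you only describe a strategy --- ``is designed to force'', ``I expect this \dots to be the main obstacle'' --- and never carry it out. Ruling out $\mG=\theta$ there genuinely requires pinning down the per-vertex split of $R_1$ and $R_2$ and deriving a contradiction with $\hat{D}_1\cdot\hat{D}_2=0$; until that is done, $E_1\cdot E_2=1$ is not excluded in exactly the type ($E_7$) where Theorem \ref{MainThm3.5} needs the proposition. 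So as it stands this is a proof sketch with an acknowledged gap, not a proof.

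The case analysis is also avoidable, and you already hold the key ingredient. Set $\Dp_i:=D_i-E_i$, a nonzero sum of $(-2)$curves (nonzero because Lemma \ref{Lem:NoIntersection} forces $D_i$ to contain $\mF-F-\Fp\neq 0$), so that $(E_1+E_2)\in|{-\Kxt}-F-\Fp-\Dp_1-\Dp_2|$ and hence $-2+2E_1\cdot E_2=(E_1+E_2)^2=2-(F+\Fp+\Dp_1+\Dp_2)\cdot(E_1+E_2)$. Now $(F+\Fp+\Dp_1+\Dp_2)\cdot E_1\ge 2$ for two separate reasons: from $-1=D_1^2=-1+2E_1\cdot\Dp_1+(\Dp_1)^2$ and $(\Dp_1)^2\le-2$ one gets $\Dp_1\cdot E_1\ge 1$; and $(F+\Fp+\Dp_2)\cdot E_1\ge 1$ because by Lemma \ref{Lem:NoIntersection} the divisor $F+\Fp+\Dp_2$ contains \emph{every} $(-2)$curve as a component, $E_1$ meets at least one $(-2)$curve by Lemma \ref{Lem3.12}, and $E_1$ meets each $(-2)$curve nonnegatively. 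The same bound holds for $E_2$, giving $-2+2E_1\cdot E_2\le 2-4$, i.e.\ $E_1\cdot E_2\le 0$, uniformly in the singularity type and with no root-lattice combinatorics. This is the route the paper takes; I would recommend replacing the type-by-type argument with it, or at minimum completing the $E_7$ verification explicitly.
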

\begin{proof}
	Let $\Dp_i := D_i - E_i$ for $i = 1,2$. Since $(D_1 + D_2) \in |-\Kxt - F - \Fp|$, we have $(E_1 + E_2) \in |-\Kxt - F - \Fp - \Dp_1 - \Dp_2|$. Thus,
\begin{eqnarray}
	-2 + 2E_1 \cdot E_2 &=& (E_1 + E_2)^2 \nonumber \\ 
	 &=& (-\Kxt - F - \Fp - \Dp_1 - \Dp_2) \cdot (E_1 + E_2) \nonumber \\
	&=& 2 - (F + \Fp + \Dp_1 + \Dp_2) \cdot (E_1 + E_2) {\label{eq:3.4.1}}
\end{eqnarray}
Since $-1 = (D_1)^2 = (E_1 + \Dp_1)^2 = -1 + 2E_1 \cdot \Dp_1 + (\Dp_1)^2$ and $(\Dp_1)^2 \leq -2$, we have $\Dp_1 \cdot E_1 \geq 1$. On the other hand, $(F + \Fp + \Dp_2) \cdot E_1 \geq 1$ since $(F + \Fp + \Dp_2)$ contains all $(-2)$curves by Lemma {\ref{Lem:NoIntersection}} and $E_1$ intersects at least one $(-2)$curve by Lemma {\ref{Lem3.12}}. Thus, $(F + \Fp + \Dp_1 + \Dp_2) \cdot E_1 \geq 2$. Similarly, $(F + \Fp + \Dp_1 + \Dp_2) \cdot E_2 \geq 2$. Thus $E_1 \cdot E_2 \leq 0$ by the equation ({\ref{eq:3.4.1}}) 
\end{proof}
\begin{proof}[Proof of Theorem {\ref{MainThm3.5}}]
	If the type of singularity of $\Xkb$ is $A_2$, this follows from Proposition {\ref{Prop:A2case}}. Suppose that the type of singularity of $\Xkb$ is not $A_1$ or $A_2$. Then the type is either $A_3$, $A_4$, $A_5$, $A_6$, $A_7$, $D_4$, $D_5$, $D_6$, $E_6$ or $E_7$ by Proposition {\ref{Prop:singularity type}}. Except $D_4$ type, there exist $(-2)$curves $F, \Fp$ which satisfy both of the following:
\begin{itemize}
	\item[] $(F + \Fp)$ is defined over $k$;
	\item[] each $F$ and $\Fp$ intersects only one $(-2)$curves.
\end{itemize}
For example, if the type is $A_3$, $D_5$ or $E_6$ then $F$ and $\Fp$ are:
$$
\xygraph{
	{A_1}
	[]!{+(.1,-.6)} \circ ([]!{+(0,+.2)} {F})
	- []!{+(.8,0)} \circ 
	- []!{+(.8,0)} \circ ([]!{+(0,+.2)} {\Fp})
}
\xygraph{
	{D_5}
	[]!{+(.1,-.6)} \circ ([]!{+(0,+.2)} {F})
	- []!{+(.8,0)} \circ (
		- []!{+(0,.6)} \circ ([]!{+(+.2,0)} {\Fp}),
		- []!{+(.8,0)} \circ
			- []!{+(.8,0)} \circ
	)
}
\xygraph{
	{E_6}
	[]!{+(.1,-.6)} \circ ([]!{+(0,+.2)} {F})
	- []!{+(.8,0)} \circ
	- []!{+(.8,0)} \circ (
		- []!{+(0,.6)} \circ,
		- []!{+(.8,0)} \circ
			- []!{+(.8,0)} \circ ([]!{+(0,+.2)} {\Fp})
	)
}.
$$
By using Proposition {\ref{Prop:MainProp3.5}} for the $F, \Fp$, we obtain one or two $(-1)$curves which do not intersect and the sum of these is defined over $k$. By collapsing these $(-1)$curves, $\Xt$ is birationally equivalent to a weak Del Pezzo surface of degree $3$ or $4$. This is $k$-unirational by Proposition {\ref{Prop:bir degree3}} and {\ref{Prop:bir degree4}}

	Suppose that the type of singularity of $\Xkb$ is $D_4$. Let $F_1, F_2, F_3$ and $F_4$ be the four $(-2)$curves such that:
$$
\xygraph{
	\circ ([]!{+(0,-.2)} {F_1}) - []!{+(.7,0)}
	\circ ([]!{+(0,-.2)} {F_3}) (
		- []!{+(.7,0)} \circ ([]!{+(0,-.2)} {F_4}),
		- []!{+(0,.5)} \circ ([]!{+(.3,0)} {F_2})
		)
}
$$
Let $D_{12}, \Dp_{12}$ be pre$(-1)$curves which intersect $F_1$ and $F_2$ and let $E_{12}$ and $\Ep_{12}$ be the $(-1)$curves contained in $D_{12}$ and $\Dp_{12}$, respectively. Similarly, $(F_1, F_4)$ defines $D_{14}, \Dp_{14}, E_{14}$ and $\Ep_{14}$ and $(F_2, F_4)$ defines $D_{24}, \Dp_{24}, E_{24}$ and $\Ep_{24}$. Then $(E_{12} + \Ep_{12} + E_{14} + \Ep_{14} + E_{24} + \Ep_{24})$ is defined over $k$. We show that the six $(-1)$curves do not intersect each other. By Lemma {\ref{Lem:NoIntersection}}, we have $(D_{12} - F_3 - F_4)$ and $(\Dp_{12} - F_3 - F_4)$ are pre$(-1)$curves. Since $D_{12} \cdot F_3 = \Dp_{12} \cdot F_3 = -1$, $D_{12} \cdot F_4 = \Dp_{12} \cdot F_4 = 0$ and $D_{12} \cdot \Dp_{12} = 0$, we have
\begin{itemize}
	\item[] $(D_{12} - F_3 - F_4) \cdot (\Dp_{12} - F_3 - F_4) = 0$,
	\item[] $(D_{12} - F_3 - F_4) \cdot F_1 = (\Dp_{12} - F_3 - F_4) \cdot F_1 = 0$,
	\item[] $(D_{12} - F_3 - F_4) \cdot F_2 = (\Dp_{12} - F_3 - F_4) \cdot F_2 = 0$,
	\item[] $(D_{12} - F_3 - F_4) \cdot F_3 = (\Dp_{12} - F_3 - F_4) \cdot F_3 = 0$ and
	\item[] $(D_{12} - F_3 - F_4) \cdot F_4 = (\Dp_{12} - F_3 - F_4) \cdot F_4 = 1$.
\end{itemize}
Thus $E_{12} = D_{12} - F_3 - F_4$, $\Ep_{12} = \Dp_{12} - F_3 - F_4$ and $E_{12}$ does not intersect $\Ep_{12}$. Similarly, $E_{14} = D_{14} - F_3 - F_2$, $\Ep_{14} = \Dp_{14} - F_3 - F_2$, $E_{24} = D_{24} - F_3 - F_1$ and $\Ep_{24} = \Dp_{24} - F_3 - F_1$. We show $E_{12} \cdot E_{14} = 0$. $E_{12} \cdot E_{14} = (D_{12} - F_3 - F_4) \cdot (D_{14} - F_2 - F_3) = D_{12} \cdot D_{14}$. If not $D_{12} \cdot D_{14} = 0$, we have $D_{12} \cdot D_{14} = -1$ or $1$ by Proposition {\ref{Prop3.7}}. Since $D_{12} \neq D_{14}$, we have $D_{12} \cdot D_{14} \neq -1$. If $D_{12} \cdot D_{14} = 1$, then $(D_{12} + D_{14}) \in |-\Kxt - F_1|$ by Proposition {\ref{Prop3.7}}. However, $(D_{12} + D_{14}) \cdot F_2 = 1$ but $(-\Kxt - F_1) \cdot F_2 = 0$. Therefore $D_{12} \cdot D_{14} = 0$ and $E_{12} \cdot E_{14} = 0$. Similarly the six $(-1)$curves $E_{12}, \Ep_{12}, E_{14}, \Ep_{14}, E_{24}, \Ep_{24}$ do not intersect each other. By collapsing $(E_{12} + \Ep_{12} + E_{14} + \Ep_{14} + E_{24} + \Ep_{24})$, $\Xt$ is birationally equivalent to a weak Del Pezzo surface of degree $8$. This is $k$-rational by Proposition {\ref{Prop:bir degree>5}}
\end{proof}

	As in Section 3.2, 3.3, the unirationality of $X$ is classified associated to the type of the singularity of $X$. See Appendix A.

\begin{proof}[Proof of Theorem {\ref{MainThm1}}]
	This follows from Theorem {\ref{Thm3.11}}, {\ref{Thm3.14}}, {\ref{Thm:FourSingularities}}, Proposition {\ref{Prop3.20}}, {\ref{Prop3.21}}, {\ref{Prop3.22}}, Theorem {\ref{MainThm3.5}}.
\end{proof}

\section{Uniraionality over arbitrary fields}
	In this section, we show Theorem {\ref{MainThm2}}. Let $X$ be a RDP Del Pezzo surface of degree $2$ over a field $k$. Then, $X$ is a quartic surface of weighted projective space $\bP_k(1,1,1,2)$. Since the point $p = (0, 0, 0, 1) \in \bP_k(1,1,1,2)$ is a singular point which is not rational double point, $X$ does not pass through $p$. Thus the projection from $p$ defines a finite morphism $\kappa : X \rightarrow \bP_k^2$ of degree $2$. Note that $\kappa$ is defined by $|-\Kx|$. 

	In Section 4.1, we show (1) of Theorem {\ref{MainThm2}}. We assume that $\kappa$ is separable in the remaining subsections. In Section 4.2, we define ``spine'' of a weak Del Pezzo surface at a point. In Section 4.3, we consider the $(-2)$curves on the weak Del Pezzo surface of degree $1$ defined by blowing up of $\Xt$. Then, we show that spines can become $(-2)$curves. In Section 4.4, we show Theorem {\ref{MainThm2}}.
\subsection{Inseparable anti-canonical morphism}
	Suppose that the anti-canonical morphism $\kappa$ is purely inseparable. Then the characteristic of $k$ is $2$ and $X$ is defined by an equation $w^2 + q_4(x,y,z) = 0$ in $\bP(x,y,z,w)$ where $\deg(x) = \deg(y) = \deg(z) = 1, \deg(w) = 2$ and $\deg(q_4) = 4$. 

	I would like to thank T. Kawakami for showing me the following proposition.
\begin{prop}{\label{Prop:Kawakami}}
	Let $f:X \rightarrow Y$ be a dominant, finite and purely inseparable morphism of normal varieties. Then there exists a morphism $g : Y \rightarrow X$ such that $f \circ g$ is finite succession of the Frobenius morphisms of $Y$.
$$
\xymatrix{
	Y \ar[rr]^g \ar[rd]^{Fr} & & X \ar[rr]^f & & Y\\
	 & Y \ar[r]^{Fr} & \cdots \ar[r]^{Fr}& Y \ar[ru]^{Fr}\\
}
$$
\end{prop}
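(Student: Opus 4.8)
The plan is to build $g$ from a high enough power of Frobenius on the level of function fields, and then use normality to upgrade the resulting rational map to an everywhere-defined morphism. Working in characteristic $p>0$, the hypotheses say that $f^{\sharp}\colon K(Y)\hookrightarrow K(X)$ is a finite, purely inseparable field extension. Hence there is an integer $m$ (the exponent of the extension: the maximum of the exponents of a finite set of generators) with
$$
K(X)^{p^{m}}\subseteq f^{\sharp}K(Y).
$$
I would take this $m$ to be the number of Frobenius morphisms in the conclusion, so that the target composite $f\circ g$ is the $m$-fold absolute Frobenius $\mathrm{Fr}^{m}\colon Y\to Y$, which on structure sheaves is the $p^{m}$-power map.

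Next I would carry out the construction affine-locally, and this is where normality does the real work. Cover $Y$ by affine opens $V=\Spec B$; since $f$ is finite, $U:=f^{-1}(V)=\Spec A$ is affine with $A$ a finite $B$-algebra via $f^{\sharp}$, and both $A,B$ are normal domains with fraction fields $K(X),K(Y)$. For any $a\in A$, the element $a$ is integral over $B$ because $A$ is finite over $B$, so $a^{p^{m}}$ is integral over $B$; on the other hand $a^{p^{m}}\in K(X)^{p^{m}}\subseteq \mathrm{Frac}(B)$. Since $B$ is integrally closed in its fraction field, it follows that $a^{p^{m}}\in B$. Therefore the $p^{m}$-power map $a\mapsto a^{p^{m}}$ defines a ring homomorphism $g^{\sharp}\colon A\to B$, hence a morphism $g|_{V}\colon V\to U\subseteq X$. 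The single step "$a^{p^{m}}$ is integral over $B$ and lies in $\mathrm{Frac}(B)$, hence lies in $B$'' is the crux: it is the only place the normality hypothesis is used, and it is exactly what prevents the naive rational map $y\mapsto$ ``$p^{m}$-th root'' from having indeterminacy. I expect this to be the main (indeed essentially the only) obstacle, everything else being formal.

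Finally I would verify the identity and glue. For $b\in B$ one has $g^{\sharp}(f^{\sharp}(b))=b^{p^{m}}$, so $f\circ g|_{V}$ is precisely the $p^{m}$-power Frobenius $\mathrm{Fr}^{m}|_{V}$. The local morphisms $g|_{V}$ are forced to agree on overlaps, since on sections they are all given by the intrinsic map $a\mapsto a^{p^{m}}$ (equivalently, because $X$ is separated and $Y$ reduced, two morphisms agreeing on a dense open coincide); note also that $g|_{V}$ automatically lands in $U=f^{-1}(V)$ by construction, consistent with $\mathrm{Fr}^{m}$ being the identity on the underlying space. Patching the $g|_{V}$ yields a global morphism $g\colon Y\to X$ with $f\circ g=\mathrm{Fr}^{m}$, which is the desired factorization.
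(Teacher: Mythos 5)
Your proposal is correct and follows essentially the same route as the paper: both arguments pick $n$ with $K(X)^{p^n}\subseteq K(Y)$, observe affine-locally that a $p^n$-th power of a regular function on $X$ is integral over the coordinate ring of $Y$ and lies in its fraction field, hence lies in the ring by normality, and then glue the resulting local morphisms. The step you identify as the crux (normality forcing $a^{p^n}$ into $B$) is exactly the step the paper phrases as $K(A)\cap B = A$.
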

\begin{proof}
	Let $\Spec A$ be an affine open set of $Y$ and $\Spec B := f^{-1}(\Spec A) \subset X$. Then $f$ induces an inclusion $A \subset B$ and an extension $K(A) \subset K(B)$. Since $f$ is purely inseparable, there exists $n$ such that $b \in K(B) \Rightarrow b^{p^n} \in K(A)$. On the other hand, $K(A) \cap B = A$ since $A$ is a normal ring and $B$ is integral over $A$. Thus there exists a homomorphism $B \rightarrow A ; b \mapsto b^{p^n}$. This induces the morphism $g|_{\Spec A} : \Spec A \rightarrow \Spec B$ such that $f \circ g|_{\Spec A}$ is the composite of $n$ Frobenius morphisms. By gluing $g|_{\Spec A}$ on an affine covering, we obtain the desired $g$.
\end{proof}
	Note that this $g$ is not defined over $k$ since the Frobenius morphism is not defined over $k$.

\begin{cor}{\label{Cor:MainThm2(1)}}
	Let X be a RDP Del Pezzo surface of degree $2$ over a perfect field $k$. If the anti-canonical morphism $\kappa : X \rightarrow \bP_k^2$ is purely inseparable, then $X$ is $k$-unirational.
\end{cor}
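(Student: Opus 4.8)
The plan is to make the morphism $g$ of Proposition \ref{Prop:Kawakami} explicit and to repair it into a genuine $k$-morphism using that $k$ is perfect. Recall from the discussion above that pure inseparability of $\kappa$ forces $\ch(k)=2$ and $X=\{w^2+q_4(x,y,z)=0\}\subset\bP(1,1,1,2)$, with $\kappa$ the projection $[x:y:z:w]\mapsto[x:y:z]$. Taking $Y=\bP_k^2$ and $f=\kappa$ in Proposition \ref{Prop:Kawakami} already yields a morphism $g$ with $\kappa\circ g$ a power of the Frobenius of $\bP_k^2$, hence dominant; the trouble, flagged in the remark just after that proposition, is that $g$ is assembled from the absolute Frobenius and so is not defined over $k$, and therefore does not by itself witness $k$-unirationality.

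Writing $q_4=\sum_{i+j+k=4}a_{ijk}\,x^iy^jz^k$, I would use perfectness of $k$ to form the quartic form $r:=\sum_{i+j+k=4}a_{ijk}^{1/2}\,x^iy^jz^k$, whose coefficients again lie in $k$, and define
\[
\phi:\bP_k^2\longrightarrow\bP(1,1,1,2),\qquad [x:y:z]\longmapsto[x^2:y^2:z^2:r(x,y,z)].
\]
Since $r$ is homogeneous of degree $4=2\cdot 2$, the four entries are compatible with the weights $(1,1,1,2)$, so $\phi$ is a well-defined $k$-morphism (it has no base points, as $x^2=y^2=z^2=0$ forces $x=y=z=0$ in characteristic $2$).

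It then remains to check that $\phi$ factors through $X$ and is dominant. Because the Frobenius is additive in characteristic $2$,
\[
q_4(x^2,y^2,z^2)=\sum_{i+j+k=4}a_{ijk}\,x^{2i}y^{2j}z^{2k}=\Bigl(\sum_{i+j+k=4}a_{ijk}^{1/2}\,x^iy^jz^k\Bigr)^{\!2}=r(x,y,z)^2,
\]
so the image point satisfies $w^2+q_4=r^2+r^2=0$ and lies on $X$. Moreover $\kappa\circ\phi$ is $[x:y:z]\mapsto[x^2:y^2:z^2]$, the Frobenius of $\bP_k^2$, which is surjective; since $\kappa$ is finite, $\phi$ is dominant, and hence $X$ is $k$-unirational.

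The one real obstacle is precisely this descent to $k$: Proposition \ref{Prop:Kawakami} only guarantees a dominant map after passing to $\kb$, and the content of the corollary is that perfectness removes the obstruction. Concretely, additivity of the Frobenius in characteristic $2$ turns $q_4(x^2,y^2,z^2)$ into a perfect square whose square root $r$ has coefficients $a_{ijk}^{1/2}$, and perfectness of $k$ is exactly what places these coefficients in $k$, upgrading the $\kb$-dominant map of Proposition \ref{Prop:Kawakami} to the honest $k$-morphism $\phi$.
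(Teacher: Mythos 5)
Your proof is correct. It is, however, a genuinely self-contained alternative to the paper's argument rather than a repetition of it: the paper derives the corollary from the general Proposition \ref{Prop:Kawakami} (every finite dominant purely inseparable morphism of normal varieties admits a map back that composes to a power of Frobenius), obtains $g:\bP_k^2\rightarrow X$ with $\kappa\circ g={\rm{Fr}}^n$, and then invokes perfectness of $k$ only to invert ${\rm{Fr}}^n$ on $\bP_k^2$ and descend $g\circ({\rm{Fr}}^n)^{-1}$ to $k$. You instead write this composite down in coordinates: $\phi([x:y:z])=[x^2:y^2:z^2:r(x,y,z)]$ with $r=\sum a_{ijk}^{1/2}x^iy^jz^k$, and your verifications are all sound --- $r^2=q_4(x^2,y^2,z^2)$ because the cross terms of the square carry a factor $2=0$, the image lies on $X$ since $w^2+q_4=r^2+r^2=0$, the weights $(1,1,1,2)$ are respected because $\deg r=4=2\cdot 2$, there are no base points, and dominance follows from surjectivity of the squaring map $\kappa\circ\phi$ together with finiteness of $\kappa$. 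What your route buys is an explicit degree-$2$ dominant $k$-morphism and independence from Proposition \ref{Prop:Kawakami}; what the paper's route buys is generality, since its proposition makes no reference to the defining equation of $X$. The role of perfectness is the same in both: for you it places the coefficients $a_{ijk}^{1/2}$ in $k$, for the paper it makes the Frobenius of $\bP_k^2$ invertible.
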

\begin{proof}
	By Proposition {\ref{Prop:Kawakami}}, we have a dominant morphism $g : \bP_k^2 \rightarrow X$ such that $\kappa \circ g$ is a succession of $n$ Frobenius morphisms of $\bP_k^2$. Since $k$ is perfect, the Frobenius morphism of $\bP_k^2$ is an isomorphism. Therefore, we have the dominant morphism $g \circ ({\rm{Fr}}^n)^{-1} : \bP_k^2 \rightarrow X$ over $k$. This means $X$ is $k$-unirational.
\end{proof}
From the above, Theorem {\ref{MainThm2}} (1) holds.

\subsection{Spine}
	In the following subsection, we suppose that $\kappa$ is separable. 

	Let $R \subset X$ be the ramification divisor of $\kappa$ and let $B \subset \bP_k^2$ be the branch divisor of $\kappa$. Let $F(x,y,z,w) = w^2 + w \cdot q_2(x,y,z) + q_4(x,y,z)$ be the homogeneous quartic equation which defines $X \subset \Proj\ k[x,y,z,w] = \bP_k(1,1,1,2)$. 

	If the characteristic of $k$ is not $2$, then $B$ is the quartic curve defined by $q_2(x,y,z)^2 - 4q_4(x,y,z) = 0$. Since $X$ is reduced, $B$ has no multiple component and since $B$ has only rational singularities, $B$ is not four lines meeting in a point ({\cite[Proposition 4.6]{HW}}). This quartic has singularities corresponding with the singularities of $X$. For example, in the case of $\ctext{1}, \ctext{2}$ and $\ctext{3}$ of Theorem {\ref{MainThm1}}, $B$ is:
\begin{itemize}
	\item[] a singular quartic with one node if $\ctext{1}$,
	\item[] a singular quartic with one cusp if $\ctext{2}$,
	\item[] two conics intersecting at four points if $\ctext{3}$,
\end{itemize}
according to {\cite{DuVal}}.

	If characteristic of $k$ is $2$, then $B$ is a double quadric curve defined by $q_2(x,y,z)^2 = 0$. Note that $q_2(x,y,z) \not \equiv 0$ since $\kappa$ is separable. The quadric $q_2(x,y,z) = 0$ may be a smooth conic, two lines or a double line. 

	$R$ is important because of the following Proposition:
\begin{prop}[See also {\cite[Lemma 2.5]{STVA}}]\label{Prop4.3}
	Let $f : \Xt \rightarrow X$ be a minimal resolution of $X$ and let $p$ be a $k$-point on $\Xt$. 
$$
\Xt \mapright{f} X \mapright{\kappa} \bP_k^2
$$
Suppose that $p$ does not lie on any $(-2)$curves (Recall that a $(-2)$curve means a $(-2)$curve over $\kb$). Then, there exists at most one element in $|-\Kxt|$ which is singular at $p$. There is such an element if and only if $f(p)$ lies on $R$.
\end{prop}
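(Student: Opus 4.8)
The plan is to reduce the statement to a rank computation for a $3\times 3$ matrix of sections, and then to identify that rank with the behaviour of the differential of the degree $2$ morphism $\psi := \kappa \circ f \colon \Xt \to \bP_k^2$ at $p$. Since $f$ is crepant, $-\Kxt = f^{\ast}(-\Kx)$ and $\psi$ is exactly the morphism defined by $|-\Kxt|$; by Proposition \ref{dim of anticano} (with $d=2$, $m=1$) this system has projective dimension $2$, so its members are the preimages $\psi^{-1}(L)$ of the lines $L \subset \bP_k^2$. Fix a basis $s_0, s_1, s_2$ of $H^0(\Xt, \sh{\Xt}(-\Kxt))$. Because $|-\Kxt|$ is base-point free (the anticanonical map is a morphism), some $s_i$, say $s_0$, does not vanish at $p$; since $p$ lies on no $(-2)$curve, $f$ is an isomorphism near $p$ and $f(p)$ is a smooth point of $X$, so we may choose local coordinates $(u,v)$ at $p$.

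First I would write down, for a member $\{\sum_i \lambda_i s_i = 0\}$, the conditions to be singular at $p$: vanishing of the section together with its two first derivatives, i.e. $\lambda = (\lambda_0,\lambda_1,\lambda_2)$ must lie in the kernel of
\[
M = \begin{pmatrix} s_0(p) & s_1(p) & s_2(p) \\ \partial_u s_0(p) & \partial_u s_1(p) & \partial_u s_2(p) \\ \partial_v s_0(p) & \partial_v s_1(p) & \partial_v s_2(p) \end{pmatrix}.
\]
Thus the singular members are parametrized by $\bP(\ker M)$: there is at most one exactly when $\operatorname{rank} M \ge 2$, and there is at least one exactly when $\det M = 0$. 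Setting $a = s_1/s_0$, $b = s_2/s_0$ near $p$ and performing the column operations $C_i - (s_i/s_0)(p)\,C_0$, I would factor out $s_0(p)$ three times to obtain $\det M = \pm\, s_0(p)^3\, J(p)$, where $J$ is the Jacobian determinant of $\psi = (a,b)$ in the chart $s_0 \ne 0$; likewise $\operatorname{rank} M \ge 2$ holds iff the Jacobian matrix $d\psi_p$ is nonzero.

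It then remains to analyze $d\psi_p = d\kappa_{f(p)}$ (using that $f$ is an isomorphism near $p$). Writing $X \subset \bP_k(1,1,1,2)$ as $F = w^2 + w\,q_2 + q_4 = 0$ with $\kappa$ the projection forgetting $w$, the kernel of $d\kappa$ at the smooth point $f(p)$ is $T_{f(p)}X \cap \{dx=dy=0\}$, the intersection of the two-dimensional tangent space with the one-dimensional $w$-line; hence this kernel is at most one-dimensional, and $d\kappa_{f(p)}$ always has rank $\ge 1$. Consequently $\operatorname{rank} M \ge 2$ always holds, which gives the ``at most one'' assertion. Moreover the kernel is one-dimensional — equivalently $J(p)=0$ and $\det M = 0$ — precisely when the $w$-line lies in $T_{f(p)}X$, i.e. when the coefficient $F_w = 2w + q_2$ (which is $q_2$ in characteristic $2$) vanishes at $f(p)$; since $\kappa$ is separable, this vanishing locus is exactly the ramification divisor $R$. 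So a singular member exists iff $\det M = 0$ iff $f(p) \in R$, which is the ``if and only if'' assertion.

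I expect the only delicate point to be the characteristic $2$ case: one must check that the ramification of the double cover is governed by $F_w = q_2$ rather than by a discriminant, and that $d\kappa_{f(p)}$ still has rank exactly $1$ (never $0$) at a smooth ramification point, so that uniqueness is not lost. The uniform description of $R$ as $\{F_w = 0\}$ and of $\ker d\kappa$ via the $w$-line handles all characteristics at once, which is why I would phrase the argument through $F_w$ rather than through the branch quartic.
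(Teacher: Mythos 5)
Your argument is correct, and it reaches the conclusion by a route organized differently from the paper's. The paper splits into two cases: when $f(p) \notin R$ it invokes \'etaleness of $\kappa$ at $f(p)$ to see that pullbacks of lines are smooth there, and when $f(p) \in R$ it writes $X$ locally as $w^2 + q(x,y) = 0$ (resp.\ $w^2 + q_2 w + q_4 = 0$ in characteristic $2$), parametrizes the candidate lines $ax+by=0$, and exhibits the unique singular member explicitly as the pullback of $q_x(0,0)x + q_y(0,0)y = 0$. You instead encode the members singular at $p$ as the projectivized kernel of the $3\times 3$ jet matrix $M$, identify $\mathrm{corank}\,M$ with $\mathrm{corank}\, d\kappa_{f(p)}$ (legitimate since $f$ is an isomorphism near $p$ and $s_0(p)\neq 0$, so $\mathrm{rank}\,M = 1 + \mathrm{rank}\,d\psi_p$), and observe that $d\kappa_{f(p)}$ has corank $0$ or $1$ according as $F_w$ is nonzero or zero at $f(p)$, which is exactly the condition $f(p)\in R$. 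This buys a uniform treatment: no case split on the characteristic or on membership in $R$, no explicit parametrization, and both the ``at most one'' and the ``exactly one iff on $R$'' assertions fall out simultaneously from the elementary fact that the $w$-line meets the plane $T_{f(p)}X$ in at most a line. The two points you should make explicit in a final write-up are exactly the ones you flag: that the singularity criterion via vanishing of the value and the first derivatives of a local equation is characteristic-free, and that $R$ is, set-theoretically, the zero locus of $F_w = 2w + q_2$ on $X$ in every characteristic (this is consistent with the paper's description of the branch curve $B$ as $q_2^2 - 4q_4 = 0$, resp.\ $q_2^2 = 0$ in characteristic $2$, and with separability forcing $F_w \not\equiv 0$). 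With those two remarks in place your proof is complete.
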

\begin{proof}
	If $f(p) \notin R$, then $\kappa$ is \'etale at $f(p)$ and the image of elements in $|-\Kx|$ are lines $\subset \bP_k ^2$. Therefore, anti-canonical curves on $X$ which pass through $f(p)$ are smooth at $f(p)$. Since $f^* \Kx = \Kxt$, there is no anti-canonical curve on $\Xt$ which is singular at $p$. 

	Assume that $f(p) \in R$. In neighborhood of $f(p)$, $X$ is isomorphic to $\Spec \ k[x,y,w]/F$, where $F$ is a polynomial of degree at most $4$ in $k[x,y,w]$.

	If $\ch (k) \neq 2$, we can assume that $f(p) = (0,0,0)$ and $F(x,y,w) = w^2 + q(x,y)$. Let $C$ be an anti-canonical curve on $X$ which passes through $f(p)$. Then $\kappa_{*} C$ is a line, denoted by $ax + by = 0$. $C$ is singular at $f(p)$ if and only if $w^2 + q(-bt, at) = 0 \subset \Spec \ k[t,w]$ is singular at $(0,0)$. This means that the line $ax + by = 0$ is $q_x(0,0)x + q_y(0,0)y = 0$, where $q_x$ and $q_y$ are partial derivatives of $q$ with respect to $x, y$. Thus there exists a unique anti-canonical curve with a singular point $p$. Note that this is a total transform of the tangent line of $B$ under $\kappa \circ f(p)$.

	If $\ch (k) = 2$, we can assume that $f(p)=(0,0,0)$ and $F(x,y,w)=w^2 + q_2(x,y)w + q_4(x,y)$. Similarly as the case of $\ch(k) \neq 2$, let $C$ be an anti-canonical curve on $X$ which passes through $f(p)$ and $ax + by = 0$ be the line $\kappa_* C$. $C$ is singular at $f(p)$ if and only if $w^2 + q_2(bt,at)w + q_4(bt,at)=0 \subset \Spec\ k[t,w]$ is singular at $(0,0)$. This means that the line $ax + by = 0$ is ${q_4}_x(0,0)x + {q_4}_y(0,0)y = 0$. Similarly as the case $\ch(k) \neq 2$, there exists a unique anti-canonical curve with a singular point $p$.
\end{proof}

\begin{definition}
	Let $p$ be a $k$-point on $\Xt$ which does not lie on any $(-2)$curves. Suppose that $f(p) \in R$. We call the element in $|{-\Kxt}|$ passing through $p$ as a singular point the {\em{spine}} of $\Xt$ at $p$.
\end{definition}
\subsection{Blowing up}
	For the proof of Theorem {\ref{MainThm2}}, we consider blowing ups of $\Xt$ at a $k$-point. 
\begin{lem}{\label{Lem4.6}}
	Let $p$ be a $k$-point on $\Xt$ and $\pi : \Xtp \rightarrow \Xt$ be the blowing up of $\Xt$ with a center $p$. Then $\Xtp$ is also a weak Del Pezzo surface if and only if $p$ does not lie on any $(-2)$curves.
\end{lem}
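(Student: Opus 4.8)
The plan is to compute directly on the blow-up, using that $\Xt$ is a weak Del Pezzo surface of degree $2$ by Corollary \ref{Cor:RDPDP<->weakDP}, so that $-\Kxt$ is nef and $(-\Kxt)^2 = 2$. Let $E$ denote the exceptional curve of $\pi$, so $\Kxtp = \pi^*\Kxt + E$, $E^2 = -1$, and $\Xtp$ is a smooth projective surface with $\Kxtp^2 = 1$. Being a weak Del Pezzo surface means $-\Kxtp$ is nef, and nefness may be checked after base change to $\kb$; I would therefore argue over $\kb$, where the $k$-point $p$ corresponds to a single closed point $\bar p$. It then suffices to test $-\Kxtp$ against $E$ and against the strict transform $\tilde C$ of each irreducible curve $C$ on $\Xt\times_k\kb$, since every irreducible curve on $\Xtp$ is one of these.

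For the necessity direction I would argue contrapositively. If $\bar p$ lies on a $(-2)$curve $F$, then $F\cong\bP^1_{\kb}$ is smooth, so $\mathrm{mult}_{\bar p}F = 1$ and $\tilde F = \pi^*F - E$. As $F\cdot\Kxt = 0$, a direct computation gives $(-\Kxtp)\cdot\tilde F = (-\Kxt)\cdot F - 1 = -1 < 0$, so $-\Kxtp$ is not nef and $\Xtp$ is not a weak Del Pezzo surface.

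For the converse, assume $\bar p$ lies on no $(-2)$curve. One checks $(-\Kxtp)\cdot E = -E^2 = 1 \geq 0$ at once. For an irreducible $C$ with $\mathrm{mult}_{\bar p}C = m$ we have $\tilde C = \pi^*C - mE$ and hence $(-\Kxtp)\cdot\tilde C = (-\Kxt)\cdot C - m$. Writing $d_0 := (-\Kxt)\cdot C \geq 0$, the whole claim reduces to proving the inequality $m \leq d_0$ for every irreducible $C$ passing through $\bar p$. If $m = 0$ there is nothing to prove, and if $d_0 = 0$ then $C$ is a $(-2)$curve by Lemma \ref{Lem:exccurves}(4), which cannot pass through $\bar p$; so we may assume $m \geq 1$ and $d_0 \geq 1$.

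The main point, and the step I expect to be the real obstacle, is this multiplicity bound $m \leq d_0$. I would derive it by combining two standard facts. First, a point of multiplicity $m$ on an irreducible curve forces $p_a(C) \geq \binom{m}{2}$, so by the genus formula $C^2 = 2p_a(C) - 2 + d_0 \geq m^2 - m - 2 + d_0$. Second, since $(-\Kxt)^2 = 2 > 0$, the Hodge index theorem gives $2\,C^2 \leq \bigl((-\Kxt)\cdot C\bigr)^2 = d_0^2$. Combining these yields $m^2 - m - 2 + d_0 \leq d_0^2/2$, and a short check shows that for $d_0 \geq 1$ this fails as soon as $m \geq d_0 + 1$ (already $m = d_0+1$ forces $d_0^2 + 4d_0 - 4 \leq 0$); hence $m \leq d_0$, so $(-\Kxtp)\cdot\tilde C \geq 0$. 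This shows $-\Kxtp$ is nef, and therefore $\Xtp$ is a weak Del Pezzo surface of degree $1$. As an alternative route to the bound one could intersect $C$ with a general member of $|{-\Kxt}|$ through $\bar p$, using Lemma \ref{Lem:exccurves} to control when $C$ is a component of such a curve, but the Hodge-index argument is cleaner and uniform in $d_0$.
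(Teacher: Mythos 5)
Your proof is correct, but it takes a genuinely different route from the paper. The paper disposes of this lemma in one line by citing Proposition \ref{Prop:Demazure} and Proposition \ref{Prop:HW}: over $\kb$ one writes $\Xt$ as $V(\Sigma)$ for seven points $\Sigma$ in almost general position, so that $\Xtp$ is $V(\Sigma\cup\{\bar p\})$, and Demazure's equivalence between ``almost general position'' and the numerical condition (d$^{\prime}$) identifies exactly when the eighth point spoils the configuration --- namely when $\bar p$ lies on the strict transform of a line through three of the $x_i$, a conic through six of them, or an exceptional configuration, i.e.\ on a $(-2)$curve of $\Xt$. You instead verify nefness of $-\Kxtp$ by hand: the necessity direction via $\tilde F\cdot(-\Kxtp)=-1$ is exactly what one would extract from the paper's argument anyway, but for sufficiency you replace Demazure's combinatorial classification by the uniform multiplicity bound $\mathrm{mult}_{\bar p}C\leq(-\Kxt)\cdot C$, obtained from $p_a(C)\geq\binom{m}{2}$ together with the Hodge index inequality $2\,C^2\leq\bigl((-\Kxt)\cdot C\bigr)^2$; the arithmetic ($d_0^2+4d_0-4\leq 0$ being impossible for $d_0\geq 1$) checks out, and the reduction of nefness to $E$ and to strict transforms of irreducible curves on $\Xt\times_k\kb$ is legitimate. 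What your approach buys is independence from the planar model: it uses only that $-\Kxt$ is nef with $(-\Kxt)^2=2$, so it would transfer verbatim to weak Del Pezzo surfaces of other degrees (with the same inequality, which only gets easier as $K^2$ grows), whereas the paper's argument leans on the specific presentation as a blow-up of $\bP^2_{\kb}$ at points in almost general position. The cost is that you must invoke the genus--multiplicity estimate and the index theorem, both standard but heavier tools than the paper needs; you should also note explicitly (as you implicitly do) that nefness descends from $\kb$ to $k$ and conversely, so that the computation over $\kb$ really decides the statement over $k$.
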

\begin{proof}
This follows from Proposition {\ref{Prop:Demazure}} and {\ref{Prop:HW}}.
\end{proof}
	For Proposition {\ref{Prop4.7}}, we prepare following lemma.
\begin{lem}[See also {\cite[Lemma 2.1]{STVA}}]{\label{Lem4.5}}
	Let $C$ be an element in $|-\Kxt|$. If $f_*C$ is reducible over $\kb$, then $C$ decomposes over $\kb$ as following:
	$$C = C_1 + C_2 + \sum_{i} F_i,$$
where $C_1, C_2$ are $(-1)$curves on $\Xt$ and $F_i$ are $(-2)$curves on $\Xt$ which may not be distinct. 
\end{lem}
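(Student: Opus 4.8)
The plan is to decompose $C$ over $\kb$ into irreducible components and to read off the statement from the intersection numbers against $-\Kxt$. Write $C = \sum_j m_j \Gamma_j$, where the $\Gamma_j$ are the distinct irreducible curves on $\Xt \times_k \kb$ occurring in $C$ and $m_j \geq 1$. Because $-\Kxt$ is nef, every $\Gamma_j \cdot (-\Kxt) \geq 0$, while $C \in |-\Kxt|$ gives $\sum_j m_j (\Gamma_j \cdot (-\Kxt)) = (-\Kxt)^2 = 2$. Since $d = 2 \neq 1$, Lemma \ref{Lem:exccurves} (3) and (4) tell us that an irreducible $\Gamma$ has $\Gamma \cdot (-\Kxt) = 0$ precisely when it is a $(-2)$curve and $\Gamma \cdot (-\Kxt) = 1$ precisely when it is a $(-1)$curve. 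I therefore split $C = D + \sum_i F_i$, where the $F_i$ are the $(-2)$curve components counted with multiplicity and $D$ gathers the components of positive anticanonical degree; here $D \cdot (-\Kxt) = 2$.

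Next I would enumerate the shapes of $D$ allowed by the budget $D \cdot (-\Kxt) = 2$. As each component of $D$ meets $-\Kxt$ in degree at least $1$, there are only three possibilities: (i) $D = C_1 + C_2$ for two distinct $(-1)$curves; (ii) $D = 2C_1$ for a single $(-1)$curve $C_1$; or (iii) $D = \Gamma$ for a single irreducible curve with $\Gamma \cdot (-\Kxt) = 2$. In cases (i) and (ii) the asserted decomposition is immediate, taking $C_1, C_2$ to be the $(-1)$curves in $D$ (with $C_1 = C_2$ in case (ii)). The whole content of the lemma is thus to exclude case (iii).

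Here I would invoke the hypothesis on $f_* C$. The minimal resolution $f : \Xt \rightarrow X$ contracts precisely the $(-2)$curves (Proposition \ref{Prop:HW}), so $f$ sends each $F_i$ to a point and $f_* C = f_* D$. In case (iii) the curve $\Gamma$ is irreducible and is not a $(-2)$curve, hence is not contracted by $f$; as $f$ is birational, $f(\Gamma)$ is then an integral curve and $f_* C = f_* \Gamma = f(\Gamma)$ is irreducible and reduced. This contradicts the assumption that $f_* C$ is reducible, so case (iii) does not occur and $C = C_1 + C_2 + \sum_i F_i$ as required.

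The one place demanding care is the final step, namely the interaction between $f_*$ and the component decomposition: I must check that contracting the $(-2)$curves neither merges the positive-degree components nor alters their multiplicities, so that $f_* C$ is integral exactly in case (iii). This amounts to the elementary fact that a birational morphism takes an irreducible non-contracted curve to an integral curve, but it is the precise point at which the hypothesis ``$f_* C$ reducible'' is used, and it is what distinguishes case (iii) (where $f_* C$ is integral) from cases (i) and (ii) (where $f_* C$ is either reducible or non-reduced).
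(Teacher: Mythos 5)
Your proof is correct and follows essentially the same route as the paper's: both use nefness of $-\Kxt$ together with the budget $C\cdot(-\Kxt)=2$ to sort the components into $(-2)$curves and positive-degree pieces via Lemma \ref{Lem:exccurves}, and both rule out the case of a single irreducible component of anticanonical degree $2$ by noting that $f$ contracts exactly the $(-2)$curves, so $f_*C$ would then be irreducible. Your explicit separation of the subcase $D=2C_1$ is a minor refinement the paper absorbs into its second case, but it changes nothing substantive.
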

\begin{proof}
	Since $C \cdot (-\Kxt) = 2$ and $-\Kxt$ is nef, we have the following two cases.
\begin{itemize}
	\item[] $C = \Cp + \sum_{i} F_i$ or
	\item[] $C = C_1 + C_2 + \sum_{i} F_i$,
\end{itemize}
where $\Cp, C_1, C_2$ and $F_i$ are irreducible curves and satisfy $\Cp \cdot (-\Kxt) = 2$, $C_1 \cdot (-\Kxt) = C_2 \cdot (-\Kxt) = 1$ and $F_i \cdot (-\Kxt) = 0$. By Lemma {\ref{Lem:exccurves}}, $C_1,C_2$ are $(-1)$curves and $F_i$ are $(-2)$curves. By Corollary {\ref{Cor:RDPDP<->weakDP}}, $f_*F_i$ is points. Therefore, if $C = \Cp + \sum_{i} F_i$, then $f_*C = f_*\Cp$. This is not reducible. Thus $C = C_1 + C_2 + \sum_{i} F_i$.
\end{proof}

\begin{prop}[See also{\cite[Theorem 2.9]{STVA}}]{\label{Prop4.7}}
	Let $p$ be a $k$-point which does not lie on any $(-2)$curves and $\pi : \Xtp \rightarrow \Xt$ be the blowing up at $p$. (By Lemma {\ref{Lem4.6}}, $\Xtp$ is a weak Del Pezzo surface of degree $1$.)
	$$\Xtp \mapright{\pi} \Xt \mapright{f} X$$
For an irreducible divisor $C$ on $\Xtp$, the following two conditions are equivalent:
\begin{itemize}
	\item[(i)] $C$ is a $(-2)$curve on $\Xtp$.
	\item[(ii)] $C$ is one of the following:
		\begin{itemize}
			\item[(1)] $C$ is a total transform of a $(-2)$curve on $\Xt$ under $\pi$.
			\item[(2)] $C$ is a strict transform of a $(-1)$curve passing through $p$ under $\pi$.
			\item[(3)] $f(p) \in X$ lies on the ramification divisor $R$ of $\kappa$ and $C$ is the strict transform of a component of the spine at $p$ under $\pi$.
		\end{itemize}
\end{itemize}
\end{prop}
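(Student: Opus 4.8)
The plan is to prove the two implications separately, treating $(ii)\Rightarrow(i)$ as a direct intersection computation and concentrating the real work on $(i)\Rightarrow(ii)$. Throughout I write $E$ for the exceptional $(-1)$curve of $\pi$, so that $\Kxtp=\pi^{*}\Kxt+E$, $E^2=-1$, and $\pi^{*}D\cdot E=0$ for every divisor $D$ on $\Xt$. For $(ii)\Rightarrow(i)$ I would simply check each type. If $F$ is a $(-2)$curve on $\Xt$, then $p\notin F$, so $\pi^{*}F$ is irreducible with $(\pi^{*}F)^2=-2$ and $\pi^{*}F\cdot\Kxtp=F\cdot\Kxt=0$. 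If $G$ is a $(-1)$curve through $p$, its strict transform is $\pi^{*}G-E$, and $(\pi^{*}G-E)^2=G^2-1=-2$, $(\pi^{*}G-E)\cdot\Kxtp=G\cdot\Kxt-E^2=-1+1=0$. Finally, suppose $f(p)\in R$ and $C$ is the strict transform of a component of the spine $S$. If $S$ is irreducible it is a rational curve with a node or cusp at $p$, hence singular there with multiplicity $2$, so its strict transform is $\pi^{*}S-2E$, with $(\pi^{*}S-2E)^2=S^2-4=-2$ and $(\pi^{*}S-2E)\cdot\Kxtp=S\cdot\Kxt-2E^2=-2+2=0$; if $S$ is reducible, then by Lemma~\ref{Lem4.5} each of its $(-1)$curve components passes through $p$ (they account for the singularity of $S$ at $p$) and each of its $(-2)$curve components avoids $p$, so every component already falls under the first two types. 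In each case Lemma~\ref{Lem:exccurves} identifies the transform as a $(-2)$curve.

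For $(i)\Rightarrow(ii)$, let $C$ be a $(-2)$curve on $\Xtp$. Since $C^2=-2\neq-1=E^2$, we have $C\neq E$, so $\Gamma:=\pi_{*}C$ is an irreducible curve on $\Xt$ and $C=\pi^{*}\Gamma-mE$ with $m=\mathrm{mult}_{p}\Gamma\geq0$. Expanding the two defining relations of a $(-2)$curve gives $-2=C^2=\Gamma^2-m^2$ and $0=C\cdot\Kxtp=\Gamma\cdot\Kxt+m$, that is, $\Gamma^2=m^2-2$ and $\Gamma\cdot(-\Kxt)=m$. The whole classification then reduces to bounding $m$ and reading off the three possibilities.

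The crux is the bound $m\leq2$. Since $-\Kxt$ is nef with $(-\Kxt)^2=2>0$, the Hodge index theorem yields $(\Gamma\cdot(-\Kxt))^2\geq\Gamma^2\cdot(-\Kxt)^2$, i.e.\ $m^2\geq2(m^2-2)$, whence $m^2\leq4$ and $m\in\{0,1,2\}$. If $m=0$ then $\Gamma\cdot(-\Kxt)=0$, so $\Gamma$ is a $(-2)$curve by Lemma~\ref{Lem:exccurves}(4) and $p\notin\Gamma$, giving type (1). If $m=1$ then $\Gamma\cdot(-\Kxt)=1$; since $d=2\neq1$, Lemma~\ref{Lem:exccurves}(3) forces $\Gamma$ to be a $(-1)$curve, and $m=1$ means it passes through $p$, giving type (2). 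If $m=2$ then $\Gamma^2=2$, $\Gamma\cdot\Kxt=-2$, so $p_a(\Gamma)=1$ and $\Gamma$ is singular at $p$; moreover $(\Gamma+\Kxt)\cdot(-\Kxt)=0$, and the equality case of Hodge index (using $(-\Kxt)^2>0$ and $(\Gamma+\Kxt)^2=0$) forces $\Gamma+\Kxt$ to be numerically, hence linearly, trivial, so $\Gamma\in|-\Kxt|$. Thus $\Gamma$ is an irreducible anti-canonical curve singular at $p$, which by Proposition~\ref{Prop4.3} is the spine and forces $f(p)\in R$, giving type (3).

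The step I expect to require the most care is the identification in the case $m=2$: converting the purely numerical data $\Gamma^2=2$, $\Gamma\cdot\Kxt=-2$ into the membership $\Gamma\in|-\Kxt|$ (so that Proposition~\ref{Prop4.3} genuinely applies and names $\Gamma$ as the spine) rests on the equality case of the Hodge index theorem, equivalently on the negative-definiteness of $\Kxt^{\perp}$ in the N\'eron--Severi lattice. One should also confirm that multiplicity $2$ at the smooth point $p$ is precisely what renders $\Gamma$ singular there, so that the uniqueness clause of Proposition~\ref{Prop4.3} can be invoked; everything else is bookkeeping of intersection numbers through Lemma~\ref{Lem:exccurves}.
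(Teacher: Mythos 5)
Your proof of the main implication (i)\,$\Rightarrow$\,(ii) is correct but runs along a genuinely different track from the paper's. You write $C=\pi^{*}\Gamma-mE$ with $\Gamma=\pi_{*}C$ and bound the multiplicity $m$ purely numerically: the Hodge index inequality $(\Gamma\cdot(-\Kxt))^{2}\geq 2\Gamma^{2}$ gives $m\leq 2$, and the three values $m=0,1,2$ are then matched to the three cases of (ii), with the equality case of Hodge index (plus $\operatorname{Pic}=\operatorname{NS}$ torsion-free on a rational surface) upgrading $\Gamma+\Kxt\equiv 0$ to $\Gamma\in|-\Kxt|$ so that Proposition~\ref{Prop4.7}'s case (3) is reached via Proposition~\ref{Prop4.3}. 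The paper instead argues cohomologically: from $C\cdot(-\Kxtp)=0$ and the exact sequence for $\sh{\Xtp}(-\Kxtp-C)$ it produces the unique member $D\in|-\Kxtp|$ containing $C$, and splits on whether $D$ contains $E$; if it does, $\pi_{*}D$ is the spine, and if not, $\pi_{*}C\cdot(-\Kxt)\in\{0,1\}$ gives cases (1) and (2). Your lattice-theoretic route is more elementary (no use of Proposition~\ref{dim of anticano}) and has the small advantage of directly exhibiting $\Gamma$ as an irreducible anticanonical curve, whereas the paper's route produces the spine as a possibly reducible divisor; both are sound.

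There is one slip in your easy direction (ii)\,$\Rightarrow$\,(i), in case (3). You split on whether the spine $S$ is irreducible or reducible, but the relevant dichotomy (as in Lemma~\ref{Lem4.5}) is whether $f_{*}S$ is reducible. If $S=S'+\sum_{i}F_{i}$ with $S'$ irreducible satisfying $S'\cdot(-\Kxt)=2$ and the $F_{i}$ genuine $(-2)$curves, then $S$ is reducible but Lemma~\ref{Lem4.5} does not apply, and your claim that every component is a $(-1)$curve or a $(-2)$curve fails for $S'$. The fix is the same computation you already did in the irreducible case: the $F_{i}$ miss $p$, so $\operatorname{mult}_{p}S'\geq 2$, and nefness of $-\Kxtp$ applied to the irreducible curve $\pi^{*}S'-nE$ forces $n=2$, whence $(\pi^{*}S'-2E)\cdot(-\Kxtp)=0$ and Lemma~\ref{Lem:exccurves}(4) identifies it as a $(-2)$curve. (This nefness argument also cleanly justifies the ``multiplicity exactly $2$'' assertion in your irreducible case, which you currently support only by the informal ``node or cusp'' remark.)
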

\begin{proof}
	Assume that (ii) holds. If $C$ is (1) or (2) of (ii), then $C$ is clearly a $(-2)$curve on $\Xtp$. Suppose that $f(p) \in R$ and let $S$ be the spine at $p$. If $f_*(S)$ is reducible over $\kb$, then components of $S$ are $(-1)$curves and $(-2)$curves by Lemma {\ref{Lem4.5}}. Then both of the $(-1)$curves pass through $p$ since $S$ is singular at $p$. Thus the strict transforms of the components are all $(-2)$curves on $\Xtp$. If $f_*(S)$ is irreducible over $\kb$, there exists an irreducible curve $\Sp$ and $(-2)$curves $F_i$ such that:
	$$S = \Sp + \sum_{i} F_i.$$
Then $\pi^* \Sp = \pi_*^{-1} \Sp + n E$, where $\pi_*^{-1} \Sp$ is a strict transform of $\Sp$. Since $S$ is singular at $p$ and $p$ lies on no $(-2)$curve, we have $n \geq 2$. Thus $\pi_*^{-1} \Sp \cdot (-\Kxtp) = 2 - n \leq 0$. This means $n = 2$ and $\pi_*^{-1} \Sp$ is a $(-2)$curve.

	Conversely, let $C$ be a $(-2)$curve on $\Xtp$. By $C \cdot (-\Kxtp) = 0$, we have $\dim H^0 (C, \sh{C}(-\Kxtp)) = 1$. Thus, the long exact sequence associated to the sequence
	$$0 \rightarrow \sh{\Xtp}(-\Kxtp -C) \longrightarrow \sh{\Xtp}(-\Kxtp) \longrightarrow \sh{C}(-\Kxtp) \rightarrow 0$$
induces $\dim H^0(\Xtp, \sh{\Xtp}(-\Kxtp - C)) = 1$ since $\dim H^0(\Xtp,\sh{\Xtp}(-\Kxtp)) = 2$ by Proposition {\ref{dim of anticano}} and $\sh{\Xtp}(-\Kxtp)$ has no fixed component. This means there exists a unique element in $|-\Kxtp|$ which contains $C$ as a component, denoted by $D$. 

	If $D$ contains $E := \pi^{-1}(p)$, we have $f(p) \in R$ and $\pi_* D$ is the spine at $p$ since $\pi_* D \in |-\Kxt|$ and $(D - E) \cdot E = 2$. Thus $C$ is in Case (3) of (ii). If $D$ does not contain $E$, then $C \cdot E = 0$ or $1$. Since $C \cdot (-\Kxtp) = 0$, we have $\pi_*C \cdot (-\Kxt) = 0$ or $1$. Thus (1) or (2) holds.  
\end{proof}
\subsection{Main Theorem}
We show Theorem {\ref{MainThm2}}. At first, we define a generalized Eckardt point.

\begin{lem}[See also{\cite[Lemma 2.2]{STVA}}]{\label{Lem4.8}}
	Let $p$ be a $k$-point on $\Xt$. If there exist four $(-1)$curves passing through $p$, then the sum of the four $(-1)$curves is linearly equivalent to $-2\Kxt$. In particular, there are at most four $(-1)$curves which pass through $p$.
\end{lem}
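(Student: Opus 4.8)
The plan is to work over $\kb$ throughout, since linear equivalence to $-2\Kxt$ can be checked after base change and $\Kxt$ descends to $k$. Write $E_1,E_2,E_3,E_4$ for the four distinct $(-1)$curves through $p$. The first reduction is to intersection numbers: because any two distinct irreducible curves sharing the $k$-point $p$ meet there with local multiplicity at least $1$, we have $E_i\cdot E_j\geq 1$ for $i\neq j$, while Lemma \ref{Lem:intersec_No} forces $E_i\cdot E_j\leq 2$. Thus each $E_i\cdot E_j\in\{1,2\}$, and the whole lemma follows once I show that in fact $E_i\cdot E_j=1$ for every pair.

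The main obstacle — and the only place the hypothesis of four (really, three) curves is used — is ruling out $E_i\cdot E_j=2$. Here I would argue by contradiction: if, say, $E_1\cdot E_2=2$, then Lemma \ref{Lem3.6} gives $E_1+E_2\in|-\Kxt|$. Taking a third curve $E_3$ through $p$, I get $E_3\cdot(-\Kxt)=E_3\cdot E_1+E_3\cdot E_2\geq 1+1=2$, contradicting $E_3\cdot(-\Kxt)=1$. Hence all six pairwise intersection numbers equal $1$.

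With this in hand the conclusion is a Riemann--Roch computation. Put $M:=-2\Kxt-(E_1+E_2+E_3+E_4)$. A direct computation gives $M\cdot\Kxt=-4-(-4)=0$ and, using $\sum_{i<j}E_i\cdot E_j=6$ (so that $(E_1+\cdots+E_4)^2=8$), also $M^2=8-16+8=0$. Since $\Xt$ is rational we have $\chi(\sh{\Xt})=1$ (e.g. by Proposition \ref{dim of anticano}), so Riemann--Roch yields $\chi(\sh{\Xt}(M))=1+\tfrac12(M^2-M\cdot\Kxt)=1$; moreover $H^2(\Xt,\sh{\Xt}(M))\cong H^0(\Xt,\sh{\Xt}(\Kxt-M))^{\vee}=0$ because $(\Kxt-M)\cdot(-\Kxt)=-2<0$ and $-\Kxt$ is nef. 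Hence $h^0(\sh{\Xt}(M))\geq 1$, so $M$ is effective; the identical argument applied to $-M$ (again $(\Kxt+M)\cdot(-\Kxt)=-2<0$) shows $-M$ is effective. An effective divisor whose negative is also effective is linearly trivial, so $M\sim 0$, i.e. $E_1+E_2+E_3+E_4\sim-2\Kxt$. Finally, a fifth distinct $(-1)$curve $E_5$ through $p$ would satisfy $E_5\cdot(E_1+\cdots+E_4)\geq 4$ (one from each common point $p$) yet $E_5\cdot(-2\Kxt)=2$, a contradiction; hence at most four $(-1)$curves pass through $p$.
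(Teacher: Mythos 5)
Your proof is correct, and the first half coincides with the paper's: both rule out $E_i\cdot E_j=2$ by playing Lemma \ref{Lem3.6} against a third curve through $p$ (the paper phrases it contrapositively, noting $(E_i+E_j)\cdot E_k=2\neq 1$, but it is the same argument), so that all pairwise intersections equal $1$. Where you diverge is the linear-equivalence step. The paper sets $D:=-2\Kxt-E_1-E_2-E_3$, checks $D^2=D\cdot\Kxt=-1$, invokes Corollary \ref{Cor3.5} to realize $|D|$ as a single pre$(-1)$curve, and then uses $D\cdot E_4=-1$ together with Lemma \ref{Lem3.3} to force $E_4\in|D|$. You instead show the numerically trivial class $M=-2\Kxt-\sum E_i$ satisfies $M^2=M\cdot\Kxt=0$ and run Riemann--Roch plus Serre duality on both $M$ and $-M$ to get both effective, hence $M\sim 0$. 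Both routes ultimately rest on Riemann--Roch and the nefness of $-\Kxt$; yours is more self-contained (it bypasses the pre$(-1)$curve classification entirely and would work verbatim in any degree where the arithmetic comes out numerically trivial), while the paper's is shorter given that Corollary \ref{Cor3.5} and Lemma \ref{Lem3.3} are already on the table and is more symmetric in spirit with the surrounding arguments of Section 3. The final counting step excluding a fifth curve is identical in both.
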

\begin{proof}
	Suppose that there exist four $(-1)$curves $E_1, E_2, E_3$ and $E_4$ which pass through $p$. $(E_i + E_j) \notin |-\Kxt|$ $(i \neq j)$ since $(E_i + E_j) \cdot E_k = 2$ $(k \neq i,j)$. Thus $E_i \cdot E_j = 1$ $( i \neq j)$ hold by Lemma {\ref{Lem3.6}}. Therefore $(-2\Kxt - E_1 - E_2 - E_3)^2 = (-2\Kxt - E_1 - E_2 - E_3) \cdot \Kxt = -1$. This means $(-2\Kxt - E_1 - E_2 - E_3)$ is linearly equivalent to a pre$(-1)$curve. Since $(-2\Kxt - E_1 - E_2 - E_3) \cdot E_4 = -1$, we have $(E_1 + E_2 + E_3 + E_4) \in |-2\Kxt|$. Since $E \cdot (-2\Kxt) = 2$ for any $(-1)$curves $E$, there is no other $(-1)$curve passing through $p$.
\end{proof}
\begin{definition}{\label{Def:geneEckpt}}
	A {\em{generalized Eckardt point}} is a point on $\Xt$ contained in four $(-1)$curves.
\end{definition}
	
\begin{thm}[See also{\cite[Theorem 3.1]{STVA}}]{\label{Thm4.10}}
	Let $\Xt$ be a weak Del Pezzo surface of degree $2$ over a field $k$. Let $p$ be a $k$-point on $\Xt$ which is not a generalized Eckardt point and let $n$ be the number of $(-1)$curves which pass through $p$ $(0 \leq n \leq 3)$. If $f(p)$ does not lie on the ramification divisor $R$ of $\kappa$, there exists a non-constant morphism $\bP_k^1 \rightarrow \Xt$ such that:
\begin{itemize}
	\item[] the image is singular at $p$ if $n = 0$ or $1$;
	\item[] the image passes through $p$ if $n = 2$;
	\item[] the image is a $(-1)$curve defined over $k$ if $n = 3$.
\end{itemize}
\end{thm}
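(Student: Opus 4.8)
The plan is to produce the required morphism as the normalization of a single $(-1)$curve that is built canonically out of $\Xt$ and $p$, so that its field of definition is automatically $k$; the integer $n$ will enter only through one intersection number, which records how the curve meets $p$. First I would pass to degree one. Since $f(p)\notin R$, the point $p$ lies on no $(-2)$curve (a $(-2)$curve contracts under $f$ to a singular point of $X$, and such points lie on $R$), so Lemma \ref{Lem4.6} applies: the blow-up $\pi:\Xtp\to\Xt$ at $p$ is a weak Del Pezzo surface of degree $1$, with exceptional curve $E:=\pi^{-1}(p)$ satisfying $E^2=-1$ and $E\cdot(-\Kxtp)=1$. The whole construction then lives on $\Xtp$.

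Next I would single out the class $L:=-2\Kxtp-E=\pi^{\ast}(-2\Kxt)-3E$ on $\Xtp$. A direct computation gives $L^2=L\cdot\Kxtp=-1$, so by Corollary \ref{Cor3.5} (whose proof is valid in degree $1$) the system $|L|$ consists of a single pre$(-1)$curve $\Delta$. As $\Kxtp$ and $E$ are defined over $k$, so are $L$ and $\Delta$; moreover $\Delta\cdot E=3$, so $\pi_{\ast}\Delta\in|-2\Kxt|$ meets $p$ with multiplicity $3$. I would then use the complete list of $(-2)$curves on $\Xtp$: by Proposition \ref{Prop4.7}, and because $f(p)\notin R$ rules out the spine case, every $(-2)$curve on $\Xtp$ is either a total transform $\pi^{\ast}F$ of a $(-2)$curve $F\subset\Xt$ or the strict transform $G_i:=\pi^{\ast}E_i-E$ of one of the $n$ $(-1)$curves $E_i$ through $p$, and one computes $L\cdot\pi^{\ast}F=0$ while $L\cdot G_i=-1$.

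Peeling off the $(-2)$curves met negatively (Proposition \ref{Prop3.4} and Lemma \ref{Lem:minusF}) terminates in an honest $(-1)$curve $D_0$, the unique component of $\Delta$ with $D_0\cdot(-\Kxtp)=1$. All $n$ of the $G_i$ are forced to be peeled, each exactly once, whereas any peeled $\pi^{\ast}F$ satisfies $\pi^{\ast}F\cdot E=0$ and $G_i\cdot E=1$, so
\[
D_0\cdot E \;=\; \Delta\cdot E-\big(\text{peeled }(-2)\text{curves}\big)\cdot E \;=\; 3-n.
\]
Because the set of $(-2)$curves peeled at each stage is $\Gal(\kb/k)$-stable (the $E_i$ are the lines through the $k$-point $p$, and $\Delta$ is over $k$), the curve $D_0$ is defined over $k$; this is exactly what lets me avoid descending any single $E_i$ even when the $E_i$ are Galois-conjugate. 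Finally $D_0\cong\bP^1$ is a $(-1)$curve over $k$ whose image on $\Xt$ meets $p$ with multiplicity $3-n$: a triple or double point (hence singular) for $n=0,1$, a single smooth branch through $p$ for $n=2$, and for $n=3$ it misses $E$ and descends to the $(-1)$curve $-2\Kxt-E_1-E_2-E_3$ on $\Xt$. A short Galois/parity count of $D_0\cap E$ (an intersection of odd degree for $n=0,3$, or one that meets a $k$-rational curve through $p$ simply for $n=1,2$) shows $D_0$ carries a $k$-point, so $D_0\cong\bP_k^1$ and normalization yields the desired non-constant morphism $\bP_k^1\to\Xt$.

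The step I expect to be the real work is the decomposition of $\Delta$: proving that its only non-$(-2)$ part is a single irreducible $(-1)$curve $D_0$, and that the peeled curves are exactly the $n$ strict transforms $G_i$, each with multiplicity one, so that the multiplicity of $\pi(D_0)$ at $p$ is precisely $3-n$. This rests on the complete list of $(-2)$curves of $\Xtp$ in Proposition \ref{Prop4.7}, where the hypothesis $f(p)\notin R$ is used crucially to exclude the spine components that would otherwise also be peeled and would corrupt the count. By contrast the descent to $k$, which superficially looks like the main difficulty because the lines through $p$ need not individually be defined over $k$, is automatic once everything is phrased through the Galois-invariant class $L$ and the Galois-stable collection of lines through the rational point $p$.
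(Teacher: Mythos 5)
Your strategy is the same as the paper's: blow up at $p$, take the unique pre$(-1)$curve $\Delta$ in $|-2\Kxtp - E|$ (defined over $k$ by Corollary \ref{Cor3.5}), identify its $(-1)$curve component $D_0$, and compute $D_0\cdot E = 3-n$ using the classification of $(-2)$curves on $\Xtp$ from Proposition \ref{Prop4.7}. The descent to $k$, the role of $f(p)\notin R$, and the final identification of $\pi|_{D_0}$ as the desired morphism all match the paper's proof.

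The gap sits exactly where you flag ``the real work'' and then do not carry it out: the assertion that each strict transform $G_i=\pi^*E_i-E$ occurs in $\Delta$ with multiplicity exactly one. The intersection number $\Delta\cdot G_i=-1$ only forces $G_i$ to be a component, i.e.\ $r_i\ge 1$: writing $\Delta=D_0+\sum_j r_jG_j+\sum_j s_j\pi^*F_j$ and expanding gives $2r_i=1+D_0\cdot G_i+\sum_{j\ne i}r_jG_j\cdot G_i+\sum_j s_jF_j\cdot E_i$, where every term on the right is nonnegative, so $r_i=2$ is not excluded by this computation, and excluding the spine case from Proposition \ref{Prop4.7} does not help here. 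The paper needs a separate argument: assuming $r_1\ge 2$, one rewrites $\Delta=-2\pi^*\Kxt-3E$ as an identity of effective divisors $2\pi^*(-\Kxt-E_1)=E+D_0+(r_1-2)\pi_*^{-1}E_1+\cdots$, and shows the left-hand side cannot contain $E$ because the $(-1)$curve $C$ inside the pre$(-1)$curve $-\Kxt-E_1$ cannot pass through $p$ --- otherwise $C+E_1$ plus $(-2)$curves would be an anticanonical curve singular at $p$, i.e.\ a spine, contradicting $f(p)\notin R$ via Proposition \ref{Prop4.3}. Without this (or an equivalent) argument your count $D_0\cdot E=3-n$ is unproved. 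A secondary omission: in degree $1$ the distinguished component of a pre$(-1)$curve could a priori lie in $|-\Kxtp|$ (Lemma \ref{Lem3.2} only guarantees a $(-1)$curve when $d\ne 1$); the paper rules this out with another appeal to the spine, whereas you simply assert that peeling ``terminates in an honest $(-1)$curve.''
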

\begin{proof}
	Let $\pi : \Xtp \rightarrow \Xt$ be the blowing up at $p$ and $E := \pi^{-1}(p)$. Since $f(p)$ does not lie on $R$, $f(p)$ is not a singular point of $X$. Thus $\Xtp$ is a weak Del Pezzo surface of degree $1$ by Lemma {\ref{Lem4.6}}. Since $(-2\Kxtp - E)^2 = (-2\Kxtp - E) \cdot \Kxtp = -1$, there exists a pre$(-1)$curve $D$ defined over $k$ in $|-2\Kxtp - E|$ by Corollary {\ref{Cor3.5}}. Let $D_1$ be the prime divisor defined by Lemma {\ref{Lem3.2}} and let $D_0 := D - D_1$. We show $D_1 \notin |-\Kxtp|$. If $D_1 \in |-\Kxtp|$, then $D_0 \in |-\Kxtp - E|$ and thus $D_0 \cdot E = 2$. Since $\pi_* D_0 \in |-\Kxt|$, $\pi_* D_0$ is the spine at $p$. This is impossible since $f(p) \notin R$. Therefore $D_1 \notin |-\Kxtp|$. By Lemma {\ref{Lem:exccurves}}, $D_1$ is a $(-1)$curve defined over $k$. In particular, $D_1 \cong \bP_k^1$.

	We show that $D_1 \cdot E = 3 - n$ where $n$ is the number of $(-1)$curves which pass through $p$. Let $E_i$ be the $n$ $(-1)$curves on $\Xt$ passing through $p$ $(1 \leq i \leq n)$. By Proposition {\ref{Prop4.7}}, 
	$$D_0 = \sum_{i = 1}^n r_i \pi_*^{-1}E_i + \sum_j \pi^*F_j,$$
where $r_i$ are non-negative integers, $\pi_*^{-1}E_i$ are the strict transforms of $E_i$ and $F_j$ are $(-2)$curves on $\Xt$. Since $D_0 \cdot E = \sum_i r_i$, it suffices to show that $r_i = 1$ for all $i$. Since $D \cdot \pi_*^{-1}E_i = (-2\Kxtp - E) \cdot (\pi^*E_i - E) = -1$, the $(-2)$curves $\pi_*^{-1}E_i$ are contained in $D$ as components. Thus we have $r_i \geq 1$ for all $i$. Suppose that $r_1 \geq 2$. From $D = -2\pi^*\Kxt - 3E = D_1 + r_1 \pi_*^{-1}E_1 + \sum_{i =2}^n r_i \pi_*^{-1}E_i + \sum_j \pi^*F_j$, we deduce the equality
\begin{equation}{\label{equation4.2}}
	2\pi^*(-\Kxt - E_1) = E + D_1 + (r_1 - 2) \pi_*^{-1}E_1 + \sum_{i =2}^n r_i \pi_*^{-1}E_i + \sum_j \pi^*F_j
\end{equation}
of effective divisors of $\Xtp$. Since $(-\Kxt - E_1)^2 = (-\Kxt - E_1) \cdot \Kxt = -1$, we have that $(-\Kxt - E_1)$ is a pre$(-1)$curve on $\Xt$. Let $C$ be the $(-1)$curve contained in $(-\Kxt - E_1)$. Since $f (C + E_1) \in |-\Kx|$ and $f(p) \notin R$, we have $(C + E_1)$ is not singular at $p$ by Proposition {\ref{Prop4.3}}. This means $p$ does not lie on $C$. Thus $\pi^* (-\Kxt - E_1)$ does not contain $E$. This contradicts the equality ({\ref{equation4.2}}) and thus $r_i = 1$ for all $i$. Since $D_1 \cdot E = 3 - n$, the non-constant morphism $\pi|_{D_1} : D_1 \rightarrow \Xt$ is the desired morphism. 
\end{proof}
	The following theorem gives a necessary and sufficient condition of unirationality.
\begin{thm}[See also{\cite[Theorem 3.2]{STVA}}]{\label{Thm4.11}}
	If there exists a non-constant morphism $\rho : \bP_k^1 \rightarrow \Xt$ which image is not contained in $f^* R$, then $\Xt$ is $k$-unirational.
\end{thm}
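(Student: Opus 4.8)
The plan is to use the hypothesis to manufacture a dominant rational map to $\Xt$ from a rational surface, following the strategy of {\cite[Theorem 3.2]{STVA}}. The efficient way to record the non-constant morphism $\rho$ is as a single moving point: writing $K := k(t)$ for the function field of the source $\bP_k^1$ and $\eta$ for its generic point, $\rho$ corresponds to a $K$-rational point $P := \rho(\eta) \in \Xt(K)$ of the base-changed surface $\Xt_K := \Xt \times_k K$, which is again a weak Del Pezzo surface of degree $2$, now over the non-algebraically-closed field $K$. The whole point is that $P$ varies with $t$, so any rational curve we attach to it over $K$ will, after spreading out, sweep out a two-dimensional family in $\Xt$.

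First I would verify that $P$ satisfies the hypotheses of Theorem~\ref{Thm4.10} over $K$. Since $\rho$ is non-constant, $P$ is not the base change of any closed point of $\Xt$, so in particular it is not one of the finitely many generalized Eckardt points. The hypothesis that the image $C := \rho(\bP_k^1)$ is not contained in $f^*R$ is used here: the support of $f^*R$ is $f^{-1}(R)$, so $C \not\subset f^*R$ means the generic point $P$ of $C$ maps outside $R$, i.e. $f(P) \notin R$. Finally, two distinct $(-1)$curves meet in only finitely many points, so through the generic point of any irreducible curve at most one $(-1)$curve passes; hence the number $n$ of $(-1)$curves through $P$ is $0$ or $1$. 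Thus Theorem~\ref{Thm4.10} applies to $P$ with $n \in \{0,1\}$ and produces a non-constant morphism $\mu : \bP_K^1 \to \Xt_K$ whose image is a rational curve singular at $P$.

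Next I would spread $\mu$ out over the base. As $\mu$ is defined over $K = k(t)$, it is the generic fibre of a $k$-rational map $\Phi : \bP_k^1 \times \bP_k^1 \dashrightarrow \Xt$, where the first factor carries the parameter $t$ and the second is the source $\bP_K^1$ of $\mu$. Since $\bP_k^1 \times \bP_k^1$ is $k$-birational to $\bP_k^2$, it remains only to prove that $\Phi$ is dominant; then $\Xt$ is $k$-unirational by definition.

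The main obstacle is precisely this dominance, and the mechanism forcing it is that the image of $\mu$ is singular at the moving point $P = \rho(\eta)$. Suppose $\Phi$ were not dominant. Then its image would be an irreducible curve $\Gamma$ containing $\rho(t)$ for every $t$, whence $C \subseteq \Gamma$ and so $\Gamma = C$; but then the image of $\mu$ would be $C$ itself, forcing $C$ to be singular at its generic point $\rho(\eta)$, which is absurd for an irreducible curve. Hence $\Phi$ is dominant and $\Xt$ is $k$-unirational. The points requiring care are the genericity claims of the second paragraph: one must check that $C \not\subset f^*R$ genuinely places the generic point of $C$ both off $R$ and (as already used inside the proof of Theorem~\ref{Thm4.10}, via $\mathrm{Sing}(X) \subseteq R$) off every $(-2)$curve, so that the single hypothesis of the statement suffices to invoke Theorem~\ref{Thm4.10} at $P$.
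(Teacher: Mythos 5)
Your proof is correct and follows essentially the same route as the paper: pass to the function field to obtain a moving rational point that lies off $R$ and on at most one $(-1)$curve, invoke Theorem~\ref{Thm4.10} to produce a rational curve singular at that point, and deduce dominance of the resulting two-parameter family from that singularity. The only (harmless) deviations are that you work over $k(t)$ rather than over the residue field $k(\eta)$ of the generic point of the image curve --- which in fact sidesteps the implicit appeal to L\"uroth needed to see that $\bP_{k(\eta)}^1$ is $k$-birational to $\bP_k^2$ --- and that you phrase the dominance step as a contradiction where the paper argues directly via a strict inclusion of Zariski closures.
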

\begin{proof}
	Let $\eta$ be the generic point of the image of $\rho$ and $\varphi : \Xt \times_k k(\eta) \rightarrow \Xt$ be the projection. Then there exists a $k(\eta)$-point $\etap$ on $\Xt \times_k k(\eta)$ which satisfies $\varphi(\etap) = \eta$. Since $\eta$ is not contained in two $(-1)$curves nor $f^* R$, so is $\etap$. Thus there exists a non-constant morphism $\tau : \bP_{k(\eta)}^1 \rightarrow \Xt \times_k k(\eta)$ by Theorem {\ref{Thm4.10}}.
$$
\xymatrix{
	 & \bP_{k(\eta)}^1 \ar[r] \ar[d]^{\rho \times k(\eta)} & \bP_k^1 \ar[d]^{\rho} \\
	\bP_{k(\eta)}^1 \ar[r]^-{\tau} & \Xt \times_k k(\eta) \ar[r]^-{\varphi} & \Xt \\
}
$$
Since $\bP_{k(\eta)}^1$ is birationally equivalent to $\bP_k^2$ over $k$, it suffices to show that the composition $\varphi \circ \tau$ is dominant. Since $\eta \in \Imr(\varphi \circ \tau)$, we have $\overline{\Imr(\rho)}^{Zar} \subset \overline{\Imr(\varphi \circ \tau)}^{Zar}$, where $\overline{\ \cdot\ }^{Zar}$ is Zariski closure. Further, $\Imr(\tau) \not\subset \overline{\Imr(\rho \times k(\eta))}^{Zar}$ since $\Imr(\tau)$ is singular at $\etap$. This means $\overline{\Imr(\rho)}^{Zar} \subsetneq \overline{\Imr(\varphi \circ \tau)}^{Zar}$. Since $\overline{\Imr(\varphi \circ \tau)}^{Zar}$ is irreducible, we have $\varphi \circ \tau$ is dominant and thus $\Xt$ is $k$-unirational.
\end{proof}
\begin{proof}[Proof of Theorem {\ref{MainThm2}}]
If (1), Proposition {\ref{Prop:Kawakami}} induces $k$-unirationality. If (2), Theorem {\ref{Thm4.10}} and {\ref{Thm4.11}} induce $k$-unirationality.
\end{proof}
\section{Uniraionality over finite fields}
	In this section, we show Theorem {\ref{MainThm3}}. Since geometrically rational surfaces over a finite field has at least one $k$-point (Proposition {\ref{Prop:WeylConj}}), it suffices to consider $\ctext{1}, \ctext{2}$ and $\ctext{3}$ of Theorem {\ref{MainThm1}}. 

	In Section 5.1, we give a necessary condition for $k$-unirationality: if $X$ has enough $k$-points outside the ramification divisor $R$, then it is $k$-unirational. In this subsection, we do not need to suppose that $k$ is finite. In Section 5.2, we give a lower limit of the number of $k$-points of $X$ over a finite field. For calculating this, we use the computer program SageMath{\cite{Sage}}. The functions are contained in the end of the source file in the arXiv posting. In Section 5.3, we show Theorem {\ref{MainThm3}}.
\subsection{The number of ($-$1)curves}
Let $X$ be a RDP Del Pezzo surface of degree $2$ over a field $k$. Suppose that $X$ is $\ctext{1}, \ctext{2}$ or $\ctext{3}$ of Theorem {\ref{MainThm1}}. The following two lemmas hold over any fields.
\begin{lem}{\label{Lem5.2}}
	$\Xt$ has just $n$ $(-1)$curves which do not intersect any $(-2)$curves, where
$$
n = 
\begin{cases}
	32 & $if $\ctext{1}, \\
	20 & $if $\ctext{2}, \\
	8 & $if $\ctext{3}.
\end{cases}
$$
\end{lem}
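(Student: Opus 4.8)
The plan is to reduce the assertion to a purely lattice-theoretic count among the $56$ classes listed just before Corollary \ref{Cor:56pre-1curves}. The first observation is that a $(-1)$curve meeting no $(-2)$curve is exactly a pre$(-1)$curve $D$ with $D\cdot F=0$ for every $(-2)$curve $F$ on $\Xt$: such a $D$ is automatically a $(-1)$curve by Proposition \ref{Prop3.4}, since no $(-2)$curve $F$ satisfies $D\cdot F=-1$, and conversely a $(-1)$curve disjoint from all $(-2)$curves is a pre$(-1)$curve all of whose intersection numbers with $(-2)$curves vanish. By Corollary \ref{Cor:56pre-1curves} the pre$(-1)$curves are precisely the $56$ classes $\mA_i,\mB_{ij},\mC_{ij},\mD_i$, so the number $n$ equals the number of these classes orthogonal to the entire configuration of $(-2)$curves. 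Note that this is a count over $\kb$, depending only on the geometric singularity type, so the Galois-conjugacy hypotheses in $\ctext{1},\ctext{2},\ctext{3}$ play no role here.

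Next I would fix, in each case, an explicit standard configuration of $(-2)$curves. The Weyl group acts on $\Pic(\Xt\times_k\kb)$ fixing $-\Kxt$, permuting the $56$ pre$(-1)$curves and the $(-2)$classes while preserving all intersection numbers; hence the count is invariant under this action and it suffices to treat one representative of each type. I would take: the single $(-2)$curve $l_1-l_2$ in case $\ctext{1}$; the pair $l_1-l_2,\ l_2-l_3$ (meeting transversally, of type $A_2$) in case $\ctext{2}$; and four mutually orthogonal roots realizing $4A_1$ in case $\ctext{3}$, for instance $\mBp_{123},\mBp_{145},\mBp_{246},\mBp_{356}$.

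The count is then a direct bookkeeping exercise with Table \ref{Table3}, equivalently by writing $\mL=a_0l_0+\cdots+a_7l_7$ and imposing $\mL\cdot F=0$ for each chosen $(-2)$curve $F$. In case $\ctext{1}$ the single condition $\mL\cdot(l_1-l_2)=0$ reads $a_1=a_2$, satisfied by $5$ classes of type $\mA$, $11$ of type $\mB$, $11$ of type $\mC$ and $5$ of type $\mD$, giving $32$. In case $\ctext{2}$ the two conditions force $a_1=a_2=a_3$, selecting $4+6+6+4=20$ classes. In case $\ctext{3}$ orthogonality to all four roots leaves exactly $\mA_7$ and $\mD_7$ together with three classes each of types $\mB$ and $\mC$, namely those $\mB_{ij},\mC_{ij}$ whose index set $\{i,j\}$ meets every support $\{k,l,m\}$ in exactly one index (here $\{1,6\},\{2,5\},\{3,4\}$), for a total of $8$.

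I expect the only genuine subtlety to lie in case $\ctext{3}$: one must ensure the answer does not depend on which $4A_1$ subsystem is realized, since $E_7$ may contain more than one Weyl orbit of $4A_1$ configurations. I would address this by running the same tally for the alternative realization $l_1-l_2,\ l_3-l_4,\ l_5-l_6,\ \mCp_7$, which again produces $\mA_7,\mD_7$ and three classes each of types $\mB$ and $\mC$ (now $\mB_{12},\mB_{34},\mB_{56}$ and $\mC_{12},\mC_{34},\mC_{56}$), for a total of $8$; thus the value is independent of the configuration actually occurring on $\Xt$. Every other step is a routine verification against the intersection tables.
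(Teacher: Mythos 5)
Your counts for \ctext{1} and \ctext{2} are correct and amount to the same lattice computation the paper performs (the paper reaches $56-12-12=32$ and $32-6-6=20$ via Proposition \ref{Prop3.7} and the sign-flipping bijection $D\mapsto D+F$ rather than by explicit coordinates, but this is only a cosmetic difference). The identification of the objects to be counted with pre$(-1)$classes orthogonal to every $(-2)$curve, via Proposition \ref{Prop3.4}, is also fine. The problem is case \ctext{3}, where your assertion that ``the Galois-conjugacy hypotheses play no role'' and that ``the value is independent of the configuration actually occurring on $\Xt$'' is false. The root system $E_7$ contains two Weyl orbits of $4A_1$ subsystems, and both are realized by weak Del Pezzo surfaces of degree $2$ of geometric type $4A_1$: the second orbit is exactly Case (1) of Proposition \ref{Prop3.18}, in which a $(-1)$curve of Lemma \ref{Lem3.15} meets three of the four $(-2)$curves. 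A representative is $\{\mBp_{127},\mBp_{347},\mBp_{567},\mAp_{1,2}\}$, for which $\mA_7$ satisfies $2\mA_7\in|-\Kxt-\mBp_{127}-\mBp_{347}-\mBp_{567}|$. For this configuration the supports $\{1,2,7\},\{3,4,7\},\{5,6,7\}$ cover $\{1,\dots,7\}$, and a short check against Table \ref{Table3} shows that \emph{no} pre$(-1)$class is orthogonal to all three $\mBp$'s: the count is $0$, not $8$. Your verification of a second configuration does not detect this because both of your representatives lie in the other orbit (neither admits a triple $F_a,F_b,F_c$ with $-\Kxt-F_a-F_b-F_c$ divisible by $2$).

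What rescues the statement is precisely the hypothesis of \ctext{3} that the four singular points are Galois-conjugate. A $(-1)$curve of Lemma \ref{Lem3.15} is unique when there are only four $(-2)$curves (the argument in Proposition \ref{Prop3.18}(1)), so it is Galois-stable, and the three $(-2)$curves it meets would form a Galois-stable $3$-element subset of the four conjugate ones --- impossible under a transitive action. Hence under \ctext{3} the configuration lies in the orbit with no such curve, which is the content of the paper's equation (\ref{equation5.3}) and is the step your argument is missing. To repair your proof you should drop the claim of Galois-independence, prove (or cite) that hypothesis \ctext{3} excludes the orbit containing a Lemma \ref{Lem3.15} curve, and only then invoke transitivity of $W(E_7)$ on the remaining orbit to justify computing with your chosen representative.
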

\begin{proof}
	In the case of $\ctext{1}$, let $F$ be the unique $(-2)$curve. By Proposition {\ref{Prop3.7}}, we have
$$
	\# \{ D : {\rm{pre}}(-1){\rm{curve}}\ |\ D\cdot F = 1\} = 12.
$$
Further, for a pre$(-1)$curve $D$, 
$$
	D \cdot F = 1 \Leftrightarrow (D + F) \cdot F = -1
$$
holds. Therefore
$$
	\# \{ D : {\rm{pre}}(-1){\rm{curve}}\ |\ D\cdot F = -1\} = 12.
$$
Since there are $56$ pre$(-1)$curves by Corollary {\ref{Cor:56pre-1curves}}, the number of $(-1)$curves which do not intersect $F$ is $56 - 12 - 12 = 32$.

	In the case of $\ctext{2}$, let $F_1$ and $F_2$ be the two $(-2)$curves. Similarly as $\ctext{1}$, 
\begin{itemize}
	\item[] $\# \{ D : {\rm{pre}}(-1){\rm{curve}}\ |\ D\cdot F_1 = 1\} = 12$,
	\item[] $\# \{ D : {\rm{pre}}(-1){\rm{curve}}\ |\ D\cdot F_1 = -1\} = 12$,
	\item[] $\# \{ D : {\rm{pre}}(-1){\rm{curve}}\ |\ D\cdot F_1 = 0\} = 32$.
\end{itemize}
By Proposition {\ref{Prop:A2case}}, we have
\begin{itemize}
	\item[] $\# \{ D : {\rm{pre}}(-1){\rm{curve}}\ |\ D\cdot F_1 = 0, D \cdot F_2 = 1\} = 6$,
	\item[] $\# \{ D : {\rm{pre}}(-1){\rm{curve}}\ |\ D\cdot F_1 = -1, D \cdot F_2 = 1\} = 6$.
\end{itemize}
Further, for a pre$(-1)$curve $D$,
$$
	D \cdot F_1 = -1\ {\rm{and}}\ D \cdot F_2 = 1 \Leftrightarrow (D + F_1) \cdot F_1 = 0\ {\rm{and}}\ (D + F_1) \cdot F_2 = -1
$$
holds. This induces
\begin{itemize}
	\item[] $\# \{ D : {\rm{pre}}(-1){\rm{curve}}\ |\ D\cdot F_1 = 0, D \cdot F_2 = -1\} = 6.$
\end{itemize}
Thus the number of $(-1)$curves which do not intersect $F_1, F_2$ is $32 - 6 - 6 = 20$.

	In the case of $\ctext{3}$, let $F_1$, $F_2$, $F_3$ and $F_4$ be the $(-2)$curves. A pre$(-1)$curve $D$ satisfies
$$
		D \cdot F_i = 1 \Leftrightarrow (D + F_i) \cdot F_i = -1
$$
and $D \cdot F_j = (D - F_i) \cdot F_j$ $(i \neq j)$. This induces
\begin{eqnarray}{\label{equation5.1}}
	\# \{ D : {\rm{pre}}(-1){\rm{curve}}\ |\ D\cdot F_{i_1} = r_1, D \cdot F_{i_2} = r_2,  D \cdot F_{i_3} = r_3,  D \cdot F_{i_4} = r_4 \} \nonumber \\
	= \# \{ D : {\rm{pre}}(-1){\rm{curve}}\ |\ D\cdot F_{i_1} = -r_1, D \cdot F_{i_2} = r_2,  D \cdot F_{i_3} = r_3,  D \cdot F_{i_4} = r_4 \}
\end{eqnarray}
where $\{ i_1, i_2, i_3, i_4\} = \{ 1,2,3,4\}$ and $r_1,r_2,r_3,r_4 = 1,0,-1$. On the other hand, by Lemma {\ref{Lem:(-1)curveofLem}}, we have
\begin{equation}{\label{equation5.2}}
	\# \{ D : {\rm{pre}}(-1){\rm{curve}}\ |\ D\cdot F_1 = D \cdot F_2 = D \cdot F_3 = D \cdot F_4 = 1 \} = 0.
\end{equation}
Since $\Xt$ has no $(-1)$curve of Lemma {\ref{Lem3.15}}, we have
\begin{equation}{\label{equation5.3}}
	\# \{ D : {\rm{pre}}(-1){\rm{curve}}\ |\ D\cdot F_1 = D \cdot F_2 = D \cdot F_3 = 1,  D \cdot F_4 = 0 \} = 0.
\end{equation}
By Proposition {\ref{Prop3.8}}, ({\ref{equation5.2}}) and ({\ref{equation5.3}}), we have
\begin{equation}{\label{equation5.4}}
	\# \{ D : {\rm{pre}}(-1){\rm{curve}}\ |\ D\cdot F_1 = D \cdot F_2 = 1, D \cdot F_3 = D \cdot F_4 = 0 \} = 2.
\end{equation}
By ({\ref{equation5.1}}) and ({\ref{equation5.4}}), we have
\begin{equation}
	\# \{ D : {\rm{pre}}(-1){\rm{curve}}\ |\ D\cdot F_1 = 1, D \cdot F_2 = -1, D \cdot F_3 = D \cdot F_4 = 0 \} = 2.
\end{equation}
Therefore,
\begin{equation}{\label{equation5.5}}
	\# \{ D : {\rm{pre}}(-1){\rm{curve}}\ |\ D\cdot F_1 = 1, D \cdot F_2 = D \cdot F_3 = D \cdot F_4 = 0 \} = 12 - 2 \times 6 = 0.
\end{equation}
By ({\ref{equation5.1}}), ({\ref{equation5.2}}), ({\ref{equation5.3}}), ({\ref{equation5.4}}) and ({\ref{equation5.5}}), we have
\begin{itemize}
	\item[] $\# \{ D : {\rm{pre}}(-1){\rm{curve}}\ |\ \exists i,j, D\cdot F_i = D \cdot F_j = 1 \} = 2 \times _4$C$_2 = 12$,
	\item[] $\# \{ D : {\rm{pre}}(-1){\rm{curve}}\ |\ \exists i,j, D\cdot F_i = D \cdot F_j = -1 \} = 2 \times _4$C$_2 = 12$,
	\item[] $\# \{ D : {\rm{pre}}(-1){\rm{curve}}\ |\ \exists i,j, D\cdot F_i = 1, D \cdot F_j = -1 \} = 2 \times _4$P$_2 = 24$,
\end{itemize}
where $_n$C$_m$ and $_n$P$_m$ are $m$-combination of $n$ and $m$-permutation of $n$, respectively. Therefore, the number of $(-1)$curves which do not intersect any $(-2)$curves is $56 - 12 - 12 - 24 = 8$.
\end{proof}
\begin{lem}[See also{\cite[Lemma 3.4]{STVA}}]{\label{Lem5.3}}
	Let $R$ be the ramification divisor of the anti-canonical morphism $\kappa$. Suppose that $X$ has at least $n$ $k$-points which do not lie on $R$ where
$$
n = 
\begin{cases}
	9 & $if $\ctext{1}, \\
	6 & $if $\ctext{2}, \\
	3 & $if $\ctext{3}.
\end{cases}
$$
Then one of the $k$-points is either contained in a $(-1)$curve defined over $k$ or not a generalized Ekcardt point. In particular, $X$ is $k$-unirational by Theorem {\ref{MainThm2}}.
\end{lem}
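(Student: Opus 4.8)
The plan is to show that among the $n$ given $k$-points off $R$ at most $n-1$ can be ``bad'', where I call a $k$-point \emph{bad} if it is a generalized Eckardt point (Definition \ref{Def:geneEckpt}) and lies on no $(-1)$curve defined over $k$. Every point that is not bad is then exactly ``good'' in the sense of the statement, and a good point yields $k$-unirationality: if it is not a generalized Eckardt point it is covered directly by Theorem \ref{MainThm2}(2) (we are in the separable case), while if it lies on a $(-1)$curve defined over $k$, then $\Xt$ is non-minimal and contracting that curve produces a weak Del Pezzo surface of degree $3$ carrying a $k$-point, which is $k$-unirational by Proposition \ref{Prop:bir degree3}.

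First I would pin down the four $(-1)$curves through a bad point. A $k$-point $p$ off $R$ maps to a smooth point of $X$, so $p$ lies on no $(-2)$curve. If $p$ is a generalized Eckardt point, let $E_1,\dots,E_4$ be the four $(-1)$curves through it; by Lemma \ref{Lem4.8} we have $E_1+\cdots+E_4 \in |-2\Kxt|$ and $E_i\cdot E_j = 1$ for $i\neq j$. For any $(-2)$curve $F$ this gives $(E_1+\cdots+E_4)\cdot F = -2\Kxt\cdot F = 0$ by Lemma \ref{Lem:exccurves}(4), and each $E_i\cdot F\ge 0$ since $E_i\neq F$; hence $E_i\cdot F=0$ for every $i$ and every $F$, so each $E_i$ meets no $(-2)$curve. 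Thus the four curves through $p$ all lie among the $N$ curves meeting no $(-2)$curve counted in Lemma \ref{Lem5.2}, where $N=32,20,8$ in cases $\ctext{1},\ctext{2},\ctext{3}$. Because $p$ is a $k$-point, $\Gal(\kb/k)$ fixes $p$ and permutes $\{E_1,\dots,E_4\}$, and $p$ being bad means that none of the $E_i$ is defined over $k$.

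The key step is to show that two distinct bad $k$-points $p,p'$ can share no common $(-1)$curve. Suppose $E$ lies among the four curves of both $p$ and $p'$. Since $E$ is not defined over $k$, choose $\sigma\in\Gal(\kb/k)$ with $E^\sigma\neq E$. As $\sigma$ fixes each of $p,p'$ and permutes the four $(-1)$curves through each, $E^\sigma$ again passes through both $p$ and $p'$. Then $E$ and $E^\sigma$ are two distinct $(-1)$curves meeting at the two distinct points $p,p'$, whence $E\cdot E^\sigma\ge 2$; but $E,E^\sigma$ are two of the four concurrent curves at $p$, for which $E\cdot E^\sigma=1$, a contradiction. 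Consequently the four-element sets of $(-1)$curves attached to distinct bad points are pairwise disjoint subsets of the $N$ curves of Lemma \ref{Lem5.2}, so the number of bad points is at most $N/4 = n-1$. With $n$ $k$-points off $R$, at least one must be good, and $k$-unirationality follows as above.

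The hard part lies in the two structural inputs rather than in the final count: identifying the four curves through a bad point with those enumerated in Lemma \ref{Lem5.2} (which rests on the intersection computation with $(-2)$curves above) and the Galois disjointness argument, whose punchline is that a non-rational shared curve would force one of its conjugates to meet it at two points. Once these are secured, the inequality $4\cdot(\#\,\text{bad})\le N$ is immediate and matches the thresholds $n = N/4 + 1$ exactly.
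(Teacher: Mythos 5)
Your proof is correct and follows the same counting strategy as the paper's: both bound the number of generalized Eckardt points among the given $k$-points by $N/4$, where $N = 32, 20, 8$ is the count from Lemma \ref{Lem5.2}, using Lemma \ref{Lem4.8} together with the observation that the four concurrent $(-1)$curves at such a point meet no $(-2)$curve, and then note that $n = N/4 + 1$. The only divergence is in how disjointness of the four-element sets is secured: the paper asserts that a $(-1)$curve through two of the $k$-points is automatically defined over $k$ (which implicitly rests on Proposition \ref{Prop4.3} and the hypothesis that the points avoid $R$), whereas you derive a contradiction from $E \cdot E^{\sigma} \ge 2$ against the intersection number $1$ of two concurrent curves at a generalized Eckardt point --- a self-contained variant of the same step that works equally well.
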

\begin{proof}
	If there is a $(-1)$curve which contains two of the $n$ $k$-points, then the $(-1)$curve is defined over $k$. Thus we can assume that each $(-1)$curves contains at most one of the $n$ $k$-points. By Lemma {\ref{Lem4.8}}, the sum of four $(-1)$curves which pass through a generalized Eckardt point is in $|-2\Kxt|$. Since such $(-1)$curves do not intersect any $(-2)$curves, the number of generalized Eckardt points is no more than a quarter of the number of $(-1)$curves which do not intersect any $(-2)$curves. Therefore, by Lemma {\ref{Lem5.2}}, one of the $n$ $k$-points is not a generalized Eckardt point.
\end{proof}
\subsection{The number of rational points}
	Suppose that $k$ is a finite field. Over a finite field, the number of $k$-points of $\Xt$ can be calculated by the following theorem:
\begin{prop}[{\cite{Weil}}, {\cite[Theorem 24.1]{Manin}}]{\label{Prop:WeylConj}}
	Let $V$ be a smooth projective surface over a finite field $k$ with $q$ elements. Suppose that $V\times_k \kb$ is $\kb$-rational. Then,
	$$\# V(k) = q^2 + q\ \Tr F^* + 1$$
where $F$ is the Frobenius map in Galois group $\Gal(\kb/k)$ and $F^*$ is the action of $F$ on $\Pic(V \times \kb)$. In particular, $\# V(k) \equiv 1 \mod q$ and thus $\# V(k) \neq 0$
\end{prop}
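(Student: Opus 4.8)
The plan is to compute $\#V(k)$ directly from the Grothendieck--Lefschetz trace formula for $\ell$-adic étale cohomology, exploiting the fact that a geometrically rational surface has cohomology entirely controlled by its Picard group. Fix a prime $\ell$ different from the characteristic of $k$, write $\bar V := V \times_k \kb$, and let $F$ denote the geometric Frobenius. The trace formula for the smooth projective surface $V$ gives
$$
\#V(k) = \sum_{i=0}^{4} (-1)^i \Tr\bigl(F^* \mid H^i_{\text{\'et}}(\bar V, \mathbb{Q}_\ell)\bigr),
$$
so the entire task reduces to identifying each of the five cohomology groups together with the Frobenius action on it. The extreme degrees are immediate: since $\bar V$ is geometrically connected, $H^0 \cong \mathbb{Q}_\ell$ with trivial Frobenius action (trace $1$), and by Poincaré duality $H^4 \cong \mathbb{Q}_\ell(-2)$, on which $F^*$ acts by $q^2$ (trace $q^2$).

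The odd-degree groups vanish, and this is where the rationality hypothesis first enters. Any smooth projective surface over $\kb$ that is $\kb$-rational is connected to $\bP^2_{\kb}$ by a finite chain of blow-ups and blow-downs at closed points (the classical factorization of birational maps between smooth projective surfaces, valid in every characteristic). Each blow-up leaves $H^1$ and $H^3$ unchanged while adjoining one copy of $\mathbb{Q}_\ell(-1)$ to $H^2$; since $H^1_{\text{\'et}}(\bP^2_{\kb},\mathbb{Q}_\ell) = H^3_{\text{\'et}}(\bP^2_{\kb},\mathbb{Q}_\ell) = 0$, the same vanishing propagates to $\bar V$. Hence both odd-degree summands contribute $0$.

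The essential step is the computation of $H^2$, and this is where I expect the real work. The plan is to use the cycle class map
$$
\mathrm{cl}\colon \Pic(\bar V) \otimes_{\bZ} \mathbb{Q}_\ell \longrightarrow H^2_{\text{\'et}}(\bar V, \mathbb{Q}_\ell(1)),
$$
which is Frobenius-equivariant for the natural Galois action on $\Pic(\bar V)$ on the left. The same blow-up argument shows that $H^2$ is spanned by algebraic classes (this holds for $\bP^2_{\kb}$, and every exceptional divisor introduced by a blow-up is algebraic), so $\mathrm{cl}$ is surjective; since $\Pic(\bar V)$ is a finitely generated free abelian group of rank $b_2(\bar V)$ for a rational surface, the map is an isomorphism after $\otimes \mathbb{Q}_\ell$. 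The hard part will be the careful bookkeeping of the Tate twist: as $F^*$ acts on $\mathbb{Q}_\ell(1)$ by $q^{-1}$, if $\alpha_1,\dots,\alpha_{b_2}$ are the eigenvalues of $F^*$ on $\Pic(\bar V)\otimes\mathbb{Q}_\ell$, then the eigenvalues of $F^*$ on the untwisted $H^2_{\text{\'et}}(\bar V,\mathbb{Q}_\ell)$ are $q\alpha_1,\dots,q\alpha_{b_2}$. Consequently
$$
\Tr\bigl(F^* \mid H^2_{\text{\'et}}(\bar V,\mathbb{Q}_\ell)\bigr) = q\sum_{j}\alpha_j = q\,\Tr\bigl(F^* \mid \Pic(\bar V)\bigr) = q\,\Tr F^*.
$$

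Assembling the four nonzero contributions through the trace formula then yields
$$
\#V(k) = 1 + 0 + q\,\Tr F^* + 0 + q^2 = q^2 + q\,\Tr F^* + 1,
$$
which is the asserted formula. Finally, since $F^*$ is an automorphism of the finitely generated free abelian group $\Pic(\bar V)$, its trace $\Tr F^*$ is an integer, so $\#V(k) = 1 + q\,(q + \Tr F^*) \equiv 1 \pmod q$; because $q \geq 2$ and $\#V(k)$ is a nonnegative integer, this congruence forces $\#V(k) \geq 1$, and in particular $\#V(k) \neq 0$.
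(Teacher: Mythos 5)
The paper offers no proof of this proposition at all: it is imported verbatim from Weil and from Manin's \emph{Cubic Forms} (Theorem 24.1), so there is nothing internal to compare your argument against. Your proof is correct and is the standard modern derivation of that cited result: the Grothendieck--Lefschetz trace formula, vanishing of odd-degree \'etale cohomology for a geometrically rational surface via the factorization of birational maps into point blow-ups, surjectivity (hence bijectivity) of the cycle class map $\Pic(\bar V)\otimes\mathbb{Q}_\ell \to H^2_{\text{\'et}}(\bar V,\mathbb{Q}_\ell(1))$, and the Tate-twist bookkeeping that converts the trace on $H^2$ into $q\,\Tr F^*$; this is essentially the argument underlying Manin's proof as well. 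The one point you gloss over is the arithmetic-versus-geometric Frobenius issue: the trace formula uses the geometric Frobenius, while the statement's $F^*$ is the action of the arithmetic Frobenius $F\in\Gal(\kb/k)$ on $\Pic(V\times\kb)$, and these act on $H^2(\bar V,\mathbb{Q}_\ell(1))\cong\Pic(\bar V)\otimes\mathbb{Q}_\ell$ as mutual inverses. This causes no harm here because the Galois action on $\Pic(\bar V)$ factors through a finite group, so its eigenvalues are roots of unity and $\Tr(g^{-1})=\overline{\Tr(g)}=\Tr(g)$ for the integer-valued trace; a sentence to this effect would make the identification with the $\Tr F^*$ of the statement airtight. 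This is a refinement, not a gap.
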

	We consider candidates for $\Tr F^*$. Let $\omega_{\Xt} \in \Pic(\Xt)$ be the canonical sheaf. The orthogonal component $(\omega_{\Xt})^{\perp}$ in $\Pic(\Xt \times_k \kb)$ is a root lattice of type $E_7$ and thus the Weil group $W(E_7)$ coincides with the group of lattice automorphisms of $\Pic(\Xt \times_k \kb)$ which fix $\omega_{\Xt}$. Since $F^*$ preserves $\omega_{\Xt}$ and intersection numbers, we have $F^* \in W(E_7)$.

	The class of a $(-2)$curve $\in \Pic(\Xt \times_k \kb)$ corresponds to a root in the lattice. Thus, for example, $F^*$ fixes a root if $\ctext{1}$. In this way, we search for candidates for $F^*$ and compute traces of these $F^*$ in Lemma {\ref{Lem5.6}}. As a preparation, we show the following lemma.
\begin{lem}{\label{Lem5.5}}
Let $\Delta$ be the set of roots of $E_7$,
$$\Delta_2 := \{ (r_1, r_2)\ |\ r_1,r_2 \in \Delta,\ r_1 \cdot r_2 = 1\}\ and$$
$$\Delta_3 := \left\{ (r_1,r_2, r_3, r_4) \left|
\begin{array}{l}
	r_1, r_2, r_3, r_4 \in \Delta \\
	r_i \cdot r_j = 0\ (1 \leq i\neq j \leq 4) \\
	\exists g \in W(E_7)\ (g(r_1) = r_2, g(r_2) = r_3, g(r_3) = r_4, g(r_4) = r_1)
\end{array}
\right.\right\}.$$
Then the following statements hold: 
\begin{itemize}
	\item[(1)] The natural action $W(E_7) \curvearrowright \Delta$ is transitive.
	\item[(2)] The natural action $W(E_7) \curvearrowright \Delta_2$ is transitive.
	\item[(3)] The natural action $W(E_7) \curvearrowright \Delta_3$ is transitive.
\end{itemize}
\end{lem}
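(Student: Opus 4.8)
The plan is to treat the three statements separately, exploiting that $E_7$ is simply-laced and that the three configurations correspond to the sub-root-systems $A_1$, $A_2$ and $4A_1$ of $E_7$. Part (1) is classical: the Weyl group of an irreducible root system permutes transitively the roots of each fixed length, and since $E_7$ is simply-laced all roots have equal length (see Bourbaki, Groupes et alg\`ebres de Lie, Ch.~VI). So (1) needs only a citation.

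For part (2) I would use (1) to move $r_1$ to a fixed root $r_1^0$, reducing the claim to transitivity of the stabilizer $W(E_7)_{r_1^0}$ on $\{\, r\in\Delta : r\cdot r_1^0 = 1\,\}$. By Steinberg's theorem the isotropy group of a vector in a finite reflection group is generated by the reflections it contains, i.e. the $s_\alpha$ with $\alpha\perp r_1^0$; since the orthogonal subsystem $(r_1^0)^{\perp}\cap\Delta$ is of type $D_6$ (read off from the extended Dynkin diagram, $E_7\supset A_1\times D_6$), one gets $W(E_7)_{r_1^0}\cong W(D_6)$. A direct count gives exactly $32$ roots with $r\cdot r_1^0 = 1$, namely $126 = 2 + 60 + 2\cdot 32$, where the $2$ are $\pm r_1^0$ and the $60$ are the roots of $D_6$. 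The assignment $r\mapsto r-\tfrac12 r_1^0$ identifies these $32$ roots $W(D_6)$-equivariantly with the weights of a half-spin representation of $D_6$; as that representation is minuscule, its weights form a single $W(D_6)$-orbit, whence transitivity.

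Part (3) is the substantial one. The first step is to classify the $W(E_7)$-orbits of unordered quadruples of mutually orthogonal roots, i.e. of $4A_1$-subsystems; it is known (and checkable in SageMath) that there are exactly two. For each orbit I would compute the image $H\le S_4$ of the set-stabilizer acting by permuting the four roots, and verify that exactly one orbit has $H$ containing a $4$-cycle while the other has $H$ free of $4$-cycles. Since the defining condition of $\Delta_3$ forces the underlying set to admit a cyclic element, every quadruple in $\Delta_3$ has its underlying set in this one distinguished orbit, which $W(E_7)$ permutes transitively. For a fixed such set $T$ the set-stabilizer acts freely (through $H$) on the $24$ orderings, and the orderings whose cyclic shift lies in $H$ form an $H$-invariant subset of size $4\cdot\#\{\text{4-cycles in }H\}$; requiring this to equal $|H|$ forces $H\supseteq D_4$, and under that hypothesis the subset is a single free $H$-orbit. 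Thus I must also confirm that the distinguished orbit has $H\supseteq D_4$ (so that $4\cdot\#\{\text{4-cycles}\}=|H|$ and the count collapses to one orbit), which together with transitivity on sets yields transitivity on $\Delta_3$.

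The main obstacle is precisely this orbit analysis: establishing that there are exactly two classes of $4A_1$, that only one admits a Weyl-realized $4$-cycle, and that its normalizer realizes a permutation group $H$ with $H\supseteq D_4$ (rather than merely $H=C_4$, which would split $\Delta_3$ into two orbits and sink the lemma). I expect to carry this out by exhibiting an explicit representative in the $\Pic$-model $\langle l_0,\dots,l_7\rangle$ using the $(-2)$-classes $\mAp_{i,j}$, $\mBp_{ijk}$, $\mCp_{i}$, together with explicit Weyl elements generating the cyclic symmetry and enough of $S_4$, and otherwise by the computer search already employed elsewhere in this section.
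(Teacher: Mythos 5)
Your proposal is correct in outline but takes a genuinely different route from the paper, whose entire proof of this lemma is the single sentence that ``the transitivities follow from computation by SageMath'' (a brute-force sweep over all $2{,}903{,}040$ elements of $W(E_7)$). For (1) and (2) your arguments are complete and software-free: (1) is the classical transitivity on roots of an irreducible simply-laced system, and for (2) the chain Steinberg fixed-point theorem $\Rightarrow$ $\mathrm{Stab}_{W(E_7)}(r)\cong W(D_6)$, the count $126=2+60+2\cdot 32$, and the identification of $\{s\in\Delta : s\cdot r=1\}$ with the (minuscule, hence single-orbit) half-spin weight system of $D_6$ is standard and correct. For (3) your reduction is also sound: an ordering of a fixed orthogonal quadruple $T$ lies in $\Delta_3$ exactly when its associated $4$-cycle lies in the image $H\le S_4$ of the set-stabilizer, the admissible orderings number $4\cdot\#\{4\text{-cycles in }H\}$, and since $H$ acts freely on orderings this equals $|H|$ (i.e.\ one orbit) precisely when $H\in\{D_4,S_4\}$. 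The caveat is that the decisive inputs --- that the $4A_1$-subsystems of $E_7$ form two $W$-orbits, that exactly one admits a Weyl-realized $4$-cycle, and that its stabilizer induces $H\supseteq D_4$ --- are exactly where the content sits, and you defer them to known tables or to a computer check; so part (3) does not fully escape computation, though what remains is far smaller and more structured than the paper's enumeration. Note that the two-orbit separation at least can be done by hand: whether $\sum_i\alpha_i$ is divisible by $2$ in $\Pic$ is an invariant of a $4A_1$-subsystem (changing $\alpha_i\mapsto-\alpha_i$ alters the sum by $2\alpha_i$), and it distinguishes $\{\,l_1-l_2,\ l_3-l_4,\ l_0-l_1-l_2-l_5,\ l_0-l_3-l_4-l_5\,\}$, whose sum is $2(l_0-l_2-l_4-l_5)$, from $\{\,l_0-l_1-l_2-l_3,\ l_0-l_1-l_4-l_5,\ l_0-l_1-l_6-l_7,\ 2l_0-l_2-\cdots-l_7\,\}$, whose sum is $5l_0-3l_1-2l_2-\cdots-2l_7$. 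In short: the paper's approach buys a uniform, thought-free verification of all three statements at once; yours buys an explanation of why they hold and, for (1)--(2), independence from software.
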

\begin{proof}
	The transitivities follows from computation by SageMath. The functions are contained in the end of the source file in the arXiv posting.
\end{proof}
\begin{lem}[See also{\cite[Lemma 4.1]{STVA}}]{\label{Lem5.6}}
	Let $\rho : W(E_7) \rightarrow {\rm{Aut}}(\Pic(\Xt \times_k \kb))$ be the natural representation. Let $r$ be a root and let $\delta_2$ and $\delta_3$ be elements in $\Delta_2$ and $\Delta_3$, respectively. Then,
\begin{itemize}
	\item[(1)] $\left\{ \Tr (\rho(g_1)) \left|
\begin{array}{l}
	g_1 \in W(E_7) \\
	\rho(g_1)(r) = r
\end{array}
\right.\right\} = \{ -4, -2, -1, 0, 1, 2, 3, 4, 5, 6, 8\}$
	\item[(2)] $\left\{ \Tr (\rho(g_2)) \left|
\begin{array}{l}
	g_2 \in W(E_7) \\
	g_2$ is a transposition on $\delta_2
\end{array}
\right.\right\} = \{ -4, -2, -1, 0, 1, 2\}.\\$
	\item[(3)] $\left\{ \Tr (\rho(g_3)) \left|
\begin{array}{l}
	g_3 \in W(E_7) \\
	g_3$ is a cyclic permutation on $\delta_3
\end{array}
\right.\right\} = \{ 0, 2\}$
\end{itemize}
\end{lem}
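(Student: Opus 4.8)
The three statements are finite computations inside $W(E_7)$, and the plan is to use the transitivity of Lemma \ref{Lem5.5} to reduce each one to a single representative configuration. Since $\rho$ acts on $\Pic(\Xt \times_k \kb)$, a lattice of rank $8$, and every element of $W(E_7)$ fixes $\omega_{\Xt}$, over $\mathbb{Q}$ we have the orthogonal splitting $\Pic(\Xt \times_k \kb) \otimes \mathbb{Q} = \mathbb{Q}\,\omega_{\Xt} \oplus (\omega_{\Xt})^{\perp}$, where $(\omega_{\Xt})^{\perp}$ carries the $E_7$ root system; hence $\Tr(\rho(g)) = 1 + t(g)$ with $t(g)$ the trace of $g$ on the rank-$7$ root lattice. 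As the trace is a class function, the three trace sets in the statement are constant on $W(E_7)$-orbits, so by Lemma \ref{Lem5.5} they are independent of the chosen $r$, $\delta_2$ and $\delta_3$; I may therefore fix one representative in each case and enumerate only the subgroup or coset that is actually relevant, instead of all $2903040$ elements of $W(E_7)$.

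For (1) the elements fixing $r$ form $\mathrm{Stab}_{W(E_7)}(r)$, which by Steinberg's theorem is the reflection subgroup generated by the roots orthogonal to $r$, namely $W(D_6)$ (of order $23040$). Splitting $(\omega_{\Xt})^{\perp} \otimes \mathbb{Q} = \mathbb{Q}\,r \oplus (r^{\perp} \cap E_7)\otimes\mathbb{Q}$, such a $g$ fixes both $\omega_{\Xt}$ and $r$ and acts on the $D_6$-sublattice, so $\Tr(\rho(g)) = 2 + t_{D_6}(g)$. One can finish (1) even by hand: realising $W(D_6)$ as the signed permutations of six coordinates with an even number of sign changes, the trace equals the signed count of fixed coordinates, and a short case analysis gives $t_{D_6} \in \{-6,-4,-3,-2,-1,0,1,2,3,4,6\}$ — the values $\pm 5$ being excluded because a signed permutation of six letters cannot have exactly five fixed coordinates. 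Adding $2$ yields exactly $\{-4,-2,-1,0,1,2,3,4,5,6,8\}$.

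For (2) and (3) the relevant elements fix no vector, so I would pass to cosets of pointwise stabilizers. Fix $\delta_2 = (r_1,r_2)$ with $r_1 \cdot r_2 = 1$; a transposition of $r_1$ and $r_2$ exists because $(r_2, r_1) \in \Delta_2$ lies in the same orbit by Lemma \ref{Lem5.5}(2). If $g_0$ is one such transposition, the full set of transpositions on $\delta_2$ is the coset $g_0 \cdot \mathrm{Stab}(r_1,r_2)$, and by Steinberg $\mathrm{Stab}(r_1,r_2) = W(\{r_1,r_2\}^{\perp}) = W(A_5)$ (of order $720$, since $r_1, r_2$ span an $A_2$ whose orthogonal complement in $E_7$ is $A_5$). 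Enumerating this coset produces the trace set $\{-4,-2,-1,0,1,2\}$. Likewise, for (3) the four-cycles on a fixed $\delta_3 \in \Delta_3$ form a single coset $g_0 \cdot \mathrm{Stab}(r_1,r_2,r_3,r_4)$ of the pointwise stabilizer of four mutually orthogonal roots, whose enumeration gives $\{0,2\}$.

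The mathematics here is light; the real work is computational organisation. The main obstacle is to realise $W(E_7)$ and $\rho$ concretely — e.g. as the isometries of $\Pic(\Xt \times_k \kb)$ fixing $\omega_{\Xt}$, generated by the reflections in the $56$ norm-$(-2)$ classes of Section 3.2 — and then to locate the coset representatives $g_0$ of (2) and (3) and compute the $8 \times 8$ trace matrices. Brute force over all of $W(E_7)$ is feasible but wasteful, whereas restricting to $W(D_6)$, to $g_0 \cdot W(A_5)$, and to the small four-cycle coset keeps each enumeration well within SageMath's reach, exactly as in Lemma \ref{Lem5.5}. What must be checked with care is completeness of the enumerated sets — that no further trace value is missed — and this is precisely what the transitivity in Lemma \ref{Lem5.5} secures, since it guarantees a single representative suffices in each case.
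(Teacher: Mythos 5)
Your proposal is correct and rests on the same two pillars as the paper's proof: the transitivity statements of Lemma \ref{Lem5.5} reduce each trace set to a single representative configuration, and the remaining finite enumeration is carried out explicitly (with the trace on $\Pic(\Xt\times_k\kb)$ equal to $1$ plus the trace on the $E_7$ root lattice, which is exactly the \texttt{gm.trace()+1} in the paper's code). Where you genuinely diverge is in the organisation of that enumeration. The paper simply filters all $2903040$ elements of $W(E_7)$ once into the relevant subsets (elements fixing $r$, elements swapping the two roots of $\delta_2$, elements cyclically permuting $\delta_3$) and reads off the traces by machine. You instead identify these subsets structurally: for (1) the stabilizer of a root is the Steinberg reflection subgroup $W(D_6)$ of order $23040$, and your signed-permutation computation of its possible traces $\{-6,-4,\dots,4,6\}$ (with $\pm 5$ excluded because a permutation of six letters cannot fix exactly five of them) gives a complete, computer-free proof of part (1) after adding $2$ for the fixed vectors $\omega_{\Xt}$ and $r$; for (2) and (3) you recognise the relevant sets as cosets $g_0\cdot W(A_5)$ (order $720$) and $g_0\cdot\mathrm{Stab}(r_1,r_2,r_3,r_4)$ of pointwise stabilizers, which shrinks the search space dramatically but still ends in a machine enumeration, as in the paper. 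What your route buys is a verifiable hand proof of (1) and a much cheaper computation for (2) and (3); what it costs is the extra input of Steinberg's fixed-point theorem and the identifications $r^{\perp}\cap E_7=D_6$ and $A_2^{\perp}\cap E_7=A_5$, none of which the paper needs. The one point to state explicitly in your case analysis for (1) is that every value other than $\pm5$ is actually realised by an element with an \emph{even} number of sign changes (e.g.\ $t=-1$ needs one negated fixed coordinate together with a $5$-cycle carrying an odd number of minus signs); this does check out, so the argument is sound.
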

\begin{proof}
	By Lemma {\ref{Lem5.5}}, these sets do not depend on $r, \delta_2$ and $\delta_3$. The rest follows from SageMath computation. The functions are contained in the end of the source file in the arXiv posting.
\end{proof}
These (1), (2) and (3) induce that
\begin{itemize}
	\item[] if $\ctext{1}$ holds, then $\Tr F^* \in \{ -4, -2, -1, 0, 1, 2, 3, 4, 5, 6, 8\}$,
	\item[] if $\ctext{2}$ holds, then $\Tr F^* \in \{ -4, -2, -1, 0, 1, 2\}$ and
	\item[] if $\ctext{3}$ holds, then $\Tr F^* \in \{ 0, 2 \}$,
\end{itemize}
respectively.

\subsection{Main Theorem}
\begin{proof}[Proof of Theorem {\ref{MainThm3}}]
	If (1), it follows from Theorem {\ref{MainThm1}} and Proposition {\ref{Prop:WeylConj}} that $X$ is $k$-unirational.

	Suppose (2). If $\ctext{1}$, the unique $(-2)$curve has no $k$-point or $(q + 1)$ $k$-points and the image of the $(-2)$curve by $f$ is a $k$-point. If $\ctext{2}$ or $\ctext{3}$, the $(-2)$curves have no $k$-point and the images of the $(-2)$curves by $f$ is not $k$-points. Thus, if $\ctext{1}$, we have $\# \Xt(k) = \# X(k) - 1$ or $\# X(k) + q$. If $\ctext{2}$ or $\ctext{3}$, then $\# \Xt(k) = \# X(k)$. By Proposition {\ref{Prop:WeylConj}}, Lemma {\ref{Lem5.6}},
$$\# X(k) \geq
\begin{cases}
	q^2 - 5q + 1 & $if $\ctext{1}, \\
	q^2 - 4q + 1 & $if $\ctext{2}, \\
	q^2 + 1 & $if $\ctext{3}.
\end{cases}
$$

	Next, we consider about $\# R(k)$, where $R$ is the ramification divisor of $\kappa$. In the case of characteristic of $k \neq 2$, $B$ is 
\begin{itemize}
	\item[] a singular quartic with one node if $\ctext{1}$;
	\item[] a singular quartic with one cusp if $\ctext{2}$;
	\item[] a sum of two conics intersecting at four points if $\ctext{3}$,
\end{itemize}
where $B$ is the branch divisor of $\kappa$. If $\ctext{1}$ or $\ctext{2}$, we have $|\# R_{nor}(k) - (q + 1)| \leq 4 \sqrt{q}$ by Hasse-Weil bound ({\cite{Weil48}}), where $R_{nor}$ is a normalization of $R$, since the genus of $R_{nor}$ is $2$. If $\ctext{3}$, we have $\# R_{nor}(k) = 0$ or $2q + 2$. On the other hand,
$$\# R(k) =
\begin{cases}
	\# R_{nor}(k) \pm 1 & $if $\ctext{1}, \\
	\# R_{nor}(k) & $if $\ctext{2}, \\
	\# R_{nor}(k) & $if $\ctext{3}.
\end{cases}
$$
Thus
$$\# R(k) \leq
\begin{cases}
	4\sqrt{q} + q + 2 & $if $\ctext{1}, \\
	4\sqrt{q} + q + 1 & $if $\ctext{2}, \\
	2q + 2 & $if $\ctext{3}.
\end{cases}
$$
If $\ctext{1}$, $q \geq 9 \Rightarrow \# (X \setminus R)(k) \geq 14$. If $\ctext{2}$, $q \geq 9 \Rightarrow \# (X \setminus R)(k) \geq 24$. If $\ctext{3}$, $q \geq 5 \Rightarrow \# (X \setminus R)(k) \geq 14$. By Lemma {\ref{Lem5.3}}, $X$ is $k$-unirational in each case.

	In the case of the characteristic of $k = 2$, $B$ is a quadric which may not be reducible. Thus $\# R(k) = 1, q + 1$ or $2q + 1$. If $\ctext{1}$, $q \geq 16 \Rightarrow \# (X \setminus R)(k) \geq 144$. If $\ctext{2}$, $q \geq 8 \Rightarrow \# (X \setminus R)(k) \geq 16$. If $\ctext{3}$, $q \geq 4 \Rightarrow \# (X \setminus R)(k) \geq 8$. By Lemma {\ref{Lem5.3}}, $X$ is $k$-unirational in each case.
\end{proof}
\appendix
\section{Classification of RDP Del Pezzo surfaces with one, two or three singularities}
	Let $X$ be a RDP Del Pezzo surface of degree $2$ over a perfect field $k$. The minimal resolution of $X$ is a weak Del Pezzo surface by Corollary {\ref{Cor:RDPDP<->weakDP}}, denote this by $\Xt$. If $X$ has one, two or three singularities and not $\ctext{1}, \ctext{2}$ of Theorem {\ref{MainThm1}}, then we obtain a birational morphism $\Xt \rightarrow \Vt$ by Theorem {\ref{Thm3.11}}, {\ref{Thm3.14}} and {\ref{MainThm3.5}}. In this appendix, we identify each $\Vt$ by singularities of $X$.

\begin{thmA1}
	Let $X$ be a RDP Del Pezzo surface of degree $2$ with two singularities and $\Vt$ be the weak Del Pezzo surface of degree $3$ or $4$ defined in Theorem {\ref{Thm3.11}}. By Proposition {\ref{Prop:singularity type}}, the type of singularities of $X$ is either $2A_1, A_1 + A_2, A_1 + A_3, A_1 + A_4, A_1 + A_5, A_1 + D_4, A_1 + D_5, A_1 + D_6,  2A_2, A_2 + A_3, A_2 + A_4, A_2 + A_5$ or $2A_3$. In these 13 cases, the degree and the singularities of $V$ are respectively as follows, where $V$ is the RDP Del Pezzo surface obtained by collapsing the $(-2)$curves of $\Vt$.

	In the following configurations, vertices $\circ$ and $\bullet$ are the $(-2)$curves on $\Xt$ and the $(-1)$curve(s) $E, \Ep$ in proof of Theorem {\ref{Thm3.11}}, respectively. Each edge represents the intersection of two vertices.

\begin{itemize}
	\item[1.] If $X$ is type $2A_1$, $V$ is a smooth Del Pezzo surface of degree $4$.
$$
\xymatrix@R = 2pt{
	 & \bullet \ar@{-}[rd] & \\
	\circ \ar@{-}[ru] \ar@{-}[rd] & & \circ \\
	 & \bullet \ar@{-}[ru] &
}
$$
	\item[2.] If $X$ is type $A_1 + A_2$, $V$ is a smooth Del Pezzo surface of degree $4$.
$$
\xymatrix@R = 2pt{
	 & \bullet \ar@{-}[r] & \circ \ar@{-}[dd] \\
	\circ \ar@{-}[ru] \ar@{-}[rd] & & \\
	 & \bullet \ar@{-}[r] & \circ
}
$$
	\item[3.]  If $X$ is type $A_1 + A_3$, $V$ is either (1)a RDP Del Pezzo surface of degree $3$ with $2A_1$ singularities or (2)a RDP Del Pezzo surface of degree $4$ with $A_1$ singularity.
$$(1)
\xymatrix@R = 2pt{
	 & & & \circ \ar@{-}[ld] \\
	\circ \ar@{-}[r] & \bullet \ar@{-}[r] & \circ \ar@{-}[rd]\\
	 & & & \circ
}
$$
or
$$(2)
\xymatrix@R = 2pt{
	 & \bullet \ar@{-}[r] & \circ \ar@{-}[rd] \\
	\circ \ar@{-}[ru] \ar@{-}[rd] & & & \circ \ar@{-}[ld]\\
	 & \bullet \ar@{-}[r] & \circ
}
$$
	\item[4.] If $X$ is type $A_1 + A_4$, $V$ is a RDP Del Pezzo surface of degree $4$ with $A_2$ singularity.
$$
\xymatrix@R = 2pt{
	 & \bullet \ar@{-}[r] & \circ \ar@{-}[r] & \circ \ar@{-}[dd] \\
	\circ \ar@{-}[ru] \ar@{-}[rd] & & &\\
	 & \bullet \ar@{-}[r] & \circ \ar@{-}[r] & \circ
}
$$
	\item[5.] If $X$ is type $A_1 + A_5$, $V$ is a RDP Del Pezzo surface of degree $4$ with $A_3$ singularity.
$$
\xymatrix@R = 2pt{
	 & \bullet \ar@{-}[r] & \circ \ar@{-}[r] & \circ \ar@{-}[rd] \\
	\circ \ar@{-}[ru] \ar@{-}[rd] & & & & \circ \ar@{-}[ld]\\
	 & \bullet \ar@{-}[r] & \circ \ar@{-}[r] & \circ
}
$$
	\item[6.] If $X$ is type $A_1 + D_4$, $V$ is a RDP Del Pezzo surface of degree $3$ with $A_3$ singularity.
$$
\xymatrix@R = 2pt{
	 & & & & \circ \ar@{-}[ld] \\
	\circ \ar@{-}[r] & \bullet \ar@{-}[r] & \circ \ar@{-}[r] & \circ \ar@{-}[rd]\\
	 & & & & \circ
}
$$
	\item[7.] If $X$ is type $A_1 + D_5$, $V$ is a RDP Del Pezzo surface of degree $3$ with $D_4$ singularity.
$$
\xymatrix@R = 2pt{
	 & & & & & \circ \ar@{-}[ld] \\
	\circ \ar@{-}[r] & \bullet \ar@{-}[r] & \circ \ar@{-}[r] & \circ \ar@{-}[r] & \circ \ar@{-}[rd]\\
	 & & & & & \circ
}
$$
	\item[8.] If $X$ is type $A_1 + D_6$, $V$ is a RDP Del Pezzo surface of degree $3$ with $D_5$ singularity.
$$
\xymatrix@R = 2pt{
	 & & & & & & \circ \ar@{-}[ld] \\
	\circ \ar@{-}[r] & \bullet \ar@{-}[r] & \circ \ar@{-}[r] & \circ \ar@{-}[r] & \circ \ar@{-}[r] & \circ \ar@{-}[rd]\\
	 & & & & & & \circ
}
$$
	\item[9.] If $X$ is type $2A_2$, $V$ is a smooth Del Pezzo surface of degree $4$.
$$
\xymatrix@R = 2pt{
	 & \circ \ar@{-}[r] & \circ \ar@{-}[rd] & \\
	\bullet \ar@{-}[ru] \ar@{-}[rd] & & & \bullet \\
	 & \circ \ar@{-}[r] & \circ \ar@{-}[ru] &
}
$$
	\item[10.] If $X$ is type $A_2 + A_3$, $V$ is a RDP Del Pezzo surface of degree $4$ with $A_1$ singularity.
$$
\xymatrix@R = 2pt{
	 & \circ \ar@{-}[r] & \circ \ar@{-}[r] & \circ \ar@{-}[rd] & \\
	\bullet \ar@{-}[ru] \ar@{-}[rd] & & & & \bullet \\
	 & \circ \ar@{-}[rr] & & \circ \ar@{-}[ru] &
}
$$
	\item[11.] If $X$ is type $A_2 + A_4$, $V$ is a RDP Del Pezzo surface of degree $4$ with $A_2$ singularity.
$$
\xymatrix@R = 2pt{
	 & \circ \ar@{-}[r] & \circ \ar@{-}[r] & \circ \ar@{-}[r] & \circ \ar@{-}[rd] & \\
	\bullet \ar@{-}[ru] \ar@{-}[rd] & & & & & \bullet \\
	 & \circ \ar@{-}[rrr] & & & \circ \ar@{-}[ru] &
}
$$
	\item[12.] If $X$ is type $A_2 + A_5$, $V$ is a RDP Del Pezzo surface of degree $4$ with $A_3$ singularity in Case $7$ of Proposition {\ref{Prop:classify degree4}}.
$$
\xymatrix@R = 2pt{
	 & \circ \ar@{-}[r] & \circ \ar@{-}[r] &  \circ \ar@{-}[r] &\circ \ar@{-}[r] & \circ \ar@{-}[rd] & \\
	\bullet \ar@{-}[ru] \ar@{-}[rd] & & & & & & \bullet \\
	 & \circ \ar@{-}[rrrr] & & & & \circ \ar@{-}[ru] &
}
$$
	\item[13.] If $X$ is type $2A_3$, $V$ is a RDP Del Pezzo surface of degree $4$ with $2A_1$ singularity in Case $3$ of Proposition {\ref{Prop:classify degree4}}.
$$
\xymatrix@R = 2pt{
	 & \circ \ar@{-}[r] & \circ \ar@{-}[r] & \circ \ar@{-}[rd] & \\
	\bullet \ar@{-}[ru] \ar@{-}[rd] & & & & \bullet \\
	 & \circ \ar@{-}[r] & \circ \ar@{-}[r] & \circ \ar@{-}[ru] &
}
$$
\end{itemize}
\end{thmA1}

\begin{remA2}
	In particular, the unirationality or rationality of $X$ is the following.
\begin{itemize}
	\item[] in cases 1, 9 and 13, $X$ is $k$-unirational if $\Xt$ has a $k$-point;
	\item[] in cases 2 and 3(1), $X$ is $k$-unirational;
	\item[] in cases 3(2) and 5, $X$ is $k$-rational if $\Xt$ has a $k$-point;
	\item[] in cases 4, 6, 7, 8, 10, 11 and 12, $X$ is $k$-rational.
\end{itemize}
\end{remA2}

\begin{proof}[Proof of the case 2, 3 and 6]
we prove two cases.
\begin{itemize}
	\item[2.] Let $F_1, F_2$ and $F_3$ be the three $(-2)$curves such that
$$
\xygraph{
	\circ ([]!{+(0,+.2)} {F_1}) [r]
	\circ ([]!{+(0,+.2)} {F_2}) - [r]
	\circ ([]!{+(0,+.2)} {F_3})
}
$$
Let $D_1, D_2$ be the two pre$(-1)$curves such that $D_i \cdot F_1 = D_i \cdot F_2 = 1$. Since $(D_1 + D_2) \in |-\Kxt - F_1 - F_2|$ from Proposition {\ref{Prop3.8}}, we have $(D_1 + D_2) \cdot F_3 = -1$. Thus we can assume that $D_1 \cdot F_3 = 0$ and $D_2 \cdot F_3 = -1$. Then $D_1$ is a $(-1)$curve by Proposition {\ref{Prop3.4}}. On the other hand, $(D_2 - F_3)$ is a pre$(-1)$curve by Lemma {\ref{Lem:minusF}}. Further
\begin{itemize}
	\item[] $(D_2 - F_3) \cdot F_1 = 1$
	\item[] $(D_2 - F_3) \cdot F_2 = 0$
	\item[] $(D_2 - F_3) \cdot F_3 = 1$
\end{itemize}
Thus $(D_2 - F_3)$ is a $(-1)$curve. Since $D_1 \cdot (D_2 - F_3) = 0$ from Proposition {\ref{Prop3.9}}, the configuration is
$$
\xygraph{
	\circ ([]!{+(0,+.2)} {F_1})(
	- []!{+(1,.5)} \bullet ([]!{+(0,+.2)} {D_1})
	- [r] \circ ([]!{+(0,+.2)} {F_2})
	- [d] \ ,
	- []!{+(1,-.5)} \bullet ([]!{+(.3,.2)} {D_2 - F_3})
	- [r] \circ ([]!{+(.2,+.2)} {F_3})
}
$$

	\item[3.] Let $F_1, F_2, F_3$ and $F_4$ be the four $(-2)$curves such that
$$
\xygraph{
	\circ ([]!{+(0,+.2)} {F_1}) [r]
	\circ ([]!{+(0,+.2)} {F_2}) - [r]
	\circ ([]!{+(0,+.2)} {F_3}) -[r]
	\circ ([]!{+(0,+.2)} {F_4})
)
}
$$
Let $D_1, D_2$ be the two pre$(-1)$curves such that $D_i \cdot F_1 = D_i \cdot F_3 = 1$. Since $(D_1 + D_2) \in |-\Kxt - F_1 - F_3|$, we have
\begin{itemize}
	\item[] $(D_1 + D_2) \cdot F_2 = -1$
	\item[] $(D_1 + D_2) \cdot F_4 = -1$
\end{itemize}
We can assume that $D_1, D_2$ satisfy one of the following two cases.
\begin{itemize}
	\item[(1)] $D_1 \cdot F_2 = 0$, $D_2 \cdot F_2 = -1$,
	\item[] $D_1 \cdot F_4 = 0$, $D_2 \cdot F_4 = -1$;
	\item[(2)] $D_1 \cdot F_2 = 0$, $D_2 \cdot F_2 = -1$,
	\item[] $D_1 \cdot F_4 = -1$, $D_2 \cdot F_4 = 0$.
\end{itemize}
If (1) holds, then $D_1$ is a $(-1)$curve by Proposition {\ref{Prop3.4}} and one can show that $(D_2 - F_2 - F_3 - F_4)$ is a pre$(-1)$curve by using Lemma {\ref{Lem:minusF}} repeatedly. Since $D_1 \cdot (D_2 - F_2 - F_3 - F_4) = -1$, we have $D_1 = D_2 - F_2 - F_3 - F_4$ and the configuration is 
$$
\xygraph{
	\circ ([]!{+(0,+.2)} {F_1}) - [r]
	\bullet ([]!{+(0,+.2)} {D_1}) - [r]
	\circ ([]!{+(0,+.2)} {F_3})(
		- []!{+(1,.5)} \circ ([]!{+(.2,0)} {F_2}),
		- []!{+(1,-.5)} \circ ([]!{+(.2,0)} {F_4})
}
$$

	If (2) holds, then $(D_1 - F_4)$ and $(D_2 - F_2)$ are pre$(-1)$curves by Lemma {\ref{Lem:minusF}}. Further,
\begin{itemize}
	\item[] $(D_1 - F_4) \cdot F_1 = 1$, $(D_2 - F_2) \cdot F_1 = 1$,
	\item[] $(D_1 - F_4) \cdot F_2 = 0$, $(D_2 - F_2) \cdot F_2 = 1$,
	\item[] $(D_1 - F_4) \cdot F_3 = 0$, $(D_2 - F_2) \cdot F_3 = 0$,
	\item[] $(D_1 - F_4) \cdot F_4 = 1$, $(D_2 - F_2) \cdot F_4 = 0$.
\end{itemize}
Thus, $(D_1 - F_4)$ and $(D_2 - F_2)$ are $(-1)$curves by Proposition {\ref{Prop3.4}} and the configuration is
$$
\xygraph{
	\circ ([]!{+(0,+.2)} {F_1})(
	- []!{+(1,.5)} \bullet ([]!{+(0,+.2)} {D_2 - F_2})
		- [r] \circ ([]!{+(0,+.2)} {F_2})
		- []!{+(1,-.5)} \circ ([]!{+(.1,.2)} {F_3}),
	- []!{+(1,-.5)} \bullet ([]!{+(0,-.2)} {D_1 - F_4})
		- [r] \circ ([]!{+(0,-.2)} {F_4})
		- []!{+(1,.5)} \ 
}
$$

	\item[6.] Let $F_1, F_2, F_3, F_4$ and $F_5$ be the five $(-2)$curves such that
$$
\xygraph{
	\circ ([]!{+(0,+.2)} {F_1}) [r]
	\circ ([]!{+(0,+.2)} {F_2}) - [r]
	\circ ([]!{+(.2,+.2)} {F_4})(
		-[]!{+(0,.8)} \circ ([]!{+(.3,0)} {F_3}),
		-[r] \circ ([]!{+(0,+.2)} {F_5})
)
}
$$
Let $D_1, D_2$ be the two pre$(-1)$curves such that $D_i \cdot F_1 = D_i \cdot F_4 = 1$. Since $(D_1 + D_2) \in |-\Kxt - F_1 - F_4|$, we have
\begin{itemize}
	\item[] $(D_1 + D_2) \cdot F_2 = -1$
	\item[] $(D_1 + D_2) \cdot F_3 = -1$
	\item[] $(D_1 + D_2) \cdot F_5 = -1$
\end{itemize}
We show that it is impossible that $D_1 \cdot F_2 = D_1 \cdot F_3 = D_1 \cdot F_5 = -1$. If $D_1 \cdot F_2 = D_1 \cdot F_3 = D_1 \cdot F_5 = -1$, then $(D_1 - F_2 - F_3 - F_5)$ is a pre$(-1)$curve since $(D_1 - F_2 - F_3 - F_5)^2 = (D_1 - F_2 - F_3 - F_5) \cdot \Kxt = -1$. However, $(D_1 - F_2 - F_3 - F_5) \cdot F_4 = D_1 \cdot F_4 - 3 \leq -2$. This is impossible. Similarly $D_2 \cdot F_2 = D_2 \cdot F_3 = D_2 \cdot F_5 = -1$ is also impossible. Thus we can assume that 
\begin{itemize}
	\item[] $D_1 \cdot F_2 = -1, D_2 \cdot F_2 = 0$
	\item[] $D_1 \cdot F_3 = -1, D_2 \cdot F_3 = 0$
	\item[] $D_1 \cdot F_5 = 0, D_2 \cdot F_5 = -1$
\end{itemize}
Then $(D_1 - F_2 - F_3 - F_4 - F_5)$ and $(D_2 - F_5)$ are pre$(-1)$curves and $(D_1 - F_2 - F_3 - F_4 - F_5) \cdot (D_2 - F_5) = -1$. Therefore $(D_1 - F_2 - F_3 - F_4 - F_5) = (D_2 - F_5)$. Further
\begin{itemize}
	\item[] $(D_2 - F_5) \cdot F_1 = 1$
	\item[] $(D_2 - F_5) \cdot F_2 = 0$
	\item[] $(D_2 - F_5) \cdot F_3 = 0$
	\item[] $(D_2 - F_5) \cdot F_4 = 0$
	\item[] $(D_2 - F_5) \cdot F_5 = 1$
\end{itemize}
Thus the configuration is
$$
\xygraph{
	\circ ([]!{+(0,+.2)} {F_1}) - [r]
	\bullet ([]!{+(0,+.2)} {D_2 - F_5}) - [r]
	\circ ([]!{+(0,+.2)} {F_5}) - [r]
	\circ ([]!{+(0,+.2)} {F_4})(
		- []!{+(1,.5)} \circ ([]!{+(.2,0)} {F_2}),
		- []!{+(1,-.5)} \circ ([]!{+(.2,0)} {F_3})
}
$$
\end{itemize}
\end{proof}
\begin{thmA3}
	Let $X$ be a RDP Del Pezzo surface of degree $2$ with three singularities and $\Vt$ be the weak Del Pezzo surface of degree at least $3$ defined in Theorem {\ref{Thm3.14}}. By Proposition {\ref{Prop:singularity type}}, the type of singularities of $X$ is either $3A_1, 2A_1 + A_2, 2A_1 + A_3, 2A_1 + D_4, A_1 + 2A_2, A_1 + A_2 + A_3, A_1 + 2A_3$ or $3A_2$. In these 8 cases, the degree and the singularities of $V$ are respectively as follows, where $V$ is the RDP Del Pezzo surface obtained by collapsing the $(-2)$curves of $\Vt$.

	In the following configurations, vertices $\circ$ and $\bullet$ are the $(-2)$curves on $\Xt$ and at most six $(-1)$curves $E_{12}, \Ep_{12}, E_{13}, \Ep_{13}, E_{23}, \Ep_{23}$ in proof of Theorem {\ref{Thm3.14}}, respectively. Each edge represents the intersection of two vertices.
\begin{itemize}
	\item[1.] If $X$ is type $3A_1$, $V$ is either (1)a smooth Del Pezzo surface of degree $3$ or (2)a smooth Del Pezzo surface of degree $8$.
$$
(1)\xymatrix@R = 10pt{
	 & \bullet \ar@{-}[d] \ar@{-}[rd] & \\
	\circ \ar@{-}[ru] & \circ & \circ
}
$$
or
$$
(2)\xymatrix@ = 10pt{
	 & \bullet \ar@{-}[r] & \circ \ar@{-}[r] \ar@{-}[rd] & \bullet \ar@{-}[rdd] & \\
	 & \bullet \ar@{-}[ru] \ar@{-}[ld] & \bullet \ar@{-}[rrd] & \bullet \ar@{-}[rd] & \\
	\circ \ar@{-}[ruu] \ar@{-}[rru] \ar@{-}[rr] & & \bullet \ar@{-}[rr] & & \circ
}
$$
	\item[2.] If $X$ is type $2A_1 + A_2$, $V$ is a smooth Del Pezzo surface of degree $8$.
$$
\xymatrix@ = 10pt{
	 & \bullet \ar@{-}[r] & \circ \ar@{-}[rr] \ar@{-}[rrrd] & & \circ \ar@{-}[r] & \bullet \ar@{-}[rdd] & \\
	 & \bullet \ar@{-}[rrru] \ar@{-}[ld] & & \bullet \ar@{-}[rrrd] & & \bullet \ar@{-}[rd] & \\
	\circ \ar@{-}[ruu] \ar@{-}[rrru] \ar@{-}[rrr] & & & \bullet \ar@{-}[rrr] & & & \circ
}
$$
	\item[3.] If $X$ is type $2A_1 + A_3$, $V$ is either (1)a RDP Del Pezzo surface of degree $8$ with $A_1$ singularity or (2)a smooth Del Pezzo surface of degree $7$.
$$(1)
\xymatrix@ = 10pt{
	 & \bullet \ar@{-}[r] & \circ \ar@{-}[r] \ar@{-}[rrrd] & \circ \ar@{-}[r] & \circ \ar@{-}[r] & \bullet \ar@{-}[rdd] & \\
	 & \bullet \ar@{-}[rrru] \ar@{-}[ld] & & \bullet \ar@{-}[rrrd] & & \bullet \ar@{-}[rd] & \\
	\circ \ar@{-}[ruu] \ar@{-}[rrru] \ar@{-}[rrr] & & & \bullet \ar@{-}[rrr] & & & \circ
}
$$
or
$$(2)
\xymatrix@ = 10pt{
	 &  & \circ \ar@{-}[r] \ar@{-}[rrrd] & \circ \ar@{-}[r] & \circ \ar@{-}[r] & \bullet \ar@{-}[rdd] & \\
	 & \bullet \ar@{-}[rru] \ar@{-}[ld] & & \bullet \ar@{-}[rrrd] & & \bullet \ar@{-}[rd] & \\
	\circ \ar@{-}[rrru] \ar@{-}[rrr] & & & \bullet \ar@{-}[rrr] & & & \circ
}
$$
	\item[4.] If $X$ is type $2A_1 + D_4$, $V$ is a RDP Del Pezzo surface of degree $6$ with $A_2$ singularity.
$$
\xymatrix@ = 10pt{
	 & & \circ \ar@{-}[d] & & \\
	 & \circ \ar@{-}[r] & \circ \ar@{-}[r] & \circ \ar@{-}[rd] & \\
	\bullet \ar@{-}[ru] \ar@{-}[rd] & & \bullet \ar@{-}[rd] & & \bullet \\
	 & \circ \ar@{-}[ru] \ar@{-}[r] & \bullet \ar@{-}[r] & \circ \ar@{-}[ru] &
}
$$
	\item[5.] If $X$ is type $A_1 + 2A_2$, $V$ is a smooth Del Pezzo surface of degree $8$.
$$
\xymatrix@R = 5pt @C = 10pt{
	 & \circ \ar@{-}[d] \ar@{-}[rr] & & \circ \ar@{-}[rdd] \ar@{-}[d] & \\
	 & \bullet \ar@{-}[rd] & & \bullet & \\
	\bullet \ar@{-}[ruu] \ar@{-}[rdd] & & \circ \ar@{-}[ru] \ar@{-}[rd] & & \bullet \\
	 & \bullet \ar@{-}[d] \ar@{-}[ru] & & \bullet \ar@{-}[d] & \\
	 & \circ \ar@{-}[rr] & & \circ \ar@{-}[ruu]
}
$$
	\item[6.] If $X$ is type $A_1 + A_2 + A_3$, $V$ is a smooth Del Pezzo surface of degree $7$.
$$
\xymatrix@R = 5pt @C = 10pt{
	 & \circ \ar@{-}[d] \ar@{-}[rr] & & \circ \ar@{-}[rdd] \ar@{-}[d] & \\
	 & \bullet \ar@{-}[rd] & & \bullet & \\
	\bullet \ar@{-}[ruu] \ar@{-}[rdd] & & \circ \ar@{-}[ru] \ar@{-}[d] & & \bullet \\
	 & & \bullet \ar@{-}[d] & & \\
	 & \circ \ar@{-}[r] & \circ \ar@{-}[r] & \circ \ar@{-}[ruu]
}
$$

	\item[7.] If $X$ is type $A_1 + 2A_3$, $V$ is a smooth Del Pezzo surface of degree $6$.
$$
\xymatrix@R = 5pt @C = 10pt{
	 & \circ \ar@{-}[r] & \circ \ar@{-}[d] \ar@{-}[r] & \circ \ar@{-}[rdd] & \\
	 & & \bullet \ar@{-}[d] & & \\
	\bullet \ar@{-}[ruu] \ar@{-}[rdd] & & \circ \ar@{-}[d] & & \bullet \\
	 & & \bullet \ar@{-}[d] & & \\
	 & \circ \ar@{-}[r] & \circ \ar@{-}[r] & \circ \ar@{-}[ruu]
}
$$
	\item[8.] If $X$ is type $3A_2$, $V$ is a smooth Del Pezzo surface of degree $8$.
$$
\xymatrix@R = 6pt @C = 10pt{
	 & & \bullet \ar@{-}[rrd] & & \\
	\circ \ar@{-}[rru] \ar@{-}[r] \ar@{-}[d] & \bullet \ar@{-}[r] & \circ \ar@{-}[r] \ar@{-}[d] & \bullet \ar@{-}[r] & \circ \ar@{-}[d] \\
	\circ \ar@{-}[rrd] \ar@{-}[r] & \bullet \ar@{-}[r] & \circ \ar@{-}[r] & \bullet \ar@{-}[r] & \circ \\
	 & & \bullet \ar@{-}[rru] & & 
}
$$
\end{itemize}
\end{thmA3}
\begin{remA4}
	In particular, the unirationalty or rationality of $X$ is the following.
\begin{itemize}
	\item[] in case 1(1), $X$ is $k$-unirational;
	\item[] in cases 1(2), 3(1), 5, 7 and 8, $X$ is $k$-rational if $\Xt$ has a $k$-point;
	\item[] in cases 2, 3(2), 4 and 6, $X$ is $k$-rational.
\end{itemize}
\end{remA4}
\begin{thmA5}
	Let $X$ be a RDP Del Pezzo surface of degree $2$ with one singularity and suppose $X$ is neither $\ctext{1}$ nor $\ctext{2}$.  Let $\Vt$ be the weak Del Pezzo surface defined in the proof of Theorem {\ref{MainThm3.5}}. By Proposition {\ref{Prop:singularity type}}, the type of singularities of $X$ is either $A_2, A_3, A_4, A_5, A_6, A_7, D_4, D_5, D_6, E_6$ or $E_7$. In these 11 cases, the degree and the singularities of $V$ are respectively as follows, where $V$ is the RDP Del Pezzo surface obtained by collapsing the $(-2)$curves of $\Vt$.

	In the following configurations, vertices $\circ$ are the $(-2)$curves on $\Xt$ and vertices $\bullet$ are $\{ D_1, D_2, \cdots, D_6\}$ of Proposition {\ref{Prop:A2case}} if the type is $A_2$, $\{ E_{12}, \Ep_{12}, E_{14}, \Ep_{14}, E_{24}, \Ep_{24} \}$ of Theorem {\ref{MainThm3.5}} if the type is $D_4$ and $\{ E_1, E_2\}$ of Proposition {\ref{Prop:MainProp3.5}} otherwise.
\begin{itemize}
	\item[1.] If $X$ is type $A_2$ and each $(-1)$curves is defined over $k$, then $V$ is a RDP Del Pezzo surface of degree $8$ with $A_1$ singularity.
$$
\xymatrix@R = 5pt @C = 8pt{
	\bullet \ar@{-}[rd] & \bullet \ar@{-}[d] & \bullet \ar@{-}[ld] & & \\
	 & \circ \ar@{-}[rr] & & \circ \\
	\bullet \ar@{-}[ru] & \bullet \ar@{-}[u] & \bullet \ar@{-}[lu] & &
}
$$
	\item[2.] If $X$ is type $D_4$, $V$ is a RDP Del Pezzo surface of degree $8$ with $A_1$ singularity.
$$
\xymatrix@R = 5pt @C = 8pt{
	 & & \bullet \ar@{-}[rd] & & \bullet \ar@{-}[ld] & & \\
	\bullet \ar@{-}[rd] & & & \circ \ar@{-}[d] & & & \bullet \\
	\bullet \ar@{-}[r] & \circ \ar@{-}[rr] & & \circ \ar@{-}[rr] & & \circ \ar@{-}[r] \ar@{-}[ru] & \bullet
}
$$
	\item[3.] If $X$ is type $A_3$, $V$ is a RDP Del Pezzo surface of degree $4$ with $2A_1$ singularities in Case3 of Proposition {\ref{Prop:classify degree4}}.
$$
\xymatrix@R = 5pt @C = 8pt{
	 & \bullet \ar@{-}[rd] & & \bullet & \\
	\circ \ar@{-}[rr] & & \circ \ar@{-}[rr] \ar@{-}[ru] & & \circ
}
$$
	\item[4.] If $X$ is type $A_4$, $V$ is a RDP Del Pezzo surface of degree $4$ with $2A_1$ singularities in Case3 of Proposition {\ref{Prop:classify degree4}}.
$$
\xymatrix@R = 5pt @C = 16pt{
	 & \bullet \ar@{-}[d] & \bullet \ar@{-}[d]& \\
	\circ \ar@{-}[r] & \circ \ar@{-}[r] & \circ \ar@{-}[r] & \circ
}
$$
	\item[5.] If $X$ is type $A_5$, $V$ is either (1)a RDP Del Pezzo surface of degree $3$ with $2A_2$ singularities or (2)a RDP Del Pezzo surface of degree $4$ with $3A_1$ singularities.
$$
(1)
\xymatrix@R = 5pt @C = 16pt{
	 & & \bullet \ar@{-}[d] & & \\
	\circ \ar@{-}[r] & \circ \ar@{-}[r] & \circ \ar@{-}[r] & \circ \ar@{-}[r] & \circ
}
$$
or
$$
(2)
\xymatrix@R = 5pt @C = 16pt{
	 & \bullet \ar@{-}[d] & & \bullet \ar@{-}[d] & \\
	\circ \ar@{-}[r] & \circ \ar@{-}[r] & \circ \ar@{-}[r] & \circ \ar@{-}[r] & \circ
}
$$
	\item[6.] If $X$ is type $A_6$, $V$ is a RDP Del Pezzo surface of degree $4$ with $2A_1 + A_2$ singularities.
$$
\xymatrix@R = 5pt @C = 16pt{
	 & \bullet \ar@{-}[d] & & & \bullet \ar@{-}[d] & \\
	\circ \ar@{-}[r] & \circ \ar@{-}[r] & \circ \ar@{-}[r] & \circ \ar@{-}[r] & \circ \ar@{-}[r] & \circ
}
$$
	\item[7.] If $X$ is type $A_7$, $V$ is a RDP Del Pezzo surface of degree $4$ with $2A_1 + A_3$ singularities.
$$
\xymatrix@R = 5pt @C = 16pt{
	 & \bullet \ar@{-}[d] & & & & \bullet \ar@{-}[d] & \\
	\circ \ar@{-}[r] & \circ \ar@{-}[r] & \circ \ar@{-}[r] & \circ \ar@{-}[r] & \circ \ar@{-}[r] & \circ \ar@{-}[r] & \circ
}
$$

	\item[8.] If $X$ is type $D_5$, $V$ is a RDP Del Pezzo surface of degree $4$ with $D_4$ singularity.
$$
\xymatrix@R = 5pt @C = 16pt{
	 & \circ \ar@{-}[d] & & & \bullet \\
	\circ \ar@{-}[r] & \circ \ar@{-}[r] & \circ \ar@{-}[r] & \circ \ar@{-}[r] \ar@{-}[ru] & \bullet
}
$$
	\item[9.] If $X$ is type $D_6$, $V$ is a RDP Del Pezzo surface of degree $4$ with $D_5$ singularity.
$$
\xymatrix@R = 5pt @C = 16pt{
	 & \circ \ar@{-}[d] & & & & \bullet \\
	\circ \ar@{-}[r] & \circ \ar@{-}[r] & \circ \ar@{-}[r] & \circ \ar@{-}[r] & \circ \ar@{-}[r] \ar@{-}[ru] & \bullet
}
$$
	\item[10.] If $X$ is type $E_6$, $V$ is a RDP Del Pezzo surface of degree $4$ with $D_4$ singularity.
$$
\xymatrix@R = 5pt @C = 16pt{
	 & & & \circ \ar@{-}[d] & & & \\
	\bullet \ar@{-}[r] & \circ \ar@{-}[r] & \circ \ar@{-}[r] & \circ \ar@{-}[r] & \circ \ar@{-}[r] & \circ \ar@{-}[r] & \bullet
}
$$
	\item[11.] If $X$ is type $E_7$, $V$ is a RDP Del Pezzo surface of degree $3$ with $E_6$ singularity.
$$
\xymatrix@R = 5pt @C = 16pt{
	 & & \circ \ar@{-}[d] & & & \\
	\circ \ar@{-}[r] & \circ \ar@{-}[r] & \circ \ar@{-}[r] & \circ \ar@{-}[r] & \circ \ar@{-}[r] & \circ \ar@{-}[r] & \bullet
}
$$
\end{itemize}
\end{thmA5}
\begin{remA6}
	In particular, the unirationality or rationality of $X$ is following:
\begin{itemize}
	\item[] in cases 2, $X$ is $k$-unirational if $\Xt$ has a $k$-point;
	\item[] in cases 3 and 4(1), $X$ is $k$-unirational;
	\item[] in cases 4(2), 6 and 7, $X$ is $k$-rational if $\Xt$ has a $k$-point;
	\item[] in cases 1, 5, 8, 9, 10 and 11, $X$ is $k$-rational.
\end{itemize}
\end{remA6}
\section*{Acknowledgement}
	I would like to thank my supervisor Keiji Oguiso for suggesting to me the theme of this article and providing many valuable comments. I would also like to thank Masaru Nagaoka who gave me very important comments and examples in {\cite{KN}}. Also, I thank Tatsuro Kawakami for letting me know Proposition {\ref{Prop:Kawakami}}.

	This research did not receive any specific grant from funding agencies in the public, commercial, or not-for-profit sectors.

\end{document}